\setlist{itemsep=1pt,parsep=0pt,topsep=2pt,partopsep=0pt} 
\def\itm#1{\rm ({#1})} 
\def\itmit#1{\itm{\it #1\,}} 
\def\rom{\itmit{\roman{*}}}
\def\abc{\itmit{\alph{*}}}
\let\subset\subseteq 
\let\eps\varepsilon 
\let\rho\varrho 
\let\theta\vartheta
\def\dcup{\mathbin{\dot{\cup}}}
\def\soft#1{\leavevmode\setbox0=\hbox{h}\dimen7=\ht0\advance
    \dimen7 by-1ex\relax\if t#1\relax\rlap{\raise.6\dimen7
    \hbox{\kern.3ex\char'47}}#1\relax\else\if T#1\relax
    \rlap{\raise.5\dimen7\hbox{\kern1.3ex\char'47}}#1\relax
    \else\if d#1\relax\rlap{\raise.5\dimen7\hbox{\kern.9ex
    \char'47}}#1\relax\else\if D#1\relax\rlap{\raise.5\dimen7
    \hbox{\kern1.4ex\char'47}}#1\relax\else\if l#1\relax
    \rlap{\raise.5\dimen7\hbox{\kern.4ex\char'47}}#1\relax
    \else\if L#1\relax\rlap{\raise.5\dimen7\hbox{\kern.7ex
    \char'47}}#1\relax\else\message{accent \string\soft
    \space #1 not defined!}#1\relax\fi\fi\fi\fi\fi\fi}
\newtheorem{theorem}{Theorem}
\newtheorem{lemma}[theorem] {Lemma}    
\newtheorem{corollary}[theorem] {Corollary}    
\newtheorem{conjecture}[theorem] {Conjecture}   
\newtheorem{problem}[theorem] {Problem}   
\newtheorem{proposition}[theorem] {Proposition}   
\newtheorem{claim}[theorem] {Claim}  
\newtheorem{fact}[theorem]{Fact}
\newtheorem{definition}[theorem] {Definition}  
\newcommand{\oldqed}{}
\def\endofClaim{\hfill\scalebox{.6}{$\Box$}}
\newenvironment{claimproof}[1][Proof]{
  \renewcommand{\oldqed}{\qedsymbol}
  \renewcommand{\qedsymbol}{\endofClaim}
  \begin{proof}[#1]
}{
  \end{proof}
  \renewcommand{\qedsymbol}{\oldqed}
}
\newcommand{\By}[2]{\overset{\mbox{\tiny{#1}}}{#2}} 
\newcommand{\ByRef}[2]{   \By{\eqref{#1}}{#2} }
\newcommand{\eqByRef}[1]{ \ByRef{#1}{=} }
\newcommand{\leByRef}[1]{ \ByRef{#1}{\le} }
\newcommand{\Prob}{\mathbb{P}}
\newcommand{\REALS}{\mathbb{R}}
\newcommand{\NATS}{\mathbb{N}}
\newcommand{\Exp}{\mathop{\mathbb{E}}} 
\newcommand{\Hy}{\mathcal{H}} 
\newcommand{\cG}{\mathcal{G}} 
\newcommand{\cR}{\mathcal{R}} 
\newcommand{\cJ}{\mathcal{J}} 
\newcommand{\cL}{\mathcal{L}} 
\newcommand{\cX}{\mathcal{X}}
\newcommand{\cS}{\mathcal{S}} 
\newcommand{\K}{\mathcal{K}} 
\newcommand{\Part}{\mathcal{P}} 
\newcommand{\Qart}{\mathcal{Q}} 
\newcommand{\Qb}{\mathbf{Q}} 
\newcommand{\entropy}{\mathbf{H}}
\newcommand{\paths}{\mathscr{P}}
\newcommand{\cross}{\textrm{Cross}}
\newcommand{\reld}{d^*}
\newcommand{\reldeg}{\overline{\deg}} 
\newcommand{\HOM}{\text{EMB}} 
\newcommand{\INJ}{\text{INJ}}
\newcommand{\CROSS}{\mathrm{CROSS}} 
\newcommand{\DIST}{\mathrm{DIST}} 
\newcommand{\SLICE}{\mathrm{SL}} 
\newcommand{\Hskel}{\Hy^{skel}}
\newcommand{\ex}{\mathrm{ex}}
\title{Tight cycles and regular slices in dense hypergraphs}
  \author[P. Allen]{Peter Allen} \address{
    Department of Mathematics, London School of Economics, Houghton Street,
London WC2A 2AE, U.K.
  }
  \email{p.d.allen@lse.ac.uk}
  \author[J. B\"ottcher]{Julia B\"ottcher} \address{
    Department of Mathematics, London School of Economics, Houghton Street,
London WC2A 2AE, U.K.
  }
  \email{j.boettcher@lse.ac.uk}
  \author[O. Cooley]{Oliver Cooley} \address{
    Graz University of Technology, Institute of Optimization and Discrete Mathematics,
    Steyrer\-gasse~30, 8010 Graz, Austria
  }
  \email{cooley@math.tugraz.at}
  \author[R. Mycroft]{Richard Mycroft} \address{
    School of Mathematics, University of Birmingham, Edgbaston, Birmingham B15
    2TT, UK
  }
  \email{r.mycroft@bham.ac.uk}
  \thanks{
    PA was partially supported by FAPESP (Proc.~2010/09555-7); JB by FAPESP
    (Proc.~2009/17831-7); PA and JB by CNPq (Proc.~484154/2010-9); OC by the DFG (TA~309/2-2);
    The cooperation of the authors was supported by a joint CAPES-DAAD project
    (415/ppp-probral/po/D08/11629, Proj.~no.~333/09).
    The authors are grateful to NUMEC/USP, N\'ucleo de Modelagem Estoc\'astica e
    Complexidade of the University of S\~ao Paulo, and Project MaCLinC/USP, for
    supporting this research.
  }
\date{\today} 
\begin{document} 

\begin{abstract} 
  We study properties of random subcomplexes of partitions returned by (a
  suitable form of) the Strong Hypergraph Regularity Lemma, which we call
  regular slices. We argue that these subcomplexes capture many important
  structural properties of the original hypergraph. Accordingly we advocate
  their use in extremal hypergraph theory, and explain how they can lead
  to considerable simplifications in existing proofs in this field. We also
  use them for establishing the following two new results.

  Firstly, we prove a hypergraph extension of the Erd\H{o}s-Gallai Theorem:
  for every $\delta>0$ every sufficiently large $k$-uniform hypergraph
  with at least $(\alpha+\delta)\binom{n}{k}$ edges contains a tight cycle of
  length $\alpha n$ for each $\alpha\in[0,1]$.

  Secondly, we find (asymptotically) the minimum codegree requirement for a
  $k$-uniform $k$-partite hypergraph, each of whose parts has $n$ vertices,
  to contain a tight cycle of length $\alpha kn$, for each $0<\alpha<1$.
\end{abstract} 

\maketitle


\section{Introduction}

The Szemer\'edi Regularity Lemma~\cite{SzeReg} is a powerful tool 
in extremal graph theory. It is probably fair to say that the majority of the
advances in the last decade in extremal graph theory either rely on, or at least
were inspired by, the Regularity Lemma. 
Finding the right extension of this result for hypergraphs turned out to be
a challenging endeavour, which culminated in the proof of the
Strong Hypergraph Regularity Lemma together with a corresponding Counting
Lemma (see \cite{Gowers,NRS,RS2,RS,RSk}), which provide an analogous machinery for extremal problems
in hypergraphs. The difficulty with these tools is their technical
intricacy, which leads to significant additional complexity in applications
of the regularity method in extremal hypergraph theory.

In this paper we argue that often much of this complexity can be avoided by
using instead of the complicated structure returned by the Strong
Hypergraph Regularity Lemma a more accessible structure, which we call a
\emph{regular slice}. We provide a lemma (which is a consequence of the Strong
Hypergraph Regularity Lemma) that asserts the existence of such
regular slices which inherit enough structure from the original hypergraph
to be useful for embedding problems. In addition we provide two
applications of this lemma concerning the existence of tight cycles in
dense hypergraphs.

In the remainder of this introduction we will first describe these
applications, and then provide more details on our lemma concerning regular
slices.

\subsection{Cycles in dense graphs and hypergraphs}

Dirac's Theorem~\cite{Dirac} asserts that any $n$-vertex graph~$G$ with minimum degree 
$\delta(G)\ge n/2$ contains a Hamilton cycle. Bondy~\cite{Bondy} extended
this result and showed that Hamiltonian $n$-vertex graphs with at least
$n^2/4$ edges are in fact either complete bipartite or pancyclic, that is,
they contain cycles of all lengths $\ell\le n$.  Cycles of shorter lengths
were also considered by Erd\H{o}s and Gallai~\cite{ErdGal}, whose
celebrated theorem states that for any integer $d \geq 3$, any $n$-vertex
graph with at least $(d-1)(n-1)/2 + 1$ edges contains a cycle of length at
least $d$.  This is the best possible bound which is linear in $d$. The analogous
problem for minimum degree conditions was settled by
Alon~\cite{Alon}, who proved that large graphs with minimum degree at least $n/k$
contain a cycle of length at least $n/(k-1)$. Nikiforov and
Schelp~\cite{NikiSchelp} considered exact cycle lengths. They showed that
for all $\alpha>0$ there is~$n_0$ such that for $n\ge n_0$ all $n$-vertex
graphs~$G$ with $\delta(G)\ge\alpha n$ contain all even $\ell$-cycles with
$4\le \ell\le\delta(G)+1$ (they also considered odd cycles for
non-bipartite~$G$; and even sharper results are obtained in~\cite{Allen}).

For hypergraphs much less is known. A \emph{tight cycle}\footnote{There
  are several other notions of cycles in hypergraphs. Since our results are
  about tight cycles we concentrate on these here.} in a $k$-uniform
hypergraph~$G$ is a cyclically ordered list of vertices such that every $k$
consecutive vertices form an edge in~$G$.  Only recently R\"odl, Ruci\'nski
and Szemer\'edi~\cite{RRSHam} established an approximate extension of
Dirac's Theorem for $k$-uniform hypergraphs (the $3$-uniform case was
resolved exactly by the same authors in~\cite{RRS3unif}): for all $\gamma>0$ every
sufficiently large $k$-uniform hypergraph such that every $(k-1)$-set of
vertices lies in at least $(\frac12+\gamma)n$ edges has a tight Hamilton
cycle. A hypergraph analogue of the Erd\H{o}s-Gallai Theorem is not yet
known.  Gy\H{o}ri, Katona and Lemons~\cite{GyKatLem} made a first step in
this direction. They showed that $k$-uniform hypergraphs~$G$ with more than
$(\alpha n-k)\binom{n}{k-1}$ edges contain a tight path on $\alpha n$
vertices. Note that this statement is vacuous for $\alpha>1/k$.

Our first result is an approximate Erd\H{os}-Gallai type result that
establishes (up to the error term $\delta$) the best possible linear
density bound for the containment of tight cycles of a given length.

\begin{theorem}\label{thm:HypErdGal}
  For every positive~$\delta$ and every integer $k \ge 3$, there is an
  integer~$n_*$ such that the following holds for all $\alpha\in[0,1]$.
  If~$G$ is a $k$-uniform hypergraph on $n\ge n_*$ vertices with
  $e(G)\ge(\alpha+\delta)\binom{n}{k}$, then~$G$ contains a tight cycle of length~$\ell$ for every
      $\ell\le\alpha n$ that is divisible by~$k$.
\end{theorem}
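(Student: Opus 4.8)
The plan is to prove Theorem~\ref{thm:HypErdGal} via the regular slice machinery advertised in the introduction. Given a $k$-uniform hypergraph $G$ on $n$ vertices with $e(G)\ge(\alpha+\delta)\binom nk$, first apply the regular slice lemma to obtain a regular slice $\cJ$ of $G$ with suitably fine parameters (depending on $\delta$ and $k$), so that all but at most a $\delta/2$ fraction of the $k$-sets of $G$ that are supported on $(k-1)$-polyads of $\cJ$ lie in regular $k$-complexes whose relative densities $d^*$ we may read off. The key observation is that, after discarding the negligible contributions (irregular polyads, polyads of atypically low density, and $k$-sets not supported on any polyad), the average relative density over the polyads, weighted by polyad size, is at least $\alpha+\delta/2$. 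This is a counting/averaging step: the total number of edges forces the weighted mean of the $d^*$-values to exceed $\alpha$, essentially because each polyad of density $d$ contributes about $d$ times its volume to $e(G)$.

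Next I would set up the \emph{reduced hypergraph}: build an auxiliary weighted structure whose vertices are the clusters of $\cJ$ and whose ``edges'' record, for each dense regular polyad, its relative density. The heart of the argument is an embedding lemma: inside a single dense regular $k$-complex one can embed a long tight path, and moreover one can embed a tight path that both starts and ends at prescribed ``ends'' compatible with the polyad structure, so that such paths can be concatenated. I expect the main technical content — and the main obstacle — to be this concatenation: tight cycles, unlike loose cycles, are rigid, so linking a tight path that lives in one regular $k$-complex to one living in another requires that the two complexes share a common $(k-1)$-set of clusters serving as a ``connector'', and one must track the last $k-1$ vertices of each path carefully so that the $k$ consecutive vertices straddling a junction still form edges. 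Handling this correctly is exactly the point at which the regular slice formalism pays off, since all the relevant $(k-1)$-complexes are simultaneously regular along the slice.

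With the embedding and concatenation tools in hand, the remaining step is a fractional/weighted optimisation argument in the reduced hypergraph. Using the fact that the density-weighted average is at least $\alpha+\delta/2$, I would show one can select a cyclic sequence of dense regular polyads — overlapping appropriately in $(k-1)$-sets of clusters — whose total ``capacity'' for tight-path vertices sums to at least $\alpha n$; concretely, within a polyad of density $d$ one can route a tight subpath using up to roughly $d$ times the ambient cluster volume, and a greedy/flow argument on the reduced hypergraph converts the density bound into the length bound $\alpha n$. Finally, since $\ell\le\alpha n$ and $k\mid\ell$, I would truncate the constructed tight cycle to exactly length $\ell$: the divisibility by $k$ is what lets one remove blocks of $k$ consecutive vertices (or, more carefully, re-route to shorten by multiples of $k$) while keeping the structure a genuine tight cycle. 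Assembling these pieces — regularity, the averaging/counting step, the embedding-and-concatenation lemma, the reduced-hypergraph optimisation, and the length truncation — yields the theorem; the delicate part throughout is controlling the interface between consecutive tight subpaths, which is where I would concentrate the proof's effort.
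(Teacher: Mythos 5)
Your overall architecture matches the paper's: take a regular slice, pass to the reduced $k$-graph, find a tightly connected (fractional) matching there, and feed it into a cycle embedding lemma. However, there is a genuine gap at the combinatorial core, namely the step you describe as ``a greedy/flow argument on the reduced hypergraph converts the density bound into the length bound $\alpha n$.'' This is precisely the nontrivial content, and a greedy argument does not suffice: if a $k$-graph on $t$ vertices with $\alpha\binom{t}{k}$ edges has maximum matching $\nu$, the trivial bound $e(G)\le k\nu\binom{t}{k-1}$ only gives $\nu\gtrsim\alpha t/k^2$, a factor $k$ short of the $\alpha t/k$ needed to reach cycle length $\alpha n$. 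The paper closes this gap in two steps: Proposition~\ref{prop:comp} shows (by averaging over tight components, whose $(k-1)$-shadows are pairwise disjoint) that some tight component $G^*$ satisfies $e_k(\cG^*)\ge (e_{k-1}(\cG^*)/\binom{t}{k-1})\,e(G)$, and Lemma~\ref{lem:ratiomatching} shows that any $k$-complex with $e_k(\cG)\ge (r-1)e_{k-1}(\cG)+1$ has a matching of size $r$ in its top level --- proved via compressions and the local LYM inequality, and tight for the clique on $kr-1$ vertices. Working inside a single tight component also settles the connectivity of the matching, which your sketch only gestures at via ``overlapping polyads.'' Without an argument of this strength your approach would only yield cycles of length about $\alpha n/k$.

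A secondary error: you propose to obtain shorter cycles by truncating a long tight cycle, ``removing blocks of $k$ consecutive vertices.'' Deleting $k$ consecutive vertices from a tight cycle does not leave a tight cycle, since the $k$-sets straddling the excision are not edges in general. The paper instead builds each length directly: the Cycle Embedding Lemma (Lemma~\ref{lem:emb}) produces a tight cycle of every length $\ell\le(1-\psi)k\mu n/t$ divisible by $k$ by choosing, for each matching edge, how many ``filling'' steps to perform before moving on. This is fixable within your framework, but as written the truncation step fails.
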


Partite versions of the graph embedding results mentioned above also have
been studied extensively. Moon and Moser~\cite{MooMos} showed that the
minimum degree condition in Dirac's Theorem can (almost) be halved when~$G$
is balanced bipartite. They proved that a bipartite graph~$G$ with~$n$
vertices in each partition class contains a Hamilton cycle if
$\delta(G)\ge(n/2)+1$. A corresponding (bi)pancyclicity result was
established by Schmeichel and Mitchem~\cite{SchMit} and balanced
$k$-partite graphs were considered by Chen, Faudree, Gould, Jacobson and Lesniak in~\cite{CheFauGouJacLes}.
For hypergraphs, R\"odl and Ruci\'nski~\cite{RSHamSurvey} proved that
for all $\gamma>0$ every sufficiently large $k$-uniform hypergraph which is
$k$-partite with~$n$ vertices in each part, such that each $(k-1)$-set of
vertices, one from each partition class, is contained in at least
$(\frac12+\gamma)n$ edges, contains a tight Hamilton cycle.

Our second result establishes asymptotically best possible codegree
bounds for the containment of tight cycles of a given length in the partite
setting.

\begin{theorem}\label{thm:PartiteCycle}
For every positive $\delta$ and every integer $k \ge 3$ there is an integer
 $n_{*}$ such that the following holds for each $\alpha\in[0,1]$. If $G$ is a $k$-uniform $k$-partite 
 hypergraph with parts of size $n\ge n_{*}$, such that any collection of $k-1$
 vertices, one in each of $k-1$ parts of $G$, lies in at least
$(\alpha+\delta)n$ edges of $G$, then
 \begin{enumerate}[label=\abc] 
 \item\label{thm:PartiteCycle:small} $G$ contains a tight cycle of length $\ell$ for every $\ell\le
   \alpha kn$ that is divisible by $k$, and
 \item\label{thm:PartiteCycle:big} if $\alpha\ge\frac{1}{2}$ then $G$ contains a tight cycle of length
   $\ell$ for every $\ell\le(1-\delta)kn$ that is divisible by $k$.
\end{enumerate}
\end{theorem}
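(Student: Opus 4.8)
The plan is to derive Theorem~\ref{thm:PartiteCycle} from the regular slice lemma by passing to a bounded-size reduced hypergraph, transferring the codegree condition to it, locating a suitable closed tight walk there, and lifting that walk to a tight cycle of the prescribed length via an embedding lemma for regular slices. Apply the regular slice lemma to $G$ with parameters chosen relative to $\delta$ and $k$, obtaining a regular slice refining the given $k$-partition, with $t$ clusters of common size $m_0=n/t$ in each part and with $G$ regular with respect to all but a negligible proportion of the polyads supported by the slice. Let $R$ be the reduced $k$-graph on the $kt$ clusters, in which a $k$-tuple of clusters, one from each part, is an edge precisely when $G$ is regular and of relative density at least $d$ with respect to the corresponding polyad, where $d\ll\delta$. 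Averaging the codegree hypothesis of $G$ over all vertices in a fixed $(k-1)$-tuple of clusters lying in $k-1$ distinct parts, and discarding the negligibly many edges of $G$ sitting over irregular or sparse polyads, shows that after deleting $o(t)$ clusters from each part every such $(k-1)$-tuple of clusters has codegree at least $(\alpha+\tfrac{\delta}{2})t$ in $R$; in particular every such tuple extends to an edge, and one checks that $R$ is tightly connected, so any two of these tuples are joined by tight walks of all sufficiently large lengths of the appropriate residue modulo $k$.

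For part~\ref{thm:PartiteCycle:small} build a tight walk in $R$ greedily: maintain a tight walk and at each step extend it by a cluster forming an $R$-edge with the last $k-1$ clusters, choosing among the $\ge(\alpha+\tfrac{\delta}{2})t$ valid options one that has so far been used as few times as possible. A load-balancing estimate — at the last step that raised some cluster's multiplicity to $M$, that cluster was the least-used valid choice, so at least $(\alpha+\tfrac{\delta}{2})t-1$ clusters of its part already had multiplicity $\ge M-1$, whence $(M-1)(\alpha+\tfrac{\delta}{2})t$ is at most the number of walk-steps of the walk in that part — shows that a tight walk of length $\ell\le\alpha kn$ exists in which no cluster is used more than $\tfrac{\alpha}{\alpha+\delta/2}\,m_0+1\le(1-\eta)m_0$ times, for some $\eta=\eta(\delta)>0$. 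Using the tight connectivity of $R$ one then closes this walk into a \emph{closed} tight walk of length exactly $\ell$, at the cost of $O(1)$ additional uses of $O(1)$ clusters, which preserves the multiplicity bound.

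For part~\ref{thm:PartiteCycle:big} the reduced hypergraph has minimum codegree at least $(\tfrac12+\tfrac{\delta}{2})t$ over $(k-1)$-tuples of clusters in distinct parts, but one now needs closed tight walks of length up to $(1-\delta)kn$ whose multiplicity profile is nearly balanced — each cluster used about $\ell/(kt)\approx m_0$ times — since the greedy walk guarantees only multiplicity of order $\ell/\big((\tfrac12+\tfrac{\delta}{2})kt\big)$, which exceeds $m_0$ once $\ell$ approaches $kn$. Such walks are obtained from a tight Hamilton cycle of $R$, which exists by the partite Hamiltonicity theorem of R\"odl and Ruci\'nski~\cite{RSHamSurvey} (applicable since $R$ meets its codegree hypothesis once the slice is taken with enough clusters): winding $\lfloor\ell/(kt)\rfloor$ times around this Hamilton cycle and splicing in, using the tight connectivity of $R$, a short tight walk to correct the length modulo $kt$ yields a closed tight walk of length exactly $\ell$ in which every cluster is used at most $\lfloor\ell/(kt)\rfloor+t\le(1-\eta)m_0$ times, the bounded correction being absorbed by the slack as $t$ is bounded while $m_0\to\infty$. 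In either part one finishes by invoking the embedding lemma for regular slices: a closed tight walk of length $\ell$ in $R$ all of whose cluster multiplicities are at most $(1-\eta)m_0$ lifts to a tight cycle of length $\ell$ in $G$, built chunk by bounded chunk and embedded into fresh, still-regular portions of the clusters via the counting lemma — the slack $\eta$ guaranteeing that each cluster always retains a regular sub-portion for the next chunk — with the chosen vertices kept distinct.

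I expect the main obstacle to be securing the balanced closed tight walks needed for part~\ref{thm:PartiteCycle:big}: the greedy construction that suffices throughout part~\ref{thm:PartiteCycle:small} degrades precisely in the near-spanning regime, and one genuinely needs the near-Hamiltonian behaviour of the reduced hypergraph above the codegree threshold $t/2$, which is where the $\tfrac12$ in the statement originates. The remaining ingredients — the codegree transfer, the closing-up of walks with the correct divisibility, and the chunk-by-chunk embedding — are routine once the regular slice machinery together with its counting and embedding lemmas is in place.
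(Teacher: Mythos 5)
Your overall architecture (regular slice, reduced $k$-graph, transfer of codegree, then lift) matches the paper's, but two of your steps have genuine gaps.

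First, the codegree transfer to $R$ is not achievable the way you describe. A partite $(k-1)$-tuple of clusters fails to have codegree $\ge(\alpha+\tfrac{\delta}{2})t$ in $R$ when it lies in many \emph{irregular} $k$-sets of clusters, and such ``bad'' $(k-1)$-tuples are not confined to a small set of clusters: with $\eps_k\binom{t}{k}$ irregular $k$-sets at your disposal (and $k\ge3$) one can make $\Theta(t)$ pairwise disjoint bad $(k-1)$-tuples, which no deletion of $o(t)$ clusters per part can destroy. This breaks both halves of your argument: the greedy walk in part~(a) can get stuck the moment its terminal $(k-1)$-tuple is bad, and the R\"odl--Ruci\'nski partite Dirac theorem you invoke in part~(b) requires the codegree bound for \emph{every} partite $(k-1)$-tuple of clusters. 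The paper circumvents exactly this by the Keevash--Mycroft device: a hierarchy of ``good'' $j$-sets for $0\le j\le k-1$ (a $j$-set is good if few of its $(j{+}1)$-supersets are bad), assembled into a non-uniform $k$-partite hypergraph $\cR$ to which Lemma~\ref{lem:PartConnMatch} applies; that lemma produces a tightly connected matching of $\alpha t$ edges (resp.\ a tightly connected perfect fractional matching when $\alpha\ge\tfrac12$) by navigating only through ``excellent'' edges. If you want to keep your greedy walk, you would need the same hierarchical goodness notion to steer the walk away from bad tuples at every step. (Separately, your assertion that $R$ is tightly connected is false for $\alpha<\tfrac12$ --- see the extremal construction in Section~\ref{sec:LB} --- though closing up your walk only needs its two ends to lie in the same tight component, which is automatic.)

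Second, the lifting step you invoke is not the paper's Cycle Embedding Lemma. Lemma~\ref{lem:emb} takes as input a \emph{tightly connected fractional matching} of weight $\mu$ and outputs cycles of all admissible lengths up to $(1-\psi)k\mu n/t$; it does not take a closed tight walk in $R$ with cluster multiplicities bounded by $(1-\eta)m_0$. Your ``chunk by bounded chunk into fresh, still-regular portions'' sketch does not work as stated: the Regular Restriction Lemma degrades $\eps$ to $\sqrt{\eps}$ and only applies to subsets of size at least a fixed fraction $\alpha m$ of a cluster, so it cannot be iterated $\Theta(m)$ times as a cluster is revisited $\Theta(m)$ times along the walk. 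The paper's proof of Lemma~\ref{lem:emb} resolves precisely this by keeping, for each cluster, a single working subset of size $\alpha m$ that is updated by swapping out $O(1)$ vertices per step, together with a large family of candidate extensions maintained via the Extension Lemma. A ``walk-with-multiplicities'' embedding lemma of the kind you want is provable by the same techniques, but it is a nontrivial piece of work that your proposal treats as available off the shelf. Once both gaps are repaired your route is viable, and it is genuinely different from the paper's for part~(a): you replace the tightly connected matching of Lemma~\ref{lem:ratiomatching}/Lemma~\ref{lem:PartConnMatch} by a load-balanced greedy walk, and for part~(b) you import the R\"odl--Ruci\'nski Hamiltonicity theorem at the reduced level instead of the Farkas-based fractional matching of Lemma~\ref{lem:FracMatch}; the paper's choice buys a self-contained and shorter argument, while yours would buy a conceptually transparent ``wind around the reduced cycle'' picture at the cost of a stronger embedding lemma and an external black box.
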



We believe that with substantial additional work
part~\ref{thm:PartiteCycle:big} of this Theorem would also follow from the
proof in~\cite{RSHamSurvey}. However, our new tools allow for a much
simpler proof.

\subsection{Regular slices.} 
Briefly, given an $n$-vertex graph~$G$ the Regularity Lemma returns an
equipartition of the vertices of~$G$ into a constant number of parts such
that almost all of the bipartite subgraphs induced by pairs of parts
approximate random graphs. This so-called \emph{regular partition} can be represented by
a constant size weighted graph, the \emph{weighted reduced graph}, in
which the (weighted) density of copies of any small graph approximates the
density of copies in the original graph. 
In fact, we can use a regular partition and the reduced graph also to find
copies of large bounded degree subgraphs in~$G$, among other things. In this way we can reduce difficult extremal graph problems to relatively simple problems
on the weighted reduced graph. This
approach to extremal graph theory is called the regularity method.

It is then natural to ask for an analogue of regular partitions and reduced
graphs for hypergraphs. However, the partition returned by the Strong
Hypergraph Regularity Lemma is no longer given by a vertex partition and cannot be represented
by a weighted hypergraph.  Instead, the partition has the structure of a
weighted multi-complex with edges of sizes up to and including $k$, which
makes applications of the Strong Hypergraph Regularity Lemma significantly
harder. Whilst simpler forms of hypergraph regularity do exist
(see~\cite{WeakReg,WeakCount}), they are substantially less powerful
for hypergraph embedding.

The \emph{Regular Slice Lemma} (Lemma~\ref{lem:simpreg}) that we propose in
this paper for approaching problems in extremal hypergraph theory bypasses
these difficulties by taking a random subcomplex of the weighted
multi-complex returned by the Strong Hypergraph Regularity Lemma (after a
considerable number of suitable modifications). Related ideas were applied
already by Haxell, {\L}uczak, Peng, R{\"o}dl, Ruci{\'n}ski and
Skokan~\cite{3CycleRamsey}. The advantage of this random subcomplex, which we call a \emph{regular slice}, is that it does correspond to a vertex partition and a weighted reduced hypergraph. The disadvantage of this
approach is that a regular slice discards most of the hyperedges of the
original hypergraph. However, as our lemma asserts, a lot of information
about the original hypergraph is still captured: edge densities, and more
generally small subgraph densities, minimum degree and codegree conditions,
and edge densities in link hypergraphs are approximated in regular
slices. Since these are exactly the type of conditions which typically
appear in extremal hypergraph theory we believe that regular slices will be
useful for solving many problems in this area.

\subsection{Structure of the paper.} 
In the next section we give the definitions needed to work with hypergraphs,
then in Section~\ref{sec:LB} we discuss lower bounds for
Theorems~\ref{thm:HypErdGal} and~\ref{thm:PartiteCycle}.
Section~\ref{sec:slice} introduces the definitions required for and the statement of the Regular Slice Lemma. It also includes the statement of a Cycle Embedding Lemma, which enables us to apply the Regular Slice Lemma to prove our two main theorems.
In Sections~\ref{sec:EG} and~\ref{sec:partite} we assume these results to prove
Theorems~\ref{thm:HypErdGal} and~\ref{thm:PartiteCycle} respectively.
Section~\ref{sec:reg} deals with the definitions and machinery of hypergraph
regularity, while Section~\ref{sec:regslice} contains the proof of the Regular Slice Lemma, and in Section~\ref{sec:embedding}, we prove the Cycle Embedding Lemma. We
conclude with a discussion of open problems and possible further applications in
Section~\ref{sec:conclude}.
Finally, Appendix~\ref{app:count} contains the derivation of the version of the Hypergraph Counting Lemma we use, and Appendix~\ref{app:strReg} contains the proof of a variant of the Strong Hypergraph Regularity Lemma which is used in the proof of the Regular Slice Lemma.


\section{Preliminaries}

A \emph{hypergraph} consists of a vertex set $V$ and an edge set $E$, where each edge $e \in E$ is a subset of $V$.  We frequently identify a hypergraph $\Hy$ with
its edge set, writing $e \in \Hy$ to mean that $e$ is an edge of $\Hy$, and
$|\Hy|$ for $e(\Hy)$, the number of edges of $\Hy$.
A \emph{$k$-uniform hypergraph}, or \emph{$k$-graph}, is a hypergraph in which every edge has
size $k$; it is \emph{$\ell$-partite} for $\ell \ge k$ if there exists a partition of the vertex set into $\ell$ parts so that every edge has at most one vertex in each part.
Given a hypergraph $\Hy$, we denote by $\Hy^{(i)}$ the $i$-graph on
$V(\Hy)$ formed by the \emph{$i$-edges} of $\Hy$, i.e.~the edges of cardinality $i$, and write $e_i(\Hy) := |\Hy^{(i)}|$ for the number of such edges. The \emph{order} of a hypergraph is the
number of vertices $v(\Hy) := |V(\Hy)|$. The \emph{degree} of a set of $i$ vertices in a $k$-graph, where $1\le i \le k-1$, is the number of edges containing all $i$ vertices. In the case when $i=k-1$ we talk of the \emph{codegree}. If $\Hy$ is a hypergraph on $V$, and $V'
\subseteq V$, then the induced subgraph $\Hy[V']$ is the hypergraph on $V'$ whose 
edges are all $e \in \Hy$ with $e \subseteq V'$. A \emph{$k$-complex} $\Hy$ is a hypergraph in which all edges have size at most $k$,
and which is down-closed: that is, if $e \in \Hy$ is an edge, and $e' \subset e$,
then $e' \in \Hy$. Note that for any non-empty complex $\Hy$ we have $\emptyset \in \Hy$. We prefer the terms \emph{subgraph} and \emph{subcomplex} to subhypergraph, sub-$i$-graph, or sub-$k$-complex.

Throughout this paper we will maintain the convention of using normal
letters for uniform hypergraphs, and calligraphic letters for not necessarily uniform
hypergraphs (which will usually be complexes). The exceptions to this rule are the definition of $\Hy^{(i)}$ above and of $\Hy_X$ in Section~\ref{sec:slice}: both define a specific uniform subgraph of a not necessarily uniform hypergraph $\Hy$.
 
Let $G$ be a $k$-graph. A
\emph{tight path} in $G$ is an ordered list of distinct vertices of $G$, such that
each set of $k$ consecutive vertices induces an edge of $G$; a \emph{tight cycle} is
a cyclically ordered list with the same property. The \emph{length} of a tight path or cycle is the number of edges in the path or cycle; the number of vertices in a tight cycle is therefore equal to its length, whilst the number of vertices in a tight path is $k-1$ greater than its length. We will denote the tight cycle or path on $\ell$ vertices by $C_\ell$ or $P_\ell$ respectively.

Any $k$-graph $G$ naturally corresponds to a $k$-complex $\cG$, whose edges are all subsets $e' \subseteq e$ of edges $e \in G$. We refer to $\cG$ as the complex \emph{generated by the down-closure of $G$}. We can then work with the $k$-complex $\cG$ to obtain useful information about the $k$-graph $G$. We say that a set $S$ of $k+1$ vertices of $G$ is \emph{supported on $G$} if every subset $S' \subseteq S$ of size $k$ is an edge of $G$ (i.e.~$S$ forms a clique in $G$). Similarly, we say that a $(k+1)$-graph $G'$ on $V(G)$ is \emph{supported on $G$} if every edge of $G'$ is supported on $G$. Note in particular that if $\Hy$ is a $k$-complex, then $\Hy^{(i)}$ is supported on $\Hy^{(i-1)}$ for any $1 \leq i \leq k$.

We write $[r]$ to denote the set $\{1, 2, \dots, r\}$. For a set $A$, we often
write $\binom{A}{r}$ to denote the collection of subsets of $A$ of size $r$. We
use the notation $x=a\pm b$ to denote $a-b\leq x \leq a+b$. If $S(x,y)$ is a
statement, we say `$S(x,y)$ holds for $x \ll y$' if for any $y >
0$ there exists $x_0 > 0$ such that for any $0<x \leq x_0$ the statement
$S(x,y)$ holds; similar expressions with more constants are defined analogously.
Throughout this paper floors and ceilings are often omitted where they do not
affect the argument.

\section{Lower bounds}
\label{sec:LB}

\subsection{Lower bounds for Theorem~\ref{thm:HypErdGal}.}

Given a $k$-graph $H$, we define $\ex(n,H)$ to be the maximum number of edges an $n$-vertex $k$-graph can have without containing $H$ as a subgraph. Then Theorem~\ref{thm:HypErdGal} says that $\ex\big(n,P_{\alpha n}\big),\ex\big(n,C_{\alpha n}\big)\le\big(\alpha+o(1)\big)\binom{n}{k}$. We will now establish several lower bounds on $\ex\big(n,P_{\alpha n}\big)$ and $\ex\big(n,C_{\alpha n}\big)$.

Gy\H{o}ri, Katona and Lemons~\cite{GyKatLem} (who were interested in the case
$\alpha n=\ell$ where $\ell$ is constant) observed that
given any set system $\cS$ on $n$ vertices with sets of size at most $\alpha n-1$, no pair of which intersect in more than $k-2$ vertices, one can construct a $k$-graph $G$ without any tight path on $\alpha n=\ell$
vertices by taking every $k$-set contained in a set of $\cS$. In the event that
all members of $\cS$ have size exactly $\alpha n-1$ and every $(k-1)$-set in $[n]$ is in some member of $\cS$,
$\cS$ is a combinatorial design and the corresponding $G$ has $(\alpha
n-k)\binom{n}{k-1}/k \approx \alpha\binom{n}{k}$ edges. A celebrated recent result of Keevash~\cite{KeeDesign} states that for each fixed $\alpha n=\ell$ and $k$, these combinatorial designs exist for all sufficiently large $n$ satisfying a necessary divisibility condition. Gy\H{o}ri, Katona and Lemons (writing before~\cite{KeeDesign} appeared) used R\"odl's solution~\cite{RodlEH} of the Erd\H{o}s-Hanani
Conjecture to prove, for each $\delta>0$, the existence of hypergraphs with
$(1-\delta)\alpha\binom{n}{k}$ edges and no $\ell$-vertex tight path, provided that $n$ is sufficiently
large compared to $\alpha$. This provides the lower bound $\ex\big(n,P_{\alpha n}\big)\ge\big(1-o(1)\big)\alpha\binom{n}{k}$. Unfortunately both Keevash's and R\"odl's proofs only
work when $n$ is superexponentially large in $\ell$, and it is easy to
see that in fact no designs (or even approximations to designs) can exist if
$\ell\gg n^{1/2}$, so that this construction does not work in the constant $\alpha$ range where
Theorem~\ref{thm:HypErdGal} gives a non-trivial result.

A very similar construction gives $G$ without tight cycles on $\ell$ or
more vertices. Namely, if there exists a set system $\cS$ on $n-1$ vertices
with all sets of size $\ell-2$, covering each $(k-1)$-set exactly once,
then we can construct a graph $G$ whose edges are all $k$-sets contained in
members of $\cS$ together with all $k$-sets containing the $n$th vertex. We
have $e(G)=\frac{\ell-1}{k}\binom{n-1}{k-1}$, and it is easy to check
that $G$ contains no tight cycle on $\ell$ or more vertices. However, if we
only want to exclude tight cycles on exactly $\ell$ vertices, we obtain
\[\ex\big(n,C_\ell\big)\ge
\tfrac12\big(\tfrac{1}{k!}\big)^{\tfrac{\ell}{\ell-1}}n^{k-1+\tfrac{k-1}{\ell-1}}\]
by deleting an edge from each copy of $C_\ell$ in the random $k$-graph with edge probability
\[p=\big(\tfrac{2}{k!}n^{k-\ell}\big)^{\tfrac{1}{\ell-1}}\,.\]

A simple lower bound on both $\ex\big(n,P_{\alpha n}\big)$ and $\ex\big(n,C_{\alpha n}\big)$ is provided by the following construction. Let $G$
be the $k$-graph on vertex set $V=A\dcup B$, where $|A|=\alpha
n/k-1$ and $|B|=(1-\alpha /k)n+1$, and whose edge set is $E(G):=\{e\in
\binom{V}{k}:e\cap A\neq \emptyset\}$. It is easy to see that the longest tight
path in $G$ has at most $k|A|+k-1< \alpha n$ vertices (the longest tight cycle is even slightly shorter). For large $n$,
the number of edges in $G$ is 
\[ \left(1-(1-\tfrac{\alpha}{k})^k\right)\binom{n}{k} +
o(n^k) = \left(\sum_{i=1}^k (-1)^{i-1}
\binom{k}{i}\big(\tfrac{\alpha}{k}\big)^i\right) \binom{n}{k} + o(n^k)\,.\]
Hence $\ex\big(n,C_{\alpha n}\big),\ex\big(n,P_{\alpha n}\big)\ge\big(\alpha+O(\alpha^2)\big)\binom{n}{k}$, matching the first order asymptotics in Theorem~\ref{thm:HypErdGal} for small $\alpha$.

Moreover, for each $2\le r\le k$, if $\alpha n r/k >(1-\alpha)n+1$, then the $k$-graph $G$ on
vertex set $V=A\dcup B$, where $|A|=\alpha n-1$ and $|B|=(1-\alpha)n+1$, with edge set consisting of all $k$-sets either
contained in $A$ or with at least $r$ vertices in $B$, contains no $\alpha
n$-vertex tight cycle. The densest of these is the construction with $r=2$, which is permissible when $1-\alpha$ is small, giving $\ex\big(n,C_{\alpha n}\big)\ge\big(1-k\alpha^{k-1}(1-\alpha)+O(1/n)\big)\binom{n}{k}$. (Minor modifications of the construction give the same asymptotic bound for $\ex\big(n,P_{\alpha n}\big)$.) This tends to $1$ as $1-\alpha$ tends to zero, but does not match the first order asymptotics of Theorem~\ref{thm:HypErdGal}.

Finally, we note that the divisibility condition on $\ell$ in Theorem~\ref{thm:HypErdGal} is necessary, at least for
$\alpha \leq k!/k^k$, as can be seen by considering the complete $k$-partite $k$-graph with vertex classes of size $n/k$.

\subsection{Lower bounds for Theorem~\ref{thm:PartiteCycle}.}

Clearly Theorem~\ref{thm:PartiteCycle} part~\ref{thm:PartiteCycle:big} is asymptotically best possible. For part~\ref{thm:PartiteCycle:small}, consider the following construction. Given the $k$-partite
vertex set $V_1\cup\cdots\cup V_k$
with $|V_i|=n$ for each $i$ and $\alpha\le\frac{1}{2}$, we partition each $V_i$
into sets $V_i^0$ and $V_i^1$ of sizes respectively $(1-\alpha)n$ and $\alpha n$. We let a set of $k$ vertices $e$, with one vertex in each of
$V_1^{j_1},\ldots,V_k^{j_k}$, where $j_i\in\{0,1\}$ for each $i\in[k]$, be an edge of the $k$-graph $G$ if and only if
$j_1+\cdots+j_k$ is odd. It is easy to check that the partite codegree
condition of Theorem~\ref{thm:PartiteCycle} is almost satisfied, and similarly easy to check that a tight cycle
with one edge $e$ whose vertices are in $V_1^{j_1},\ldots,V_k^{j_k}$ must have
all its vertices in $V_1^{j_1},\ldots,V_k^{j_k}$, from which it follows that we
cannot have tight cycles on more than
$k\min(|V_1^{j_1}|,\ldots,|V_k^{j_k}|)=k\alpha n$ vertices in $G$. This shows
that Theorem~\ref{thm:PartiteCycle} part~\ref{thm:PartiteCycle:small} is
asymptotically best possible.

\section{Regular Slices} \label{sec:slice} In this section we state our Regular Slice Lemma. We first need to introduce the required notation.
Much of this notation is standard, and will also be needed in Section~\ref{sec:reg} for the Strong Hypergraph Regularity Lemma. We will then state a Cycle Embedding Lemma, which allows us to use the nice properties of regular slices to find tight cycles in a hypergraph satisfying the appropriate conditions.

\subsection{Multipartite hypergraphs and regular complexes}\label{subsec:regdef}

Let $\Part$ partition a vertex set $V$ into parts $V_1, \dots, V_s$. Then we say
that a subset $S \subseteq V$ is \emph{$\Part$-partite} if $|S \cap V_i| \leq 1$ for
every $i \in [s]$. Similarly, we say that a hypergraph $\Hy$ is \emph{$\Part$-partite} if
all of its edges are $\Part$-partite. In this case we refer to the parts of
$\Part$ as the \emph{vertex classes} of $\Hy$. As defined previously for $k$-graphs, we say that 
a hypergraph $\Hy$ is \emph{$s$-partite} if there is some partition $\Part$ of $V(\Hy)$ into $s$ parts for which $\Hy$ is $\Part$-partite.

Let $\Hy$ be a $\Part$-partite hypergraph.
Then for any $A \subseteq [s]$ we write $V_A$ for $\bigcup_{i \in A} V_i$. The
\emph{index} of a $\Part$-partite set $S \subseteq V$ is $i(S) := \{i \in [s] : |S \cap
V_i| = 1\}$. We write $\Hy_A$ to denote the collection of edges in $\Hy$ with
index $A$. So $\Hy_A$ can be regarded as an $|A|$-partite $|A|$-graph on vertex
set $V_A$, with vertex classes $V_i$ for $i \in A$. It is often convenient to refer to the subgraph induced by a set of vertex classes rather than with a given index; if $X$ is a $k$-set of vertex classes of $\Hy$ we write $\Hy_X$ for the $k$-partite $k$-uniform subgraph of $\Hy^{(k)}$ induced by $\bigcup X$, whose vertex classes are the members of~$X$. Note that $\Hy_X = \Hy_{\{i : V_i \in X\}}$. In a similar manner we write $\Hy_{X^<}$ for the $k$-partite hypergraph on vertex set $\bigcup X$ whose edge set is $\bigcup_{X' \subsetneq X} \Hy_X$. Note that if $\Hy$ is a complex, then $\Hy_{X^<}$ is a $(k-1)$-complex because $X$ is a $k$-set.

Let $i \geq 2$, let $H_i$ be any $i$-partite $i$-graph, and let $H_{i-1}$ be any $i$-partite
$(i-1)$-graph, on a common vertex set $V$ partitioned into $i$ common vertex classes. We denote by
$K_i(H_{i-1})$ the $i$-partite $i$-graph on $V$ whose edges are all
$i$-sets in $V$ which are supported on $H_{i-1}$ (i.e.~induce a copy of the complete $(i-1)$-graph~$K_i^{i-1}$
on~$i$ vertices in~$H_{i-1}$). The \emph{density of $H_i$ with respect to
$H_{i-1}$} is then defined to be \[ d(H_i|H_{i-1}):= \frac{|K_i(H_{i-1})\cap
H_i|}{|K_i(H_{i-1})|}\]
 if
$|K_i(H_{i-1})|>0$. For convenience we take
$d(H_i|H_{i-1}):=0$ if $|K_i(H_{i-1})| = 0$. So $d(H_i|H_{i-1})$ is the proportion of copies of
$K^{i-1}_i$ in $H_{i-1}$ which are also edges of $H_i$. When $H_{i-1}$ is clear from the
context,
we simply refer to $d(H_i | H_{i-1})$ as the \emph{relative density of $H_i$}.
More generally, if $\Qb:=(Q_1,Q_2,\ldots,Q_r)$ is a collection
of~$r$ not necessarily disjoint subgraphs of~$H_{i-1}$, we define $K_i(\Qb):=\bigcup_{j=1}^r K_i(Q_j)$
and \[d(H_i |\Qb):= \frac{|K_i(\Qb)\cap H_i|}{|K_i(\Qb)|}\] if
$|K_i(\Qb)|>0$. Similarly as before we take $d(H_i |\Qb):=0$ if
$|K_i(\Qb)| =0$. 
We say that $H_i$ is \emph{$(d_i,\eps,r)$-regular with respect
to~$H_{i-1}$} if we have $d(H_i|\Qb) = d_i \pm \eps $ for every $r$-set $\Qb$
of subgraphs of $H_{i-1}$ such that $|K_i(\Qb)| > \eps |K_i(H_{i-1})|$. We
often refer to $(d_i,\eps,1)$-regularity simply as
\emph{$(d_i,\eps)$-regularity}; also, we say simply that $H_i$ is \emph{$(\eps,r)$-regular with respect
to~$H_{i-1}$} to mean that there exists some $d_i$ for which $H_i$ is \emph{$(d_i,\eps,r)$-regular with respect
to~$H_{i-1}$}. Finally, given an $i$-graph $G$
whose vertex set contains that of $H_{i-1}$, we say that $G$ is
\emph{$(d_i,\eps,r)$-regular with respect to~$H_{i-1}$} if the $i$-partite subgraph of
$G$ induced by the vertex classes of $H_{i-1}$ is $(d_i,\eps,r)$-regular with respect to
$H_{i-1}$. Similarly as before, when $H_{i-1}$ is clear from the context, we refer to the relative density of this $i$-partite subgraph of $G$ with respect to $H_{i-1}$ as the \emph{relative density of~$G$}.

Now let $\Hy$ be an $s$-partite $k$-complex on
vertex classes $V_1, \dots, V_s$, where $s \geq k \geq 3$. Recall that this
means that if $e \in \Hy$ and $e' \subseteq e$ then $e' \in \Hy$. So if $e \in
\Hy^{(i)}$ for some $2 \leq i \leq k$, then the vertices of $e$ induce a copy of
$K^{i-1}_i$ in $\Hy^{(i-1)}$. This means that for any index $A \in \binom{[s]}{i}$ the
density $d(\Hy^{(i)}[V_A]|\Hy^{(i-1)}[V_A])$ can be regarded as the proportion of
`possible edges' of $\Hy^{(i)}[V_A]$ which are indeed edges. (Here a `possible edge' is a
subset of $V(\Hy)$ of index $A$ all of whose proper subsets are edges of~$\Hy$).
We therefore say that~$\Hy$ is \emph{$(d_k,\dots,d_2,\eps_k,\eps,r)$-regular} if
\begin{enumerate}[label=\abc]
 \item for any $2 \leq i \leq k-1$ and any $A \in
  \binom{[s]}{i}$, the induced subgraph $\Hy^{(i)}[V_A]$ is
  $(d_i,\eps)$-regular with respect to~$\Hy^{(i-1)}[V_A]$, and
 \item for any $A \in \binom{[s]}{k}$, the induced subgraph 
  $\Hy^{(k)}[V_A]$ is $(d_k, \eps_k, r)$-regular with respect to~$\Hy^{(k-1)}[V_A]$.
\end{enumerate}
So each constant $d_i$ approximates the relative density of each subgraph $\Hy^{(i)}[V_A]$ for $A \in \binom{[s]}{i}$ for which $\Hy^{(i)}[V_A]$ is non-empty.
For a $(k-1)$-tuple $\mathbf{d} = (d_k, \dots, d_2)$ we write
$(\mathbf{d},\eps_k,\eps,r)$-regular to mean
$(d_k,\dots,d_2,\eps_k,\eps,r)$-regular. 
A regular complex is the
correct notion of `approximately random' for hypergraph regularity.

\subsection{Regular slices and the reduced $k$-graph}\label{sec:rhgraph}
The Regular Slice Lemma says that any $k$-graph $G$ admits a
`regular slice'. This is a multipartite $(k-1)$-complex $\cJ$ whose vertex
classes have equal size, which is regular, and which moreover has the property that $G$ is regular with
respect to $\cJ$. The first two of these conditions are formalised in the
following definition: we say that a $(k-1)$-complex $\cJ$ is
\emph{$(t_0,t_1,\eps)$-equitable} if it has the following properties.
\begin{enumerate}[label=\abc]
  \item $\cJ$ is $\Part$-partite for some $\Part$ which partitions $V(\cJ)$ into
  $t$ parts, where $t_0\le t\le t_1$, of equal size. We
  refer to $\Part$ as the \emph{ground partition} of $\cJ$, and to the parts of $\Part$ as the
  \emph{clusters} of $\cJ$.
  \item There exists a \emph{density vector} $\mathbf{d}=(d_{k-1},\ldots,d_2)$
  such that for each $2\le i\le k-1$ we have $d_i\ge 1/t_1$ and
  $1/d_i\in\NATS$, and the $(k-1)$-complex $\cJ$ is $(\mathbf{d},\eps,\eps,1)$-regular.
\end{enumerate}
For any $k$-set $X$ of clusters of $\cJ$, we write $\hat{\cJ}_X$ for the
$k$-partite $(k-1)$-graph $\cJ^{(k-1)}_{X^<}$.
For reasons that will become apparent later, we sometimes refer to $\hat{\cJ_X}$ as a
\emph{polyad}.
The following well-known fact, a special case of the Dense Counting Lemma (see
\cite[Theorem 6.5]{KRS}), tells us approximately the number of edges in each
layer of $\cJ$, and also how many $k$-sets are supported on any polyad. Note that the definition of an equitable complex implies that $\eps\ll 1/t_1\le d_i$ in this fact.

\begin{fact} \label{fact:densecount} 
Suppose that $1/m_0 \ll \eps \ll 1/t_1, 1/t_0, \beta, 1/k$, and that
$\cJ$ is a $(t_0, t_1, \eps)$-equitable $(k-1)$-complex with density
vector $(d_{k-1}, \dots, d_2)$ whose clusters each have size $m \geq m_0$. Let $X$ be a set of
$k$ clusters of~$\cJ$. Then $$|K_k(\hat{\cJ}_X)| = (1 \pm \beta) m^k \prod_{i =
2}^{k-1} d_i^{\binom{k}{i}},$$ and for any proper subset $X' \subsetneq X$ we
have $$|\cJ_{X'}| = (1 \pm \beta) m^{|X'|} \prod_{i = 2}^{|X'|}
d_i^{\binom{|X'|}{i}}.$$ \qed
\end{fact}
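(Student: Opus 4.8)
The plan is to derive both estimates from the Dense Counting Lemma \cite[Theorem~6.5]{KRS}, applied to suitable induced subcomplexes of $\cJ$; the only preparatory observation is that these subcomplexes are themselves regular complexes with the expected density vectors. Since $\cJ$ is $(\mathbf{d},\eps,\eps,1)$-regular with $\mathbf{d}=(d_{k-1},\dots,d_2)$, the definition of a regular complex gives that $\cJ^{(i)}[V_A]$ is $(d_i,\eps)$-regular with respect to $\cJ^{(i-1)}[V_A]$ for every $2\le i\le k-1$ and every $A\in\binom{[t]}{i}$. The same then holds for induced subcomplexes: for any set $Y$ of at most $k-1$ clusters, $\cJ[V_Y]$ is a $|Y|$-partite $|Y|$-complex that is $((d_{|Y|},\dots,d_2),\eps,\eps,1)$-regular, and $\cJ[V_X]$ is a $k$-partite $(k-1)$-complex that is $((d_{k-1},\dots,d_2),\eps,\eps,1)$-regular, the relevant regularity conditions simply being inherited from those of $\cJ$. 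Throughout, $\eps\ll 1/t_1\le d_i$ and $1/m_0\ll\eps\ll\beta,1/k$, so the hypotheses of the counting lemma are comfortably met whenever clusters have size $m\ge m_0$.

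For the second estimate, fix $X'\subsetneq X$ and set $j:=|X'|$; the cases $j\le 1$ are trivial, so assume $2\le j\le k-1$. By definition $\cJ_{X'}$ is precisely the top ($j$-uniform) level of the $j$-partite $j$-complex $\cJ[V_{X'}]$, which by the previous paragraph is $((d_j,\dots,d_2),\eps,\eps,1)$-regular with clusters of size $m\ge m_0$. The Dense Counting Lemma, which counts the edges of the top level of such a regular complex across a prescribed tuple of clusters, therefore gives $|\cJ_{X'}|=(1\pm\beta)\,m^{j}\prod_{i=2}^{j}d_i^{\binom{j}{i}}$, the exponent $\binom{j}{i}$ being the number of $i$-subsets of a $j$-set.

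For the first estimate, note that because $\cJ$ is a $(k-1)$-complex we have $\hat{\cJ}_X=\cJ^{(k-1)}_{X^<}=\cJ^{(k-1)}[V_X]$, so by the definition of $K_k(\cdot)$ the set $K_k(\hat{\cJ}_X)$ consists of exactly those $k$-sets of index $X$ all of whose $(k-1)$-subsets are edges of $\cJ^{(k-1)}[V_X]$; equivalently, those $k$-tuples, one vertex per cluster of $X$, that induce a copy of $K_k^{k-1}$ in the top level of the $k$-partite $(k-1)$-complex $\cJ[V_X]$. Applying the Dense Counting Lemma to $\cJ[V_X]$, which is $((d_{k-1},\dots,d_2),\eps,\eps,1)$-regular with clusters of size $m$, yields $|K_k(\hat{\cJ}_X)|=(1\pm\beta)\,m^{k}\prod_{i=2}^{k-1}d_i^{\binom{k}{i}}$.

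The one point requiring care is the interface with \cite[Theorem~6.5]{KRS}: one must confirm that the version cited counts, with multiplicative error $(1\pm\beta)$ and exactly the stated product, the number of copies of the relevant complete partite complex across a prescribed tuple of clusters of a $(\mathbf{d},\eps,\eps,1)$-regular complex, and that its hypotheses -- regularity of every level with a common parameter $\eps$, all relative densities bounded below by a positive constant, and clusters of size at least $m_0$ -- are precisely those guaranteed by $(t_0,t_1,\eps)$-equitability together with $m\ge m_0$. Given the parameter hierarchy, this is a routine matter of matching notation; I do not expect any genuine obstacle.
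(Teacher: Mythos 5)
Your proposal is correct and matches the paper's treatment: the paper gives no proof of Fact~\ref{fact:densecount}, simply asserting it as a special case of the Dense Counting Lemma of~\cite{KRS}, which is precisely the reduction you carry out (applying that lemma to the induced subcomplexes $\cJ[V_{X'}]$ and $\cJ[V_X]$, whose regularity is inherited from the equitability of $\cJ$). Your identification of $|K_k(\hat{\cJ}_X)|$ with the number of copies of $K_k^{k-1}$ in the top level of $\cJ[V_X]$, and of $|\cJ_{X'}|$ with the top-level edge count of $\cJ[V_{X'}]$, is exactly the intended specialisation.
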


Given a $(t_0,t_1,\eps)$-equitable $(k-1)$-complex $\cJ$ and a
$k$-graph $G$ on $V(\cJ)$, we say that \emph{$G$ is $(\eps_k,r)$-regular with respect to a $k$-set $X$ of clusters of $\cJ$} if there exists some $d$ such
that $G$ is $(d,\eps_k,r)$-regular with respect to the polyad $\hat{\cJ_X}$. We
also write $\reld_\cJ(X)$ for the relative density of $G$ with respect to
$\hat{\cJ_X}$, or simply $\reld(X)$ if $\cJ$ is clear from the context, which will usually be the case in applications. 
Note in particular that Fact~\ref{fact:densecount} then implies
that the number of edges of $G$ which are supported on $\hat{\cJ_X}$ is
approximately $\reld(X) m^k \prod_{i = 2}^{k-1} d_i^{\binom{k}{i}}$. We now give
the key definition of the Regular Slice Lemma.

\begin{definition}[Regular slice] Given $\eps,\eps_k>0$,
$r,t_0,t_1\in\NATS$, a $k$-graph $G$ and a $(k-1)$-complex $\cJ$ on $V(G)$, we
call $\cJ$ a $(t_0,t_1,\eps,\eps_k,r)$-regular slice for $G$ if
$\cJ$ is $(t_0,t_1,\eps)$-equitable and $G$ is $(\eps_k,r)$-regular with respect to all but at most
$\eps_k\binom{t}{k}$ of the $k$-sets of clusters of $\cJ$, where $t$ is the number of clusters of $\cJ$.
\end{definition}

  It will sometimes be convenient not to specify all of the parameters: we may
  write that $\cJ$ is $(\cdot, \cdot, \eps)$-equitable, or is a $(\cdot,\cdot,\eps,\eps_k,r)$-slice for 
$G$, if we do not wish to specify $t_0$ or $t_1$ (as will be the
  case if we specify instead the density vector $\mathbf{d}$ and the number of clusters $t$).

Given a regular slice $\cJ$ for a $k$-graph $G$, it will be important to know the
relative densities $\reld(X)$ for $k$-sets $X$ of clusters of $\cJ$. To keep track of these we make the following definition.

\begin{definition}[Weighted reduced $k$-graph]
 Given a $k$-graph $G$ and a $(t_0, t_1, \eps)$-equitable
 $(k-1)$-complex $\cJ$ on $V(G)$, we let $R_\cJ(G)$ be the complete weighted $k$-graph whose
 vertices are the clusters of $\cJ$, and where each edge $X$ is given weight $\reld(X)$
 (in particular, the weight is in $[0,1]$). When $\cJ$ is clear from the context we often simply write $R(G)$ instead of $R_\cJ(G)$. 
\end{definition}

In applications we will usually take $\cJ$ to be a regular slice for $G$. In
this case we will want to ensure that $G$ is regular with respect to all $k$-sets $X$ of clusters of $\cJ$
with $\reld(X) > 0$, which can be achieved by
the simple expedient of deleting all edges of $G$ lying in $k$-sets with respect to which $G$ is not regular; the definition of a regular slice implies that
there are few such edges. The reason that we do not specify this in the definition
of a regular slice, or specify that only $k$-sets with respect to which $G$ is regular are given
positive weight in the weighted reduced graph, is that we will also make use of this definition in the process
of proving that a certain equitable $(k-1)$-complex is in fact a regular slice.
Note that although different choices of $\cJ$ may well produce different
weighted reduced $k$-graphs (and only some of these will have `good'
properties), since $\cJ$ will always be clear from the context we will speak of
`the' reduced $k$-graph $R(G)$ of $G$.

In general, it is not very helpful to know that $\cJ$ is a regular slice for a
$k$-graph $G$;
the reduced graph of $G$ with respect to $\cJ$ does not necessarily resemble $G$ in the way that the reduced $2$-graph of a $2$-graph $H$ with respect to a Szemer\'edi partition resembles $H$. The Regular Slice Lemma states that there is a regular slice $\cJ$ with respect to which $R(G)$ does resemble $G$, in the sense that densities of small subgraphs (part~\ref{simpreg:a} of the Regular Slice Lemma) and degree conditions (part~\ref{simpreg:b}) are preserved. Furthermore, all vertices of $G$ are in some sense `represented' on $\cJ$ (part~\ref{simpreg:c})---this is useful for embedding spanning subgraphs.
In order to make this precise, we need the following further definitions.

Given hypergraphs $G$ and $H$, we write $n_H(G)$ for the number of labelled
copies of $H$ in $G$.
In order to extend this definition to weighted $k$-graphs
$G$, we let \[n_H(G):=\sum_{\phi:V(H)\to V(G)}\prod_{e\in
E(H)}\reld\big(\phi(e)\big)\] where $\phi$ ranges over all injective maps and
$\reld$ is the weight function on $E(G)$. We then define the \emph{$H$-density
of $G$} as \[d_H(G):=\frac{n_H(G)}{\binom{v(G)}{v(H)}\cdot v(H)!}\,. \] In
other words, $d_H(G)$ is the expectation of $\prod_{e\in E(H)}\reld (\phi(e))$
for an injective map $\phi:V(H)\to V(G)$ chosen uniformly at random. In the
special case that $H$ consists of a single edge, we will write simply $d(G)$,
and speak of the \emph{density of $G$} or the \emph{edge-density of $G$}.

Now let $G$ be a $k$-graph on $n$ vertices. Given a set $S \subseteq V(G)$ of size $j$
for some $1 \le j\le k-1$, and a subset $X \subseteq V(G)$, the \emph{relative degree $\reldeg(S;G, X)$ of $S$ in $X$
with respect to $G$} is defined to be
\[\reldeg(S;G, X):=\frac{\big|\{e\in G[S \cup X] :S\subset
e\}\big|}{\binom{|X \setminus S|}{k-j}}\,.\]
In other words, $\reldeg(S;G, X)$ is the proportion of $k$-sets of vertices of $G[X \cup S]$ extending $S$ which are in fact edges of $G$.
To extend this definition to weighted $k$-graphs $G$, we replace the
number of edges in $G[X \cup S]$ including $S$ with the sum of the weights of edges of $G[X \cup S]$
including $S$. Finally, if $\cS$ is a collection of $j$-sets in $V(G)$,
then $\reldeg(\cS; G, X)$ is defined to be the mean of $\reldeg(S;G, X)$ over
all sets $S\in\cS$.

Given a $k$-graph $G$ and distinct `root' vertices
$v_1,\ldots,v_\ell$ of $G$, and a $k$-graph $H$ equipped with a set of distinct
`root' vertices $x_1,\ldots,x_\ell$, we define the \emph{number of labelled rooted copies of $H$ in $G$}, written
$n_H(G;v_1,\ldots,v_\ell)$, to be the number of injective maps from $V(H)$ to
$V(G)$ which embed $H$ in $G$ and take $x_j$ to $v_j$ for each $1\le j\le\ell$. Then the \emph{density of
rooted copies of $H$ in $G$} is defined to be
\[d_H(G;v_1,\ldots,v_\ell):=\frac{n_H(G;v_1,\ldots,v_\ell)}{\tbinom{v(G)-\ell}{v(H)-\ell}\cdot\big(v(H)-\ell\big)!}\,.\]
This density has a natural probabilistic interpretation: choose uniformly at random an injective map $\psi : V(H) \to V(G)$ such that $\psi(x_j) = v_j$ for every $j \in [\ell]$. Then $d_H(G;v_1, \dots, v_\ell)$ is the probability that $\psi$ embeds $H$ in $G$. 
Next, we define $\Hskel$ to be the $(k-1)$-complex on $V(H) - \ell$ vertices which is
obtained from the complex $\Hy$ generated by the down-closure of $H$ by deleting the vertices
$x_1,\ldots,x_\ell$ (and all edges containing them) and deleting all
edges of size $k$. Given a $(t_0, t_1, \eps)$-equitable
$(k-1)$-complex $\cJ$ on $V(G)$, the \emph{number of rooted copies of $H$ supported by~$\cJ$}, written $n_H(G;v_1,\ldots,v_\ell,\cJ)$, is defined to be the number of labelled
rooted copies of $H$ in $G$ such that each vertex of $\Hskel$ lies in a distinct cluster of~$\cJ$ and the image of $\Hskel$ is in $\cJ$ (but note that we do \emph{not} require the edges involving
$v_1,\ldots,v_\ell$ to be contained in or supported by $\cJ$, and indeed
typically they will not be). We also define $n'_{\Hskel}(\cJ)$ to be the number of labelled copies of $\Hskel$ in $\cJ$ with each vertex of $\Hskel$ embedded in a distinct cluster of $\cJ$. Then the
\emph{density $d_H(G;v_1,\ldots,v_\ell,\cJ)$ of rooted copies of $H$ in $G$ supported by $\cJ$} is then defined by
$$d_H(G;v_1,\ldots,v_\ell,\cJ):= \frac{n_H(G;v_1,\ldots,v_\ell,\cJ)}{n'_{\Hskel}(\cJ)}.$$
Again we have a natural probabilistic interpretation: let $\psi : V(\Hskel) \to V(G)$ be an injective map chosen uniformly at random, and extend $\psi$ to a map $\psi' : V(H) \to V(G)$ by taking $\psi'(x_i) = v_i$ for each $i \in [\ell]$. Then $d_H(G;v_1,\ldots,v_\ell,\cJ)$ is the conditional probability that~$\psi'$ embeds $H$ in~$G$ given that~$\psi$ embeds $\Hskel$ in~$\cJ$ with each vertex of~$\Hskel$ embedded in a different cluster of~$\cJ$.

We can now state the Regular Slice Lemma. We remark that for many
applications it suffices to take $q = s =1$, with $\Qart$ being the
trivial partition of $V$ with one part.

\begin{lemma}[Regular Slice Lemma]\label{lem:simpreg}
Let $k \geq 3$ be a fixed integer. For all positive integers $q$, $t_0$ and $s$,
positive~$\eps_k$ and all functions $r: \NATS \rightarrow \NATS$ 
and $\eps: \NATS \rightarrow (0,1]$, there are integers~$t_1$ 
and~$n_0$ such that the following holds for all $n \ge n_0$ which are divisible 
by~$t_1!$. Let $V$ be a set of $n$ vertices, and suppose that~$G_1, \dots,
G_s$ are edge-disjoint $k$-graphs on $V$, and that $\Qart$ is a partition of $V$
into at most $q$ parts of equal size. Then there exists a $(k-1)$-complex
$\cJ$ on $V$ which is a
$(t_0,t_1,\eps(t_1),\eps_k,r(t_1))$-regular slice for each
$G_i$, such that the ground partition $\Part$ of $\cJ$ refines $\Qart$, and such
that $\cJ$ has the following additional properties.
\begin{enumerate}[label=\abc]
  \item\label{simpreg:a} For each $1 \leq i \leq s$, any $k$-graph $H$ with $v(H)\le 1/\eps_k$ and each
  set $X$ of at least $\eps_k t$ clusters of $\cJ$ (where $t$ is the total number of clusters of $\cJ$), we have
  \[\left|d_H\left(R(G_i)[X]\right)-d_H\left(G_i\left[\bigcup X\right]\right)\right|<\eps_k\,.\]
  \item\label{simpreg:b} For each $1 \leq i \leq s$, each $1 \le j \le k-1$,
  each set $Y$ of $j$ clusters of $\cJ$, and each set $X$ of clusters of $\cJ$
  for which $\bigcup X$ is the union of some parts of $\Qart$, we have
  \[\big|\reldeg(Y;R(G_i), X)-\reldeg(\cJ_Y;G_i, \bigcup X)\big|<\eps_k\,.\]
  \item\label{simpreg:c} For each $1\leq i\leq s$, each $1\le\ell\le 1/\eps_k$,
  each $k$-graph $H$ equipped with a set of distinct root vertices
  $x_1,\ldots,x_\ell$ such that $v(H)\le 1/\eps_k$, and any distinct vertices
  $v_1,\ldots,v_\ell$ in $V$, we have
  \[\big|d_H(G_i;v_1,\ldots,v_\ell,\cJ)-d_H(G_i;v_1,\ldots,v_\ell) \big|<\eps_k
  \,.\]
\end{enumerate}
\end{lemma}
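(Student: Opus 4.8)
\emph{The plan} is to obtain $\cJ$ as a random ``slice'' of the partition produced by the variant of the Strong Hypergraph Regularity Lemma proved in Appendix~\ref{app:strReg}, and then to verify properties \ref{simpreg:a}--\ref{simpreg:c} using the Hypergraph Counting Lemma of Appendix~\ref{app:count}. First I would apply that variant regularity lemma to $G_1,\dots,G_s$ simultaneously, with the ground partition refining $\Qart$ and with all parameters chosen as suitable functions of $k,q,s,t_0,\eps_k,r$ and $\eps$: in particular with an internal regularity parameter far smaller than $\eps(t_1)$, with a lower bound on the number $t$ of clusters far larger than $1/\eps_k$, and with an allowed proportion $\eps_k'$ of exceptional $k$-sets satisfying $\eps_k'\ll\eps_k/s$ (with further room for the union bounds below). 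This yields clusters $V_1,\dots,V_t$ of equal size with $t_0\le t\le t_1$, a density vector $\mathbf d=(d_{k-1},\dots,d_2)$ in which each $d_i$ is a unit fraction at least $1/t_1$, and, for each $2\le i\le k-1$, a partition of the $i$-cliques of every polyad into $1/d_i$ cells of equal relative density $d_i$, each $(\cdot,\eps)$-regular with respect to that polyad, such that each $G_j$ is $(\cdot,\eps_k,r)$-regular with respect to all but an $\eps_k'$-fraction of the top polyads.

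Now define the random slice $\cJ$: working upwards through $i=2,\dots,k-1$, choose independently, for every $i$-set $A$ of clusters and every polyad over $A$ assembled from the cells already chosen below level $i$, a uniformly random one of the $1/d_i$ cells of its partition, and let $\cJ$ be the down-closure of all the chosen cells. Whatever the choices, $\cJ$ is a $\Part$-partite $(k-1)$-complex that is $(t_0,t_1,\eps)$-equitable with density vector $\mathbf d$ --- the required regularity at level $i$ with respect to the polyad induced by a given cluster set is exactly the regularity of the chosen cell --- and $\Part$ refines $\Qart$. It therefore remains to show that with positive probability $\cJ$ is also a regular slice for each $G_j$ satisfying \ref{simpreg:a}--\ref{simpreg:c}. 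For the slice property itself this is immediate: $\hat\cJ_X$ is a uniformly random polyad over the $k$-set $X$, and by symmetry of the construction every $k$-set has the same number of polyads over it, so the expected number of $k$-sets $X$ for which $G_j$ fails to be $(\cdot,\eps_k,r)$-regular with respect to $\hat\cJ_X$ is $\binom tk$ times the overall proportion of exceptional polyads, hence at most $\eps_k'\binom tk$; summing over $j$ and applying Markov's inequality handles all $s$ of these with room to spare.

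For \ref{simpreg:a} and \ref{simpreg:b} I would compute expectations and then concentrate. Fix $j$, a $k$-graph $H$ with $v(H)\le 1/\eps_k$, and a set $X$ of at least $\eps_k t$ clusters. For any cluster-pattern $\phi\colon V(H)\to X$ with distinct clusters and any choice of polyads for the $(k-1)$-skeleton of $H$ with respect to each of which $G_j$ is regular, the Counting Lemma gives that the number of copies of $H$ in $G_j$ with pattern $\phi$ and skeleton in those polyads is $(1\pm o(1))\prod_{e\in E(H)}\reld(\text{polyad of }e)$ times the number of copies of the skeleton, which is independent of $\phi$ up to $(1\pm o(1))$; summing over $\phi$ and over the polyad-choices, and using that the probability that a fixed distinct-cluster copy of the skeleton lies in $\cJ$ equals $\prod_{i=2}^{k-1}d_i^{|\{i\text{-subsets of edges of }H\}|}$ \emph{exactly} (computed level by level, since within a level the cell-choices over distinct cluster-sets are independent), one obtains $\Exp[d_H(R(G_j)[X])]=d_H(G_j[\bigcup X])\pm o(1)$, and the analogous computation with the rooted Counting Lemma treats $\reldeg(Y;R(G_j),X)$ and $\reldeg(\cJ_Y;G_j,\bigcup X)$. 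Since changing a single cell alters only $O(t^{v(H)})$ of the terms $\prod_{e\in E(H)}\reld_\cJ(\phi(e))$, a bounded-differences inequality shows that $d_H(R(G_j)[X])$ lies within $o(1)$ of its expectation with probability at least $1-\exp(-\Omega(t^2))$, which beats a union bound over the (boundedly many) choices of $j$, $H$ and $X$; the same argument gives \ref{simpreg:b}, the only new point being that $\reldeg(Y;R(G_j),X)$ and $\reldeg(\cJ_Y;G_j,\bigcup X)$ need not each concentrate about a fixed value, but the Counting Lemma shows that both are determined up to $o(1)$ by the cell-choices over cluster-sets meeting $Y$, so that they track each other.

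The main obstacle is \ref{simpreg:c}, which must hold for \emph{every} $\ell$-tuple of roots --- there are about $n^\ell$ of these, far too many for a union bound against any concentration one could establish over the choice of $\cJ$ (and indeed $d_H(G_j;v_1,\dots,v_\ell,\cJ)$ genuinely fluctuates with $\cJ$). The resolution is that \ref{simpreg:c} is not an extra probabilistic requirement at all: once $G_j$ is $(\cdot,\eps_k,r)$-regular with respect to enough polyads --- which we have already secured --- the version of the Counting Lemma proved in Appendix~\ref{app:count} gives, \emph{for every} choice of $v_1,\dots,v_\ell$, that $n_H(G_j;v_1,\dots,v_\ell,\cJ)=(1\pm o(1))\,d_H(G_j;v_1,\dots,v_\ell)\cdot n'_{\Hskel}(\cJ)$, whence \ref{simpreg:c} on dividing by $n'_{\Hskel}(\cJ)$. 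The point is that, pinning the roots, each root-edge of $H$ becomes a constraint placing the images of its non-root vertices into a neighbourhood hypergraph of the corresponding $v_i$ in $G_j$, and $(\cdot,\eps_k,r)$-regularity --- applied to the ``section'' of each polyad cut out by a single vertex, which can be exhibited as $K_k$ of a bounded family of subgraphs of the polyad --- forces these neighbourhood hypergraphs to be pseudorandom with respect to $\cJ$, so that conditioning on the skeleton being embedded in $\cJ$ does not bias the presence of the root-edges, exactly as it does not bias the root-free edges; this is precisely what rules out the degenerate possibility that a vertex link ``aligns'' with the cells of $\cJ$. This is the step where the exact formulation of the Counting Lemma has to be right; with it in hand, intersecting the constantly many high-probability events from the slice property and from \ref{simpreg:a}--\ref{simpreg:b} produces with positive probability a complex $\cJ$ with all the required properties, completing the proof.
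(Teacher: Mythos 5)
Your overall architecture for the slice construction and for properties \ref{simpreg:a} and \ref{simpreg:b} matches the paper's: take a uniformly random slice through the family of partitions, get the regular-slice property from Markov's inequality, and establish \ref{simpreg:a} and \ref{simpreg:b} by computing expectations via the Counting Lemma and concentrating via a bounded-differences inequality before a union bound (over $2^t$ sets $X$, which your $\exp(-\Omega(t^2))$ concentration does absorb). Up to that point the proposal is sound.

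The gap is in your treatment of \ref{simpreg:c}, which is precisely the hard part of the lemma. You claim that \ref{simpreg:c} is ``not an extra probabilistic requirement at all'' and follows deterministically, for every slice, from the $(\eps_k,r)$-regularity of $G_j$ with respect to the polyads, by applying regularity to the ``section of each polyad cut out by a single vertex.'' This cannot work: the $k$-cliques of a polyad $\hat{P}(Q)$ that contain a fixed vertex $v$ form only an $O(k/m)$-fraction of $K_k(\hat{P}(Q))$, far below the $\eps_k$-threshold in the definition of $(d,\eps_k,r)$-regularity, so regularity gives no control over the density of $G_j$ on $k$-sets through a single vertex, and the link of $v$ need not be pseudorandom with respect to $\cJ$. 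The paper makes this concrete with a counterexample (opening of Appendix~\ref{app:strReg}): given any $G$ regular with respect to $\Part^*$, assign to each of the boundedly many slices $\cJ$ through $\Part^*$ a distinct vertex $v_\cJ$ and delete every edge $\{v_\cJ\}\cup e$ with $e\in\cJ^{(k-1)}$; only $O(n^{k-1})$ edges change, so the resulting $G'$ is still regular with respect to $\Part^*$, yet no slice satisfies \ref{simpreg:c}. A direct union bound is also unavailable, since there are roughly $n^{\ell}$ choices of roots while the per-choice failure probability depends only on $t$ and $\eps_k$, not on $n$. This is exactly why the paper needs the \emph{two-level} structure of the Strengthened Regularity Lemma (Lemma~\ref{lem:strReg}): property \ref{strReg:spread} guarantees that rooted densities agree between a coarse slice of $\tilde{\Part}^*$ and any of its refinements in $\Part^*$, and a separate concentration argument (Claim~\ref{clm:equalchoose}, which requires the number of fine clusters to dwarf the number of coarse slices — hence the function $p$) shows the random slice samples every coarse slice proportionally. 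You invoke the appendix lemma but never use its coarse family $\tilde{\Part}^*$, so your proof of \ref{simpreg:c} does not go through.
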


To understand this lemma consider first the special case
of regularising only one graph $G=G_1$, i.e.~$s=1$, with $\Qart$ being the trivial partition with one
part, i.e.~$q=1$, and where (for properties~\ref{simpreg:a}
and~\ref{simpreg:c}) $H$ consists of a single edge (in the latter case, rooted at one vertex $v$). Then the result
is that $\cJ$ is a regular slice for $G$, with not too few clusters (bounded below by
$t_0$) but also not too many (bounded above by $t_1$), with very
strong regularity properties (typically $\eps$ is much smaller than any density
$d_i$) and $G$ is $(\eps_k,r)$-regular with respect to $\cJ$. This looks so far
very much like the `usual' hypergraph regularity (see Lemma~\ref{reglemma} later
for a statement) except that the statement that $G$ is regular with respect to
$\cJ$ says something only about a very small part of $G$. Then
property~\ref{simpreg:a} states that $R(G)$ `looks like' $G$ in that
edge densities on large sets agree. Property~\ref{simpreg:b} tells us that degree conditions on $G$ transfer to $R(G)$. Finally
property~\ref{simpreg:c} says: for every vertex $v$ of $G$, the fraction
of $(k-1)$-sets of $V(G)$ which make edges with $v$ is close to the fraction
of $(k-1)$-edges of $\cJ$ which make edges with $v$. That is, vertex degrees in $G$ are inherited when we consider only extensions supported on $\cJ$.

We note that it is often necessary to estimate (for example) the number of edges
of $G$ which have one vertex in each of some pairwise-disjoint sets $X_1,\ldots,X_k$ rather than
just the number of edges in a set $X$: provided that each of the $X_i$ is large,
it follows by use of the inclusion-exclusion principle and the ability conferred
by~\ref{simpreg:a} to estimate the number of edges in a large set $X$ that we
can also make such estimates.

For embedding subgraphs, it is important to distinguish dense regular $k$-sets (i.e. $k$-sets on which $G$ is dense and regular with respect to $\cJ$). In order to state our Cycle Embedding Lemma we therefore give the following variation of the definition of the reduced graph.

\begin{definition}[The $d$-reduced $k$-graph] \label{def:redgraph}
Let $G$ be a $k$-graph and let $\cJ$ be a
$(t_0,t_1,\eps,\eps_k,r)$-regular slice for $G$. Then for $d>0$ we
define the $d$-reduced $k$-graph $R_d(G)$ to be the $k$-graph whose vertices
are the clusters of $\cJ$ and whose edges are all $k$-sets $X$ of clusters of
$\cJ$ such that $G$ is $(\eps_k, r)$-regular with respect to $X$ and $\reld(X) \geq d$.
\end{definition}

The next lemma states
that for regular slices $\cJ$ from Lemma~\ref{lem:simpreg}, $H$-densities and degrees are also preserved by $R_d(G)$, allowing us to work with this
structure also.

\begin{lemma} \label{reducedd+d} 
Let $G$ be a $k$-graph and let $\cJ$ be a
$(t_0,t_1,\eps,\eps_k,r)$-regular slice for~$G$ with $t$
clusters. Also let $X$ be a set of clusters of $\cJ$. Then for any $k$-graph $H$ we have
\begin{equation*} \label{eq:reduceddensity} 
d_H\big(R_d(G)[X]\big)\ge d_H\big(R(G)[X]\big) - d - \frac{\eps_k e(H)
\binom{t}{k}}{\binom{|X|}{k}},
\end{equation*} 
and for any set $Y$ of at most $k-1$ clusters of $\cJ$ we have
\begin{equation*} \label{eq:reduceddegree} 
\reldeg(Y;R_{d}(G), X) \ge \reldeg(Y;R(G), X) - d - \zeta(Y),
\end{equation*}
where $\zeta(Y)$ is defined to be the proportion of $k$-sets of clusters $Z$
with $Y \subseteq Z \subseteq Y \cup X$ which are not $(\eps_k, r)$-regular with respect to $G$.
\end{lemma}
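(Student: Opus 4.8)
The plan is to bound the ``loss'' incurred when passing from $R(G)$ to $R_d(G)$ by observing that $R_d(G)$ is obtained from $R(G)$ by two operations: setting the weight of every edge $X$ of $R(G)[X_0]$ with $\reld(X) < d$ to zero, and setting the weight of every edge $X$ with respect to which $G$ is not $(\eps_k,r)$-regular to zero. (Here I write $X_0$ for the given cluster set to avoid a clash with the dummy edge variable $X$.) For the density inequality, I would start from the definitions $n_H(R(G)[X_0]) = \sum_{\phi}\prod_{e\in E(H)}\reld(\phi(e))$ and the analogous expression for $R_d(G)[X_0]$, where $\phi$ ranges over injections $V(H)\to X_0$. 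For a fixed $\phi$, the product over $E(H)$ drops from its value in $R(G)$ to either the same value or $0$ in $R_d(G)$; moreover the drop is $0$ unless some edge $e$ of $H$ has $\phi(e)$ either of low density or not $(\eps_k,r)$-regular. I would split the total drop $\sum_\phi\bigl(\prod\reld_{R(G)} - \prod\reld_{R_d(G)}\bigr)$ according to which edge $e\in E(H)$ is ``responsible''. The contribution of low-density edges is at most $\sum_{e\in E(H)} d\cdot (\text{number of }\phi) \le e(H)\, d\,\binom{|X_0|}{k}k!$ (each $\reld(\phi(e'))\le 1$ for the other edges, and there are at most that many $\phi$), and the contribution of irregular edges is at most $e(H)$ times the number of injections $\phi$ for which a fixed edge is mapped to an irregular $k$-set, which is at most $\eps_k\binom{t}{k}$ choices of irregular $k$-set times $\binom{|X_0|-k}{v(H)-k}(v(H)-k)!$ ways to place the rest; bounding $\binom{|X_0|-k}{v(H)-k}(v(H)-k)!\,k! \le \binom{|X_0|}{v(H)}v(H)!$ and dividing through by $\binom{v(H)}{k}\cdot v(H)!$ to convert counts into $H$-densities yields exactly the claimed bound $d + \eps_k e(H)\binom{t}{k}/\binom{|X_0|}{k}$.

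For the degree inequality I would argue in the same spirit. Fix a set $Y$ of at most $k-1$ clusters. By definition $\reldeg(Y;R(G),X_0)$ is the average over $k$-sets $Z$ with $Y\subseteq Z\subseteq Y\cup X_0$ of $\reld(Z)$ (reading the weighted-$k$-graph definition of relative degree), and $\reldeg(Y;R_d(G),X_0)$ is the average over the \emph{same} index set of the modified weights, which are $\reld(Z)$ if $Z$ is dense and regular and $0$ otherwise. The difference of the two averages is thus the average of $\reld(Z)-\reld_{R_d}(Z)$, which is $0$ unless $Z$ is of low density (contributing at most $d$ to the difference, since then $\reld(Z)<d$ but we are being pessimistic and allow $\reld(Z)$ up to $d$) or $Z$ is not $(\eps_k,r)$-regular (contributing at most $1$ for each such $Z$). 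Summing, the average of the drop is at most $d$ plus the proportion of $k$-sets $Z$ in the relevant range that are irregular, which is precisely $\zeta(Y)$. This gives $\reldeg(Y;R_d(G),X_0)\ge \reldeg(Y;R(G),X_0)-d-\zeta(Y)$.

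Neither part poses a serious obstacle; the only point requiring mild care is the bookkeeping in the first inequality, namely organising the telescoping drop in $\prod_{e\in E(H)}\reld(\phi(e))$ by the ``first bad edge'' so that no configuration is double-counted, and then loosening the resulting binomial count to the clean form $\binom{t}{k}/\binom{|X_0|}{k}$ stated in the lemma (using $|X_0|\le t$ and $v(H)\ge k$). The step I expect to be fiddliest is therefore the conversion between labelled-injection counts and the normalised $H$-densities $d_H(\cdot)$, ensuring the factor $e(H)$ rather than, say, $v(H)!$ appears in front of $\eps_k\binom{t}{k}/\binom{|X_0|}{k}$; but this is routine counting once the decomposition is set up.
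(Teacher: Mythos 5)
Your proposal is correct and follows essentially the same route as the paper's proof: view $R_d(G)$ as obtained from $R(G)$ by zeroing out low-density and irregular edges, bound the resulting drop in the average of $\prod_{e\in E(H)}\reld(\phi(e))$ by $d$ for the low-density removals (each affected product was already at most $d$) and by counting the injections that send some edge of $H$ into one of the at most $\eps_k\binom{t}{k}$ irregular $k$-sets for the rest, and for the degree statement simply average the per-$Z$ drop over the $k$-sets $Y\subseteq Z\subseteq Y\cup X$. Two small points to tidy in the write-up: the surviving edges of $R_d(G)$ actually acquire weight $1$ rather than keeping weight $\reld$ (this only increases the density, so it is harmless, but your ``drops to the same value or $0$'' is not literally true), and the normalising constant for $d_H$ is $\binom{|X|}{v(H)}\cdot v(H)!$ rather than $\binom{v(H)}{k}\cdot v(H)!$, under which the exact identity $k!\,(h-k)!\,\binom{|X|-k}{h-k}\big/\bigl(h!\,\binom{|X|}{h}\bigr)=1/\binom{|X|}{k}$ with $h=v(H)$ produces the stated error term without any loose inequalities.
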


\begin{proof} 
Observe that we can transform $R(G)$ to $R_d(G)$ by editing the edge-weights of
$R(G)$ in three stages. First, for any edge $S$ of $R(G)$ with $\reld(S) \geq
d$, we increase the weight of $S$ from $\reld(S)$ to 1. Second, for any edge
$S$ of $R(G)$ with $\reld(S) < d$, we decrease the weight of $S$ from
$\reld(S)$ to zero. Finally, for any edge $S$ of $R(G)$ such that $G$ is not
$(\eps_k, r)$-regular with respect to $S$, we reduce the weight of $S$ to
zero. Note that the number of $k$-sets $S$ of the latter type is at most $\eps_k \binom{t}{k}$ since $\cJ$ is a $(t_0,t_1,\eps,\eps_k,r)$-regular slice
for~$G$.

To prove the first equation, we consider the effect of each of these changes on the quantity $d_H\big(R(G)[X]\big)$. 
Recall that this was defined to be the average of 
$\prod_{e \in H} \reld(\phi(e))$ taken over all the 
$\binom{|X|}{h}h!$ injections 
$\phi : V(H) \to X$, where $h := |V(H)|$. 
If the weight of any edge of $R(G)$ is increased, this average cannot decrease. 
Likewise, if the weights of any subset of edges of $R(G)$ are decreased from at most $d$ to zero, 
then this average will decrease by at most $d$ also. Finally, if the
weights of some set of $m$ chosen edges of $R(G)$ are reduced to zero, then
this reduces $\prod_{e \in H} \reld(\phi(e))$ by at most one for each of the at
most $e(H) m \binom{|X|-k}{h-k} (h-k)!k!$ injections $\phi$ for which $\phi(e)$
is a chosen edge for some $e \in H$. So $d_H\big(R(G)[X]\big)$ decreases by at
most $$\frac{e(H) m \binom{|X|-k}{h-k} (h-k)!k!}{\binom{|X|}{h}h!} = \frac{e(H)
m }{\binom{|X|}{k}}.$$ Combining these changes for the three steps described
above, we obtain the first equation.

Similarly, for the second equation we consider the effect of each change on the
quantity $\reldeg(Y;R(G), X)$, which was defined to be the average of
$\reld(Z)$ over all $k$-sets $Z$ with $Y \subseteq Z \subseteq Y \cup X$. As
before, increasing the weight of any edge of $R(G)$ cannot cause this average
to decrease, and decreasing the weights of any subset of edges of $R(G)$ by at
most $d$ will cause this average to decrease by at most $d$. Finally, since
$\reld(Z)$ must be between zero and one for any $Z$, reducing the weight of a
$\zeta$-proportion of the $k$-sets $Z$ to zero will reduce this average by at
most $\zeta$; combining these changes for the three steps gives the second equation.
\end{proof}

\subsection{Cycle Embedding Lemma}\label{sec:emb}

A standard and very useful result (originally proved by
{\L}uczak~\cite{LucRamsey}) in extremal graph theory is that for all $d>0$ and
sufficiently small $\eps>0$, the following holds. Given a graph $G$ and a partition of $V(G)$ into clusters of equal size, let $R_d(G)$ be the graph whose vertices correspond to clusters and where an edge indicates that the bipartite graph induced by the corresponding clusters is $\eps$-regular with density at least $d$. If there
is a connected component of $R_d(G)$ which contains a matching covering at least
$(\alpha+d)v(R_d(G))$ of the vertices of $R_d(G)$, then there are paths and even
cycles in $G$ of each length up to $\alpha v(G)$. In fact, a stronger result is
true, as observed by Hladk\'y, Kr\'a\soft{l} and Piguet (see~\cite{FracMatch}, where
this idea was also used): we need
only a fractional matching with weight $(\alpha+d)v(R_d(G))$. In this
subsection we state the corresponding result for tight paths and cycles in
$k$-graphs.

Let $G$ be an $n$-vertex $k$-graph. Then 
a \emph{matching} in~$G$ is a
set $M \subset E(G)$ of vertex-disjoint edges of~$G$, and the \emph{matching
  number}~$\nu(G)$ denotes the maximum size of a matching in~$G$. A \emph{fractional matching} $M$ in $G$ assigns a weight $w_e \in [0, 1]$ to each edge $e \in G$ so that for every vertex $v \in V(G)$ we have $\sum_{e :~v \in e} w_e \leq 1$. The \emph{weight} of $M$ is the sum of all the edge weights, which must lie between zero and $n/k$. We say that $M$ is $\emph{perfect}$ if it has weight $n/k$.

  Next, we define a \emph{tight walk} $W$ in $G$ to be a sequence of vertices of $G$ such that each set of $k$ consecutive vertices induces an edge of $G$.
For edges $e, f \in G$ we say that $W$ is a walk from $e$ to $f$ if $W$ begins with 
the vertices of $e$ (in some order) and concludes with the vertices of  
the vertices of $f$ (in some order). If such a walk exists then we say that $e$ and $f$ are 
\emph{tightly connected}. This gives an equivalence relation on the
edges of~$G$. To see this, observe that if $e$ and $f$ are edges of $G$ with 
$|e \cap f| = k-1$, then given any $(k-1)$-tuple in $e$ and
 $(k-1)$-set in $f$, there is a tight walk in $G$ whose first $k-1$ vertices are
 the chosen $(k-1)$-tuple in $e$ (in order) and whose last $k-1$ vertices are
 the chosen $(k-1)$-set in $f$ (in some order which we cannot choose). Applying this observation repeatedly establishes the transitivity of the `tightly connected' relation, and moreover shows that the observation still holds if we replace the assumption $|e \cap f| = k-1$ by the weaker assumption that $e$ and $f$ are tightly connected in $G$. 
A \emph{tight component} of~$G$ is an equivalence class of this relation, that is, an edge maximal set
$C \subset G$ such that each pair $e, f$ of edges in~$C$ are tightly
connected (recall we identify this edge set with the subgraph of~$G$ with vertex set $\bigcup C$ and edge set~$C$). A \emph{tightly connected matching} in $G$ is a matching in which all edges are tightly connected (that is, they all lie in the same tight component of $G$). Finally, a \emph{tightly connected fractional matching} is a fractional matching in which the same is true of all edges of non-zero weight.

We can now state our main embedding result, which we prove in Section~\ref{sec:embedding}.

\begin{lemma}[Cycle Embedding Lemma]\label{lem:emb}
  Let $k,r,n_0,t$ be positive integers, and $\psi,d_2,\ldots,d_k,\eps,\eps_k$ be
  positive constants such that $1/d_i\in\NATS$ for each $2\le i\le k-1$, and such
  that $1/n_0 \ll 1/t$,
\[\frac{1}{n_0} \ll
  \frac{1}{r},\eps\ll\eps_k,d_2,\ldots,d_{k-1}\quad\text{ and }\quad\eps_k\ll
  \psi,d_k,\frac{1}{k}\,.\] Then the following holds for all integers $n\ge
  n_0$. Let~$G$ be a $k$-graph on~$n$ vertices, and $\cJ$ be a $(\cdot,\cdot,\eps,\eps_k,r)$-regular slice for $G$ with $t$ clusters and density vector $(d_{k-1},\ldots,d_{2})$. Suppose that
  $R_{d_k}(G)$ contains a tightly connected fractional matching with weight~$\mu$. Then $G$ contains a tight cycle of length~$\ell$ for every
      $ \ell\le(1-\psi)k\mu n/t$ that is divisible by~$k$.
\end{lemma}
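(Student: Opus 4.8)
The plan is to transfer a tightly connected fractional matching of weight~$\mu$ in $R_{d_k}(G)$ into a long tight walk in $R_{d_k}(G)$, and then to convert this walk into a tight cycle in~$G$ of the desired length by a greedy embedding argument using the Hypergraph Counting Lemma and the regularity properties of~$\cJ$. First I would show that a tight component $C$ of $R_{d_k}(G)$ carrying a fractional matching of weight~$\mu$ contains a closed tight walk that traverses, counted with multiplicity, roughly $k\mu$ clusters; more precisely, for any $\psi>0$ there is a closed tight walk $W = X_1 X_2 \cdots X_L X_1 \cdots$ (indices cyclic) in $C$ whose \emph{cluster visit frequencies} are close to a scaled copy of the fractional matching weights, so that the total length $L$ can be taken as large as $(1-\psi/2)k\mu$ while each cluster $V_i$ is visited a number of times proportional to the total weight $\sum_{e\ni V_i} w_e$ of matching edges at~$V_i$ (divided by~$k$). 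The standard way to do this (as in the graph case of {\L}uczak and the fractional refinement of Hladk\'y--Kr\'a\soft{l}--Piguet) is: first find an Eulerian-type closed walk in an auxiliary multigraph built from the support of the fractional matching inside the tight component, using the fact that tightly connected edges are joined by tight walks; then observe that repeatedly concatenating the fractional matching edges, connected up by short tight walks within~$C$, yields a closed tight walk whose visit frequencies approximate $w_e$ (the short connecting walks contribute only an $o(\mu)$ fraction and can be absorbed into~$\psi$).

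Next I would perform the embedding. Having fixed the closed tight walk $W$ in $R_{d_k}(G)$ of length $L \approx (1-\psi/2)k\mu$, and given a target length $\ell \le (1-\psi)k\mu n/t$ divisible by~$k$, I would traverse $W$ cyclically, and for each new cluster visited I would choose a vertex of~$G$ from that cluster, maintaining the invariant that the last $k-1$ chosen vertices, together with the polyad of the relevant $k$-set of clusters on $\cJ$, support many extensions. Concretely, at each step I have a partial tight path whose final $k-1$ vertices lie in clusters $X_{j-k+2}, \dots, X_j$ of~$W$; since consecutive $k$-sets of clusters along~$W$ are edges of $R_{d_k}(G)$, the $k$-graph~$G$ is $(\eps_k,r)$-regular of relative density $\ge d_k$ with respect to each such polyad, and $\cJ$ itself is $(\mathbf d,\eps,\eps,1)$-regular; the Dense Counting Lemma (Fact~\ref{fact:densecount}) together with the Hypergraph Counting Lemma (as derived in Appendix~\ref{app:count}) guarantees that all but a negligible fraction of $(k-1)$-tuples in those clusters that are supported on $\cJ$ extend to $\gg (d_k/2)^{?}\,\prod_i d_i^{?}\, m$ valid next vertices, where $m = n/t$ is the cluster size. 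I would bound the number of `bad' choices at each step—vertices already used, or vertices whose $(k-1)$-suffix fails to be a `typical' supported tuple—and since $\ell \le (1-\psi) k\mu n/t$ and each cluster is visited at most $(1+\psi/10)\ell/(km\cdot(\text{something}))$ times in line with its fractional-matching weight, the number of forbidden vertices in any cluster stays below~$m/2$, so a valid choice always exists. After $\ell$ steps I close the cycle: because $W$ is a \emph{closed} walk, the last $k-1$ clusters coincide with the first $k-1$; a short, standard `connecting' argument (again using regularity of the final few polyads) lets me finish the cycle on exactly the right vertices. One must also ensure $k \mid \ell$ is compatible with returning to the start, which is why we only claim lengths divisible by~$k$ and why the closed walk~$W$ has length a multiple of relevant quantities up to the $\psi$ slack.

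The main obstacle will be the bookkeeping in the greedy embedding: one has to control simultaneously (i) that the $(k-1)$-suffix of the partial path remains a \emph{supported} tuple on $\cJ$ lying in the prescribed sequence of clusters (so that the Counting Lemma applies at the next step), (ii) that enough extensions avoid all previously used vertices, which requires that no cluster is over-used relative to its fractional weight—this is exactly where the hypothesis $\ell \le (1-\psi)k\mu n/t$ is spent—and (iii) that the closing-up step can be carried out without a parity or divisibility clash, which is handled by building a little flexibility (a choice of connecting subwalks of various lengths, all multiples of appropriate residues) into~$W$ from the start. A secondary technical point is choosing the hierarchy of constants: we need $\eps \ll \eps_k \ll d_i, d_k, 1/k$ precisely so that the exceptional sets in every application of regularity and of the Counting Lemma are a $o(1/k)$-fraction, and this is guaranteed by the stated dependencies $1/n_0 \ll 1/r, \eps \ll \eps_k, d_2, \dots, d_{k-1}$ and $\eps_k \ll \psi, d_k, 1/k$. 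Once these are in place the argument is a routine, if lengthy, greedy embedding.
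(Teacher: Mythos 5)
Your high-level strategy (use the tightly connected fractional matching to build a walk in the reduced graph, then embed along it using regularity) matches the paper's, but the core embedding step has a genuine gap. You propose to extend a \emph{single} partial tight path one vertex at a time, maintaining the invariant that its terminal $(k-1)$-tuple is a ``typical'' supported tuple with many extensions. The Counting and Extension Lemmas only tell you that \emph{all but a small fraction} of $(k-1)$-tuples (or of copies of a fixed subcomplex) extend well; they do not tell you that, after you append one vertex $v$ to a typical tuple, the \emph{new} $(k-1)$-suffix (which shares $k-2$ vertices with the old one and is therefore far from uniformly distributed) is again typical. Without a mechanism for propagating typicality, your invariant can fail after a single step, and the greedy argument stalls. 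This is precisely the difficulty the paper flags: instead of one path, it carries along a \emph{set} $\paths^{(j)}$ of candidate extensions whose terminal tuples occupy at least half of all ordered $(k-1)$-edges between the current clusters, and it proves the stronger two-sided statement (via the Extension Lemma applied to the complex of a path together with both its initial and terminal $(k-1)$-tuples) that almost all \emph{pairs} of disjoint $(k-1)$-tuples are linked by many short paths. Having half of all tuples available at each step is what guarantees that a ``good'' one can always be selected. Your proposal needs either this device or some substitute for it.

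Two secondary points. First, your closed walk $W$ of length $L\approx(1-\psi/2)k\mu\le t$ lives in the reduced graph, so traversing it once yields only $O(t)$ vertices; to reach length $\Theta(n)$ you must either traverse it $\Theta(m)$ times (in which case the connecting subwalks between matching edges are traversed $\Theta(m)$ times and can overuse clusters not weighted by the matching) or wind around each matching edge $\approx w_e m$ times between single traversals of the connectors, which is what the paper does (``filling'' each edge for $n_i\le(1-3\alpha)w_im$ steps, with total connector length at most $t^{2k}\ll m$). Second, closing the cycle is not routine: by the time the path is long, most vertices in the starting clusters are used, so you must reserve subsets $Z_1,\dots,Z_k$ at the outset and choose the initial $(k-1)$-tuple to be ``well-connected'' through them, and you must arrange the walk lengths so that the total is divisible by $k$ (the paper achieves this by constructing the return walk as a reversal of length $(k-1)\ell(W')$). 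Your ``little flexibility built into $W$'' needs to be made concrete along these lines.
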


\section{Erd\H{o}s-Gallai for hypergraphs}\label{sec:EG} 
In this section we aim to demonstrate the value of the tools presented
in the previous section by proving Theorem~\ref{thm:HypErdGal}.

Our strategy is simple: by Lemmas~\ref{lem:simpreg} and~\ref{lem:emb} it is
enough to show that any $k$-graph $G$ with edge density $\alpha$ contains a
tightly connected matching with at least $\alpha v(G)/k$ edges. To obtain this, we show (Proposition~\ref{prop:comp}) that any $G$ with edge density $\alpha$ contains a tight component $G^*$ with $e_k(\cG^*)\ge\alpha e_{k-1}(\cG^*)$, and then that any $k$-graph which satisfies this inequality contains the desired matching (Lemma~\ref{lem:ratiomatching}).

We first justify our assertion that there is a dense tight component.

\begin{proposition}\label{prop:comp}
Let $G$ be a $k$-graph on $n$ vertices. Then there is a tight component $G^*$ of $G$ such that  
  \begin{equation*}
    e_k(\cG^*)\ge\frac{e_{k-1}(\cG^*)}{\binom{n}{k-1}} e(G),
  \end{equation*}
  where $\cG^*$ denotes the $k$-complex generated by the down-closure of $G^*$.
\end{proposition}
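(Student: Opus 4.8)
The plan is to average over all tight components of $G$ and apply a convexity/pigeonhole argument. Let $G_1, \dots, G_m$ be the tight components of $G$, so that $E(G) = \bigcup_i E(G_i)$ is a partition of the edge set (the ``tightly connected'' relation is an equivalence relation on $E(G)$). For each $i$ let $\cG_i$ be the $k$-complex generated by the down-closure of $G_i$, so $e_k(\cG_i) = e(G_i)$ and $\sum_i e(G_i) = e(G)$. We want to find some $i$ with $e_k(\cG_i) \binom{n}{k-1} \ge e_{k-1}(\cG_i) e(G)$, equivalently $e(G_i)/e_{k-1}(\cG_i) \ge e(G)/\binom{n}{k-1}$.

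The key observation I would make is that the $(k-1)$-edges also split according to tight components, in the following sense: every $(k-1)$-set that lies in some edge of $G$ lies in edges of exactly one tight component. Indeed, if a $(k-1)$-set $f$ is contained in edges $e \in G_i$ and $e' \in G_j$, then $e$ and $e'$ share $k-1$ vertices, hence are tightly connected, so $i = j$. Thus the sets $\cG_i^{(k-1)}$ (the $(k-1)$-edges appearing in the down-closure of $G_i$) are pairwise disjoint subsets of $\binom{V(G)}{k-1}$, giving $\sum_i e_{k-1}(\cG_i) \le \binom{n}{k-1}$.

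Now apply the mediant inequality (or pigeonhole on ratios): since $\sum_i e(G_i) = e(G)$ and $\sum_i e_{k-1}(\cG_i) \le \binom{n}{k-1}$, with all $e_{k-1}(\cG_i) > 0$ for components containing at least one edge, we have
\[
\max_i \frac{e(G_i)}{e_{k-1}(\cG_i)} \ge \frac{\sum_i e(G_i)}{\sum_i e_{k-1}(\cG_i)} \ge \frac{e(G)}{\binom{n}{k-1}}\,.
\]
Choosing $G^* = G_i$ attaining this maximum and rearranging gives $e_k(\cG^*) = e(G_i) \ge e_{k-1}(\cG^*) e(G)/\binom{n}{k-1}$, as desired. (The degenerate case $e(G) = 0$ is trivial, taking $G^*$ to be any single-vertex ``component''; and if $e(G) > 0$ at least one component has a positive denominator, so the mediant step is valid.)

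I do not expect a serious obstacle here; the only point requiring a moment's care is the claim that the $(k-1)$-level edge sets $\cG_i^{(k-1)}$ are disjoint, which is exactly where the structure of the tightly-connected equivalence relation is used, and it follows immediately from the fact that two $k$-edges sharing $k-1$ vertices are tightly connected. Everything else is the elementary mediant inequality $\sum a_i / \sum b_i \le \max_i a_i/b_i$.
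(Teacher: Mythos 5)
Your proof is correct and follows essentially the same route as the paper's: decompose into tight components, observe that the $(k-1)$-levels of their down-closures are pairwise disjoint (so their sizes sum to at most $\binom{n}{k-1}$), and pick the component maximising the ratio $e_k(\cG_i)/e_{k-1}(\cG_i)$ via the mediant inequality. Your explicit justification of the disjointness of the $(k-1)$-edge sets is exactly the step the paper dispatches with ``by the definition of tight components.''
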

\begin{proof}
  Let $G_1,\dots,G_s$ be the tight components of~$G$, and let $\cG_i$ denote 
 the $k$-complex generated by the down-closure of $G_i$. Fix $\ell \in [s]$ 
  which maximises $e_k(\cG_\ell)/e_{k-1}(\cG_\ell)$. 
  By the definition of tight components we have
  $\sum_{i\in[s]}e_{k-1}(\cG_i)\le\binom{n}{k-1}$. Hence
  \begin{equation*}
      e(G)= \sum_{i\in[s]} e_k(\cG_i) 
      \le \frac{e_k(\cG_\ell)}{e_{k-1}(\cG_\ell)}\sum_{i\in[s]} e_{k-1}(\cG_i) 
      \le \frac{e_k(\cG_\ell)}{e_{k-1}(\cG_\ell)} \binom{n}{k-1} \,.
  \end{equation*} 
\end{proof}

We next state the lemma guaranteeing that a dense $k$-graph has a large matching.

\begin{lemma}\label{lem:ratiomatching}
  Let $k,r$ be any natural numbers and let
  $\cG$ be a $k$-complex in which
  \begin{equation}\label{eq:ratio}
   e_k(\cG)\geq (r-1)e_{k-1}(\cG)+1\,.
  \end{equation}
  Then $\nu\big(\cG^{(k)}\big)\ge r$.
\end{lemma}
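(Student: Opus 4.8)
The plan is to recast Lemma~\ref{lem:ratiomatching} as a clean statement about a $k$-graph and its shadow, and then prove it by induction using two local ``cleaning'' moves, leaving one genuinely dense case as the crux.

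First I would reduce to the case in which $\cG$ is generated by the down-closure of its top layer $H:=\cG^{(k)}$, so that $\cG^{(i)}=\partial^{k-i}H$ for every $i$ and in particular $e_{k-1}(\cG)$ equals the size of the $(k-1)$-shadow of $H$. Indeed, if $\cG'$ is the complex generated by $H$, then $\cG'^{(k)}=\cG^{(k)}$ while $\cG'^{(k-1)}\subseteq\cG^{(k-1)}$, so $\cG'$ still satisfies~\eqref{eq:ratio} and $\nu(\cG'^{(k)})=\nu(\cG^{(k)})$; any extra $(k-1)$-edges only make the hypothesis harder. Equivalently, the lemma is the statement that every $k$-graph $H$ with $e(H)\ge(r-1)e_{k-1}(\cG)+1$ has a matching of size $r$; and since $\nu$ is an integer this follows once one proves the sharp bound $e_k(\cG)\le\nu(\cG^{(k)})\cdot e_{k-1}(\cG)$ for every $k$-complex $\cG$ (tight, e.g., for the complete $k$-graph on $mk-1$ vertices). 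I would run an induction on the total number of edges of $\cG$ (summed over all layers); the base cases $r\le1$, and $\cG^{(k)}=\emptyset$, are immediate.

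For the inductive step there are two reductions. \emph{First move:} if some $(k-1)$-edge $g$ lies in at most $r-1$ edges of $\cG^{(k)}$, delete $g$ together with those $\le r-1$ $k$-edges; what remains is still a $k$-complex (we removed $g$ and all faces strictly above it), has $e_{k-1}$ smaller by exactly $1$ and $e_k$ smaller by at most $r-1$, hence still satisfies~\eqref{eq:ratio}, so by induction its top layer already contains a matching of size $r$. \emph{Second (dual) move:} writing $d_i(v)$ for the number of $i$-edges of $\cG$ through $v$, if some vertex $v$ has $d_k(v)\le(r-1)d_{k-1}(v)$ (in particular if $v$ lies in no $k$-edge), delete $v$ and all edges through it; the same arithmetic shows~\eqref{eq:ratio} is preserved, and induction applies. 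So it remains to treat complexes $\cG$ in which \emph{every} $(k-1)$-edge lies in at least $r$ $k$-edges and \emph{every} vertex $v$ satisfies $d_k(v)>(r-1)d_{k-1}(v)$.

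This ``everywhere dense'' case is where I expect the main difficulty. Let $d^{+}(g)$ denote the number of $k$-edges containing a $(k-1)$-edge $g$, so $\sum_g d^{+}(g)=k\,e_k(\cG)$ and trivially $d^{+}(g)\le v(\cG)-k+1$; combined with $e_k(\cG)\ge(r-1)e_{k-1}(\cG)+1$ this already forces $v(\cG)\ge kr$. Now take a maximum matching $M$ in $\cG^{(k)}$ and suppose, for contradiction, that $|M|=m\le r-1$; since $M$ is maximal, every $k$-edge meets $W:=V(M)$, and $|W|=km\le k(r-1)<v(\cG)$, so there are vertices outside $W$. The idea is to exploit that a $(k-1)$-edge disjoint from $W$ must have all of its (at least $r$) extensions to $k$-edges landing \emph{inside} $W$, and then to pigeonhole these $\ge r$ landing spots over the $m\le r-1$ edges of $M$ to contradict maximality of $M$ (or, alternatively, to build the matching greedily, using at each step that every surviving $(k-1)$-edge still extends in many ways). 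The delicate point is exactly the accounting here: the naive bound ``a $k$-edge meets $W$ in up to $k$ vertices'' loses a factor of $k$ and does not close the argument, so one must argue more carefully — controlling how many $(k-1)$-edges lie outside $W$ and precisely how they can extend into $W$ — to recover the sharp constant $r-1$. I would expect the genuine work of the proof to be concentrated in this step.
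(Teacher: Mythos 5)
Your two cleaning moves are sound: the first is exactly the $j=k-1$ case of the paper's Proposition~\ref{deletelowdeg}, and the vertex-deletion move also preserves~\eqref{eq:ratio} by the arithmetic you give. The problem is that the ``everywhere dense'' case to which you reduce is the entire content of the lemma, and you do not prove it; you only sketch a pigeonhole argument and then correctly observe that the accounting does not close. This is a genuine gap, and moreover the particular contradiction you propose cannot be repaired as stated: your dense-case argument uses only the local hypotheses (every $(k-1)$-edge extends to at least $r$ $k$-edges, plus maximality of $M$) and never re-invokes the global count $e_k(\cG)\ge(r-1)e_{k-1}(\cG)+1$. But the local hypotheses alone are consistent with $\nu=r-1$: in the complete $k$-graph $K^{(k)}_{kr-1}$ every $(k-1)$-set extends to $k(r-1)\ge r$ edges, every vertex satisfies your ratio condition, and yet $\nu=r-1$. (This complex fails the global inequality by exactly one edge, which is why the lemma is tight; it shows that any proof of the dense case must bring the global edge count back into play.)

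The paper closes this gap with two ingredients you are missing. First, it deletes low-degree edges at \emph{every} level $0\le j\le k-1$ (threshold $(k-j)r$ at level $j$), with the preservation of~\eqref{eq:ratio} justified by the local LYM inequality rather than by counting deleted $k$-edges one level up. Second, and crucially, it interleaves these deletions with $ij$-compressions $S_{ij}$, which leave every $e_\ell(\cG)$ unchanged and do not increase $\nu(\cG^{(k)})$. Once the complex is fully compressed and satisfies all the degree thresholds, the matching is written down explicitly: the sets $\{(k-\ell)r+m,\dots,(k-1)r+m\}$ for $m\in[r]$ are shown by induction on $\ell$ to be edges of $\cG^{(\ell)}$. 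Without compression (or some substitute that converts the global inequality into usable structure), your reduction leaves the hard part untouched.
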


The proof of this lemma proceeds inductively and uses a classical
concept from extremal set theory called compression.  Let~$\Hy$ be a hypergraph on vertex set $[n]$, and choose $i, j \in [n]$ with $i < j$.  Then the \emph{$ij$-compression}
$S_{ij}$ performs the following operation on $\Hy$. For every edge $e \in E(\Hy)$ such 
that $i \notin e$, $j \in e$ and $\{i\} \cup e \setminus \{j\} \notin E(\Hy)$, delete $e$ from~$E(\Hy)$ 
and replace it by $\{i\} \cup e \setminus \{j\}$. We denote the resulting
hypergraph by $S_{ij}(\Hy)$.  If $S_{ij}(\Hy) = \Hy$ for every $i < j$, then we say that $\Hy$ is \emph{fully-compressed}.

The next proposition sets out various properties of compressions of complexes which we shall use.
\begin{proposition}\label{compress}
Let $\Hy$ be a $k$-complex on vertex set~$[n]$. Then for any $1 \leq i < j \leq n$ we have that
\begin{enumerate}[label=\abc]
 \item\label{compress:a} $S_{ij}(\Hy)$ is a $k$-complex with $e_\ell(S_{ij}(\Hy)) = e_\ell(\Hy)$ for any $\ell$, 
 \item\label{compress:b} $\nu\big(S_{ij}(\Hy^{(k)})\big)\leq \nu(\Hy^{(k)})$, and
 \item\label{compress:c} if $\Hy$ is fully-compressed, then for any edge $e \in \Hy$ such that $j \in e$ and $i \notin e$ we have that $\{i\} \cup e \setminus \{j\} \in \Hy$.
\end{enumerate}
\end{proposition}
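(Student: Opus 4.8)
The plan is to verify the three parts of Proposition~\ref{compress} essentially by unpacking the definition of the $ij$-compression $S_{ij}$ and checking each claim directly.

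\medskip
\noindent\textbf{Part~\ref{compress:a}.} First I would argue that $e_\ell(S_{ij}(\Hy)) = e_\ell(\Hy)$ for every $\ell$. The compression acts separately on each edge $e$ with $j \in e$, $i \notin e$: either it leaves $e$ alone (if $\{i\}\cup e \setminus\{j\} \in \Hy$ already) or it replaces $e$ by $\{i\}\cup e\setminus\{j\}$, which has the same cardinality. Since distinct edges $e$ with $j \in e$, $i \notin e$ map to distinct sets $\{i\}\cup e\setminus\{j\}$, and the replacement sets are precisely those absent from $\Hy$, the map on each layer is a bijection; hence the number of edges of each size is preserved. Next I would check down-closure. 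Take an edge $f \in S_{ij}(\Hy)$ and $f' \subset f$; I want $f' \in S_{ij}(\Hy)$. There are a few cases depending on whether $f$ was ``moved'' (i.e.\ $f = \{i\}\cup e\setminus\{j\}$ for some $e\in\Hy$ with $j\in e$, $i\notin e$, $\{i\}\cup e\setminus\{j\}\notin\Hy$) and whether $f'$ contains $i$ or $j$. The main subcase: $f$ was moved, so $i \in f$, $j\notin f$, and $e = \{j\}\cup f\setminus\{i\}\in\Hy$. If $i\notin f'$ then $f' \subset e\in\Hy$, so $f'\in\Hy$; one must then check $f'$ is not destroyed by the compression, which holds because destroyed edges all contain $j$, and $j\notin f'$ (as $f'\subseteq f$ and $j\notin f$). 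If $i \in f'$, then $\{j\}\cup f'\setminus\{i\} \subseteq e \in \Hy$, so $\{j\}\cup f'\setminus\{i\}\in\Hy$; now either $f'\in\Hy$ (and $f'$ survives, since it contains $i$ so it is not destroyed) or $f'\notin\Hy$, in which case the compression moves $\{j\}\cup f'\setminus\{i\}$ to $f'$, so $f'\in S_{ij}(\Hy)$. The remaining case, where $f$ was not moved (so $f\in\Hy$), is easier: $f'\subset f\in\Hy$ gives $f'\in\Hy$, and $f'$ is destroyed only if $j\in f'$, $i\notin f'$ and $\{i\}\cup f'\setminus\{j\}\notin\Hy$; but then $\{i\}\cup f'\setminus\{j\}\subset \{i\}\cup f\setminus\{j\}$, and since $f$ was not moved either $i\in f$ (so $\{i\}\cup f\setminus\{j\} = f\in\Hy$ forces, by down-closure of $\Hy$, $\{i\}\cup f'\setminus\{j\}\in\Hy$, contradiction) or $\{i\}\cup f\setminus\{j\}\in\Hy$ (again forcing $\{i\}\cup f'\setminus\{j\}\in\Hy$ by down-closure, contradiction). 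So $f'$ survives.

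\medskip
\noindent\textbf{Part~\ref{compress:b}.} Given a maximum matching $M$ in $S_{ij}(\Hy^{(k)})$, I would build a matching of the same size in $\Hy^{(k)}$. For each edge $f\in M$: if $f\in\Hy^{(k)}$, keep it; otherwise $f$ was moved, so $f=\{i\}\cup e\setminus\{j\}$ with $e=\{j\}\cup f\setminus\{i\}\in\Hy^{(k)}$, and we replace $f$ by $e$. The only worry is that two edges of $M$ might collide after this swap. At most one edge of $M$ contains $i$ (matchings are vertex-disjoint), so at most one edge gets swapped; the swap replaces the vertex $i$ by $j$, and $j$ can appear in no other edge of $M$ precisely because in the moved edge $f$ we had $j\notin f$ --- wait, $j$ could appear in some other edge $f'$ of $M$ with $f'\in\Hy^{(k)}$. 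To handle this, note that if some $f'\in M$ with $j\in f'$ is kept unchanged (because $f'\in\Hy^{(k)}$), then $f'$ was not destroyed by the compression, which for an edge containing $j$ but not $i$ means $\{i\}\cup f'\setminus\{j\}\in\Hy^{(k)}$ --- so we could instead keep $f$ as is (it is already in $S_{ij}$, hence do nothing) and there is nothing to prove; the cleanest route is: $\nu(S_{ij}(\Hy^{(k)})) \le \nu(\Hy^{(k)})$ follows from the standard fact that $ij$-compression does not increase the matching number, and I would give the short vertex-disjointness argument: take $M$ maximum in $S_{ij}(\Hy^{(k)})$; if no edge of $M$ was moved, $M\subseteq\Hy^{(k)}$ and we are done; otherwise let $f$ be the unique moved edge (unique as it is the only one containing $i$), and $e=\{j\}\cup f\setminus\{i\}\in\Hy^{(k)}$. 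If no other edge of $M$ contains $j$, then $(M\setminus\{f\})\cup\{e\}$ is a matching in $\Hy^{(k)}$. If some $f'\in M$ contains $j$, then since $f'\ne f$ is an edge of $S_{ij}(\Hy^{(k)})$ containing $j$, it was not moved (moved edges contain $i$ not $j$) nor destroyed, so $f'\in\Hy^{(k)}$ and $\{i\}\cup f'\setminus\{j\}\notin\Hy^{(k)}$ fails, i.e.\ $\{i\}\cup f'\setminus\{j\}\in\Hy^{(k)}$; but this set contains $i$ and is disjoint from $M\setminus\{f,f'\}$ and from $f$ only possibly at\ldots hmm, this is getting intricate, so in the writeup I would simply cite that compressions do not increase $\nu$ as a classical fact, and supply the one-line disjointness check. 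Either way, $\nu(\Hy^{(k)})\ge |M| = \nu(S_{ij}(\Hy^{(k)}))$.

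\medskip
\noindent\textbf{Part~\ref{compress:c}.} Suppose $\Hy$ is fully-compressed, so $S_{ij}(\Hy)=\Hy$, and let $e\in\Hy$ with $j\in e$, $i\notin e$. I want $\{i\}\cup e\setminus\{j\}\in\Hy$. If not, then by definition the compression $S_{ij}$ would delete $e$ and insert $\{i\}\cup e\setminus\{j\}$, so $S_{ij}(\Hy)\ne\Hy$, contradicting that $\Hy$ is fully-compressed. Hence $\{i\}\cup e\setminus\{j\}\in\Hy$, as required.

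\medskip
\noindent\textbf{Main obstacle.} The genuinely fiddly part is \ref{compress:a}, specifically verifying that $S_{ij}(\Hy)$ remains down-closed: one must track which subsets contain $i$ or $j$, and repeatedly use the down-closure of the original complex $\Hy$ together with the precise rule for which edges get moved. Part~\ref{compress:b} also requires a small but careful vertex-disjointness argument; part~\ref{compress:c} is immediate. I would therefore devote most of the written proof to a clean case analysis for the down-closure claim, organised by the relationship of a candidate subset to $\{i,j\}$.
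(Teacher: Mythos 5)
Your proposal is correct and follows the same route as the paper, which simply records that \ref{compress:a} is immediate from the definition, that \ref{compress:b} is a classical fact about compressions (cited from Huang--Loh--Sudakov, ``or easy to prove directly''), and that \ref{compress:c} holds because $S_{ij}(\Hy)=\Hy$; you merely write out the case analysis the paper leaves implicit. Two cosmetic remarks: in the last subcase of \ref{compress:a}, when $i,j\in f$ one has $\{i\}\cup f\setminus\{j\}=f\setminus\{j\}$ rather than $f$ (the conclusion still follows by down-closure), and your direct argument for \ref{compress:b} can be completed by swapping \emph{both} $f\mapsto e=\{j\}\cup f\setminus\{i\}$ and $f'\mapsto\{i\}\cup f'\setminus\{j\}$ simultaneously, though falling back on the citation is exactly what the paper does.
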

\begin{proof}
\ref{compress:a} is immediate from the definition, whilst~\ref{compress:b} follows from Lemma~2.1 in~\cite{HuangLohSud}, or is easy to prove directly. Finally~\ref{compress:c} follows since in this case $S_{ij}(\Hy)=\Hy$.
\end{proof}

The second proposition needed to prove Lemma~\ref{lem:ratiomatching} shows that if $G$ satisfies~\eqref{eq:ratio} and some edge of $\cG$ is contained in few higher-level edges of $\cG$, then removing this edge and all edges containing it from $\cG$ gives a subcomplex which also satisfies~\eqref{eq:ratio}.

  \begin{proposition}\label{deletelowdeg}
Let $k,r$ be any natural numbers and let
$\cG$ be a $k$-complex in which $e_k(\cG)\geq (r-1)e_{k-1}(\cG)+1$.
Fix $0 \leq j \leq k-1$, and suppose that 
$e \in E(\cG^{(j)})$ lies in fewer than $(k-j)r$ edges
of $\cG^{(j+1)}$. Let $\cG'$ be the $k$-complex obtained by
deleting $e$ and any edge of $\cG$ which contains $e$ from $\cG$. 
Then $e_k(\cG')\geq (r-1)e_{k-1}(\cG')+1$.
  \end{proposition}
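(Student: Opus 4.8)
The plan is to account exactly for which edges of $\cG$ are destroyed in passing to $\cG'$, and to show that we lose at most $(r-1)$ times as many $(k-1)$-edges as $k$-edges; plugging this into the hypothesis $e_k(\cG)\ge (r-1)e_{k-1}(\cG)+1$ then immediately gives the conclusion. Write $m:=k-j$, and for each $j\le i\le k$ let $D_i$ be the number of edges of $\cG^{(i)}$ that contain~$e$. Since $\cG'$ is obtained by deleting precisely those edges of $\cG$ that contain~$e$, we have $e_i(\cG')=e_i(\cG)-D_i$ for every~$i$, and the degree hypothesis reads $D_{j+1}< mr$, i.e.\ $D_{j+1}\le mr-1$. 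Because
\[
e_k(\cG')-(r-1)e_{k-1}(\cG')=\bigl(e_k(\cG)-(r-1)e_{k-1}(\cG)\bigr)-D_k+(r-1)D_{k-1}\ge 1-D_k+(r-1)D_{k-1},
\]
it is enough to prove the single inequality $D_k\le (r-1)D_{k-1}$.

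To establish this, I would pass to the link of~$e$: let $\K$ be the $m$-complex with vertex set $\{v\notin e : e\cup\{v\}\in\cG\}$ whose edges are the sets $f\setminus e$ for $f\in\cG$ with $e\subseteq f$ (this is down-closed because $\cG$ is). A routine check gives $e_\ell(\K)=D_{j+\ell}$ for $0\le\ell\le m$; in particular $e_1(\K)=D_{j+1}$, $e_{m-1}(\K)=D_{k-1}$ and $e_m(\K)=D_k$. Now double-count the pairs $(g,f)$ with $g\in\K^{(m-1)}$, $f\in\K^{(m)}$ and $g\subset f$. Every $f\in\K^{(m)}$ has all $m$ of its $(m-1)$-subsets in $\K$, contributing $m\,e_m(\K)$ pairs in total; on the other hand, an $m$-edge containing a fixed $g\in\K^{(m-1)}$ is obtained by adjoining to~$g$ one of the at most $e_1(\K)-(m-1)$ vertices of~$\K$ not in~$g$, so the total is at most $\bigl(e_1(\K)-m+1\bigr)e_{m-1}(\K)$. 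Hence
\[
m\,e_m(\K)\le\bigl(e_1(\K)-m+1\bigr)e_{m-1}(\K)\le (mr-m)\,e_{m-1}(\K)=m(r-1)\,e_{m-1}(\K),
\]
which rearranges to $D_k=e_m(\K)\le (r-1)e_{m-1}(\K)=(r-1)D_{k-1}$, as wanted. (If $e_1(\K)<m$ then $\K$ has no $m$-edge, so $D_k=0$ and the inequality is trivial.)

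I do not expect a genuine obstacle here; the work is essentially bookkeeping, and the only mildly delicate point is the identity $e_\ell(\K)=D_{j+\ell}$ at the extremes $\ell=0$ and $\ell=m$ --- in particular, when $j=k-1$ the edge~$e$ itself is the unique $(k-1)$-edge containing~$e$, consistent with $e_0(\K)=D_{k-1}=1$. One should also note the degenerate case $j=0$, in which $\cG'$ is empty but the two hypotheses in fact cannot both hold (the displayed double-count applied with $\K=\cG$ would force $e_k(\cG)\le (r-1)e_{k-1}(\cG)$, contradicting $e_k(\cG)\ge (r-1)e_{k-1}(\cG)+1$), so there is nothing to prove. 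With these checks in place, the chain of inequalities yields $e_k(\cG')\ge (r-1)e_{k-1}(\cG')+1$.
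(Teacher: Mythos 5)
Your proof is correct and is essentially the paper's argument: both pass to the link complex of $e$ (your $\K$ is the paper's $\Hy$) and reduce the claim to $D_k\le(r-1)D_{k-1}$ via the bound $e_1(\K)\le (k-j)r-1$. The only cosmetic difference is that you prove the key inequality $m\,e_m(\K)\le(e_1(\K)-m+1)e_{m-1}(\K)$ by the double count yourself, whereas the paper cites it as the local LYM inequality (Theorem~\ref{thm:kk}).
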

  
To prove this we use the local LYM inequality, which can be found in e.g.~\cite{bollobas}.

\begin{theorem}[Local LYM inequality]\label{thm:kk}
  Let $\cG$ be an $i$-complex on $n$ vertices. 
 Then $e_{i-1}(\cG) \geq \frac{i}{n-i+1} e_i(\cG)$. \qed
\end{theorem}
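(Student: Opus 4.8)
The plan is the standard double-counting (``local LYM'') argument applied to the top two levels of the complex. Since $\cG$ is down-closed, only $\cG^{(i)}$ and $\cG^{(i-1)}$ are relevant here, and I may assume $n\ge i$ (otherwise $e_i(\cG)=0$ and the claimed inequality is trivial, as its right-hand side is then zero). First I would count the number $N$ of incident pairs $(A,B)$ with $A\in\cG^{(i-1)}$, $B\in\cG^{(i)}$ and $A\subset B$, in two different ways.

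Counting from the $i$-edge side: each $B\in\cG^{(i)}$ has exactly $i$ subsets of size $i-1$, and since $\cG$ is a complex every one of these subsets lies in $\cG^{(i-1)}$; hence each $B$ contributes exactly $i$ pairs, and $N=i\,e_i(\cG)$. Counting from the $(i-1)$-edge side: each $A\in\cG^{(i-1)}$ is contained in at most $n-(i-1)=n-i+1$ subsets of $V(\cG)$ of size $i$ (one for each way of adjoining a further vertex), so $A$ lies in at most $n-i+1$ edges of $\cG^{(i)}$; hence $N\le(n-i+1)\,e_{i-1}(\cG)$.

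Combining the two counts yields $i\,e_i(\cG)\le(n-i+1)\,e_{i-1}(\cG)$, and dividing through by $n-i+1>0$ gives $e_{i-1}(\cG)\ge\frac{i}{n-i+1}\,e_i(\cG)$, which is exactly the assertion. There is essentially no obstacle in this argument; the only point meriting any care is the appeal to down-closedness in the first count, since it is precisely this property that makes the count from above an \emph{equality} (all $i$ of the $(i-1)$-subsets of an $i$-edge are themselves edges) rather than merely an upper bound, and this is what drives the inequality.
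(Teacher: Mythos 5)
Your proof is correct: the paper itself states this inequality without proof, citing it as a standard fact from the literature, and your double-counting of incident pairs $(A,B)$ with $A\in\cG^{(i-1)}$, $A\subset B\in\cG^{(i)}$ (exactly $i$ pairs per $i$-edge by down-closedness, at most $n-i+1$ per $(i-1)$-edge) is precisely the standard argument found there. Nothing is missing, and your remark that down-closedness is what makes the count from the $i$-edge side an equality correctly identifies the one place the complex structure is used.
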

  
  \begin{proof}[Proof of Proposition~\ref{deletelowdeg}]
Let $A = \{x \in V(\cG) : e \cup \{x\} \in \cG^{(j+1)}\}$. By assumption we have $|A| \leq (k-j)r-1$. Let $\Hy$ be the $(k-j)$-complex on vertex set $A$ with edge set $\{e' \setminus e : e \subseteq e' \in \cG\}$. Note that when we delete $e$ and all edges containing it from $\cG$, we delete exactly $d := e_{k-j}(\Hy)$ edges from~$\cG^{(k)}$ and exactly $d' := e_{k-j-1}(\Hy)$ edges from $\cG^{(k-1)}$.
By Theorem~\ref{thm:kk},
    \begin{equation*}\begin{split}
      \frac{d'}{d} 
      &= \frac{e_{k-j-1}(\Hy)}{e_{k-j}(\Hy)}
      \geq \frac{k-j}{|A|-k+j+1}
      \geq \frac{k-j}{(k-j)r-1-k+j+1} \\
      &= \frac{k-j}{(k-j)(r-1)}
      = \frac{1}{r-1}\,.
    \end{split}\end{equation*}
    Thus the complex $\cG'$ obtained from our deletions satisfies
    \begin{equation*}\begin{split}
      e_k(\cG')
      &=e_k(\cG)-d 
      \ge e_k(\cG)-(r-1)d' \\
      &\geq (r-1)e_{k-1}(\cG)+1-(r-1)d' 
      =(r-1)e_{k-1}(\cG')+1 \,
    \end{split}\end{equation*}
    where we used the assumption that $e_k(\cG)\geq (r-1)e_{k-1}(\cG)+1$
    for the second inequality.
  \end{proof}

The case $j=0$ of Proposition~\ref{deletelowdeg} immediately gives the following corollary, which we use in the proof of Lemma~\ref{lem:ratiomatching}.

\begin{corollary}\label{bigenough} 
  Let $k,r$ be any natural numbers and let
  $\cG$ be a $k$-complex in which $e_k(\cG)\geq (r-1)e_{k-1}(\cG)+1$. Then
  $|V(\cG)| \ge kr$. \qed
\end{corollary}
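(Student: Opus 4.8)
The plan is to derive this directly from the $j=0$ case of Proposition~\ref{deletelowdeg}, arguing by contradiction. Suppose $|V(\cG)|\le kr-1$. Since $e_k(\cG)\ge1$ the complex $\cG$ is non-empty, so $\emptyset\in\cG$ and $\cG^{(0)}=\{\emptyset\}$. The edges of $\cG^{(1)}$ that contain $\emptyset$ are precisely the singleton edges of $\cG$, of which there are $e_1(\cG)\le|V(\cG)|\le kr-1$. Hence, taking $j=0$ and $e=\emptyset$, the $0$-edge $e$ lies in fewer than $(k-0)r=kr$ edges of $\cG^{(j+1)}=\cG^{(1)}$, so the hypothesis of Proposition~\ref{deletelowdeg} is satisfied.

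Applying that proposition with $j=0$ yields a $k$-complex $\cG'$, obtained by deleting $\emptyset$ together with every edge of $\cG$ containing $\emptyset$, which still satisfies $e_k(\cG')\ge(r-1)e_{k-1}(\cG')+1$. But every edge of a complex contains $\emptyset$, so $\cG'$ has no edges at all; in particular $e_k(\cG')=e_{k-1}(\cG')=0$, which forces $0\ge1$, a contradiction. Therefore $|V(\cG)|\ge kr$.

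There is essentially no obstacle here; the single point meriting a moment's care is the observation that deleting all edges containing the empty set empties the complex entirely, so that the left-hand side of the ratio inequality vanishes while the right-hand side remains at least $1$. One could equivalently run the argument in contrapositive form — if $\cG$ had fewer than $kr$ singleton edges, Proposition~\ref{deletelowdeg} would already force the empty complex to obey the impossible inequality $e_k\ge(r-1)e_{k-1}+1$ — but the proof by contradiction above is the most transparent way to phrase it.
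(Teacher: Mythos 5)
Your proof is correct and is exactly the argument the paper intends: the corollary is stated as an immediate consequence of the $j=0$ case of Proposition~\ref{deletelowdeg} (with $e=\emptyset$), and your contradiction — that deleting $\emptyset$ and everything containing it empties the complex, which cannot satisfy $e_k\geq (r-1)e_{k-1}+1$ — is precisely that derivation. The only addition worth noting is that the argument actually shows the stronger fact $e_1(\cG)\geq kr$, from which $|V(\cG)|\geq kr$ follows.
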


We now give the proof of Lemma~\ref{lem:ratiomatching}.

\begin{proof}[Proof of Lemma~\ref{lem:ratiomatching}]
 Let~$\cG$ be a $k$-complex on vertex set $[n]$ in which 
$e_k(\cG) \geq (r-1)e_{k-1}(\cG)+1$. First, we perform repeatedly the following two operations. If for some $\ell\in[k-1]$ there is an edge $e\in\cG^{(\ell)}$ which is contained in fewer than $(k-\ell)r$ edges of $\cG^{(\ell+1)}$, we delete it and all its supersets from $\cG$. If there are $1\le i<j\le n$ such that $S_{ij}(\cG)\neq\cG$ then we replace $\cG$ with $S_{ij}(\cG)$. Eventually we reach a complex $\Hy$ where neither operation is possible.

Observe that Proposition~\ref{compress} part~\ref{compress:a} and Proposition~\ref{deletelowdeg} together tell us that, since we started with a complex~$\cG$ satisfying~\eqref{eq:ratio}, $\Hy$ satisfies~\eqref{eq:ratio} also. Moreover, Proposition~\ref{compress} part~\ref{compress:b} together with the trivial fact that deleting edges from a complex does not increase the matching number of any level of the complex implies that $\nu\big(\Hy^{(k)}\big)\le\nu\big(\cG^{(k)}\big)$. By definition, $\Hy$ is fully-compressed. Now given $1\le\ell\le k-1$, let $e$ be an edge of $\Hy^{(\ell)}$. Because $e$ is in $\Hy$, there are at least $(k-\ell)r$ edges of $\Hy^{(\ell+1)}$ containing $e$, and in particular there is some $j\ge(k-\ell)r$ such that $e\cup\{j\}$ is in $\Hy^{(\ell+1)}$. Now Proposition~\ref{compress} part~\ref{compress:c} states that for any $i<j$ with $i\notin e$ we have $\{i\}\cup e\cup\{j\}\setminus\{j\}\in\Hy^{(\ell+1)}$, and thus we have
 \begin{equation}\label{eq:compressgives}
  \{i\}\cup e\in\Hy^{(\ell+1)}\quad\text{for each}\quad i\in\big[(k-\ell)r\big]\quad\text{such that}\quad i\notin e\,.
 \end{equation}

  We will now show by induction on~$\ell$ that the $\ell$-edges 
  \[e_{\ell,m}:=\big\{\, (k-\ell)r+m, (k-\ell+1)r+m, \ldots, (k-1)r+m \,\big\}\] 
  for $m=1,\ldots,r$ form a matching in $\Hy^{(\ell)}$.
  Note that for any $\ell\in[k]$ the sets
  $e_{\ell,m}$ for $m\in[r]$ are by definition pairwise-disjoint; it remains to show that
  all $e_{\ell,m}$ are edges of $\Hy$.

  The base case $\ell=1$ is trivial, since we are looking only for distinct
  vertices, and the singletons $(k-1)r+1,(k-1)r+2,\ldots,kr$ are in $\Hy$ by
  Corollary~\ref{bigenough}.

  Now for some $1\le\ell\le k-1$ and $m\in[r]$ suppose that $e_{\ell,m}$ is an edge of $\Hy^{(\ell)}$. Let $i=(k-\ell-1)r+m\le(k-\ell)r$. By definition we have $i\notin e_{\ell,m}$, and so by~\eqref{eq:compressgives} we have $e_{\ell+1,m}=\{i\}\cup e_{\ell,m}\in\Hy^{(\ell+1)}$, as desired.
  
  We conclude that the sets $e_{k,m}$ for $m\in[r]$ form a matching in $\Hy^{(k)}$, so $r\le\nu\big(\Hy^{(k)}\big)\le\nu\big(\cG^{(k)}\big)$ as desired.
\end{proof}

We note that Lemma~\ref{lem:ratiomatching} is tight. Indeed, for any $k$ and $r$, let $\K$ be the $k$-complex generated by the down-closure of the complete $k$-graph $K^{(k)}_{kr-1}$ on $kr-1$ vertices. Then
\[e_k(\K) = \binom{kr-1}{k} = (r-1)\binom{kr-1}{k-1} = (r-1)e_{k-1}(\K)\,,\]
and $\nu\big(\K^{(k)}\big)=r-1$.

We can now complete our proof of Theorem~\ref{thm:HypErdGal}, which we restate for the convenience of the reader.

\medskip
\noindent {\bf Theorem~\ref{thm:HypErdGal}.}
\emph{For every positive~$\delta$ and every integer $k \ge 3$, there is an
  integer~$n_*$ such that the following holds for all $\alpha\in[0,1]$.
  If~$G$ is a $k$-uniform hypergraph on $n\ge n_*$ vertices with
  $e(G)\ge(\alpha+\delta)\binom{n}{k}$, then~$G$ contains a tight cycle of length~$\ell$ for every
      $\ell\le\alpha n$ that is divisible by~$k$.}

\begin{proof}
  Given $k \geq 3$ and $\delta>0$, we choose $d_k=\delta/3$, $t_0=24k/\delta$ and $\psi = \delta/8$. We let
  $\eps_k\le\delta/12$ be sufficiently small for Lemma~\ref{lem:emb}. We choose
  functions $\eps(\cdot)$ tending to zero sufficiently rapidly,
  and $r(\cdot)$ growing sufficiently rapidly, so that for any $t \in
  \NATS$ and $d_2, \dots, d_{k-1} \geq 1/t$ we may apply
  Lemma~\ref{lem:emb} with $r(t)$, $\eps(t)$. Obtain
  $t_1$ and $n_0$ by applying Lemma~\ref{lem:simpreg} with inputs
  $t_0, \eps_k, r(\cdot)$ and $\eps(\cdot)$ (taking $q=s=1$); for the rest of
  this proof we write $r$ and $\eps$ for $r(t_1)$ and $\eps(t_1)$
  respectively. Then by Lemma~\ref{lem:simpreg}, for any $k$-graph $G$ on $n$
  vertices, where $n\ge  n_0$ is divisible by $t_1!$, there is a
  $(t_0, t_1,\eps,\eps_k, r)$-regular slice for $G$ such that
  $d\big(R(G)\big)\ge d(G)-\eps_k$. Finally, we choose $n_1 \geq n_0$ sufficiently large for us to apply Lemma~\ref{lem:emb} with
 $t_1$ in place of $t$, $n_1$ in place of $n_0$ and all other constants as above, and also such that
  $t_1!\binom{n_1}{k-1}<\delta\binom{n_1}{k}/12$.
  
  Set $n_* := n_1 + t_1!$, and let $G'$ be a $k$-graph on $n'\ge n_*$
  vertices with $e(G')\ge(\alpha+\delta)\binom{n'}{k}$. Delete at most
  $t_1!-1$ vertices from $G'$ to obtain a $k$-graph $G$ on $n$ vertices,
  where $n$ is divisible by $t_1!$. By choice of $n_1$ we have
  $e(G)\ge(\alpha+11\delta/12)\binom{n}{k}$. So Lemma~\ref{lem:simpreg}
   gives us a $(t_0,t_1, \eps, \eps_k, r)$-regular slice $\cJ$ for
   $G$ with $t$ clusters (where $t_0 \leq t \leq t_1$)
   such that $d\big(R(G)\big)\ge d(G)-\eps_k\ge \alpha+10\delta/12$. Finally by Lemma~\ref{reducedd+d} we have
  $d\big(R_{d_k}(G)\big)\ge d\big(R(G)\big)-d_k-\eps_k\ge \alpha+5\delta/12$.
  
  The total number of edges in $R_{d_k}(G)$ is therefore at least
  $(\alpha+5\delta/12)\binom{t}{k}$. It follows by
  Proposition~\ref{prop:comp} that there is a tight component $R$ of
  $R_{d_k}(G)$ such that the $k$-complex $\cR$ generated by the down-closure
  of $R$ satisfies
  \begin{align*}
    e_k(\cR) \ge
  \frac{e_{k-1}(\cR)}{\binom{t}{k-1}}\left(\alpha+\frac{5\delta}{12}\right)\binom{t}{k}&=
  \frac{t-k+1}{k}\left(\alpha+\frac{5\delta}{12}\right)e_{k-1}(\cR) \\
  &\ge \left(\alpha+\frac{4\delta}{12}\right)t \frac{e_{k-1}(\cR)}{k}
  \end{align*} 
  where the final inequality is guaranteed by $t \ge t_0 = 24k/\delta$.
  
  By Lemma~\ref{lem:ratiomatching} it follows that $R$ contains a matching of at
  least $(\alpha+\delta/4)t/k$ edges. Since this matching is contained in
  $R$ it is tightly connected. Lemma~\ref{lem:emb} then implies that $G$
  contains a tight cycle of length $\ell$ for every $\ell$ at most
  $(1-\psi)k\big((\alpha+\delta/4)t/k\big)n/t\ge\alpha n$ which is
  divisible by $k$ (where the inequality follows from $\psi=\delta/8$).
\end{proof}

\section{Cycles in $k$-partite hypergraphs}\label{sec:partite}
In this section we will prove Theorem~\ref{thm:PartiteCycle}. As in the
previous section, our strategy is to show that the partite codegree condition
implies the existence of a sufficiently large (this time fractional) matching in
a connected component of $R_d(G)$, after which applying Lemma~\ref{lem:emb} will
give the existence of the desired tight paths and cycles in $G$.
Unfortunately, while the minimum codegree of $G$ transfers to $R(G)$ by
Lemma~\ref{lem:simpreg}, as we saw in Lemma~\ref{reducedd+d} it does not
transfer perfectly to $R_d(G)$: some $(k-1)$-sets of clusters may lie in many
irregular $k$-sets of clusters, and so have small relative degree in $R_d(G)$.
To handle this, we follow the approach of Keevash and Mycroft~\cite{KeeMyc},
moving to non-uniform hypergraphs $\cR$ in which an edge of $\cR^{(k-1)}$
corresponds to a $(k-1)$-set of $R_d(G)$ which \emph{does} acquire the
desired codegree from $G$, and the presence of an edge of $\cR$ of size $j <
k-1$ implies that most of the $(j+1)$-supersets of this edge are also edges.
The hypergraph-theoretic core of our proof of Theorem~\ref{thm:PartiteCycle} is
Lemma~\ref{lem:PartConnMatch}; to prove this we use the following special case
of Lemma~7.2 of~\cite{KeeMyc}, which was proved by a straightforward
application of Farkas's Lemma.

\begin{lemma}\label{lem:FracMatch}
Let $\Hy$ be a $k$-partite hypergraph whose parts $X_1, \dots, X_k$ each have size $t$. Suppose that $\emptyset \in \Hy$ and that for any $0 \leq i \leq k-1$ and $j \in [k]$, any edge of $\Hy^{(i)}$ which does not intersect $X_j$ is contained in at least $t - it/k$ edges of $\Hy^{(i+1)}$ which do intersect $X_j$. Then $\Hy^{(k)}$ admits a perfect fractional matching. \qed
\end{lemma}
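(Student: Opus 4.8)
The plan is to deduce this by linear-programming duality, as a special case of~\cite[Lemma 7.2]{KeeMyc}; I will sketch a self-contained derivation. Write $V := X_1 \cup \dots \cup X_k$. A perfect fractional matching of $\Hy^{(k)}$ is exactly a solution of the system ``$w_e \ge 0$ ($e \in \Hy^{(k)}$), $\sum_{e \ni v}w_e = 1$ (all $v \in V$)'': indeed, any fractional matching of weight $t$ covers every vertex to extent exactly $1$, since for each $j$, summing the inequality $\sum_{e\ni v}w_e \le 1$ over $v \in X_j$ gives $\sum_e w_e = t = |X_j|$ (each edge meets $X_j$ in one vertex), forcing equality throughout. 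By Farkas's lemma, either this system is solvable or there is a function $z \colon V \to \REALS$ with $\sum_{v \in e}z_v \ge 0$ for every $e \in \Hy^{(k)}$ but $\sum_{v\in V}z_v < 0$. So it suffices to rule out the latter, i.e.\ to show that every $z \colon V \to \REALS$ with $\sum_{v\in e}z_v \ge 0$ for all $e \in \Hy^{(k)}$ satisfies $\sum_{v\in V}z_v \ge 0$.

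I would prove this by induction on $k$, the case $k=1$ being immediate since then $\Hy^{(1)} = \{\{v\}: v \in X_1\}$ forces each $z_v \ge 0$. For the inductive step, the first point is that the restriction $\Hy' := \{e \in \Hy : e \subseteq X_1 \cup \dots \cup X_{k-1}\}$ satisfies the hypotheses of the lemma with $k-1$ in place of $k$ and the same $t$: an $i$-edge of $\Hy'$ avoiding a part $X_j$ ($j \le k-1$) extends within $\Hy'$ into $X_j$ in at least $t - it/k \ge t - it/(k-1)$ ways, by the hypothesis for $\Hy$. Hence, reasoning as above for $\Hy'$ and applying the inductive hypothesis, $\Hy'^{(k-1)}$ has a perfect fractional matching $M'$. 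The second point is to extend $M'$ into $X_k$: each $(k-1)$-edge $f$ in the support of $M'$ uses all of $X_1,\dots,X_{k-1}$, hence by the hypothesis for $\Hy$ with $i=k-1$, $j=k$ it lies in a set $N(f) \subseteq X_k$ of at least $t-(k-1)t/k = t/k$ edges of $\Hy^{(k)}$; we want to distribute the $M'$-weight of $f$ over $\{f\cup\{c\} : c \in N(f)\}$ so that every $c \in X_k$ receives total weight $1$. This is a (balanced) transportation problem, and by the Gale--Hoffman feasibility criterion it has a solution if and only if for every $D \subseteq X_k$ the total $M'$-weight of edges $f$ with $N(f) \cap D \neq \emptyset$ is at least $|D|$; a solution yields the desired perfect fractional matching of $\Hy^{(k)}$, since weights are preserved on $X_1\cup\dots\cup X_{k-1}$ and each vertex of $X_k$ is covered to extent exactly $1$ by construction.

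The main obstacle is exactly verifying that feasibility condition. A bare inductive hypothesis produces \emph{some} perfect fractional matching $M'$ of $\Hy'^{(k-1)}$ but gives no control over how $M'$ distributes its weight relative to the extension sets $N(f)$; and since the thresholds $t - it/k$ are precisely the extremal (``space-barrier'') values, no slack can be wasted. The clean way to handle this, following~\cite{KeeMyc}, is to work not layer by layer but with a single linear program whose variables are weight functions on \emph{all} of $\Hy^{(0)}, \Hy^{(1)}, \dots, \Hy^{(k)}$ simultaneously, with consistency constraints tying each edge's weight to the total weight of its one-vertex extensions in $\Hy$, and then to apply Farkas's lemma to this program: an infeasibility certificate can be propagated down through the layers one at a time until it contradicts one of the extension hypotheses. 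Carrying this out with the exact constants is the content of (the relevant special case of) Lemma~7.2 of~\cite{KeeMyc}, which we therefore invoke directly.
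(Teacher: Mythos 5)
Your proposal is correct and ultimately takes the same route as the paper: Lemma~\ref{lem:FracMatch} is stated there without proof precisely because it is quoted as a special case of Lemma~7.2 of~\cite{KeeMyc} (itself proved via Farkas's Lemma), which is exactly what you invoke after correctly diagnosing why a naive layer-by-layer induction cannot be closed at these extremal thresholds. Your preliminary observations (that weight $t$ forces exact coverage, and that the restriction to $k-1$ parts inherits the hypotheses) are sound but not needed once the citation is made.
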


\begin{lemma}\label{lem:PartConnMatch}
  Given $0\le \alpha<1$ and $\beta>0$, let $\cR$ be a $k$-partite hypergraph
 with vertex classes $X_1, \ldots, X_k$ of size $t$ with the following properties.
\begin{enumerate}[label=\rom]
\item\label{pcm:i} $\emptyset \in \cR$, and $\{v\} \in \cR$ for any $v \in V(\cR) = \bigcup_{i \in [k]} X_i$.
\item\label{pcm:ii} For any $1 \le i \le k-2$ and $j \in [k]$, any edge of $\cR^{(i)}$ which has no vertex in $X_j$ is contained in at least $(1-\beta)t$ edges of $\cR^{(i+1)}$ which
intersect $X_j$.
\item\label{pcm:iii} Each edge of $\cR^{(k-1)}$ is contained in at least 
$(\alpha+(2^k+1)\beta)t$ edges of $\cR^{(k)}$.
\end{enumerate}
Then 
 \begin{enumerate}[label=\abc]
\item\label{pcm:a} $\cR^{(k)}$ contains a tightly connected matching of $\alpha t$ edges, and
\item\label{pcm:b} if $\alpha\ge\frac{1}{2}$ then $\cR^{(k)}$ contains a tightly connected perfect fractional matching.
 \end{enumerate}
\end{lemma}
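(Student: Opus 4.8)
The plan is to treat the two conclusions by separate routes: for part~\ref{pcm:a} I would grow a tight path greedily and read a matching off it, and for part~\ref{pcm:b} I would first show that $\cR^{(k)}$ is a single tight component and then invoke Lemma~\ref{lem:FracMatch}. Throughout I may assume $(2^k+1)\beta\le 1$, since otherwise~\ref{pcm:iii} is unsatisfiable and there is nothing to prove; in particular $\beta<1/k$. I will also treat $\cR$ as a complex, as it is in the application. Note first that~\ref{pcm:i} and~\ref{pcm:ii} force $\cR^{(k-1)}\neq\emptyset$: starting from a singleton and applying~\ref{pcm:ii} repeatedly (each step offers at least $(1-\beta)t\ge1$ choices) builds an edge of $\cR^{(k-1)}$ of any prescribed index, and by~\ref{pcm:iii} such an edge lies in an edge of $\cR^{(k)}$.

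For part~\ref{pcm:a} I would pick an edge of $\cR^{(k)}$, order its vertices as $v_1,\dots,v_k$ to start a tight path, and then extend one vertex at a time. Given a tight path $v_1,\dots,v_N$, its last $k-1$ vertices form an edge of $\cR^{(k-1)}$ whose missing class $X_j$ is the class of $v_{N-k+1}$; by~\ref{pcm:iii} this $(k-1)$-edge lies in more than $\lfloor\alpha t\rfloor$ edges of $\cR^{(k)}$ (since $(\alpha+(2^k+1)\beta)t>\alpha t$), i.e.\ it has more than $\lfloor\alpha t\rfloor$ neighbours in $X_j$. Since a tight path meets each class in at most $\lceil N/k\rceil$ of its vertices, as long as $\lceil N/k\rceil\le\lfloor\alpha t\rfloor$ some neighbour is unused and the path extends. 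Iterating, I reach a tight path on $N$ vertices with $\lfloor N/k\rfloor\ge\lfloor\alpha t\rfloor$; the edges $\{v_1,\dots,v_k\},\{v_{k+1},\dots,v_{2k}\},\dots$ are then $\lfloor N/k\rfloor$ pairwise disjoint edges of $\cR^{(k)}$, all lying in the single tight component spanned by the path, which is the required tightly connected matching.

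For part~\ref{pcm:b}, assume $\cR^{(k)}$ is a single tight component (proved below). Then Lemma~\ref{lem:FracMatch} applies to $\cR$: its hypothesis for $i=0$ is~\ref{pcm:i}; for $1\le i\le k-2$ it follows from~\ref{pcm:ii} together with $(1-\beta)t\ge t-it/k$ (valid as $\beta<1/k$); and for $i=k-1$ it follows from~\ref{pcm:iii} and $(\alpha+(2^k+1)\beta)t\ge t/2\ge t/k$. Hence $\cR^{(k)}$ has a perfect fractional matching, which is tightly connected since its support lies in one tight component. To prove the connectivity claim, let $\cS$ be the down-closure of the set of edges of $\cR^{(k-1)}$ of index $[k-1]$, so that $\cS$ is a $(k-1)$-complex on $X_1,\dots,X_{k-1}$ and $\cS^{(k-1)}$ is exactly the set of edges of $\cR^{(k-1)}$ missing $X_k$, which I identify with a subset of $X_1\times\dots\times X_{k-1}$. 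Using~\ref{pcm:ii} to extend any edge of $\cR$ inside $X_1\cup\dots\cup X_{k-1}$ up to a $(k-1)$-edge of index $[k-1]$, one checks that $\cS$ inherits~\ref{pcm:i} and~\ref{pcm:ii}. Granting (see below) that $\cS^{(k-1)}$ is connected under the relation ``differ in exactly one coordinate'': if $e,e'\in\cS^{(k-1)}$ differ in one coordinate then, each having by~\ref{pcm:iii} and $\alpha\ge\tfrac12$ more than $t/2$ neighbours in $X_k$, they share a neighbour $w\in X_k$, and the $k$-edges $e\cup\{w\}$, $e'\cup\{w\}$ meet in $k-1$ vertices and so are tightly connected. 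Since every edge of $\cR^{(k)}$ contains a member of $\cS^{(k-1)}$, and all edges of $\cR^{(k)}$ through a fixed member of $\cS^{(k-1)}$ are pairwise tightly connected (they pairwise share $k-1$ vertices), walking along a connecting sequence in $\cS^{(k-1)}$ shows every edge of $\cR^{(k)}$ lies in one tight component.

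The step I expect to be the main obstacle is the connectivity of $\cS^{(k-1)}$ under single-coordinate changes, which I would prove by induction on the number $d$ of coordinates in which two given members $e=(a_1,\dots,a_{k-1})$ and $e'=(a_1',\dots,a_{k-1}')$ of $\cS^{(k-1)}$ differ, the cases $d\le1$ being immediate. For $d\ge2$, pick a coordinate $i$ in which $e$ and $e'$ differ and delete the $i$-th vertex from each, obtaining $(k-2)$-edges $g\subset e$ and $g'\subset e'$ of $\cS$, neither meeting $X_i$; by property~\ref{pcm:ii} for $\cS$ each of $g,g'$ extends into at least $(1-\beta)t$ vertices of $X_i$, and since $2(1-\beta)t>t$ they have a common extension $v\in X_i$. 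Then $g\cup\{v\}$ and $g'\cup\{v\}$ lie in $\cS^{(k-1)}$, $g\cup\{v\}$ is obtained from $e$ by changing only coordinate $i$ (and $g'\cup\{v\}$ from $e'$ likewise), and $g\cup\{v\}$, $g'\cup\{v\}$ differ in only $d-1$ coordinates; applying the inductive hypothesis to the latter pair and concatenating the two single-coordinate moves completes the induction, and with it the proof.
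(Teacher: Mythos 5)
Your overall architecture (grow a tight path greedily and take every $k$-th edge for~\ref{pcm:a}; establish tight connectivity and invoke Lemma~\ref{lem:FracMatch} for~\ref{pcm:b}) matches the paper's, but there is a genuine gap at the outset: you announce that you will ``treat $\cR$ as a complex, as it is in the application.'' The lemma does not assume $\cR$ is down-closed, and in the application it is \emph{not} a complex: the levels $\cR^{(0)},\dots,\cR^{(k-1)}$ are the ``good'' sets of clusters (and a subset of a good set need not be good), while $\cR^{(k)}$ is taken to be the edge set of $R_{d_k}(G)$ restricted to $V'$, whose $(k-1)$-subsets need not be good. Under the complex assumption your argument is essentially sound, but it then proves a strictly weaker statement than the lemma and one that does not cover the intended use.

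The places where this bites are exactly the load-bearing steps. In part~\ref{pcm:a} you extend the path by asserting that ``its last $k-1$ vertices form an edge of $\cR^{(k-1)}$'' so that~\ref{pcm:iii} applies; without down-closure the terminal $(k-1)$-tuple of an edge of $\cR^{(k)}$ need not lie in $\cR^{(k-1)}$ at all, and the greedy extension stalls. Similarly, in part~\ref{pcm:b} you need ``every edge of $\cR^{(k)}$ contains a member of $\cS^{(k-1)}$,'' and your verification that $\cS$ inherits~\ref{pcm:ii} silently uses that subsets of $(k-1)$-edges are edges. The paper's proof is built precisely to avoid this assumption: after a harmless pruning (delete any $i$-edge containing no $(i-1)$-edge), it introduces \emph{excellent} $k$-edges --- those all of whose subsets lie in $\cR$ --- proves that every $k$-edge is tightly connected to an excellent one, and then runs the path-growing and connecting arguments only through excellent edges, where the degree conditions~\ref{pcm:ii} and~\ref{pcm:iii} are available for every relevant subset. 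That device (or an equivalent one) is the missing ingredient in your write-up; with it inserted, your greedy-path count for~\ref{pcm:a} and your single-coordinate-change connectivity induction for~\ref{pcm:b} would go through.
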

\begin{proof}
First note that for any $1 \leq i \leq k$, if some edge $e \in \cR^{(i)}$ does not include any edge of $\cR^{(i-1)}$ as a subset, then deleting $e$ from $\cR$ yields a subgraph which also meets the conditions of the lemma. So we may assume that every edge in $\cR^{(i)}$ includes at least one edge of $\cR^{(i-1)}$ as a subset.

The edges $f \in \cR^{(k)}$ for which every subset of $f$ is an edge of $\cR$ are the most useful in the sense that for each subset of $f$ we can apply the degree conditions~\ref{pcm:ii} and~\ref{pcm:iii}. We say that such edges are \emph{excellent}. We will show that each tightly connected component of $\cR^{(k)}$ contains a matching of $\alpha t$ excellent edges, giving~\ref{pcm:a}.
\begin{claim}\label{clm:pcm:conn}
 Each edge of $\cR^{(k)}$ is tightly connected to an excellent edge of $\cR^{(k)}$.
\end{claim}
\begin{claimproof}
Let $e=\{v_1,\ldots,v_k\}$ be an edge of $\cR^{(k)}$ with $v_i\in X_i$ for each $i$. Since every edge in $\cR^{(i)}$ includes at least one edge of $\cR^{(i-1)}$ as a subset, we may
assume without loss of generality that the
subsets $\{v_1,\ldots,v_j\}$ are in $\cR$ for each $1\le j \le k-1$. Choose $w_k \in X_k$ such that $\{v_1,\ldots,v_j,w_k\}$ is in $\cR$ for each
$0\le j\le k-1$. This is possible because $\{v_1,\ldots,v_j\}$ is in $\cR$ for each
$j$, and so at most $(k-2)\beta t+(1-\alpha-(2^k+1)\beta)t<t$ vertices in $X_k$
do not satisfy the given condition.
Now for each $i=k-1,\ldots,1$ in
that order, we choose a vertex $w_i \in X_i$ such that $\{v_1,\ldots,v_j,S\}$ is an edge of $\cR$ for each $0\le j\le i-1$ and each subset $S$ of $\{w_i,\ldots,w_k\}$.
Again, since $2^{k-1}\beta t+(1-\alpha-(2^k+1)\beta)t<t$ this is always possible.
Then the sequence of edges of the form $\{v_1,\ldots,v_i,w_{i+1},\ldots,w_k\}$ for $0\le i\le
k$ is a tight walk between $e$ and $\{w_1,\ldots,w_k\}$ in $\cR$ (since by
construction each of these $k$-sets is in $\cR$) and also by
construction $\{w_1,\ldots,w_k\}$ is an excellent edge.
\end{claimproof}

 We next show that for any small vertex set $S$ and any excellent edge $e$, there is a tight walk from $e$ to another excellent edge outside $S$.

\begin{claim}\label{clm:getedge}
 Given any set $S$ of vertices of $\cR$ such that $|S\cap X_i|<\alpha t$ for each
 $1\le i\le k$, and an excellent edge $e$, there is a tight walk from $e$ to an excellent edge $e'$ of $\cR^{(k)}$ with $e'\cap S=\emptyset$. 
\end{claim}
\begin{claimproof}
Let $e = \{u_1, \dots, u_k\}$, where $u_i \in X_i$ for each $i$.
 For each $i=1,\ldots,k$ in that order, we wish to choose $v_i$ to be a vertex of
 $X_i\setminus S$ such that $\{v_1,\ldots,v_i,u_{i+1},\ldots,u_k\}$ is an
 excellent edge. This means that for each subset $T$ of
 $\{v_1,\ldots,v_{i-1},u_{i+1},\ldots,u_k\}$, we need to guarantee that
 $T\cup\{v_i\}$ is an edge of $R$. 
In total, at most
\[2^{k-1}\beta t+(1-\alpha-(2^k+1)\beta)t < t-\alpha t-\beta t\]
  vertices of $X_i$ are not suitable. Thus at least $(\alpha+\beta)t>|S \cap X_i|$
vertices are suitable, and it is always possible to choose $v_i\in X_i\setminus
S$ as desired. Now the sequence of edges of the form $\{v_1,\ldots,v_i,u_{i+1},\ldots,u_k\}$
for $0\le i\le k$ is a tight walk from $e$ to an excellent edge $e'=\{v_1,\ldots,v_k\}$ which does not intersect
$S$ as desired.
\end{claimproof}

Next we show that any tight component of $\cR^{(k)}$ has a matching with at
least $\alpha t$ edges. So let $C$ be a tight component of $\cR^{(k)}$; then $C$ contains at least one edge. Since by Claim~\ref{clm:pcm:conn} any
edge of $\cR^{(k)}$ is tightly connected to an excellent edge of $\cR^{(k)}$, $C$ contains an excellent edge
$e_1$.

We construct a matching in $C$ as follows. We let $M_1=\{e_1\}$. Now for each
$2\le i\le \alpha t$, let $e_i$ be the edge returned by applying
Claim~\ref{clm:getedge} with $e = e_{i-1}$ and $S$ consisting of all vertices covered by
$M_{i-1}$, and let $M_i=M_{i-1}\cup\{e_i\}$. Then $M_{\alpha t}$ is the
desired matching in $C$. (In fact, this process actually returns a tight path in $C$ on $\alpha t$ vertices. The matching $M_{\alpha t}$ consists of every $k$th edge of this path.)

To complete the proof we show that in the case $\alpha\ge\frac{1}{2}$ we have more: $\cR^{(k)}$ is tightly connected and admits a perfect fractional matching. For the latter we use Lemma~\ref{lem:FracMatch}. Indeed, any edge of $\cR^{(k-1)}$ is contained in at least $\alpha t \geq t/k=t-\tfrac{k-1}{k}t$ edges of $\cR^{(k)}$, the edge of $\cR^{(0)}$ (i.e.~$\emptyset$) is contained in the edges $\{v\} \in \cR^{(1)}$ for $v \in V(\cR)$, and for any $1 \leq i \leq k-2$, any edge of $\cR^{(i)}$ which does not intersect some part $X_j$ is contained in at least $(1-\beta)t \geq (k-1)t/k\ge t-\tfrac{i}{k}t$ edges of $\cR^{(i+1)}$ which do intersect $X_j$ (the first inequality follows from $\alpha + (2^{k-1}+1)\beta \le 1$). So $\cR$ meets the conditions of Lemma~\ref{lem:FracMatch}, from which we deduce that $\cR^{(k)}$ admits a perfect fractional matching.

To show that $\cR^{(k)}$ is tightly connected, it is by Claim~\ref{clm:pcm:conn} enough to show that any two
excellent edges are tightly connected. Given two excellent edges
$\{u_1,\ldots,u_k\}$ and $\{v_1,\ldots,v_k\}$, where $u_i, v_i \in X_i$ for each $i$, we have that both
$e := \{u_1,\ldots,u_{k-1}\}$ and $f := \{v_1,\ldots,v_{k-1}\}$ are edges of $\cR$: in particular
there are at least $2(\alpha+(2^k+1)\beta) t-t \ge (2^k + 1)\beta t$ 
vertices in $X_k$ which form edges with both $e$ and $f$. Of these, all but at
most $2\cdot 2^{k-1}\beta t$ form excellent edges with both $e$ and $f$, so we can choose a
vertex $w_k \in X_k$ so that $e \cup \{w_k\}$ and $f \cup \{w_k\}$ are both excellent edges of $\cR^{(k)}$. We repeat the same
process for each $i=k-1,\ldots,1$ in that order to find $w_i \in X_i$ which forms
excellent edges with each of $\{u_1,\ldots,u_{i-1},w_{i+1},\ldots,w_k\}$ and
$\{v_1,\ldots,v_{i-1},w_{i+1},\ldots,w_k\}$. We have thus constructed tight walks
from each of $\{u_1,\ldots,u_k\}$ and $\{v_1,\ldots,v_k\}$ to $\{w_1,\ldots,w_k\}$,
and thus the two edges are tightly connected, as desired.
\end{proof} 

We now prove Theorem~\ref{thm:PartiteCycle}, which we restate for the reader's convenience. We note that an ordering of
constants and functions:
\[ 1/n_1 \ll \eps(\cdot), 1/r(\cdot) \ll 1/t_1 \ll \eps_k \ll \beta, d_k, \psi \ll \delta,
1/k \] would be enough in the proof, but for convenience we will give values
explicitly.

\medskip \noindent {\bf Theorem~\ref{thm:PartiteCycle}.} 
\emph{For every positive $\delta$ and every integer $k \ge 3$ there is an integer
 $n_{*}$ such that the following holds for each $\alpha\in[0,1]$. If $G$ is a $k$-uniform $k$-partite 
 hypergraph with parts of size $n\ge n_{*}$, such that any collection of $k-1$
 vertices, one in each of $k-1$ parts of $G$, lies in at least
$(\alpha+\delta)n$ edges of $G$, then
 \begin{enumerate}[label=\abc] 
 \item $G$ contains a tight cycle of length $\ell$ for every $\ell\le
   \alpha kn$ that is divisible by $k$, and
 \item if $\alpha\ge\frac{1}{2}$ then $G$ contains a tight cycle of length
   $\ell$ for every $\ell\le(1-\delta)kn$ that is divisible by $k$.
\end{enumerate}}

\begin{proof}
  Given $0 < \delta < 1$ and $k \geq 3$, we choose \[\psi =
  \frac{\delta}{28}\,,\quad d_k =\frac{\delta}{7}\,,\quad
  \beta=\frac{\delta}{7(2^k+1)}\,,\quad t_0=1\,.\]
  We let \[\eps_k \leq \frac{\beta^{k-1}\delta}{7\cdot 2^{k-1}}\] be
sufficiently small for applying Lemma~\ref{lem:emb} with constants $\psi,
\eps_k$ and $d_k$. Next we choose functions $r: \NATS \to \NATS$ growing
sufficiently rapidly and $\eps: \NATS \to (0, 1]$ tending to zero sufficiently
rapidly so that for any $t \in \NATS$ we can apply Lemma~\ref{lem:emb}
with $\eps_k$ as chosen above, $r(t)$, $\eps(t)$ and any $d_2,
\dots, d_k \geq 1/t$. Now Lemma~\ref{lem:simpreg} returns $t_1$ and
$n_0$ such that for any $n\ge n_0$ divisible by $t_1!$, any $kn$-vertex
$k$-graph $G$ and any partition $\Qart$ of $V(G)$ into $k$ parts of size $n$, there is a
$\big(t_0,t_1,\eps(t_1),\eps_k,r(t_1)\big)$-regular slice for
$G$ which satisfies conditions~\ref{simpreg:a} and~\ref{simpreg:b} of
Lemma~\ref{lem:simpreg} and whose ground partition refines $\Qart$. Fix this
$t_1$; for the rest of the proof we will write $\eps$ and $r$ for
$\eps(t_1)$ and $r(t_1)$ respectively. Finally we choose $n_1 \geq \max
\left(n_0, 7t_1!/\delta\right)$ to be sufficiently large to apply
Lemma~\ref{lem:emb} with $t_1$ in place of $t$, $n_1$ in place of $n_0$ and all other constants as above.

Let $G'$ be a $k$-partite $k$-graph with parts $U'_1, \ldots, U'_k$ of size $n'\ge n_{*} :=
n_1 + t_1!$ such that any $(k-1)$-set of vertices, one in each of $k-1$
parts of $G'$, lies in at least $(\alpha+\delta)n'$ edges of $G'$. Choose $n$
divisible by $t_1!$ with $n'-t_1! \leq n \leq n'$ and a subset $U_i \subseteq
U'_i$ of size $n$ for each $i \in [k]$, and let $G$ be the subgraph of $G'$
induced by $\bigcup_{i \in [k]} U_i$. Let $\Qart$ denote the partition of
$V(G)$ into the classes $U_i$. Since at most $t_1!$ vertices were deleted
from each vertex class of $G'$, and $n' \geq n_1 \ge 7t_1!/\delta$, any
$\Qart$-partite $(k-1)$-set $S$ of vertices of $G$ lies in at least
$(\alpha+6\delta/7)n'$ edges of $G$, so $\reldeg(S;G, U_i)\ge\alpha+6\delta/7$,
where $U_i$ is the part of $\Qart$ which $S$ does not intersect. By
Lemma~\ref{lem:simpreg} we may choose a $\big(t_0,t_1,\eps,\eps_k,
r\big)$-regular slice $\cJ$ for $G$ which has the
properties~\ref{simpreg:a} and~\ref{simpreg:b} of Lemma~\ref{lem:simpreg} and
whose ground partition refines $\Qart$. Since each cluster of $\cJ$ has the
same size, and the same is true of the parts of $\Qart$, the number of clusters
of $\cJ$ must be divisible by $k$. So let $t$ be such that $kt$ is
the number of clusters. Then each cluster has size $m := n/t$, and since
$\cJ$ is $(t_0, t_1, \eps)$-equitable we have $t_0 \leq
kt \leq t_1$.

Let $\Qart_R$ denote the natural partition of the clusters of $\cJ$; so the
parts of $\Qart_R$ are $V_1, \dots, V_k$, where $V_i$ consists of all clusters
which are subsets of $U_i$. Observe that we have $|V_i| = t$, and that the
reduced $k$-graph $R_{d_k}(G)$ is $\Qart_R$-partite. We would like to
immediately apply Lemma~\ref{lem:PartConnMatch} to the $k$-complex generated by
the down-closure of $R_{d_k}(G)$, but unfortunately this complex may not
satisfy the required degree conditions. Instead we obtain a subcomplex which
does satisfy the necessary conditions.

To do this, consider a $\Qart_R$-partite set $\cX$ of $k-1$ clusters. There is
exactly one vertex class $V_i$ which $\cX$ does not intersect, and we say that
$\cX$ is \emph{good} if there are fewer than $\delta t/7$ clusters $X \in
V_i$ such that $G$ is not $(\eps_k,r)$-regular with respect to the $k$-set $\cX \cup \{X\}$ (for convenience, we henceforth refer to such $k$-sets simply
as \emph{irregular} $k$-sets). Next, for $\ell=k-2,\dots,0$ (in that order)
we say that a $\Qart_R$-partite set~$\cX$ of $\ell$ clusters is \emph{good} if
it is contained in at most $\beta t/2$ $\Qart_R$-partite sets of $\ell+1$
clusters which are \emph{bad}, i.e.~not good.

At the end of this process we have labelled each individual cluster as good or bad. 
Suppose that at least $\beta t/2$ clusters are bad. 
Then we can construct an irregular $k$-set in $R(G)$ by choosing any one
of these bad clusters, any of the at least $\beta t/2$ clusters with
which it forms a bad pair, and so on up to any of the at least $\delta
t/7$ irregular $k$-sets which contain the bad $(k-1)$-set so constructed. Since we
could construct a given irregular $k$-set in at most $k!$ ways, we conclude
that $R(G)$ contains at least $(\beta t/2)^{k-1}(\delta t/7)/k!$
irregular $k$-sets. Since $\eps_k \leq \beta^{k-1}\delta/(7\cdot 2^{k-1})$,
this is greater than $\eps_k\binom{t}{k}$, contradicting the fact that
$\cJ$ is an $\big(t_0,t_1,\eps,\eps_k,r\big)$-regular slice for $G$.

We conclude that fewer than $\beta t/2$ clusters are bad. So for each $1
\leq i \leq k$ we may choose a set $V_i'$ of $t' := (1 - \beta/2)t$
clusters from $V_i$, all of which are good. We now define a $k$-partite
hypergraph $\cR$ on $V' := V_1' \cup \cdots \cup V_k'$ as follows. For each $0
\leq j \leq k-1$ we take $\cR^{(j)}$ to consist of all good $j$-sets of
clusters (so in particular, $\emptyset \in \cR$ and $\{v\} \in \cR$ for any $v
\in V'$). For $\cR^{(k)}$ we instead take all edges of $R_{d_k}(G)$ which are
contained in $V'$.

We want to apply Lemma~\ref{lem:PartConnMatch} with
$t'$ and $\alpha + \delta/7$ in place of $t$
and $\alpha$ respectively. Observe that condition~\ref{pcm:i} is satisfied. For condition~\ref{pcm:ii} note that by definition of $\cR$, for any $1 \leq i \leq k-2$ and $j \in [k]$, any edge
$\cX \in \cR^{(i)}$ which does not intersect $V'_j$ is contained in at least
$|V_j'| - \beta t/2 \geq (1-\beta)t'$ edges of $\cR^{(i+1)}$ which
do intersect $V'_j$.

We now check condition~\ref{pcm:iii}. Consider any edge $\cX$ of $\cR^{(k-1)}$, so $\cX$ is a good set of
$k-1$ clusters, and let $j$ be such that $\cX$ does not intersect $V_j$. Recall
that $\reldeg(S;G, U_j)\ge\alpha+6\delta/7$ for any $\Qart$-partite
$(k-1)$-set of vertices of $G$ which does not intersect $U_j$; it follows that
$\reldeg(\cJ_\cX;G, U_j) \ge \alpha+6\delta/7$. By property~\ref{simpreg:b} of
Lemma~\ref{lem:simpreg} we have $|\reldeg(\cX;R(G), V_j) - \reldeg(\cJ_\cX;G,
U_j)| < \eps_k$, so $\reldeg(\cX; R(G), V_j) \ge \alpha+5\delta/7$. Since $\cX$
is good, the proportion of clusters $X$ in $V_j$ for which $\cX \cup \{X\}$ is
an irregular $k$-set is at most $\delta/7$, and so Lemma~\ref{reducedd+d}
implies that $\reldeg(\cX;R_{d_k}(G), V_j) \ge \reldeg(\cX; R(G), V_j) - d_k -
\delta/7 \geq \alpha+3\delta/7$, and therefore $\reldeg(\cX;R_{d_k}(G), V_j')
\ge \alpha + 2\delta/7$. As $(2^k+1)\beta = \delta/7$, it follows that any edge $\cX$
of $\cR^{(k-1)}$ lies in at least $(\alpha+\delta/7+(2^k+1)\beta)t'$ edges
of $\cR^{(k)}$.

So $\cR$ also satisfies condition~\ref{pcm:iii} of Lemma~\ref{lem:PartConnMatch}. It follows that $\cR^{(k)}$ has a tight component with a
matching containing $(\alpha+\delta/7) t' = (\alpha + \delta/7)(1 -
\beta/2)t \geq (\alpha + \delta/14)t$ edges. Since $\cR^{(k)}$ is a
subgraph of $R_{d_k}(G)$, by Lemma~\ref{lem:emb} $G$ contains a tight cycle of length $\ell$ for any $k < \ell \leq (1-\psi)(\alpha +
\delta/14)kn$ which is divisible by $k$. Since
$(1-\psi)(\alpha + \delta/14)kn \geq \alpha kn$ this completes the proof of~\ref{thm:PartiteCycle:small}.
If $\alpha \ge \frac{1}{2}$ then Lemma~\ref{lem:PartConnMatch} also shows
that $\cR^{(k)}$ admits a tightly connected perfect fractional matching, and~\ref{thm:PartiteCycle:big}
follows analogously.
\end{proof}

\section{Strong Hypergraph Regularity}\label{sec:reg}

In this section we introduce the Strong Hypergraph Regularity Lemma and some related tools.

\subsection{Families of partitions and the Strong Hypergraph Regularity Lemma}
In order to state the Strong Hypergraph Regularity Lemma we need some more notation.
Fix $k \geq 3$, and let $\Part$ partition a vertex set $V$ into parts $V_1,
\dots, V_t$.
For any $A \subseteq [t]$, we denote by $\cross_A$ the
collection of $\Part$-partite subsets $S \subseteq V$ of index $i(S) = A$.
Likewise, we denote by $\cross_j$ the union of $\cross_A$ for each $A \in
\binom{[t]}{j}$, so $\cross_j$ contains all $\Part$-partite subsets $S
\subseteq V$ of size $j$. Note that $\cross_A$ and $\cross_j$ are dependent on 
the choice of partition $\Part$, but this will always be clear from the context.
For each $2 \leq j \leq k-1$ and $A \in \binom{[t]}{j}$ let $\Part_A$ be a
partition of $\cross_A$. For consistency of notation we also define the trivial partitions $\Part_{\{s\}} := \{V_s\}$ for $s \in [t]$ and $\Part_\emptyset := \{\emptyset\}$. Let $\Part^*$ consist of the partitions $\Part_A$ for each $A\in\binom{[t]}{j}$ and each $0 \le j\le k-1$. We say that $\Part^*$ is a \emph{$(k-1)$-family of partitions
on $V$} if whenever $S, T \in \cross_A$ lie in the same part of $\Part_A$ and $B
\subseteq A$, then $S \cap \bigcup_{j\in B} V_j$ and $T \cap
\bigcup_{j\in B} V_j$ lie in the same part of $\Part_B$. In other words, given $A \in \binom{[t]}{j}$, if we specify one part of each $\Part_B$ with $B \in \binom{A}{j-1}$, then we obtain a subset of $\cross_A$ consisting of all $S\in\cross_A$ whose $(j-1)$-subsets are in the specified parts. Thus the partitions $\Part_B$ give a natural partition of $\cross_A$, and we are saying that $\Part_A$ must refine it.

We refer to the parts
of each member of $\Part^*$ as \emph{cells}. Also, we refer to $\Part$ as the \emph{ground partition} of $\Part^*$, and the parts
of $\Part$ (i.e.~the vertex classes $V_i$) as the \emph{clusters} of $\Part^*$. For
each $0 \leq j \leq k-1$ let $\Part^{(j)}$ denote the partition of $\cross_j$
formed by the parts (which we call \emph{$j$-cells}) of each of the partitions 
$\Part_A$ with $A \in \binom{[t]}{j}$ (so in particular $\Part^{(1)} = \Part$).

The cells of $\Part^*$ naturally form $(k-1)$-complexes on~$V$. 
Indeed, for any $0 \leq j \leq k-1$, any $A \in
\binom{[t]}{j}$ and any $Q' \in \cross_A$, let $C_{Q'}$ denote the cell of
$\Part_{A}$ which contains $Q'$. 
Then the fact that $\Part^*$ is a family of partitions implies that 
for any $Q \in \cross_k$ the union 
$\cJ(Q) := \bigcup_{Q' \subsetneq Q} C_{Q'}$ 
of cells containing subsets of $Q$ is a
$k$-partite $(k-1)$-complex.
We say that the $(k-1)$-family of partitions $\Part^*$ is \emph{$(t_0,
t_1, \eps)$-equitable} if 
\begin{enumerate}[label=\abc]
  \item\label{equitfam:a} $\Part$ partitions~$V$ into~$t$ clusters of equal size, where
  $t_0 \leq t \leq t_1$,
  \item\label{equitfam:b} for each $2 \leq j \leq k-1$, $\Part^{(j)}$ partitions
  $\cross_j$ into at most $t_1$ cells,
  \item\label{equitfam:c} there exists ${\mathbf{d}}=( d_{k-1},
  \dots, d_2)$ such that for each $2 \leq j \leq k-1$ we have $d_j \geq
  1/t_1$ and $1/d_j \in \NATS$, and for every $Q \in \cross_k$ the $k$-partite
  $(k-1)$-complex 
$\cJ(Q)$ 
is $({\bf d},\eps,\eps,1)$-regular.
\end{enumerate}
Note that conditions~\ref{equitfam:a} and~\ref{equitfam:c} imply that $\cJ(Q)$ is a $(1, t_1,
\eps)$-equitable $(k-1)$-complex (with the same density vector $\mathbf{d}$) for any $Q \in \cross_k$.

Next, for any $\Part$-partite set $Q$ with $2 \leq |Q| \leq k$, define $\hat{P}(Q; \Part^*)$
to be the $|Q|$-partite $(|Q|-1)$-graph on $V_{i(Q)}$ with edge set $\bigcup_{Q' \in
\binom{Q}{|Q|-1}} C_{Q'}$. We refer to $\hat{P}(Q; \Part^*)$ as a \emph{polyad}; when the family of partitions $\Part^*$ is clear from the context, we write simply $\hat{P}(Q)$ rather than $\hat{P}(Q; \Part^*)$. 
Note that the condition for $\Part^*$ to be a $(k-1)$-family of partitions can then be rephrased as saying that if $2 \leq |Q| \leq k-1$ then the cell $C_Q$ is supported on $\hat{P}(Q)$. 
Moreover, we will show in the proof of Lemma~\ref{lem:simpreg} (Claim~\ref{claim:numcell}) that if $\Part^*$ is $(t_0, t_1, \eps)$-equitable for sufficiently small $\eps$, then for any $2 \leq j \leq k-1$ and any $Q \in \cross_j$ the number of $j$-cells of $\Part^*$ supported on $\hat{P}(Q)$ is precisely equal to $1/d_j$.

Now let~$G$ be a $k$-graph on~$V$, and let $\Part^*$ be a $(k-1)$-family of
partitions on~$V$. Let $Q\in\cross_k$, so the polyad $\hat{P}(Q)$ is a $k$-partite $(k-1)$-graph. Recall
(see Section~\ref{subsec:regdef}) that $G$ is $(\eps_k,r)$-regular
with respect to $\hat{P}(Q)$ if there is some $d$ such that~$G$ is
$(d,\eps_k,r)$-regular with respect to $\hat{P}(Q)$. 
We say that~$G$
is \emph{$(\eps_k,r)$-regular with respect to~$\Part^*$} if there are at most
$\eps_k \binom{|V|}{k}$ sets $Q \in \cross_k$ for which $G$ is not
$(\eps_k,r)$-regular with respect to the polyad $\hat{P}(Q)$. That is,
at most an $\eps_k$-proportion of subsets of $V$ of size $k$ yield polyads with respect to which $G$ is not regular (though some subsets of $V$ of size $k$ do
not yield any polyad due to not being members of $\cross_k$). Similarly, we say that~$G$
is \emph{perfectly $(\eps_k,r)$-regular with respect to~$\Part^*$} if for every
set $Q \in \cross_k$, the graph $G$ is $(\eps_k,r)$-regular with respect to the
polyad $\hat{P}(Q)$, i.e.~there are no polyads with respect to which
$G$ is not regular.

Finally, we define the notion of a slice through a family of partitions. Indeed, 
if $\Part^*$ is a $(k-1)$-family of partitions, then a \emph{slice through $\Part^*$} 
is a $(k-1)$-complex $\cJ$ on $V$ such that for each $0 \leq i \leq k-1$ we may write
$\cJ^{(i)} = \bigcup_{A \in \binom{[t]}{i}} C(A),$
where each $C(A)$ is a cell in $\Part_A$. That is, a slice consists of a single cell 
from each $\Part_A$, but the requirement that $\cJ$ should be a $(k-1)$-complex requires that 
the choices of cells are `consistent', meaning that each chosen 3-cell is supported 
on the chosen 2-cells, and so forth. Observe in particular that any slice through 
$\Part^*$ must be $\Part$-partite. Moreover, if $\Part^*$ is 
$(t_0, t_1, \eps)$-equitable then any slice through $\Part^*$ must be 
$(t_0, t_1, \eps)$-equitable also, and the polyads of any slice through $\Part^*$ are also polyads of $\Part^*$. The reason for the term `regular slice' should 
now be apparent: we take a slice through a $(k-1)$-family of partitions $\Part^*$ 
so that the chosen slice has desirable regularity properties. 

We are now ready to state the Strong Hypergraph Regularity Lemma. The form
we consider, due to R\"odl and Schacht, is Lemma~23 in~\cite{RS}\footnote{In fact, their lemma allows for an initial family of partitions rather than just a partition $\Qart$ of the vertex set $V$.}.

\begin{lemma}[Strong Hypergraph Regularity Lemma]\label{reglemma}
Let $k \geq 3$ be a fixed integer. For all positive integers $q$, $t_0$ and $s$,
positive~$\eps_k$ and functions $r: \NATS \rightarrow \NATS$ 
and $\eps: \NATS \rightarrow (0,1]$, there exist integers~$t_1$ 
and~$n_0$ such that the following holds for all $n \ge n_0$ which are divisible 
by~$t_1!$. Let $V$ be a vertex set of size $n$, and suppose that~$G_1,
\dots, G_s$ are edge-disjoint $k$-graphs on $V$, and that $\Qart$ is a partition
of $V$ into at most $q$ parts of equal size. Then there exists a
$(k-1)$-family of partitions $\Part^*$ on~$V$ such that
\begin{enumerate}[label=\abc]
\item the ground partition of $\Part^*$ refines $\Qart$,
\item $\Part^*$ is $(t_0,t_1, \eps(t_1))$-equitable, and 
\item for each $1 \leq i \leq s$, $G_i$ is $(\eps_k,r(t_1))$-regular with
respect to~$\Part^*$. \qed
\end{enumerate}
\end{lemma}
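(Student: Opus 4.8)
The statement is, up to a translation of conventions, precisely Lemma~23 of~\cite{RS}, so the ``proof'' I would give is simply to verify that our notion of a $(k-1)$-family of partitions, of a $(t_0,t_1,\eps)$-equitable such family, and of $(\eps_k,r)$-regularity of a $k$-graph with respect to $\Part^*$ agree with theirs, and then to quote the result. In particular one checks that conditions~\ref{equitfam:a}--\ref{equitfam:c} match their equitability requirements (the conditions $1/d_j\in\NATS$ and $d_j\ge 1/t_1$ being the only mild normalisations), and that our way of handling the initial partition $\Qart$ --- asking only that the ground partition of $\Part^*$ refine $\Qart$ --- is the $s$-graph, $q$-part specialisation of their more general formulation allowing an initial family of partitions.

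For completeness let me indicate why a statement of this shape holds, following the standard template. One introduces an index (mean-square-density) function $q(\Part^*)\in[0,1]$, a weighted sum over the levels $2\le j\le k-1$ of the average squared cell-densities of $\Part^*$, and proves the monotonicity lemma that refining any of the partitions $\Part_A$ cannot decrease $q(\Part^*)$. The heart of the argument is a one-step refinement lemma: if some $G_i$ fails to be $(\eps_k,r)$-regular with respect to $\Part^*$, then there is a refinement $\Qart^*$ of $\Part^*$, still with a bounded number of cells at each level, with $q(\Qart^*)\ge q(\Part^*)+\delta$ for some $\delta=\delta(\eps_k)>0$. Since $q$ is bounded, at most $1/\delta$ such steps occur before every $G_i$ is regular, and tracking the (tower-type) growth of the number of cells through these boundedly many steps gives the bound $t_1$; the value $n_0$ is whatever is needed for the associated counting estimates to hold.

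The real content of~\cite{RS}, and the point I would flag as the main obstacle, is the cross-level bookkeeping hidden in the one-step lemma. The refinement must be carried out coherently: it must again yield a $(k-1)$-\emph{family} of partitions (so each refined $j$-cell is supported on refined $(j-1)$-cells), it must remain $(\mathbf d,\eps,\eps,1)$-regular with the normalisations $1/d_j\in\NATS$, and it must be performed simultaneously for all $s$ edge-disjoint graphs. This forces one to regularise from the bottom level upwards and to choose parameters in the correct order, with $\eps(\cdot)$ decaying and $r(\cdot)$ growing as prescribed functions of the current number of clusters rather than as fixed constants --- which is exactly why the lemma must quantify over functions $r$ and $\eps$. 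The delicate part is ensuring that repairing irregularity of $G_i$ at a low level, which requires repartitioning that level, does not destroy the regularity already established at higher levels; this is controlled by defect Cauchy--Schwarz estimates and by making the tolerances at higher levels far smaller than those below. All of this is carried out in~\cite{RS}, so here we simply invoke it.
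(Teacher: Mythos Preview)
Your proposal is correct and matches the paper's treatment exactly: the paper does not prove this lemma but simply cites it as Lemma~23 of~\cite{RS} (with the same remark you make, that their formulation allows an initial family of partitions rather than just $\Qart$), and the \qed\ at the end of the statement signals precisely this. Your additional sketch of the energy-increment template is accurate and goes beyond what the paper offers, but the core ``proof'' in both cases is the citation.
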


Similar results were proved previously by R\"odl and Skokan~\cite{RSk} and
Gowers~\cite{Gowers}. In applications of Lemma~\ref{reglemma}, the regularity parameter $\eps_k$ of the graphs $G_i$ is typically much larger than the entries of the density vector of $\Part^*$, which may cause substantial technical difficulties. The next result, also due to  
R\"odl and Schacht (the form we state is Theorem~25 in~\cite{RS}), is a Regular Approximation Lemma, in which the regularity parameter may be taken to be much smaller than the densities of $\Part^*$. However, the price for this is that it is not our $k$-graph $G$ which is regularised, but instead a $k$-graph $G'$ which differs from $G$ only in a small number of edges. 

\begin{lemma}[Regular Approximation Lemma]\label{lem:RAL}
Let $k \geq 3$ be a fixed integer. For all positive integers $q$, $t_0$ and $s$,
positive~$\nu$ and functions $\eps: \NATS \rightarrow (0,1]$, there exist integers~$t_1$ 
and~$n_0$ such that the following holds for all $n \ge n_0$ which are divisible 
by~$t_1!$. Let $V$ be a vertex set of size $n$, and suppose that~$G_1,
\dots, G_s$ are edge-disjoint $k$-graphs on $V$, and that $\Qart$ is a partition
of $V$ into at most $q$ parts of equal size. Then there exist $k$-graphs $G'_1,\ldots,G'_s$
on $V$ and a
$(k-1)$-family of partitions $\Part^*$ on~$V$ such that
\begin{enumerate}[label=\abc]
\item the ground partition of $\Part^*$ refines $\Qart$,
\item $\Part^*$ is $(t_0,t_1, \eps(t_1))$-equitable,
\item for each $1 \leq i \leq s$ the graph $G'_i$ is perfectly
$(\eps(t_1),1)$-regular with respect to~$\Part^*$, and
\item for each $1\leq i\leq s$, we have $|G'_i\Delta G_i|\le\nu n^k$. \qed
\end{enumerate}
\end{lemma}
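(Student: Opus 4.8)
The plan is to prove Lemma~\ref{lem:RAL} by running the index-increment argument behind the Strong Hypergraph Regularity Lemma (Lemma~\ref{reglemma}) and then applying a single ``emptying'' step. It is convenient first to note that it suffices to prove the statement with the parameter $\eps(t_1)$ replaced everywhere by $\eps^{*}(t):=\min\{\eps(t),\nu/4\}$ and with $t_0$ replaced by $t_0^{*}:=\max\{t_0,\lceil 2k^2/\nu\rceil\}$: a smaller regularity parameter and a larger lower bound on the number of clusters both yield strictly stronger conclusions, so producing a $(t_0^{*},t_1,\eps^{*}(t_1))$-equitable $\Part^{*}$ with each $G'_i$ perfectly $(\eps^{*}(t_1),1)$-regular is more than enough.

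First I would run precisely the index-increment argument used to prove Lemma~\ref{reglemma}: the defect form of the Cauchy--Schwarz inequality applied to the cell densities across all levels $2,\dots,k$, iteratively refining a $(k-1)$-family of partitions $\Part^{*}$ whose ground partition refines $\Qart$. I make two changes. First, I take $r\equiv 1$ throughout: the increment step needs only a \emph{single} subgraph witnessing a density defect on an irregular polyad, so $r=1$ is enough here (the large values of $r$ in Lemma~\ref{reglemma} are present only to enable later counting arguments, which play no role in this lemma). Second, when the current family has $t$ clusters I demand $\eps^{*}(t)$-regularity of every level of the complex and I stop only once additionally, for every $1\le i\le s$, at most $\eps^{*}(t)\binom{n}{k}$ sets $Q\in\cross_k$ have $G_i$ not $(\eps^{*}(t),1)$-regular with respect to the polyad $\hat P(Q)$. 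Since the total energy stays in a bounded interval and each refinement increases it by a positive amount depending only on the current parameters, the process terminates; reading off $t_1$ and $n_0$ as usual, for every admissible $n$ the resulting $\Part^{*}$ is $(t_0^{*},t_1,\eps^{*}(t_1))$-equitable, and for each $i$ at most $\eps^{*}(t_1)\binom{n}{k}$ sets $Q\in\cross_k$ are ``bad'', i.e.~have $G_i$ not $(\eps^{*}(t_1),1)$-regular with respect to $\hat P(Q)$.

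To finish, for each $i$ let $G'_i$ be obtained from $G_i$ by deleting every non-$\Part$-partite edge together with every edge lying in $\bigcup_{Q\text{ bad}}K_k(\hat P(Q))$. Each $G'_i\subseteq G_i$, so $G'_1,\dots,G'_s$ are still edge-disjoint, and $\Part^{*}$ is untouched (hence still equitable and a refinement of $\Qart$). If $G_i$ is $(\eps^{*}(t_1),1)$-regular with respect to a polyad $\hat P(Q)$ then $G'_i$ agrees with $G_i$ on $K_k(\hat P(Q))$ and so is $(\eps^{*}(t_1),1)$-regular with respect to $\hat P(Q)$; with respect to every other polyad $G'_i$ is empty on the supported $k$-sets and so trivially $(\eps^{*}(t_1),1)$-regular; hence $G'_i$ is perfectly $(\eps^{*}(t_1),1)$-regular with respect to $\Part^{*}$. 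Finally $|G'_i\Delta G_i|$ is at most the number of non-$\Part$-partite $k$-subsets of $V$ plus $\bigl|\bigcup_{Q\text{ bad}}K_k(\hat P(Q))\bigr|\le\eps^{*}(t_1)\binom{n}{k}\le\tfrac{\nu}{4}n^k$; and the number of non-$\Part$-partite $k$-subsets is at most $\bigl(1-\prod_{j=0}^{k-1}(1-\tfrac{j}{t})\bigr)\binom{n}{k}\le\tfrac{k^2}{2t}\binom{n}{k}\le\tfrac{\nu}{4}n^k$ because $t\ge t_0^{*}\ge 2k^2/\nu$. Hence $|G'_i\Delta G_i|\le\tfrac{\nu}{2}n^k\le\nu n^k$, as required.

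The emptying step and this parameter bookkeeping are routine. The genuine obstacle is the index-increment argument itself, carried out with a \emph{function}-valued regularity parameter while keeping the family equitable and its ground partition a refinement of $\Qart$ throughout, so that $t_1$ is a well-defined (tower-type) function of the inputs. This is exactly the content of the R\"odl--Schacht Regular Approximation Lemma (Theorem~25 of~\cite{RS}) cited here, which one would normally simply invoke rather than reprove.
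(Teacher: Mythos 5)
The paper offers no proof of Lemma~\ref{lem:RAL}: it is quoted verbatim as Theorem~25 of~\cite{RS} and marked \qed{} in the statement. So the only question is whether your sketch would actually work, and it would not: the gap is in the sentence ``Since the total energy stays in a bounded interval and each refinement increases it by a positive amount depending only on the current parameters, the process terminates.'' The increment guaranteed by the defect Cauchy--Schwarz step at a stage with $t$ clusters is only $\mathrm{poly}\bigl(\eps^{*}(t)\bigr)$, and $t$ grows (tower-fashion) at every refinement while $\eps^{*}(t)$ shrinks. Bounded total energy forces termination only when the increments are bounded below by a constant independent of the iteration index; here the guaranteed increments $\mathrm{poly}(\eps^{*}(t_1)), \mathrm{poly}(\eps^{*}(t_2)),\dots$ can sum to far less than the energy budget, so the iteration need not stop. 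This is not repairable bookkeeping: the intermediate statement you are running towards --- a single partition with respect to which the \emph{unmodified} $G_i$ is $(\eps(t),1)$-regular on all but an $\eps(t)$-fraction of polyads, for an arbitrary function $\eps(\cdot)$ --- is essentially false, and its failure is precisely why the Regular Approximation Lemma modifies the hypergraph (and why the graph-case analogue, the Alon--Fischer--Krivelevich--Szab\'o strong regularity lemma, outputs \emph{two} partitions rather than one).

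Two further points. First, your emptying step only deletes edges; the genuine R\"odl--Schacht modification redistributes edges within each polyad (adding as well as removing them) so as to make every polyad perfectly regular with density preserved --- that redistribution, performed when the energies of two successive partitions differ by less than a fixed constant depending on $\nu$ alone, is the mechanism that replaces the false intermediate statement. Second, the logical dependency runs the other way from what you assume: maintaining equitability of the levels $2,\dots,k-1$ with $\eps(t_1)\ll d_j$ already requires the Regular Approximation Lemma at lower uniformities, which is how Lemma~\ref{reglemma} is itself proved in~\cite{RS}; there is no self-contained ``index-increment argument behind Lemma~\ref{reglemma}'' at level $k$ that one can rerun with a function-valued parameter.
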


In fact, in order to prove Lemma~\ref{lem:simpreg} we need a slightly
strengthened version of the Strong Hypergraph Regularity Lemma, which we deduce from Lemma~\ref{lem:RAL}. 

Given families of partitions $\Part^*$ and $\tilde{\Part}^*$, such that the ground partition $\Part$ of $\Part^*$ is a refinement of the ground partition $\tilde{\Part}$ of $\tilde{\Part}^*$, we say that \emph{$\Part^*$ is generated from
$\tilde{\Part}^*$ by $\Part$} if every $\tilde{\Part}$-partite $j$-cell of $\Part^*$ is an
induced subgraph of a $j$-cell of $\tilde{\Part}^*$. Note that this condition does not determine a unique $\Part^*$, since the $j$-cells of $\Part^*$ which are not $\tilde{\Part}$-partite are not determined. However, if $\tilde{\Part}^*$ has many clusters, it does determine most $j$-cells of $\Part^*$. If $\tilde{\cJ}$ is a slice through $\tilde{\Part}^*$ and $\tilde{X}$ is a
set of clusters of $\tilde{\Part}$, then $\tilde{\cJ}[\tilde{X}]$ is the subset of elements of
$\tilde{\cJ}$ supported on $\tilde{X}$. Next, given a set $\tilde{X}$ of $\ell$ clusters of $\tilde{\Part}^*$, we say that a set $X$ of clusters of $\Part^*$ is \emph{$\tilde{X}$-consistent} if $X$ has precisely $\ell$ members, each of which is a subset of precisely one member of $\tilde{X}$. Finally, given such $X$ and $\tilde{X}$, for any slice $\cJ$ through $\Part^*$ we say that \emph{$\cJ[X]$
is contained in $\tilde{\cJ}[\tilde{X}]$}, and write $\cJ[X]\subseteq \tilde{\cJ}[\tilde{X}]$, if every
$j$-cell of $\cJ[X]$ is an induced subgraph of a cell of
$\tilde{\cJ}[\tilde{X}]$.

The Strengthened Regularity Lemma we require follows. The difference between this lemma and Lemma~\ref{reglemma} is the appearance
of a `coarse' family of partitions $\tilde{\Part}^*$ containing the `fine' family of partitions $\Part^*$
on which we guarantee regularity properties. This lemma guarantees that the neighbourhood of
some root vertices looks about the same on a part of
the coarse family of partitions as on any of the corresponding parts in the fine family of partitions.

\begin{lemma}[Strengthened Regularity Lemma]\label{lem:strReg}
  Let $k \geq 3$ be a fixed integer. For all positive integers $q$, $t_0$ and $s$,
  positive~$\eps_k$, functions $r: \NATS \rightarrow \NATS$, $\eps: \NATS \rightarrow (0,1]$ and monotone increasing functions 
  $p:\NATS\to\NATS$, there exist integers~$t_1^*$, $t_1$
  and~$n_0$ with $t_1=p(t_1^*)t_1^*$ such that the following holds
  for all $n \ge n_0$ which are divisible by~$t_1!$. Let $V$ be a vertex set of size $n$, and suppose that~$G_1, \dots, G_s$ are edge-disjoint $k$-graphs on $V$, and that $\Qart$ is a
  partition of $V$ into at most $q$ parts of equal size. Then there exist
  $(k-1)$-families of partitions $\Part^*$ and $\tilde{\Part}^*$ on~$V$ with ground
  partitions $\Part$ and $\tilde{\Part}$ respectively such that
  \begin{enumerate}[label=\abc]
    \item\label{strReg:refQ} $\Part$ refines $\Qart$,
    \item\label{strReg:genP} $\Part^*$ is generated from $\tilde{\Part}^*$ by $\Part$,
   \item\label{strReg:numP} $\Part$ has
    $t=p(t_1^*)\tilde{t}$ parts, where $\tilde{t}$ is the number of parts of $\tilde{\Part}$,
    \item\label{strReg:equit} $\tilde{\Part}^*$ and $\Part^*$ are $(t_0,t_1,
    \eps(t_1))$-equitable with equal density vectors, and all
    densities are at least $1/t_1^*$,
   \item\label{strReg:regP} for each $1 \leq i \leq s$, $G_i$ is
    $(\eps_k,r(t_1))$-regular with respect to~$\Part^*$,
    \item\label{strReg:spread} for each $1\leq i\leq s$, each $1\le\ell\le
    1/\eps_k$, each $k$-graph $H$ equipped with a set of distinct root vertices
  $x_1,\ldots,x_\ell$ such that $v(H)\le 1/\eps_k$, any distinct vertices
  $v_1,\ldots,v_\ell$ in $V$, any slices $\tilde{\cJ}$ through $\tilde{\Part}^*$ and $\cJ$
  through $\Part^*$, any $(v(H)-\ell)$-set of clusters $\tilde{X}$ of $\tilde{\Part}^*$ and any 
  $\tilde{X}$-consistent $(v(H)-\ell)$-set $X$ of clusters of $\Part^*$ such that
  $\cJ[X]\subseteq\tilde{\cJ}[\tilde{X}]$, we have
  \[d_H(G_i;v_1,\ldots,v_\ell,\tilde{\cJ}[\tilde{X}])=d_H(G_i;v_1,\ldots,v_\ell,\cJ[X])\pm\eps_k\,.\]
  \end{enumerate}
\end{lemma}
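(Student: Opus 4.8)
The plan is to deduce the lemma from the Regular Approximation Lemma (Lemma~\ref{lem:RAL}) together with the Hypergraph Counting Lemma of Appendix~\ref{app:count}. First I would fix auxiliary data: a function $\eps':\NATS\to(0,1]$ tending to zero much faster than $\eps$ (at each value also small compared with $\eps_k$, $1/r(\cdot)$ and $1/p(\cdot)$, and with $\eps'(m)\le\eps(p(m)m)$ for all $m$), a constant $\nu>0$ small compared with $\eps_k$, and $t_0':=\max\{t_0,\lceil 4k^2/\eps_k\rceil\}$. Applying Lemma~\ref{lem:RAL} with $q$, $t_0'$, $s$, $\nu$ and $\eps'$ produces integers $\tilde t_1$ and $n_0'$; I set $t_1^*:=\tilde t_1$ and $t_1:=p(t_1^*)t_1^*$. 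For $n\ge n_0$ (where $n_0\ge n_0'$ is chosen large at the end, and $t_1!\mid n$, hence $\tilde t_1!\mid n$) Lemma~\ref{lem:RAL} yields $k$-graphs $G'_1,\dots,G'_s$ and a $(k-1)$-family of partitions $\tilde\Part^*$ on $V$ with ground partition $\tilde\Part$ into $\tilde t\in[t_0',\tilde t_1]$ clusters refining $\Qart$, such that $\tilde\Part^*$ is $(t_0',\tilde t_1,\eps'(\tilde t_1))$-equitable with some density vector $\mathbf{d}$ (all entries at least $1/\tilde t_1=1/t_1^*$), each $G'_i$ is perfectly $(\eps'(\tilde t_1),1)$-regular with respect to $\tilde\Part^*$, and $|G_i\Delta G'_i|\le\nu n^k$. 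This is the coarse family; $\mathbf{d}$ will serve as the common density vector in~\ref{strReg:equit}, and it is immediate that $\tilde\Part^*$ is $(t_0,t_1,\eps(t_1))$-equitable.

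Next I would build the fine family $\Part^*$ by splitting each cluster of $\tilde\Part$ arbitrarily into $p(t_1^*)$ parts of equal size, so that the ground partition $\Part$ has $t=p(t_1^*)\tilde t$ clusters; this gives~\ref{strReg:refQ},~\ref{strReg:numP}, and $t_0\le t\le t_1$. For every $\tilde\Part$-partite set $A$ of clusters of $\Part$ and every cell $\tilde C$ of $\tilde\Part^*$ on the coarse clusters containing $A$, declare the induced subgraph $\tilde C[\bigcup A]$ to be a cell of $\Part^*$; these restrictions partition the corresponding $\mathrm{Cross}$-classes, so $\Part^*$ is generated from $\tilde\Part^*$ by $\Part$, giving~\ref{strReg:genP}, and the standard fact that regularity of a complex is inherited by induced subcomplexes on vertex subsets of relative size at least $1/p(t_1^*)$ (the parameter worsening only by a factor depending on $p(t_1^*)$ and $k$) shows these cells meet the regularity requirements since $\eps'$ is tiny. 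The remaining, non-$\tilde\Part$-partite cells I would construct by hand, processing $j=2,\dots,k-1$ in turn: having built the $(j-1)$-cells, and hence each level-$j$ polyad $\hat{P}$ lying on a non-$\tilde\Part$-partite set of $j$ clusters, I partition $K_j(\hat{P})$ uniformly at random into $1/d_j$ classes of equal size. Since $|K_j(\hat{P})|=\Theta((n/t)^j)$ by Fact~\ref{fact:densecount} while any witness to $(d_j,\eps(t_1))$-irregularity of a constructed cell is a subgraph of the $(j-1)$-graph $\hat{P}$, of which there are at most $2^{O((n/t)^{j-1})}$, a Chernoff bound together with a union bound over witnesses, cells and the bounded number of polyads shows that with probability $1-o(1)$ all constructed cells are $(d_j,\eps(t_1))$-regular with respect to their polyads; fixing such a choice, and noting each constructed $j$-cell is supported on a single polyad, one checks that $\Part^*$ is a $(k-1)$-family of partitions which is $(t_0,t_1,\eps(t_1))$-equitable with density vector $\mathbf{d}$, giving~\ref{strReg:equit}.

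I would then verify~\ref{strReg:regP}. Every $\tilde\Part$-partite polyad of $\Part^*$ is the restriction, to sub-clusters of relative size $1/p(t_1^*)$, of a polyad of $\tilde\Part^*$; by the restriction fact above together with the standard passage from $1$-regularity to $r$-regularity (which worsens the parameter only by a factor depending on $r$ and $\eps_k$), each $G'_i$ is $(\eps_k^2,r(t_1))$-regular with respect to every $\tilde\Part$-partite polyad of $\Part^*$. Since each $\Part$-partite $k$-set lies in $K_k(\hat{P}(Q))$ for at most one $Q\in\cross_k$, we have $\sum_Q|(G_i\Delta G'_i)\cap K_k(\hat{P}(Q))|\le\nu n^k$, while $|K_k(\hat{P}(Q))|=\Theta((n/t)^k)$, so only a bounded number of $\tilde\Part$-partite $Q$ satisfy $|(G_i\Delta G'_i)\cap K_k(\hat{P}(Q))|\ge\eps_k^2|K_k(\hat{P}(Q))|$; for every other $\tilde\Part$-partite $Q$, $G_i$ inherits $(\eps_k,r(t_1))$-regularity from $G'_i$. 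Finally the number of $\Part$-partite $k$-sets that are \emph{not} $\tilde\Part$-partite is at most $\tilde t\binom{n/\tilde t}{2}\binom{n}{k-2}$, which by the choice $\tilde t\ge t_0'\ge 4k^2/\eps_k$ is at most $\tfrac{\eps_k}{2}\binom{n}{k}$. Discarding all non-$\tilde\Part$-partite $Q$ together with the bounded number of bad $\tilde\Part$-partite $Q$ (a constant, below $\tfrac{\eps_k}{2}\binom{n}{k}$ once $n_0$ is large enough), we conclude that each $G_i$ is $(\eps_k,r(t_1))$-regular with respect to $\Part^*$.

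The heart of the argument is~\ref{strReg:spread}, which I expect to be the main obstacle. Fix $H$, $\vec v=(v_1,\dots,v_\ell)$, slices $\tilde\cJ$ through $\tilde\Part^*$ and $\cJ$ through $\Part^*$, a set $\tilde X$ of clusters of $\tilde\Part^*$ and an $\tilde X$-consistent set $X$ of clusters of $\Part^*$ with $\cJ[X]\subseteq\tilde\cJ[\tilde X]$. I would apply the Hypergraph Counting Lemma of Appendix~\ref{app:count} to $G_i$ over each of the regular complexes $\cJ[X]$ and $\tilde\cJ[\tilde X]$: since $G_i$ is $(\eps_k,r(t_1))$-regular with respect to all but boundedly many polyads, the counting lemma expresses $d_H(G_i;\vec v,\cJ[X])$ and $d_H(G_i;\vec v,\tilde\cJ[\tilde X])$, up to errors that can be kept well below $\eps_k$, as the \emph{same} product over the $k$-edges $e$ of $H$ of the relative density of the corresponding link of $G_i$ (at the roots occurring in $e$) with respect to the polyad spanned by the clusters hosting the non-root vertices of $e$ --- provided these relative densities agree for $\cJ[X]$ and for $\tilde\cJ[\tilde X]$. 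They do agree: because $X$ is $\tilde X$-consistent and $\cJ[X]\subseteq\tilde\cJ[\tilde X]$, each such polyad of $\cJ[X]$ is the restriction, to sub-clusters of relative size $1/p(t_1^*)$, of the corresponding polyad of $\tilde\cJ[\tilde X]$, and the relevant link of $G_i$ --- which is again, by the counting lemma machinery applied to the regularity of $G_i$, regular with respect to the coarse polyad --- has essentially the same relative density with respect to it and with respect to its restriction. Combining these estimates yields~\ref{strReg:spread}. The difficulty is to run the counting lemma uniformly over \emph{all} admissible $H$, $\vec v$ and slices, to isolate the boundedly many polyads on which $G_i$ fails to be regular and show they perturb each density only by an $o(1)$ amount (they form an $o(1)$ fraction of the polyads), and to order the choices of $\eps'$, $\nu$ and $n_0$ so that every accumulated error stays below $\eps_k$.
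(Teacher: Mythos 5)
Your treatment of parts \ref{strReg:refQ}--\ref{strReg:regP} essentially matches the paper's proof: apply the Regular Approximation Lemma to obtain the coarse family $\tilde{\Part}^*$ and the approximating graphs $G'_i$, refine the ground partition into $p(t_1^*)$ pieces per cluster, build the $\tilde{\Part}$-partite cells of $\Part^*$ by restriction and the remaining cells by a separate (random) construction, and deduce~\ref{strReg:regP} from $|G_i\Delta G'_i|\le\nu n^k$ by showing that irregularity of $G_i$ on a polyad forces $G_i\Delta G'_i$ to be dense there. One slip: the number of $\tilde{\Part}$-partite $k$-sets $Q$ whose polyad is bad is not ``a constant'' but $O(\nu n^k/\eps_k^2)$ --- a bounded number of bad polyads, each supporting $\Theta\big((n/t)^k\big)$ sets $Q$ --- which is still below $\tfrac{\eps_k}{2}\binom{n}{k}$ once $\nu$ is chosen small relative to $\eps_k$, so this is repairable.

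The genuine gap is in~\ref{strReg:spread}, and it is fatal to the approach as you state it. You split each coarse cluster \emph{arbitrarily} and then try to prove~\ref{strReg:spread} via the Counting Lemma, asserting that the relevant link of $G_i$ at the roots is ``again, by the counting lemma machinery applied to the regularity of $G_i$, regular with respect to the coarse polyad.'' This is false: regularity of $G_i$ with respect to $\Part^*$ or $\tilde{\Part}^*$ says nothing about the link of an individual vertex, and those links are exactly what the edges of $H$ containing roots contribute to $d_H(G_i;v_1,\ldots,v_\ell,\cdot)$. There are $n^{\ell}$ choices of roots while only the $s$ graphs $G_i$ are regularised, so no regularity of the links is available; Appendix~\ref{app:strReg} opens with an explicit example (delete from $G$ all edges $\{v_{\cJ}\}\cup e$ with $e\in\cJ^{(k-1)}$, one vertex $v_{\cJ}$ per slice $\cJ$) showing that regularity of $G$ is compatible with \emph{every} slice misrepresenting the link of some vertex. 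With an arbitrary equipartition the same phenomenon kills~\ref{strReg:spread}: an adversary can choose a sub-cluster $X_1\subseteq\tilde{X}_1$ that avoids the link of a particular root entirely. The paper's proof instead takes the equipartition of each cluster \emph{uniformly at random}, encodes the rooted $H$-copies supported on $\tilde{\cJ}[\tilde{X}]$ as an $h$-uniform hypergraph on the coarse clusters, and uses hypergeometric concentration (Claim~\ref{clm:equipart} via Theorem~\ref{thm:hypgeom}) to show that each such density is preserved under restriction with failure probability exponentially small in $n$ --- which is precisely what survives the union bound over the $n^{O(1/\eps_k)}$ choices of root vertices. Some such use of randomness in the choice of $\Part$ is indispensable here; the Counting Lemma alone cannot supply it.
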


We will prove Lemma~\ref{lem:strReg} in Appendix~\ref{app:strReg}. However,
it is worth noting that we need it only for proving property~\ref{simpreg:c} of
Lemma~\ref{lem:simpreg}. The only use of this property in this paper is in the sketch proof of Theorem~\ref{thm:rrsham} given in Section~\ref{sec:conclude}, so the 
reader whose interest lies only in verifying Theorems~\ref{thm:HypErdGal} and \ref{thm:PartiteCycle} can safely forget Lemma~\ref{lem:strReg} and pretend that
Lemma~\ref{reglemma} provides the $(k-1)$-family of partitions~$\Part^*$ in the proof of
Lemma~\ref{lem:simpreg}.

\subsection{Tools for working with regularity.}
We will need various standard results in the proofs of Lemmas~\ref{lem:simpreg}
and~\ref{lem:emb}, which we present here.

We start with the Hypergraph Counting Lemma. The version we present here is slightly modified from a result
in~\cite{CFKO}, which in turn was derived from~\cite[Theorem~9]{RS2}. We
sketch in Appendix~\ref{app:count} how this modified version can be obtained. Similar
results were also proved previously by Gowers~\cite{Gowers} and by Nagle, R\"odl and
Schacht~\cite{NRS}. We remark that Fact~\ref{fact:densecount} is a special case of this lemma. Note though that in contrast to Fact~\ref{fact:densecount}, Lemma~\ref{lem:count} allows for a $k$-graph $G$ on top of the equitable complex $\cJ$, whose regularity $\eps_k$ with respect to $\cJ$ can be bigger than the entries of the density vector of $\cJ$.

\begin{lemma}[Counting Lemma, {\cite[Lemma 4]{CFKO}}]\label{lem:count}
  Let $k,s,r,m_0$ be positive integers, and let
  $d_2,\ldots,d_{k-1},\eps,\eps_k,\beta$ be positive constants such that $1/d_i
  \in\mathbb{N}$ for any $2 \leq i \leq k-1$ and
  \[\frac{1}{m_0}\ll \frac{1}{r}, \eps \ll
  \eps_k,d_2,\ldots,d_{k-1}\quad\text{and}\quad \eps_k \ll
  \beta, \frac{1}{s}\,.\]
  Then the following holds for all
  integers $m\ge m_0$. Let $H$ be a $k$-graph on $s$ vertices $1, \ldots, s$,
  and let $\mathcal{H}$ be the $k$-complex generated by the down-closure of
  $H$.
Also let $\cJ$ be a $(\cdot, \cdot, \eps)$-equitable $(k-1)$-complex with $s$
clusters $V_1, \dots, V_s$ each of size $m$ and with density vector
$\mathbf{d}$. 
Finally, let $G$ be a $k$-graph on $\bigcup_{i \in [s]} V_i$ which is supported
on $\cJ^{(k-1)}$ such that for any edge $e \in H$ the graph $G$ is $(\eps_k,r)$-regular with respect to the $k$-set of clusters $\{V_j : j \in e\}$. Then the number
of copies of $\mathcal{H}$ in $G$ such that $i$ is in $V_i$ for each $i$ is
$$\left(\prod_{e \in H} \reld(\{V_j : j \in e\}) \pm \beta \right)\left(\prod_{i
= 2}^{k-1} d_i^{e_i(\Hy)}\right) m^s\,.$$
\qed
\end{lemma}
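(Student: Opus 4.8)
The plan is to obtain Lemma~\ref{lem:count} from \cite[Lemma~4]{CFKO} by a short sequence of reductions. That result is a counting lemma for regular $k$-complexes: given an $s$-partite $k$-complex $\cG$ on clusters $V_1,\dots,V_s$ of size $m$ whose levels below the top are $(\mathbf d,\eps,\eps,1)$-regular and whose top level is $(\eps_k,r)$-regular with respect to every polyad of $\cG$, and a $k$-graph $H$ on $[s]$ with down-closure $\Hy$, it counts the copies of $\Hy$ in $\cG$ with vertex $i$ in $V_i$ as $\big(\prod_{e\in H}d(e)\pm\beta\big)\prod_{i=2}^{k-1}d_i^{e_i(\Hy)}\,m^s$, where $d(e)$ is the relative density of $\cG^{(k)}$ over the polyad indexed by $\{V_j:j\in e\}$, under the same parameter hierarchy we assume here. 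Relative to our statement, the differences are that we present the top level as a separate $k$-graph $G$ (rather than as $\cG^{(k)}$), and that we ask $G$ to be $(\eps_k,r)$-regular only with respect to those polyads $\hat{\cJ}_X$ with $X=\{V_j:j\in e\}$ for some edge $e\in H$.

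First I would set $\cG^{(i)}:=\cJ^{(i)}$ for $0\le i\le k-1$ --- these levels are $(\mathbf d,\eps,\eps,1)$-regular because $\cJ$ is $(\cdot,\cdot,\eps)$-equitable --- and let $\cG^{(k)}$ be the set of edges of $G$ supported on $\cJ^{(k-1)}$, which is all of $G$ by hypothesis, so that $\cG$ is an $s$-partite $k$-complex. To fit the hypotheses of \cite[Lemma~4]{CFKO} I would then delete from $\cG^{(k)}$ every edge supported on a polyad $\hat{\cJ}_X$ with $X$ not of the form $\{V_j:j\in e\}$, $e\in H$. Every $k$-edge of a copy of $\Hy$ with vertex $i$ in $V_i$ is supported on $\hat{\cJ}_{\{V_j:j\in e\}}$ for some $e\in H$, so no such edge is removed and the number we are counting is unchanged. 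After the deletion the restriction of $\cG^{(k)}$ to a polyad $\hat{\cJ}_{\{V_j:j\in e\}}$ is just the restriction of $G$, hence $(\eps_k,r)$-regular, while its restriction to any other polyad is the empty $k$-graph, which is trivially $(0,\eps_k,r)$-regular; thus $\cG$ now meets the regularity hypotheses of \cite[Lemma~4]{CFKO}. Applying that lemma, and noting that $d(e)=\reld(\{V_j:j\in e\})$ for each $e\in H$ while the empty $k$-graphs on the remaining polyads enter neither the count nor the product over $e\in H$, gives exactly the count asserted in Lemma~\ref{lem:count}; one then checks that the hierarchy $1/m_0\ll 1/r,\eps\ll\eps_k,d_2,\dots,d_{k-1}$ and $\eps_k\ll\beta,1/s$ is precisely the one required by \cite[Lemma~4]{CFKO}.

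The one step that is not pure bookkeeping is making sure \cite[Lemma~4]{CFKO} is formulated with per-polyad top densities $d(e)$ rather than a single density for the whole top level (in which case the relevant factor reads $d_k^{e_k(\Hy)}$). If the cited lemma is stated only in that constant-density form, the fix is to note that in its proof the top-level regularity is used separately for each copy of each polyad, so the contribution of the $k$-edges of $\Hy$ can be tracked polyad by polyad and the factor $d_k^{e_k(\Hy)}$ is replaced by $\prod_{e\in H}\reld(\{V_j:j\in e\})$ with no change to the (already small) error terms. I expect this to be the main obstacle, in the sense that it is the only point at which \cite[Lemma~4]{CFKO} cannot be invoked as a black box; the rest is routine bookkeeping about how $\cJ$, $G$ and the vertex partition interact.
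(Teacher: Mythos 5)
Your reduction identifies the wrong obstacle. The substantive difference between Lemma~\ref{lem:count} and \cite[Lemma~4]{CFKO} is not merely that the latter uses a single top-level density: it is that \cite[Lemma~4]{CFKO} carries an extra constant $d_k$ with $\eps_k\ll d_k$ and $1/d_k\in\NATS$, and requires $G$ to be $(d_k,\eps_k,r)$-regular with respect to each polyad $\hat{\cJ}_{\{V_j:j\in e\}}$ --- that is, the relative density of $G$ on every relevant polyad must be much larger than the regularity parameter. Our Lemma~\ref{lem:count} assumes only $(\eps_k,r)$-regularity, so $\reld(\{V_j:j\in e\})$ may be smaller than $\eps_k$, or even zero. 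Your proposed fix --- reopen the proof of \cite[Lemma~4]{CFKO} and track the densities polyad by polyad ``with no change to the error terms'' --- does not address this: the hypothesis $\eps_k\ll d_k$ is genuinely used there, and for a polyad whose density is comparable to or below $\eps_k$ the regularity hypothesis gives no control of the quality that proof requires. (Your preliminary step of deleting $G$ from polyads not indexed by edges of $H$ is harmless but also unnecessary, since \cite[Lemma~4]{CFKO} already imposes regularity only on the polyads $\hat{\cJ}_{\{V_j:j\in e\}}$ for $e\in H$; and the hierarchy is consequently \emph{not} ``precisely the one required'' by that lemma, since $\eps_k\ll d_k$ has been dropped.)

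The derivation in Appendix~\ref{app:count} instead introduces $d_k\ll\beta$ and a further constant $\gamma$ with $\eps\ll\gamma\ll\eps_k,d_2,\dots,d_{k-1}$, and for each relevant $k$-set $X$ of clusters uses the Slicing Lemma \cite[Lemma~8]{CFKO} to partition $G_X$ into subgraphs $G_X^1,\dots,G_X^{p_X}$, each $(d_k,2\eps_k,r)$-regular with respect to $\hat{\cJ}_X$, together with a sparse remainder $G_X^0$ with $|G_X^0|\le d_k|K_k(\hat{\cJ}_X)|$. Applying \cite[Lemma~4]{CFKO} to every choice of one slice per polyad and summing over all choices recovers the factor $\prod_{e\in H}\reld(\{V_j:j\in e\})$ up to an error $2\beta/3$; the copies of $\Hy$ using an edge of some $G_X^0$ are bounded separately, via the Dense Extension Lemma \cite[Lemma~7]{CFKO} (which shows that all but a $\gamma$-fraction of edges lie in few copies of $\Hy$) together with Fact~\ref{fact:densecount}, contributing at most a further $(\beta/3)m^s\prod_{i=2}^{k-1}d_i^{e_i(\Hy)}$. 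Some decomposition of this kind is needed; the lemma cannot be obtained by bookkeeping alone from the constant-density statement.
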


A key property of regular complexes is that the
restriction of such a complex to a large subset of its vertex set is also a
regular complex, with the same relative densities at each level of the complex, albeit
with somewhat degraded regularity properties. The next lemma states this property formally. Its proof is as sketched for Lemma 4.1 in~\cite{lcycles} (the quantification there is slightly different but this does not affect the proof).

\begin{lemma}[Regular Restriction Lemma]\label{restriction} 
Suppose integers $k,m$ and real $\alpha,\eps,\eps_k,d_2,\ldots,d_k > 0$ are such that
$$\frac{1}{m}\ll \eps\ll \eps_k,d_2,\dots,d_{k-1}\quad\text{and}\quad  \eps_k\ll \alpha,\frac{1}{k}\,.$$ 
For any $r,s\in\NATS$ and $d_k>0$, set $\mathbf{d} = (d_k, \dots, d_2)$, and let $\cG$ be an $s$-partite $k$-complex whose vertex classes $V_1, \dots, V_s$ each have size $m$ and which is $(\mathbf{d}, \eps_k, \eps, r)$-regular. Choose any $V'_i \subseteq V_i$ with $|V'_i| \geq \alpha m$ for each $i \in [s]$. Then the induced subcomplex $\cG[V'_1 \cup \dots \cup V'_s]$ is $(\mathbf{d},\sqrt{\eps_k},\sqrt{\eps},r)$-regular. \qed
\end{lemma}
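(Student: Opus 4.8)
The plan is to check directly that the induced subcomplex $\cG[V'_1\cup\dots\cup V'_s]$ meets the two defining conditions of $(\mathbf{d},\sqrt{\eps_k},\sqrt\eps,r)$-regularity. Since this subcomplex is automatically down-closed, and its level-$j$ graph induced on a set $A$ of its clusters is exactly $\cG^{(j)}[V'_A]$, what must be shown is: (i) $\cG^{(i)}[V'_A]$ is $(d_i,\sqrt\eps)$-regular with respect to $\cG^{(i-1)}[V'_A]$ for every $2\le i\le k-1$ and $A\in\binom{[s]}{i}$, and (ii) $\cG^{(k)}[V'_A]$ is $(d_k,\sqrt{\eps_k},r)$-regular with respect to $\cG^{(k-1)}[V'_A]$ for every $A\in\binom{[s]}{k}$. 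The one elementary observation underpinning everything is that if $Q$ is a subgraph of $\cG^{(i-1)}[V'_A]$, then every $i$-set supported on $Q$ has all its vertices in $\bigcup_{a\in A}V'_a$; consequently $K_i(Q)$, the intersection $K_i(Q)\cap\cG^{(i)}$, and therefore the relative density $d(\cG^{(i)}[V'_A]|Q)$, are all unchanged if we instead regard $Q$ as a subgraph of $\cG^{(i-1)}[V_A]$ and compute inside $\cG^{(i)}[V_A]$. The same holds for an $r$-tuple $\Qb$ of subgraphs of $\cG^{(k-1)}[V'_A]$.

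I would prove (i) and (ii) together by induction on $i$, from $2$ up to $k$. The content of the inductive step is a lower bound on the clique count $|K_i(\cG^{(i-1)}[V'_A])|$. For $i=2$ this is immediate: $K_2(\cG^{(1)}[V'_A])$ is the complete bipartite graph between the two classes, so its size is $|V'_{a_1}||V'_{a_2}|\ge\alpha^2m^2$. For $i\ge3$, the inductive hypothesis says that the levels up to $i-1$ of $\cG[V'_A]$ form a regular $(i-1)$-complex with density vector $(d_{i-1},\dots,d_2)$, so the Dense Counting Lemma (the general form of Fact~\ref{fact:densecount}; see \cite[Theorem~6.5]{KRS}), applied with a crude error term $\tfrac12$, gives $|K_i(\cG^{(i-1)}[V'_A])|\ge\tfrac12\big(\prod_{a\in A}|V'_a|\big)\prod_{j=2}^{i-1}d_j^{\binom{i}{j}}$. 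Comparing this with the trivial bound $|K_i(\cG^{(i-1)}[V_A])|\le m^i$, and using $\eps\ll\alpha,d_2,\dots,d_{k-1}$ (and, at the top level, $\eps_k\ll\alpha,1/k$), we obtain $|K_i(\cG^{(i-1)}[V'_A])|\ge\sqrt\eps\,|K_i(\cG^{(i-1)}[V_A])|$ for $2\le i\le k-1$, and $|K_k(\cG^{(k-1)}[V'_A])|\ge\sqrt{\eps_k}\,|K_k(\cG^{(k-1)}[V_A])|$.

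With this in hand the inductive step is one line. For $2\le i\le k-1$, suppose $Q$ is a subgraph of $\cG^{(i-1)}[V'_A]$ with $|K_i(Q)|>\sqrt\eps\,|K_i(\cG^{(i-1)}[V'_A])|$; then $|K_i(Q)|>\eps\,|K_i(\cG^{(i-1)}[V_A])|$ by the previous paragraph, so $(d_i,\eps)$-regularity of $\cG^{(i)}[V_A]$ with respect to $\cG^{(i-1)}[V_A]$ gives $d(\cG^{(i)}[V_A]|Q)=d_i\pm\eps$, which by the elementary observation equals $d(\cG^{(i)}[V'_A]|Q)$; and $d_i\pm\eps\subseteq d_i\pm\sqrt\eps$, which is what we want. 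The top level is identical: an $r$-tuple $\Qb$ of subgraphs of $\cG^{(k-1)}[V'_A]$ with $|K_k(\Qb)|>\sqrt{\eps_k}\,|K_k(\cG^{(k-1)}[V'_A])|$ has $|K_k(\Qb)|>\eps_k\,|K_k(\cG^{(k-1)}[V_A])|$, so $(d_k,\eps_k,r)$-regularity of $\cG^{(k)}[V_A]$ yields $d(\cG^{(k)}[V_A]|\Qb)=d_k\pm\eps_k$, hence $d(\cG^{(k)}[V'_A]|\Qb)=d_k\pm\sqrt{\eps_k}$. This completes the induction, and with it the proof.

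I do not expect a genuine obstacle: this is the routine ``restriction'' argument of hypergraph regularity, sketched for Lemma~4.1 of \cite{lcycles}. The only point requiring a little care is to ensure that the Dense Counting Lemma is invoked only for a complex already known, via the induction, to be regular, and to check that the factor $\alpha^{\Theta(k)}$ lost in passing from $|K_i(\cG^{(i-1)}[V_A])|$ to $|K_i(\cG^{(i-1)}[V'_A])|$ stays comfortably larger than $\sqrt\eps$ and $\sqrt{\eps_k}$ --- which is exactly what the hierarchy $1/m\ll\eps\ll\eps_k,d_2,\dots,d_{k-1}$ and $\eps_k\ll\alpha,1/k$ guarantees.
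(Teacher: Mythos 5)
Your overall strategy is exactly the intended one: the paper gives no proof of this lemma, deferring to the sketch for Lemma~4.1 of~\cite{lcycles}, and that sketch is precisely your argument --- a subgraph (or $r$-tuple of subgraphs) of the restricted polyad is also a subgraph of the original polyad with the same clique set and the same relative density, so the only substantive point is a lower bound on $|K_i(\cG^{(i-1)}[V'_A])|$ relative to $|K_i(\cG^{(i-1)}[V_A])|$, obtained by induction on the level together with the Dense Counting Lemma. Your handling of the levels $2\le i\le k-1$ is correct.

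There is, however, a concrete error at the top level as you have written it. You bound the denominator by the trivial estimate $|K_k(\cG^{(k-1)}[V_A])|\le m^k$, so your lower bound on the ratio is $\tfrac12\alpha^k\prod_{j=2}^{k-1}d_j^{\binom{k}{j}}$, and you need this to exceed $\sqrt{\eps_k}$. But the hierarchy in the lemma does \emph{not} give $\eps_k\ll d_2,\dots,d_{k-1}$; it only gives $\eps\ll \eps_k,d_2,\dots,d_{k-1}$ and $\eps_k\ll\alpha,1/k$. Indeed the whole point of the strong regularity setup (as the paper emphasises after Lemma~\ref{reglemma}) is that $\eps_k$ is typically \emph{much larger} than the densities $d_j$, in which case $\prod_j d_j^{\binom{k}{j}}\ll\sqrt{\eps_k}$ and your inequality fails. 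The fix is immediate but necessary: apply the Dense Counting Lemma to the \emph{unrestricted} $(k-1)$-complex as well, giving $|K_k(\cG^{(k-1)}[V_A])|\le 2m^k\prod_{j=2}^{k-1}d_j^{\binom{k}{j}}$, so that the density factors cancel in the ratio and you are left with $|K_k(\cG^{(k-1)}[V'_A])|\ge\tfrac{\alpha^k}{4}|K_k(\cG^{(k-1)}[V_A])|\ge\sqrt{\eps_k}\,|K_k(\cG^{(k-1)}[V_A])|$, which now follows from $\eps_k\ll\alpha,1/k$ alone. (For the intermediate levels your cruder comparison happens to survive because there $\eps\ll d_j$ is available.) Your closing remark about ``the factor $\alpha^{\Theta(k)}$ lost'' suggests you had this like-with-like comparison in mind, but the computation you actually describe does not deliver it at level $k$.
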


Given a copy of some subgraph $H' \subseteq H$ in $\cG^{(k)}$, how many ways are there to extend $H'$ to a copy of $H$ in $\cG^{(k)}$? The next lemma gives a lower bound on this number for almost all copies of $H'$ in $\cG^{(k)}$. To state this precisely we make the following definitions. 

Let $\cG$ be an $s$-partite $k$-complex whose vertex classes $V_1, \dots ,V_s$
are each of size~$m$, and let $\Hy$ be an $s$-partite $k$-complex whose vertex
classes $X_1, \dots, X_s$ each have size at most $m$. We say that an embedding of $\Hy$ in $\cG$ is
\emph{partition-respecting} if for any $i \in [s]$ the vertices of $X_i$ are
embedded within $V_i$. We denote the set of labelled partition-respecting copies
of $\Hy$ in $\cG$ by $\Hy_\cG$. The Extension Lemma~\cite[Lemma 5]{CFKO}
states that if $\Hy'$ is an induced subcomplex of $\Hy$, and $\cG$ is regular with $\cG^{(k)}$ reasonably dense, then almost all partition-respecting copies of
$\Hy'$ in $\cG$ can be extended to a large number of copies of $\Hy$ in
$\cG$. We present a slightly modified form of this lemma,  implied
by~\cite[Lemma~4.6]{lcycles}.
Again, we explain in Appendix~\ref{app:count} how the modification can be obtained.

\begin{lemma}[Extension Lemma,~{\cite[Lemma 4.6]{lcycles}}]\label{lem:ext}
Let $k,s,r,b,b',m$ be positive integers, where $b'<b$, and let $c,
\beta,d_2,\ldots,d_k,\eps,\eps_k$ be positive constants such that $1/d_i
\in\NATS$ for any $2 \leq i \leq k-1$ and 
\[\frac{1}{m}\ll \frac{1}{r},
\eps \ll c \ll \eps_k,d_2,\ldots,d_{k-1}\quad\text{and}\quad \eps_k \ll
\beta,d_k,\frac{1}{s},\frac{1}{b}\,.\]
Suppose that~$\Hy$ is an $s$-partite $k$-complex on~$b$ vertices
with vertex classes $X_1, \dots, X_s$ and let~$\Hy'$ be an induced subcomplex
of~$\Hy$ on~$b'$ vertices.
Suppose that $\cG$ is an $s$-partite $k$-complex with vertex classes $V_1,\ldots,V_s$, all of size $m$, such that $\bigcup_{i=0}^{k-1}\cG^{(i)}$ is $(\cdot, \cdot, \eps)$-equitable with density vector
$(d_{k-1},\ldots,d_2)$.  Suppose further that for each $e\in\Hy^{(k)}$ with index $A \in \binom{[s]}{k}$, the $k$-graph $\cG^{(k)}[V_A]$ is $(d,\eps_k,r)$-regular with respect to $\cG^{(k-1)}[V_A]$ for some $d\ge d_k$. Then
all but at most $\beta |\Hy'_\cG|$ labelled partition-respecting copies of~$\Hy'$
in~$\cG$ extend to at least $cm^{b-b'}$ labelled partition-respecting
copies of~$\Hy$ in~$\cG$. \qed
\end{lemma}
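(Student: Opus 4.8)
The plan is a second-moment (variance) argument. For a partition-respecting copy $\phi$ of $\Hy'$ in $\cG$, let $X_\phi$ be the number of partition-respecting copies of $\Hy$ in $\cG$ that restrict to $\phi$ on the vertices of $\Hy'$; we must show $X_\phi\ge c\,m^{b-b'}$ for all but at most $\beta|\Hy'_\cG|$ of the copies $\phi$. I would obtain this by computing the first two moments of $X_\phi$, averaged over all $|\Hy'_\cG|$ copies of $\Hy'$, and then applying Chebyshev's inequality; the moments are computed using the Counting Lemma (Lemma~\ref{lem:count}).

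The key device is the \emph{glued double} $\mathcal{D}$: the $s$-partite $k$-complex obtained from two vertex-disjoint copies of $\Hy$ by identifying their copies of $\Hy'$, with no further edges added. Then $\mathcal{D}$ is a down-closed $s$-partite $k$-complex on $2b-b'$ vertices (so its vertex classes have at most $2b\le m$ vertices, as $b$ is bounded), it contains $\Hy'$ as an induced subcomplex, its lower-level edge counts satisfy $e_i(\mathcal{D})=2e_i(\Hy)-e_i(\Hy')$ for each $i$, and its $k$-edges are exactly the $k$-edges of the two copies of $\Hy$ — so the hypothesis that $\cG$ is $(d,\eps_k,r)$-regular with $d\ge d_k$ with respect to every $k$-set of clusters indexing a $k$-edge of $\Hy$ automatically supplies the same for $\mathcal{D}$. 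A labelled partition-respecting copy of $\mathcal{D}$ in $\cG$ is exactly an ordered pair of labelled partition-respecting copies of $\Hy$ in $\cG$ sharing a common copy of $\Hy'$, whence
\[\sum_{\phi}X_\phi=|\Hy_\cG|\qquad\text{and}\qquad\sum_{\phi}X_\phi^{2}=|\mathcal{D}_\cG|,\]
the sums running over the copies $\phi$ of $\Hy'$.

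I would then apply Lemma~\ref{lem:count} — after the routine reduction to one vertex per cluster, splitting each cluster into equal sub-clusters and invoking the Regular Restriction Lemma (Lemma~\ref{restriction}), which only degrades the regularity parameters to $\sqrt{\,\cdot\,}$ — to each of $\Hy'$, $\Hy$ and $\mathcal{D}$ with a small error parameter $\beta_1$. Writing $P(\cdot):=\prod_{i=2}^{k-1}d_i^{e_i(\cdot)}$ and $\rho(\cdot)$ for the product of the relative densities $\reld(\cdot)$ over the $k$-edges, one gets $|\Hy'_\cG|=(\rho'\pm\beta_1)P'm^{b'}$, $|\Hy_\cG|=(\rho\pm\beta_1)P m^{b}$ and $|\mathcal{D}_\cG|=(\rho^{\mathcal{D}}\pm\beta_1)P^{\mathcal{D}}m^{2b-b'}$, and the crucial multiplicativity of the gluing gives $P^{\mathcal{D}}=P^2/P'$ and $\rho^{\mathcal{D}}=\rho^2/\rho'$ (the private vertices of the two copies lie over the same clusters as in $\Hy$). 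Since $\rho\ge d_k^{e_k(\Hy)}>0$ and $\rho'\ge d_k^{e_k(\Hy')}>0$, putting $\sigma:=(\rho/\rho')(P/P')>0$, a constant depending only on $k$, $b$ and $d_2,\dots,d_k$, we obtain
\[\frac{1}{|\Hy'_\cG|}\sum_\phi X_\phi=(\sigma\pm O(\beta_1))\,m^{b-b'}\qquad\text{and}\qquad\frac{1}{|\Hy'_\cG|}\sum_\phi X_\phi^{2}=(\sigma^2\pm O(\beta_1))\,m^{2(b-b')},\]
so the variance of $X_\phi$ over the copies of $\Hy'$ is at most $O(\beta_1)\,m^{2(b-b')}$. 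By Chebyshev's inequality, the number of copies $\phi$ with $X_\phi<\tfrac12\sigma\,m^{b-b'}$ is at most $O(\beta_1/\sigma^2)|\Hy'_\cG|\le\beta|\Hy'_\cG|$, provided $\beta_1$ is chosen small enough in terms of $\beta$, $k$, $b$ and the $d_i$ — which is consistent with the stated hierarchy, since $\eps_k\ll\beta,d_k$ leaves enough room to take $\sqrt{\eps_k}\ll\beta_1$ as Lemma~\ref{lem:count} requires. Finally $c\le\sigma/2$ because $c\ll d_2,\dots,d_k$, so every remaining copy of $\Hy'$ extends to at least $\tfrac12\sigma\,m^{b-b'}\ge c\,m^{b-b'}$ copies of $\Hy$, as required.

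The step I expect to be the crux is this moment computation — in particular checking that $|\mathcal{D}_\cG|$ really does factor, up to the Counting Lemma error, as $(\rho^2/\rho')(P^2/P')m^{2b-b'}$, so that $\mathbb{E}[X_\phi^{2}]$ agrees with $\mathbb{E}[X_\phi]^2$ to leading order and the variance is of genuinely lower order. This ``conditional near-independence of two random extensions of a fixed copy of $\Hy'$'' is exactly what hypergraph regularity delivers, and it is what lets Chebyshev upgrade the easy average bound $|\Hy_\cG|/|\Hy'_\cG|$ on the number of extensions into one that holds for almost every copy of $\Hy'$; an attempt to use only the average (for instance via Markov's inequality on $\sum_\phi X_\phi$) cannot give this. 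Relative to \cite[Lemma 4.6]{lcycles} the only differences are cosmetic: here the top-level relative density may be any $d\ge d_k$ rather than exactly $d_k$ — everything above is monotone non-decreasing in it — and the vertex classes of $\Hy$ may be strictly smaller than those of $\cG$, which is immaterial since surplus vertices of $\cG$ are simply never used.
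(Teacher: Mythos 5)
Your route is genuinely different from the paper's: the paper does not reprove this lemma at all, but (in Appendix~\ref{app:count}) reduces it to the cited \cite[Lemma~4.6]{lcycles} --- the only changes being that the top-level density is $\ge d_k$ rather than exactly $d_k$, which is absorbed by the Slicing Lemma, and that only a lower bound on the number of extensions is claimed. What you have written is essentially a reconstruction of the underlying proof of that cited lemma: the second-moment/Chebyshev argument with the glued double $\mathcal{D}$, and the identities $e_i(\mathcal{D})=2e_i(\Hy)-e_i(\Hy')$, $\rho^{\mathcal{D}}=\rho^2/\rho'$, $P^{\mathcal{D}}=P^2/P'$ are correct (using that $\Hy'$ is an \emph{induced} subcomplex). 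Two routine points you suppress: $\sum_\phi X_\phi^2$ equals $|\mathcal{D}_\cG|$ only up to the $O(m^{2b-b'-1})$ ordered pairs of extensions that collide outside $\Hy'$, and Lemma~\ref{lem:count} as stated counts only down-closures of $k$-graphs with one vertex per class, so you need its standard extension to arbitrary complexes (note $\Hy'$ may have empty $k$-th level, in which case Fact~\ref{fact:densecount} is the relevant statement).

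The one step that does not close as written is the final Chebyshev calculation. You require $\beta_1\ll\beta\sigma^2$ with $\sigma=(\rho/\rho')(P/P')$, while the Counting Lemma forces $\eps_k\ll\beta_1$. But the hypothesis places $\eps_k$ and $d_2,\dots,d_{k-1}$ on the same side of the hierarchy (we have $c\ll\eps_k,d_2,\dots,d_{k-1}$ with no relation among these), so $P/P'=\prod_{i<k}d_i^{e_i(\Hy)-e_i(\Hy')}$ may be far smaller than any fixed power of $\eps_k$; then no admissible $\beta_1$ exists --- and the regime $d_i\ll\eps_k$ is exactly what strong regularity produces. The fix is bookkeeping rather than a new idea: keep the Counting Lemma errors in their native normalisation, $|\mathcal{D}_\cG|=(\rho^{\mathcal{D}}\pm\beta_1)P^{\mathcal{D}}m^{2b-b'}$ and so on. Then the variance is $O\big(\beta_1(\rho')^{-2}\big)(P/P')^2m^{2(b-b')}$ rather than $O(\beta_1)m^{2(b-b')}$, the factor $(P/P')^2$ cancels against the identical factor in the squared mean, and the proportion of copies of $\Hy'$ with fewer than half the average number of extensions becomes $O\big(\beta_1 d_k^{-2e_k(\Hy)}\big)$ --- which only requires $\beta_1$ to be small in terms of $\beta$, $d_k$, $b$ and $k$, all of which dominate $\eps_k$ in the stated hierarchy. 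With that correction your argument is complete.
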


\section{Proof of the Regular Slice Lemma}\label{sec:regslice}
In this section we prove Lemma~\ref{lem:simpreg}.
We begin with an outline of the proof, considering the case of regularising only one $k$-graph $G=G_1$. Let $\Part^*$ be an equitable $(k-1)$-family of partitions obtained from Lemma~\ref{lem:strReg}. 
As previously noted, we will take a `slice' through $\Part^*$ so that the $i$-cells of this `slice' are consistent.
This can be done by the following procedure: for each pair of clusters we choose one
of the $2$-cells of $\Part^*$ on these clusters. We then throw out all other
$2$-cells and everything in higher levels of $\Part^*$ which is not supported on
our chosen $2$-cells. Now for each triple of clusters we choose one of the
$3$-cells of $\Part^*$ on these clusters, and so on.

Since the family of partitions $\Part^*$ is $(t_0, t_1, \eps)$-equitable,
the procedure described will always output a $(t_0,
t_1, \eps)$-equitable $(k-1)$-complex~$\cJ$. Because $G$ is also
regular with respect to $\Part^*$, we could hope that $\cJ$ will be a regular slice
for $G$. But in order for
this to be true, it is necessary that we do not accidentally choose the
$(k-1)$-cells of disproportionately many polyads $\hat{P}(Q)$ such that
$G$ is not regular with respect to $\hat{P}(Q)$. This already suggests
our proof method: we will follow the above procedure, and when we are required
to pick an $i$-cell for some $i$-set of clusters, we will choose uniformly at
random from the $1/d_i$ possibilities. It is easy to see that the expected
fraction of irregular $k$-sets of clusters of $\cJ$ is then equal to the
fraction of irregular polyads in $\Part^*$. So by linearity of expectation there
exists a regular slice for $G$, an idea which was previously observed in the $k=3$ case by Haxell,
{\L}uczak, Peng, R{\"o}dl, Ruci{\'n}ski and Skokan~\cite{3CycleRamsey}. However,
a straightforward application of McDiarmid's inequality shows that the fraction
of irregular sets is actually sharply concentrated, so that the $(k-1)$-complex
$\cJ$ obtained from the random procedure is very likely to be a regular slice. A similar argument together with the union bound guarantees that $\cJ$ is very likely to satisfy both properties~\ref{simpreg:a} and~\ref{simpreg:b} of Lemma~\ref{lem:simpreg}.

So far the properties of $\Part^*$ we used would also be guaranteed by Lemma~\ref{reglemma}. However, this will not be true for the argument proving property~\ref{simpreg:c}.
The obstacle to obtaining property~\ref{simpreg:c} is as follows: for any $H$ we are asking for
roughly $n^{1/\eps_k}$ distinct inequalities to be satisfied, one for
each choice of root vertices. Although each individual one of these inequalities is very likely to be true, their success probability only depends on the regularity $\eps_k$ and the number of clusters $t$, but not on $n$. Hence a union bound fails miserably. In order to circumvent this, we make use of the coarse $(k-1)$-family of partitions $\tilde{\Part}^*$ given by Lemma~\ref{lem:strReg}. We will illustrate our strategy in the case that $H$ is a $k$-edge, rooted at a single vertex. Each vertex $v$ of $G$ defines a $(k-1)$-graph, called the \emph{link of $v$}, whose edges are those $(k-1)$-sets which together with $v$ form edges of $G$. Then in this case, property~\ref{simpreg:c} states that for each $v\in V(G)$, the density of the link of $v$ is close to the density of the link of $v$ supported on $\cJ$.

Observe that most $(k-1)$-sets in $V(G)$ are $\tilde{\Part}$-partite, and the $\tilde{\Part}$-partite $(k-1)$-sets are partitioned by $\tilde{\Part}^*$ into its $(k-1)$-cells. These $(k-1)$-cells are of approximately equal size by Fact~\ref{fact:densecount}, and it follows that for each $v$ the density of the link of $v$ is close to the average, over $(k-1)$-cells $C$ of $\tilde{\Part}^*$, of the density of the link of $v$ on $C$. Now each $(k-1)$-cell $C$ of $\tilde{\Part}^*$ is partitioned by $\Part$ into a large number of $(k-1)$-cells of $\Part^*$, and Lemma~\ref{lem:strReg}\ref{strReg:spread} guarantees that for each $v$ the link density of $v$ on each of these parts is close to the link density of $v$ on $C$. It follows that if $\cJ$ is a slice through $\Part^*$ whose $(k-1)$-cells are chosen with about equal frequency from the $(k-1)$-cells of $\tilde{\Part}^*$ then $\cJ$ satisfies this case of property~\ref{simpreg:c}. We can again prove that this is the likely outcome using McDiarmid's inequality.

\smallskip

The form of McDiarmid's inequality we use is the following.

\begin{theorem}[McDiarmid's inequality, \cite{McDiarmid}]\label{thm:azuma} 
  Let $f:\REALS^n\to\REALS$ be such that there
  exists a vector $(c_1,\ldots,c_n)\in\REALS^n$ with the following property. For
  each $\mathbf{x}$, if $\mathbf{x}'$ differs from $\mathbf{x}$ only in 
  coordinate $i$ then $\big|f(\mathbf{x})-f(\mathbf{x}')\big|\le c_i$. Now if
  $(X_1,\ldots,X_n)$ is a vector of independent random variables, then for each $a> 0$ we have
  \[\Pr\Big(\big|f(X_1,\ldots,X_n)-\Exp f(X_1,\ldots,X_n)\big|\ge a\Big)\le
  2\exp \Big(\frac{-a^2}{2\sum_{i=1}^n c_i^2}\Big)\,. \] \qed
\end{theorem}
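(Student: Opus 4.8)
The plan is to run the classical martingale (Azuma--Hoeffding) argument, exposing the coordinates $X_1,\ldots,X_n$ one at a time. First observe that applying the bounded differences hypothesis coordinate by coordinate along a path shows that changing all $n$ coordinates alters $f$ by at most $\sum_{i=1}^n c_i$; hence $f$ has oscillation at most $\sum_{i=1}^n c_i<\infty$, so $f(X_1,\ldots,X_n)$ is a bounded random variable and every expectation below is finite. Using the independence of the $X_i$, for $0\le i\le n$ set $g_i(x_1,\ldots,x_i):=\Exp\big[f(x_1,\ldots,x_i,X_{i+1},\ldots,X_n)\big]$, which is a well-defined (bounded, measurable) function by Fubini's theorem, and put $Y_i:=g_i(X_1,\ldots,X_i)$. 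Then $Y_0=\Exp f(X_1,\ldots,X_n)$, $Y_n=f(X_1,\ldots,X_n)$, and $(Y_i)_{i=0}^n$ is a martingale for the filtration generated by $X_1,\ldots,X_i$.

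The key step is to control the martingale differences $D_i:=Y_i-Y_{i-1}$. Writing $A_i:=\inf_{x}g_i(X_1,\ldots,X_{i-1},x)-Y_{i-1}$ and $B_i:=\sup_{x}g_i(X_1,\ldots,X_{i-1},x)-Y_{i-1}$ (these are functions of $X_1,\ldots,X_{i-1}$ only), we have $A_i\le D_i\le B_i$. The bounded differences condition applied to the $i$th argument of $g_i$ gives $|g_i(x_1,\ldots,x_{i-1},x)-g_i(x_1,\ldots,x_{i-1},x')|\le c_i$ for all $x,x'$, whence $B_i-A_i\le c_i$; since $\Exp[D_i\mid X_1,\ldots,X_{i-1}]=0$ forces $0\in[A_i,B_i]$, this yields $|D_i|\le c_i$. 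For a mean-zero random variable taking values in $[-c_i,c_i]$, bounding $e^{\lambda t}$ by its chord over that interval gives the conditional estimate $\Exp\big[e^{\lambda D_i}\mid X_1,\ldots,X_{i-1}\big]\le\cosh(\lambda c_i)\le e^{\lambda^2 c_i^2/2}$ for every $\lambda\in\REALS$.

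Iterating this from $i=n$ down to $i=1$ — at each stage conditioning on $X_1,\ldots,X_{i-1}$ and pulling out the factor $e^{\lambda(D_1+\cdots+D_{i-1})}$, which is measurable with respect to it — gives $\Exp\big[e^{\lambda(Y_n-Y_0)}\big]\le\exp\big(\tfrac{\lambda^2}{2}\sum_{i=1}^n c_i^2\big)$. Markov's inequality applied to $e^{\lambda(Y_n-Y_0)}$ then gives, for $\lambda>0$,
\[\Prob\big(f(X_1,\ldots,X_n)-\Exp f(X_1,\ldots,X_n)\ge a\big)\le\exp\Big(\tfrac{\lambda^2}{2}\textstyle\sum_{i=1}^n c_i^2-\lambda a\Big),\]
and taking $\lambda=a/\sum_{i=1}^n c_i^2$ gives the one-sided bound $\exp\big(-a^2/(2\sum_{i=1}^n c_i^2)\big)$. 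Since $-f$ satisfies the bounded differences hypothesis with the same constants $c_i$, the same argument bounds $\Prob\big(\Exp f-f\ge a\big)$ by the same quantity, and a union bound over the two events yields the claimed two-sided inequality.

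I expect the only genuinely delicate point to be the measure-theoretic justification of $B_i-A_i\le c_i$: one must check, using independence, that conditioning on $X_1,\ldots,X_{i-1}$ and then on the value of $X_i$ amounts to evaluating $g_i$ at those coordinates, so that the deterministic bound on the oscillation of $g_i$ in its $i$th argument (which is immediate from the bounded differences hypothesis once the later coordinates have been integrated out by Fubini) transfers to a bound on the conditional range, and hence the absolute value, of $D_i$. The remaining ingredients — the chord estimate for $e^{\lambda t}$, the telescoping of the moment generating function, and the Chernoff optimization over $\lambda$ — are entirely routine.
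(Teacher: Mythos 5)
Your proof is correct, but note that the paper does not prove this statement at all: Theorem~\ref{thm:azuma} is quoted directly from the cited reference \cite{McDiarmid} (hence the \qed with no argument), so there is no in-paper proof to compare against. What you give is the standard Azuma--Hoeffding/Doob-martingale derivation: expose the coordinates one at a time, bound the martingale differences by $|D_i|\le c_i$ using the bounded-differences hypothesis after the later coordinates have been integrated out, apply the chord estimate $\Exp\big[e^{\lambda D_i}\mid X_1,\ldots,X_{i-1}\big]\le\cosh(\lambda c_i)\le e^{\lambda^2c_i^2/2}$, telescope, optimise over $\lambda$, and take a union bound over the two tails. This delivers exactly the constant in the statement, i.e.\ the exponent $-a^2/\big(2\sum_i c_i^2\big)$ (the sharper form $2\exp\big(-2a^2/\sum_i c_i^2\big)$ would require Hoeffding's lemma applied to the conditional ranges $[A_i,B_i]$ rather than the symmetric bound $|D_i|\le c_i$, but the paper only states the weaker version, so your argument suffices). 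Two minor points: the suprema and infima defining $A_i$ and $B_i$ range over an uncountable set, so their measurability is not automatic, but this is harmless because you never need $A_i,B_i$ as random variables --- the bound $|D_i|\le c_i$ follows directly from $D_i=g_i(X_1,\ldots,X_i)-\Exp_{X_i'}\big[g_i(X_1,\ldots,X_{i-1},X_i')\big]$ together with the oscillation bound on $g_i$ in its last argument, and the chord estimate is a pointwise inequality whose conditional expectation uses only $\Exp[D_i\mid X_1,\ldots,X_{i-1}]=0$. Also, $f$ must of course be assumed measurable (implicit in the statement) for the Fubini step; given that, your observation that the total oscillation of $f$ is at most $\sum_i c_i$ correctly guarantees all expectations are finite.
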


We will also make several uses of the following probabilistic statement which shows that $\Prob(E\mid C\cap S)$ and $\Prob(E\mid S)$ are nearly the same when $S$ `almost implies' $C$.

\begin{proposition} \label{prop:condprob}
Let $E, C$ and $S$ be events in some probability space such that $\Prob(S) > 0$ and $\Prob(C^c \mid S) \leq \eps$ for some constant $\eps \leq \tfrac{1}{2}$. Then $\Prob(E \mid C \cap S) = \Prob(E \mid S) \pm 2\eps$.
\end{proposition}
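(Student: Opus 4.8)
The plan is to expand both conditional probabilities in terms of joint probabilities and use the hypothesis $\Prob(C^c\mid S)\le\eps$ to bound the resulting discrepancy; the argument is a short direct computation.

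First I would verify that $\Prob(E\mid C\cap S)$ is well-defined. Since $\Prob(C^c\mid S)\le\eps\le\tfrac12$, we have $\Prob(C\mid S)\ge\tfrac12$, and therefore $\Prob(C\cap S)=\Prob(C\mid S)\,\Prob(S)\ge\tfrac12\Prob(S)>0$. (This is the only place the hypothesis $\eps\le\tfrac12$ is needed.)

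Next, decompose $\Prob(E\cap S)=\Prob(E\cap C\cap S)+\Prob(E\cap C^c\cap S)$, divide through by $\Prob(S)>0$, and rewrite the first summand as $\Prob(E\mid C\cap S)\,\Prob(C\mid S)$. Using $\Prob(C\mid S)=1-\Prob(C^c\mid S)$ this yields
\[
\Prob(E\mid S)=\Prob(E\mid C\cap S)\bigl(1-\Prob(C^c\mid S)\bigr)+\frac{\Prob(E\cap C^c\cap S)}{\Prob(S)},
\]
and hence
\[
\Prob(E\mid S)-\Prob(E\mid C\cap S)=\frac{\Prob(E\cap C^c\cap S)}{\Prob(S)}-\Prob(E\mid C\cap S)\,\Prob(C^c\mid S).
\]
The first term on the right lies in $[0,\eps]$ because $E\cap C^c\cap S\subseteq C^c\cap S$, so it is at most $\Prob(C^c\mid S)\le\eps$; the second term also lies in $[0,\eps]$ because $\Prob(E\mid C\cap S)\le 1$ and $\Prob(C^c\mid S)\le\eps$. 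Consequently the difference lies in $[-\eps,\eps]$, giving $\Prob(E\mid C\cap S)=\Prob(E\mid S)\pm\eps$, which is even slightly stronger than the stated bound of $2\eps$.

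There is no real obstacle here: the content is entirely elementary, and the only subtlety is the well-definedness check above. I would present the computation essentially as displayed, then conclude.
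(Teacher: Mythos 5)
Your proof is correct, and it takes a slightly different (and in fact sharper) route than the paper's. The paper sandwiches $\Prob(E \mid C \cap S)$ between two one-sided estimates: for the upper bound it uses $\Prob(C \cap S) \geq (1-\eps)\Prob(S)$ together with $\tfrac{1}{1-\eps} \leq 1+2\eps$ (this is where its hypothesis $\eps \leq \tfrac12$ enters and where the factor $2$ in $2\eps$ originates), and for the lower bound it uses $\Prob(E \cap C \cap S) \geq \Prob(E \cap S) - \Prob(C^c \cap S)$. You instead write down the exact identity
\[
\Prob(E\mid S)-\Prob(E\mid C\cap S)=\frac{\Prob(E\cap C^c\cap S)}{\Prob(S)}-\Prob(E\mid C\cap S)\,\Prob(C^c\mid S)
\]
coming from the law of total probability, and observe that each of the two terms on the right lies in $[0,\eps]$. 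This yields the stronger conclusion $\Prob(E\mid C\cap S)=\Prob(E\mid S)\pm\eps$, with the hypothesis $\eps\leq\tfrac12$ needed only to ensure $\Prob(C\cap S)>0$ (indeed any $\eps<1$ would do for that). Both arguments are elementary and short; yours has the small advantage of making the error term an exact expression that is then bounded, rather than accumulating losses through a chain of inequalities, and it would let one state the proposition with $\eps$ in place of $2\eps$ if that were ever needed. Your well-definedness check matches the observation the paper makes at the start of its proof.
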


\begin{proof}
Observe that $\Prob(C \cap S) = \Prob(S)(1-\Prob(C^c \mid S)) > 0$, so $\Prob(E \mid C \cap S)$ is well-defined, and
\begin{align*}
(1&+2\eps)\Prob(E \mid S)
\geq
\frac{\Prob(E \cap S)}{(1-\eps) \Prob(S)}
\geq
\frac{\Prob(E \cap S)}{\Prob(S) - \Prob(C^c \cap S)}\\
&\geq
\Prob(E \mid C \cap S)
=
\frac{\Prob(E \cap C \cap S)}{\Prob(C \cap S)}
\geq
\frac{\Prob(E \cap S) - \Prob(E \cap C^c \cap S)}{\Prob(S)} \\
&\geq
\frac{\Prob(E \cap S)}{\Prob(S)} - \frac{\Prob(C^c \cap S)}{\Prob(S)}
\geq
\Prob(E \mid S) - \eps.
\end{align*}
\end{proof} 

\begin{proof}[Proof of Lemma~\ref{lem:simpreg}]
Given integers $q$, $t_0$ and $s$,
a constant $\eps_k$, and functions $r:\NATS\to\NATS$ and $\eps:\NATS\to(0,1]$, we define
further constants as follows. Without loss of generality we assume that $\eps_k
\leq 1$, and define 
\begin{equation}\label{simpreg:etas}
t_0^*:=\max \left(t_0,
\frac {128k^2}{\eps_k^{2+2/\eps_k+k}},
\frac{4096k!^4k2^{2k}q^ks^2}{\eps_k^{4}}\right)\,.
\end{equation}
Next we choose a constant
\begin{equation}\label{simpreg:epsks}
  \eps_k^* \le
  \frac{\eps_k^{2k+1}}{40s\cdot 2^k}\,.
\end{equation}
so that $\eps_k^*$ is sufficiently small to apply Lemma~\ref{lem:count} with $\eps_k^*$, $\eps_k/100$ and $1/\eps_k$ in place of $\eps_k$, $\beta$ and $s$ respectively.
We choose $p:\NATS\to\NATS$ to be any monotone increasing function such that for
all $x\in\NATS$ we have
\begin{equation}\label{simpreg:p}
  2\exp\Big(-2^{-x-9}\eps_k^2p(x)^2\prod_{j=2}^{k-1}
\big(\tfrac{1}{x}\big)^{2\binom{x}{j}}\Big)<2^{-xp(x)}
\prod_{j=2}^{k-1}\big(\tfrac{1}{x}\big)^{\binom{x}{j}}\,.
\end{equation}
It may not be immediately obvious that this is possible, but observe that if $p(x) > \max\big(2x/c_1(x),2/c_2(x)\big)$ then we have
\[2\exp\big(-c_1(x)p(x)^2\big)<2\exp\big(-2xp(x)\big)<c_2(x)2^{-xp(x)}\,,\]
from which~\eqref{simpreg:p} follows by appropriate choice of $c_1,c_2$.
We choose strictly monotone functions
$r^*:\NATS\to\NATS$ and $\eps^*:\NATS\to(0,1]$ such that 
\begin{equation}\label{simpreg:epssrs}
  \eps^*(t_1) \le \min \left(\eps(t_1), \frac{\eps_k}{10\cdot
2^{2^{1/\eps_k}}}, \frac{1}{2t_1^2} \right)\text{ and }r^*(t_1) \geq
r(t_1)
\end{equation}
for any $t_1 \in \NATS$. Moreover, we make these choices so that for
any $t_1 \in \NATS$ we can apply Lemma~\ref{lem:count} to count graphs on up to $1/\eps_k$ vertices, with 
$r^*(t_1)$, $\eps^*(t_1)$, $\eps_k^*$ and $\eps_k/100$ in place of 
$r$, $\eps$, $\eps_k$ and $\beta$ respectively, with $1/t_1$ in place of each $d_i$, and so that we can apply
Fact~\ref{fact:densecount} with $\eps_k/10$ in place of $\beta$ to any
$(t_0^*,t_1,\eps^*(t_1))$-equitable $(k-1)$-complex with
sufficiently large clusters.
Let $n^*_0$, $t_1^*$ and $t_1$ be obtained by applying
Lemma~\ref{lem:strReg} with inputs $q, t_0^*, s, \eps_k^*$ and functions
$r^*(\cdot)$, $\eps^*(\cdot)$ and $p(\cdot)$. Note that we
have $t_1=p(t_1^*)t_1^*$ by Lemma~\ref{lem:strReg}. Finally, let $n_0 \geq n^*_0$ be sufficiently
large to apply Lemma~\ref{lem:count} and Fact~\ref{fact:densecount} with $m_0 =
n_0/t_1$ and all other constants as before. For the remainder of the proof we
write $\eps$, $\eps^*$, $r$ and $r^*$ to denote $\eps(t_1)$,
$\eps^*(t_1)$, $r(t_1)$ and $r^*(t_1)$ respectively.

Let $V$ be a set of $n \geq n_0$ vertices, where $n$ is divisible by $t_1!$,
and let $\Qart$ partition $V$ into at most $q$ parts of equal size. Let
$G_1, \dots, G_s$ be edge-disjoint $k$-graphs on the vertex set $V$. Then we
must show that there exists a $(k-1)$-complex $\cJ$ on $V$ which is a
$(t_0,t_1,\eps,\eps_k,r)$-regular slice for each $G_i$, whose ground
partition $\Part$ refines $\Qart$, and which satisfies
properties~\ref{simpreg:a},~\ref{simpreg:b} and~\ref{simpreg:c} of the lemma. We
start by applying Lemma~\ref{lem:strReg} (with the inputs stated above), which
yields partitions $\Part^*$ and $\tilde{\Part}^*$ with the properties stated in that
lemma. In particular, $\Part^*$ and $\tilde{\Part^*}$ are both $(t_0^*,t_1,\eps^*)$-equitable with the same density vector $\mathbf{d} = (d_{k-1}, \dots, d_2)$, the ground partition $\Part$ of $\Part^*$ refines $\Qart$, and each $G_i$ is $(\eps^*_k,r^*)$-regular
with respect to $\Part^*$. 
 
\begin{claim}\label{claim:numcell} For each $2 \le i \le k-1$ and each $\Part$-partite set $Q \in \binom{V}{i}$, the number of $i$-cells of $\Part^*$ supported on the polyad $\hat{P}(Q, \Part^*)$ is precisely equal to $1/d_i$. Moreover, the same statement holds with $\tilde{\Part}$ and $\tilde{\Part^*}$ in place of $\Part$ and $\Part^*$ respectively. 
\end{claim}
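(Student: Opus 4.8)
Fix a $\Part$-partite set $Q\in\binom{V}{i}$ with $2\le i\le k-1$ and write $A:=i(Q)$. The plan is to understand how the partition $\Part_A$ of $\cross_A$ interacts with the polyad $\hat P(Q)$: I would first identify the $i$-cells of $\Part^*$ supported on $\hat P(Q)$ in purely combinatorial terms, then use equitability to pin down their relative densities, and finally conclude by an integrality argument.

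For the combinatorial step I would show that the nonempty $i$-cells of $\Part^*$ supported on $\hat P(Q)$ are exactly the parts of $\Part_A$ that meet $K_i(\hat P(Q))$, and that these partition $K_i(\hat P(Q))$. One direction is immediate: a cell supported on $\hat P(Q)$ is by definition contained in $K_i(\hat P(Q))$, and this set is nonempty because $Q$ itself lies in it (the $(i-1)$-subsets of $Q$ are precisely the $(i-1)$-sets whose cells make up $\hat P(Q)$). For the converse, take $S\in K_i(\hat P(Q))$ and let $C_S$ be the part of $\Part_A$ containing it. The defining property of a $(k-1)$-family of partitions says that for each $B\in\binom{A}{i-1}$ every member $T$ of $C_S$ has $T\cap V_B$ in the same cell of $\Part_B$ as $S\cap V_B$, which by the choice of $S$ is precisely the layer of $\hat P(Q)$ of index $B$; hence $\hat P(S)=\hat P(Q)$, so $C_S$ is supported on $\hat P(S)=\hat P(Q)$ and in particular $C_S\subseteq K_i(\hat P(Q))$.

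For the density step, let $C=C_{Q'}$ be an $i$-cell supported on $\hat P(Q)$, so $\hat P(Q')=\hat P(Q)$ by the previous paragraph. Extend $Q'$ to any $\Part$-partite $k$-set $Q^{+}$; since $\Part^{*}$ is $(t_0^{*},t_1,\eps^{*})$-equitable, the $(k-1)$-complex $\cJ(Q^{+})$ is $(\mathbf d,\eps^{*},\eps^{*},1)$-regular, so its layer $\cJ(Q^{+})^{(i)}[V_A]=C_{Q'}$ is $(d_i,\eps^{*})$-regular with respect to $\cJ(Q^{+})^{(i-1)}[V_A]=\hat P(Q)$. Testing this against the one-element family $\{\hat P(Q)\}$ (legitimate since $K_i(\hat P(Q))\neq\emptyset$ and $\eps^{*}<1$) gives $|C|=(d_i\pm\eps^{*})\,|K_i(\hat P(Q))|$. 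Summing over the $N$ cells supported on $\hat P(Q)$, which partition $K_i(\hat P(Q))$, yields $1=N\,(d_i\pm\eps^{*})$, so $N$ lies between $1/(d_i+\eps^{*})$ and $1/(d_i-\eps^{*})$; since $1/d_i\in\NATS$ while $d_i\ge 1/t_1$ and $\eps^{*}\le 1/(2t_1^{2})$, this range contains no integer other than $1/d_i$, and so $N=1/d_i$. The `moreover' statement follows by the identical argument, using that $\tilde\Part^{*}$ is also $(t_0^{*},t_1,\eps^{*})$-equitable with the same density vector $\mathbf d$.

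I expect the combinatorial step to be the only genuinely delicate point: the crux is that a cell being supported on $\hat P(Q)$ forces its own polyad $\hat P(Q')$ to coincide with $\hat P(Q)$, which is exactly what lets us apply the equitability of $\cJ(Q^{+})$ to it, and one must be a little careful that empty parts of $\Part_A$ (if present) are correctly left out of the count, which they are since the relevant cells are those meeting the nonempty set $K_i(\hat P(Q))$. The density and integrality steps are then routine.
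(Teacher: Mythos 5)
Your proposal is correct and follows essentially the same route as the paper: each $i$-cell supported on $\hat P(Q)$ is $(d_i,\eps^*)$-regular with respect to $\hat P(Q)$ and hence has size $(d_i\pm\eps^*)|K_i(\hat P(Q))|$, these cells partition the nonempty set $K_i(\hat P(Q))$, and the resulting bounds on $N$ together with $1/d_i\in\NATS$, $d_i\ge 1/t_1$ and $\eps^*\le 1/(2t_1^2)$ pin down $N=1/d_i$. The paper phrases the final step as two separate contradictions ($N\ge 1/d_i+1$ and $N\le 1/d_i-1$) rather than your interval argument, and leaves your (correct) combinatorial step — that the cells supported on $\hat P(Q)$ are pairwise disjoint and cover $K_i(\hat P(Q))$ — implicit, but these are only presentational differences.
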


\begin{claimproof} We prove the claim for $\Part^*$; the proof for $\tilde{\Part}^*$ is identical. Recall that part~\ref{equitfam:c} of the definition of an equitable family of partitions implies that $1/d_i$ must be an integer, and that each $i$-cell of $\Part^*$ supported on $\hat{P}(Q)$ is $(d_i,\eps^*,1)$-regular with respect to $\hat{P}(Q)$.
In particular, the number of $i$-sets in each of these $i$-cells is $(d_i\pm\eps^*)\big|K_i\big(\hat{P}(Q)\big)\big|$. 
Suppose for a contradiction that for some polyad $\hat{P}(Q)$, the number of $i$-cells of $\Part^*$ supported on $\hat{P}(Q)$ is at least $\tfrac{1}{d_i}+1$.
Since these $i$-cells are pairwise-disjoint and cover $K_i\big(\hat{P}(Q)\big)$, we conclude that
\[\big|K_i\big(\hat{P}(Q)\big)\big|\ge\big(\tfrac{1}{d_i}+1\big)(d_i\pm\eps^*)\big|K_i\big(\hat{P}(Q)\big)\big|\,.\]By Fact~\ref{fact:densecount} this number is non-zero, hence $1\ge\big(\tfrac{1}{d_i}+1\big)(d_i\pm\eps^*)$, which is a contradiction, since we have $d_i\ge 1/t_1$ and $\eps^*\le 1/(2t_1^2)$. A similar argument shows that the number of $i$-cells of $\Part^*$ supported on $\hat{P}(Q)$ is bigger than $\tfrac{1}{d_i}-1$, giving the desired result.
\end{claimproof}

We now construct our regular slice $\cJ$, whose clusters are the clusters of $\Part^*$ (that is, the parts of $\Part$), by applying Algorithm~\ref{alg:simp} to $\Part^*$. We claim
that with positive probability $\cJ$ satisfies the conclusions of
Lemma~\ref{lem:simpreg}.
 
\begin{algorithm}
  \caption{Taking a slice}\label{alg:simp}
  let $\cJ = \{\emptyset\} \cup \{\{v\} : v \in V\}$\;
  \ForEach{$2 \le i \le k-1$}{
   \ForEach{$X \in \binom{\Part}{i}$}{
    let $\mathcal{C}:=\{C\colon\, C \text{ is an $i$-cell of $\Part^*$ and is supported on } \cJ_{X^<}^{(i-1)}\}$\;
    choose $C\in\mathcal{C}$ uniformly at random\;
    let $\cJ := \cJ \cup C$\;
   }
  }
  return $\cJ$\;
\end{algorithm}

It is immediate from the algorithm that the output $\cJ$ is a $(k-1)$-complex which is a slice through $\Part^*$. 
Since $\Part^*$ is $(t_0^*, t_1, \eps^*)$-equitable with density vector $\mathbf{d} = (d_{k-1}, \dots, d_2)$, $\cJ$ is also $(t_0^*, t_1, \eps^*)$-equitable with the same density vector. Let $t$ denote the number of clusters of $\cJ$, so
$t_0^* \leq t \leq t_1$, and let $m := n/t$, so $m$ is the common size of each cluster. Note that by Claim~\ref{claim:numcell} every slice through $\Part^*$ has probability precisely $\prod_{i=2}^{k-1} d_i^{\binom{t}{i}}$ of being selected as $\cJ$. 

To show that $\cJ$ is likely to be a regular slice for each $G_i$, we must bound the
number of \emph{irregular polyads}, that is, polyads $\hat{\cJ}_X$ for which some $G_i$ with $1\le i\le s$ is not $(\eps^*_k,
r^*)$-regular with respect to $\hat{\cJ}_X$. Since each $G_i$ is $(\eps^*_k,r^*)$-regular with respect to $\Part^*$, the number of $\Part$-partite sets $Q \in \binom{V}{k}$ for which $G_i$ is not $(\eps_k^*,r^*)$-regular with respect to $\hat{P}(Q)$ is at most $\eps_k^*\binom{n}{k}$. On the other hand, since $\eps^*$ is chosen small enough so that we can apply Fact~\ref{fact:densecount} with $\eps_k/10$ in place of $\beta$, each
polyad $\hat{P}(Q)$ of $\Part^*$ supports
\[\big(1 \pm \tfrac{\eps_k}{10}\big) m^k \prod_{i=2}^{k-1} d_i^{\binom{k}{i}}\]
$\Part$-partite members of $\binom{V}{k}$. It follows that the number of irregular polyads in $\Part^*$ is at most
\begin{equation}
\label{eq:irregpoly}
\frac{s\eps_k^*\binom{n}{k}}{\big(1 - \tfrac{\eps_k}{10}\big)m^k\prod_{i=2}^{k-1} d_i^{\binom{k}{i}}}\le 2 s \eps^*_k \binom{t}{k} \prod_{i=2}^{k-1}
d_i^{-\binom{k}{i}}\,.
\end{equation}
By Claim~\ref{claim:numcell} the probability that a given polyad of $\Part^*$ is chosen
for $\cJ$ is precisely $\prod_{i=2}^{k-1} d_i^{\binom{k}{i}}$. So by linearity of expectation, the expected number of $k$-sets of clusters $X$ such that $\cJ_X$ is an irregular polyad is at
most $2s \eps^*_k \binom{t}{k}$. By Markov's inequality, we conclude that
with probability at least $1/2$ this number is at most $4s \eps^*_k
\binom{t}{k}$, which by~\eqref{simpreg:epsks} is at most $\eps_k
\binom{t}{k}$. In particular, $\cJ$ is a $(t_0, t_1, \eps, \eps_k,
r)$-regular slice for each $G_i$ with probability
at least $1/2$. We stress however that later in this proof we will make use of the stronger property that $\cJ$ contains at most $4s\eps^*_k\binom{t}{k}$ polyads with respect to which some $G_i$ is not $(\eps^*_k,r^*)$-regular.

It remains to show that properties~\ref{simpreg:a},~\ref{simpreg:b}
and~\ref{simpreg:c} each hold with high probability. In each case, this
amounts to verifying that some random variables are likely to all
be close to certain values, and our proof in each case follows the pattern of
first showing that the expectations of these random variables are indeed close
to the desired values, then showing that the random variables are sufficiently
concentrated to apply a union bound. We begin with property~\ref{simpreg:a}.

The following Claim~\ref{lem:expect} shows that for any $i$ the reduced
$k$-graph $R(G_i)$ of $G_i$ (with respect to $\cJ$) has approximately the same
$H$-density as $G_i$ for each small $H$ in expectation, and that this remains
true when considering the subgraph induced on a large subset of the clusters.
For this, we say that a copy of $H$ in $G_i$ is \emph{$\Part$-crossing} if it
has at most one vertex in any part of $\Part$.

If it happens to be the case that the density of $H$-copies in $G$ equals
the density of $\Part$-crossing $H$-copies, and $\eps$ and $\eps_k$ are zero,
this claim is a triviality. The number of $\Part$-crossing copies of $H$ in 
$G_i[\bigcup X]$ then equals the number obtained by applying
Lemma~\ref{lem:count} to $\Part^*$, which in this case means that $d_H\big(R(G_i)[X]\big)$
equals $d_H\left(G \left[\bigcup X \right]\right)$ precisely.

Of course none of these assumptions
are true, and consequently errors are introduced: the proof of the following
claim amounts to showing that since the assumptions are `almost' true, the
errors are small. It is convenient in the proof to use probabilistic language:
the probability that a random injective map from $V(H)$ to $V(G)$ is a graph embedding is exactly the density of $H$-copies in $G$.

\begin{claim}\label{lem:expect}
Let $H$ be a labelled $k$-graph on $h \leq 1/\eps_k$ vertices, $X$ be a set of
at least $\eps_k t$ clusters of $\cJ$, and $G = G_i$ for some $i \in [s]$. Then 
$$\left| \Exp_\cJ d_H\big(R(G)[X]\big) - d_H\left(G \left[\bigcup X \right]\right)\right| \leq \eps_k/2.$$
\end{claim}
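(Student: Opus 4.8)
The plan is to argue probabilistically. By definition $d_H(R(G)[X])$ is the average of $\prod_{e\in E(H)}\reld_\cJ(\phi(e))$ over all injections $\phi\colon V(H)\to X$, so $\Exp_\cJ d_H(R(G)[X])=\Exp_\phi\Exp_\cJ\big[\prod_{e}\reld_\cJ(\phi(e))\big]$ with $\phi$ uniformly random. Fixing $\phi$ with image a set $W$ of $h$ clusters, the inner expectation depends only on the part of the random slice lying on $W$, which is distributed exactly as the output of Algorithm~\ref{alg:simp} run on the restriction of $\Part^*$ to $W$ (in the algorithm the cell chosen for a cluster set inside $W$ depends only on the cells chosen for smaller cluster sets inside $W$). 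We may assume $H$ has at least one edge, as the claim is otherwise trivial; write $\Hy$ for the down-closure complex of $H$ and $m=n/t$ for the cluster size.

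The core of the argument compares $\Exp_\cJ\prod_{e}\reld_\cJ(\phi(e))$ with $N(\phi)/m^h$, where $N(\phi)$ is the number of $\Part$-crossing labelled copies of $H$ in $G$ whose vertices respect the cluster assignment $\phi$. Let $n_H^\phi(\cJ)$ be the number of such copies $\psi$ that additionally satisfy $\psi(f)\in\cJ$ for every $f$ with $2\le|f|\le k-1$ that is contained in an edge of $H$. First I compute $\Exp_\cJ n_H^\phi(\cJ)$ exactly: for a fixed crossing copy $\psi$, the probability over the random slice that all of $\psi$'s lower structure is chosen into $\cJ$ equals $\prod_{i=2}^{k-1}d_i^{e_i(\Hy)}$, because by Claim~\ref{claim:numcell} exactly $1/d_i$ cells are available for each relevant cluster set at level $i$, and conditioned on the correct level-$(i-1)$ choices the level-$i$ choices made by Algorithm~\ref{alg:simp} are uniform and mutually independent; summing over $\psi$ yields $\Exp_\cJ n_H^\phi(\cJ)=N(\phi)\prod_{i=2}^{k-1}d_i^{e_i(\Hy)}$. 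Second, when $\cJ$ is \emph{$\phi$-good}, meaning no polyad $\hat{\cJ}_{\phi(e)}$ with $e\in E(H)$ is irregular, the Counting Lemma (Lemma~\ref{lem:count}), applied to copies of $H$ in the subgraph of $G$ supported on $\cJ^{(k-1)}$ with the parameters $\eps^*,\eps_k^*$ and error $\eps_k/100$ prepared in \eqref{simpreg:epsks}--\eqref{simpreg:epssrs}, gives $n_H^\phi(\cJ)=\big(\prod_{e}\reld_\cJ(\phi(e))\pm\tfrac{\eps_k}{100}\big)\big(\prod_{i=2}^{k-1}d_i^{e_i(\Hy)}\big)m^h$. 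Dividing, on a $\phi$-good slice $\prod_{e}\reld_\cJ(\phi(e))$ equals $n_H^\phi(\cJ)\big/\big((\prod_i d_i^{e_i(\Hy)})m^h\big)$ up to $\eps_k/100$.

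Splitting $\Exp_\cJ\prod_{e}\reld_\cJ(\phi(e))$ according to whether $\cJ$ is $\phi$-good, the $\phi$-good part combined with the exact identity for $\Exp_\cJ n_H^\phi(\cJ)$ gives $N(\phi)/m^h$ up to $\eps_k/100$ together with a correction for the $\phi$-bad slices. These $\phi$-bad contributions are bounded by $2\rho(\phi)$, where $\rho(\phi):=\sum_{e\in E(H)}I(\phi(e))\prod_{i=2}^{k-1}d_i^{\binom{k}{i}}$ and $I(Y)$ is the number of irregular polyads on a $k$-set $Y$ of clusters: the contribution of $\phi$-bad slices to $\Exp_\cJ\prod_e\reld_\cJ(\phi(e))$ is at most $\Pr[\cJ\text{ is }\phi\text{-bad}]\le\rho(\phi)$ because $\hat{\cJ}_{\phi(e)}$ is a uniformly random polyad on $\phi(e)$, and the contribution of $\phi$-bad slices to $\Exp_\cJ n_H^\phi(\cJ)$ equals $\prod_i d_i^{e_i(\Hy)}$ times the number of crossing copies of $H$ (respecting $\phi$) one of whose edges lands on an irregular polyad, which by Fact~\ref{fact:densecount} (each polyad supports at most $(1+\tfrac{\eps_k}{10})m^k\prod_i d_i^{\binom{k}{i}}$ crossing $k$-sets) is at most $2m^h\prod_i d_i^{\binom{k}{i}}\sum_{e}I(\phi(e))$. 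Thus $\big|\Exp_\cJ\prod_e\reld_\cJ(\phi(e))-N(\phi)/m^h\big|\le\tfrac{\eps_k}{100}+2\rho(\phi)$ for each $\phi$. Averaging: $\phi(e)$ is a uniform random $k$-subset of $X$, and $\sum_Y I(Y)\le 2s\eps_k^*\binom{t}{k}\prod_i d_i^{-\binom{k}{i}}$ by \eqref{eq:irregpoly}, while $|X|\ge\eps_k t$ gives $\binom{t}{k}/\binom{|X|}{k}\le(2/\eps_k)^k$, so \eqref{simpreg:epsks} yields $\Exp_\phi\rho(\phi)\le\eps_k/20$; moreover $\Exp_\phi[N(\phi)/m^h]$ is the number of $\Part$-crossing labelled copies of $H$ in $G[\bigcup X]$ divided by $\binom{|X|}{h}h!m^h$, which differs from $d_H(G[\bigcup X])$ by $O(h^2/|X|)$ — from the non-crossing copies and from $\binom{|X|m}{h}$ versus $\binom{|X|}{h}m^h$ — hence by at most $\eps_k/10$ by the choice of $t_0^*$ in \eqref{simpreg:etas}. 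The three error contributions $\eps_k/100$, $2\Exp_\phi\rho(\phi)\le\eps_k/10$ and $\eps_k/10$ sum to less than $\eps_k/2$.

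I expect the main obstacle to be controlling the contribution of irregular polyads: for a single cluster assignment $\phi$ the $k$-sets $\phi(e)$ might carry an unfavourably large share of the irregular polyads of $\Part^*$, so there is no useful estimate per $\phi$; the resolution is to keep $\rho(\phi)$ until the averaging step, where the global bound \eqref{eq:irregpoly} together with $|X|\ge\eps_k t$ makes $\Exp_\phi\rho(\phi)$ small. A minor technical point is that the Counting Lemma must be applicable even though only $h\le1/\eps_k$ clusters are involved, which holds because $\eps^*(t_1)$ was chosen small relative to $1/t_1\le\min_i d_i$ and the hierarchy in Lemma~\ref{lem:count} only requires the number of clusters to be at most $1/\eps_k$.
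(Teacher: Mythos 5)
Your proposal is correct and follows essentially the same route as the paper's proof: both decompose by the cluster assignment, use Claim~\ref{claim:numcell} to get the exact expectation of the number of slice-supported copies, apply the Counting Lemma on slices whose relevant polyads are all regular, bound the irregular contribution only after averaging over assignments via~\eqref{eq:irregpoly} and $|X|\ge\eps_k t$, and finish by comparing crossing with non-crossing copies. The only differences are presentational (you work with counts $N(\phi)$ and $n_H^\phi(\cJ)$ where the paper phrases the same computation as conditional probabilities $\Prob(\HOM\mid\CROSS_{\mathcal{C}})$ for a random vertex map, and you bound the bad-slice contribution via Fact~\ref{fact:densecount} where the paper applies the Counting Lemma to the full polyad-supported $k$-graph), and these do not change the argument.
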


\begin{claimproof}
Let the vertices of $H$ be labelled with the integers $1, \dots, h$.
Choose an injective map $\phi : V(H) \to \bigcup X$ uniformly at random. Say
that $\phi$ is \emph{$\Part$-crossing} if no two vertices of $H$ are mapped to the same
part of $\Part$, and let $\CROSS$ be the event that $\phi$ is $\Part$-crossing.
Also let $\HOM$ denote the event that $\phi$ is an embedding of $H$ into $G$.
We prove the claim by calculating the conditional probability $\Prob(\HOM
\mid \CROSS)$ (that is, the proportion of $\Part$-crossing $h$-tuples which form $\Part$-crossing copies of $H$ in $G[\bigcup X]$) in two different ways.

First, note that we have $\Prob(\HOM) =
d_H(G[\bigcup X])$ by definition. Recall that $m = n/t$ is the size of each
cluster of $\cJ$. Then there are $h!\binom{|X|m}{h}$ $h$-tuples in $\bigcup X$. At
most $h!\binom{|X|m}{h-1}hm$ of these $h$-tuples are not $\Part$-crossing. 
Since $t \geq t_0^* \geq 20/\eps_k^4$ by~\eqref{simpreg:etas} and $|X|\ge\eps_k t$ we have
$h \leq 1/\eps_k \leq \eps_k t \leq |X|m/2$. 
So the
probability of the complement $\CROSS^c$ is at most
\[\Prob(\CROSS^c) \leq \frac{h!\binom{|X|m}{h-1}hm}{h!\binom{|X|m}{h}} = \frac{h^2m}{|X|m - h+1} \leq
\frac{2h^2}{|X|} \leq \frac{2}{\eps_k^3 t} \leq \frac{\eps_k}{10}\,.\] 
We may therefore apply Proposition~\ref{prop:condprob} with $\HOM, \CROSS$ and the certain event in place of $E, C$ and $S$ respectively, which gives
\begin{equation}\label{expect:dHHomCross} 
\Prob(\HOM \mid \CROSS) = \Prob(\HOM) \pm \frac{\eps_k}{5} = d_H\left(G\left[\bigcup X\right]\right) \pm \frac{\eps_k}{5}\,.
\end{equation}

We now turn to the second way of evaluating $\Prob(\HOM\mid\CROSS)$, this time in terms of $\Exp_\cJ d_H\big(R(G)[X]\big)$. For any 
$h$-tuple $\mathcal{C} := (C_1, \dots, C_h)$ of distinct clusters of $\cJ$, we say that a copy of $H$ in $G$ is \emph{$\mathcal{C}$-crossing} if vertex $j$ of $H$ lies in the cluster $C_j$ for each $j \in [h]$. Similarly, we say that $\phi$ is $\mathcal{C}$-crossing if $\phi(j) \in C_j$ for each $j \in [h]$, an event which we denote by $\CROSS_\mathcal{C}$.

Fix some $h$-tuple $\mathcal{C} := (C_1, \dots, C_h)$, and let $\Hy$ be the $k$-complex generated by the down-closure of $H$. For any slice $\cJ$ though $\Part^*$ and any $e \in \Hy$ let $\cJ_e$ denote the cell of $\cJ$ on the clusters $\{C_j \colon j \in e\}$, and define
 $\cJ(H,\mathcal{C}):= \bigcup_{e \in \Hy} \cJ_e$ (so if $\phi$ is $\mathcal{C}$-crossing, then $\cJ(H, \mathcal{C})$ is the subcomplex of $\cJ$ consisting of all cells of $\cJ$ on sets of clusters to which $\phi$ maps an edge of $\Hy$). We can then extend $\cJ(H, \mathcal{C})$ to a slice through $\Part^*$ by appropriate choices of cells on each remaining set of clusters, and by Claim~\ref{claim:numcell}, the number of such extensions does not depend on~$\cJ$. We therefore have
 \begin{equation}\label{eq:expect:exc} 
\Exp_\cJ
 \prod_{e \in H} \reld(\{C_j : j \in e\})=\Exp_{\cJ(H,\mathcal{C})}
 \prod_{e \in H} \reld(\{C_j : j \in e\})\,.
 \end{equation}
 Let $q_{\cJ(H,\mathcal{C})}$ denote the number of $\mathcal{C}$-crossing copies of $H$ in $G$ which are supported on
 $\cJ(H,\mathcal{C})$. 
Then for each fixed $\cJ(H,\mathcal{C})$ we have
 \[\Prob(\HOM\text{ and } \phi(H)\text{ is supported on }\cJ(H,\mathcal{C})\mid \CROSS_\mathcal{C})=\frac{q_{\cJ(H,\mathcal{C})}}{m^h}\,.\]
Observe that for each $\mathcal{C}$-crossing copy of $H$ in $G$ there is precisely one possibility for $\cJ(H,\mathcal{C})$ on which this copy is supported. Moreover, by Claim~\ref{claim:numcell} there are precisely $\prod_{j=2}^{k-1}d_j^{-e_j(\Hy)}$ different choices for $\cJ(H,\mathcal{C})$, each of which is equally likely to occur. Hence we have 
 \begin{equation}\begin{split}\label{eq:expect:hcc}
  \Prob(\HOM \mid \CROSS_\mathcal{C})&=\frac{1}{m^h}\sum_{\cJ(H,\mathcal{C})}q_{\cJ(H,\mathcal{C})}\\
  &=\frac{1}{m^h}\Big(\prod_{j=2}^{k-1}d_j^{-e_j(\Hy)}\Big)\Exp_{\cJ(H,\mathcal{C})}q_{\cJ(H,\mathcal{C})} \,.
 \end{split}\end{equation}

If $\cJ$ has the property that $G$ is $(\eps_k^*, r^*)$-regular with respect to
$\hat{\cJ_Z}$ for every $k$-set $Z \subseteq \mathcal{C}$, then, because we chose $\eps^*_k$, $\eps^*$ and $r^*$ such that we can apply Lemma~\ref{lem:count} with $\beta=\eps_k/100$, the number of $\mathcal{C}$-crossing copies of $H$ in $G$ which are supported on $\cJ(H,\mathcal{C})$ is
\[ q_{\cJ(H,\mathcal{C})}=\left(\prod_{e\in H} \reld\big(\{C_j:j\in e\} \big) \pm
\frac{\eps_k}{100} \right) \left(\prod_{j=2}^{k-1}d_j^{e_j(\mathcal{H})}\right)
m^{h}\,.\]

If on the other hand $\cJ$ does not satisfy this property, we also need some bound on 
$q_{\cJ(H,\mathcal{C})}$. This number is obviously at least zero, and is at most the number 
of copies of $H$ in the $k$-graph whose edges are all $k$-tuples supported on 
$\bigcup_{Z \in \binom{\mathcal{C}}{k}} \hat{\cJ_Z}$. Since this $k$-graph is 
automatically $(\eps^*_k,r^*)$-regular with respect to $\cJ$, we can apply Lemma~\ref{lem:count} 
to find that the number of $\mathcal{C}$-crossing copies of $H$ in $G$
supported on $\cJ(H,\mathcal{C})$ satisfies
\[q_{\cJ(H,\mathcal{C})}\le \big(1+\tfrac{\eps_k}{100}\big)\left(\prod_{j=2}^{k-1}d_j^{e_j(\mathcal{H})}\right) m^{h}\le 2\left(\prod_{j=2}^{k-1}d_j^{e_j(\mathcal{H})}\right) m^{h}\,,\]
and hence
\[ q_{\cJ(H,\mathcal{C})}=\left(\prod_{e\in H} \reld\big(\{C_j:j\in e\} \big) \pm
2 \right) \left(\prod_{j=2}^{k-1}d_j^{e_j(\mathcal{H})}\right)
m^{h}\,.\]

Now let $\zeta(\mathcal{C})$ be the fraction of slices $\cJ$ through $\Part^*$ for which $G$ is not $(\eps_k^*, r^*)$-regular with respect to
$\hat{\cJ_Z}$ for some $k$-set $Z \subseteq \mathcal{C}$. Putting the above two estimates for $q_{\cJ(H,\mathcal{C})}$ together with~\eqref{eq:expect:hcc}, we have
\begin{align*}
\Prob(\HOM\mid \CROSS_\mathcal{C})&=\Big(\Exp_{\cJ(H,\mathcal{C})}\prod_{e \in H} \reld\big(\{C_j : j \in e\}\big)\Big)\pm\left( 2 \zeta(\mathcal{C}) +
\frac{\eps_k}{100}\right)\\
 &\eqByRef{eq:expect:exc} \Big(\Exp_\cJ \prod_{e\in H}
\reld\big(\{C_j:j\in e\} \big)\Big) \pm \left( 2 \zeta(\mathcal{C}) +
\frac{\eps_k}{100}\right)\,.
\end{align*}

We now condition on the event $\CROSS$. Observe that there is then precisely one $h$-tuple $\mathcal{C} = (C_1, \dots, C_h)$ for which the event $\CROSS_\mathcal{C}$ occurs (i.e. $\phi$ is $\mathcal{C}$-crossing). We take $\mathcal{C}$ to be this $k$-tuple, so $\mathcal{C}$ is now a random variable. Moreover, since $\phi$ was chosen uniformly at random and each cluster has equal size, the $k$-tuple $\mathcal{C}$ is chosen uniformly at random among all possibilities, which implies that 
\begin{equation}\label{eq:phom}
\Prob(\HOM \mid \CROSS) = \Big( \Exp_\mathcal{C} \Exp_\cJ \prod_{e\in H} \reld\big(\{C_j:j\in e\} \big)\Big) \pm \left( 2 \Exp_\mathcal{C} \zeta(\mathcal{C}) + \frac{\eps_k}{100}\right).
\end{equation}
However, $\Exp_\mathcal{C} \zeta(\mathcal{C})$ is simply the probability that
our uniformly random $\mathcal{C}$ has the property that 
$G$ is not $(\eps^*_k, r^*)$-regular with respect to $\hat{\cJ_Z}$ for some 
$k$-set $Z \subseteq \mathcal{C}$.
Taking a union
bound, this is at most $\binom{h}{k}$ multiplied by the probability that
$G$ is not $(\eps^*_k, r^*)$-regular with respect to $\hat{\cJ_Z}$ for a $k$-set
$Z$ of clusters of $X$ chosen uniformly at random. Since there are at least
$\binom{\eps_k t}{k}$ $k$-sets $Z$ of clusters in $X$, and 
by~\eqref{eq:irregpoly}
 there are at most $2\eps_k^* s \binom{t}{k} \prod_{j=2}^{k-1}
d_j^{-\binom{k}{j}}$ irregular polyads of $\Part^*$, each of which is chosen for
$\cJ$ with probability $\prod_{j=2}^{k-1} d_j^{\binom{k}{j}}$, we find that
\begin{equation}\label{eq:badpolyads}
\Exp_\mathcal{C} \zeta(\mathcal{C}) \leq \binom{h}{k}\frac{2\eps_k^* s
\binom{t}{k} }{\binom{\eps_k t}{k}}
\leByRef{simpreg:etas} \frac{2\eps_k^* s
t^k h^k}{(\eps_k
t/2)^k }\leByRef{simpreg:epsks}\frac{\eps_k}{20}\,.
\end{equation}
Finally, since $\mathcal{C}$ is chosen uniformly at random, by definition we have 
$d_H\big(R(G)[X]\big) = \Exp_\mathcal{C} \prod_{e \in H} \reld(\{C_j : j \in e\})$, and so
\begin{equation}\label{eq:expecteddensity} 
 \Exp_\cJ d_H\big(R(G)[X]\big) = \Exp_\mathcal{C} \Exp_\cJ 
 \prod_{e \in H} \reld(\{C_j : j \in e\})\,.
\end{equation}
Combining equations~$(\ref{eq:phom})$,~$(\ref{eq:badpolyads})$ and~$(\ref{eq:expecteddensity})$ we conclude
that 
\[\Exp_\cJ d_H\big(R(G)[X]) = \Prob(\HOM \mid \CROSS) \pm \frac{\eps_k}{5}\]
and so by~\eqref{expect:dHHomCross} we have
\[\Exp_\cJ d_H\big(R(G)[X])= d_H\left(G\left[\bigcup X\right]\right) \pm \frac{\eps_k}{2}\,.\]
\end{claimproof}

We next show that the random variable $d_H\big(R(G_i)[X]\big)$ is concentrated
about its mean using McDiarmid's inequality. We will require enough concentration of $d_H\big(R(G_i)[X]\big)$ to make use of a
union bound over all possible $H$ and $X$.

\begin{claim}\label{lem:concentrate} 
  Let $H$ be a $k$-graph on $h \leq 1/\eps_k$ vertices, $X$ a set of
  at least $\eps_k t$ clusters of $\cJ$, and $G = G_i$ for some $i \in [s]$. Then the probability
  that $d_H\big(R(G)[X]\big)$ deviates from $\Exp d_H\big(R(G)[X]\big)$ by more
  than $\eps_k/2$ is at most
  \[2^{-\binom{1/\eps_k}{k}}2^{-2t}\,.\]
\end{claim}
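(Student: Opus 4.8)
The plan is to apply McDiarmid's inequality (Theorem~\ref{thm:azuma}) to $f:=d_H\big(R(G)[X]\big)$, viewed as a function of the random choices made by Algorithm~\ref{alg:simp}. First I would pin down an independent-coordinate model. By Claim~\ref{claim:numcell}, each time the algorithm processes an $i$-set $Y$ of clusters there are exactly $1/d_i$ admissible $i$-cells of $\Part^*$, irrespective of the earlier choices; so, after fixing once and for all a linear order on the cells of $\Part^*$, we may run the algorithm driven by a family $(U_{i,Y})_{2\le i\le k-1,\ Y\in\binom{\Part}{i}}$ of independent random variables, with $U_{i,Y}$ uniform on $[1/d_i]$, letting it take the $U_{i,Y}$-th admissible $i$-cell in the fixed order. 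Then $\cJ$, and hence $f$, is a deterministic function of the $U_{i,Y}$, and Theorem~\ref{thm:azuma} applies.

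Next I would compute the bounded-differences constants. Fix a coordinate $(i_0,Y_0)$ and change only $U_{i_0,Y_0}$. An induction on the level shows that the only cells of $\cJ$ that can change are those on sets of clusters containing $Y_0$: nothing changes below level $i_0$, and at each later level the admissible cells on a set $W$ of clusters depend only on the cells previously chosen on proper subsets of $W$, so a cell on $W$ can change only when $Y_0\subseteq W$. Hence the only polyads $\hat{\cJ}_Z$ that can change are those with $Y_0\subseteq Z$, of which there are $\binom{t-i_0}{k-i_0}$, and each such $\reld(Z)$ moves within $[0,1]$, so by at most $1$. Now the computation from the proof of Lemma~\ref{reducedd+d} (altering the weights of $m'$ edges of $R(G)$ changes $d_H(R(G)[X])$ by at most $e(H)m'/\binom{|X|}{k}$) gives the admissible difference $c_{i_0,Y_0}:=e(H)\binom{t-i_0}{k-i_0}/\binom{|X|}{k}$.

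Finally I would substitute this into Theorem~\ref{thm:azuma} with $a=\eps_k/2$. Using $\binom{t}{i}\binom{t-i}{k-i}=\binom{t}{k}\binom{k}{i}$, the bounds $e(H)\le\binom{1/\eps_k}{k}\le\eps_k^{-k}$ and $|X|\ge\eps_k t$, together with $\binom{\eps_k t}{k}\ge(\eps_k t/2)^k/k!$ (valid since $t\ge t_0^*$), a short calculation gives $\sum_{i,Y}c_{i,Y}^2\le 8^k/(k!\,\eps_k^{4k}t^2)$ and therefore $\Pr\big(|f-\Exp f|\ge\eps_k/2\big)\le 2\exp\!\big(-k!\,\eps_k^{4k+2}t^2/8^{k+1}\big)$. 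It then remains to verify that the right-hand side is at most $2^{-\binom{1/\eps_k}{k}}2^{-2t}$; since $t\ge t_0^*$ and, by~\eqref{simpreg:etas}, $t_0^*\ge 128k^2\eps_k^{-2-2/\eps_k-k}\ge\binom{1/\eps_k}{k}$, this reduces to the numerical inequality $k!\,\eps_k^{4k+2}t^2/8^{k+1}\ge\big(\binom{1/\eps_k}{k}+2t+1\big)\ln 2$, which follows from the lower bound on $t_0^*$ (it being harmless to assume in addition that $\eps_k$ lies below an absolute constant, so that the $\eps_k^{-2/\eps_k}$ factor in $t_0^*$ dominates). The one genuinely delicate point is the bounded-differences step: justifying the independent-coordinate model and confirming that a single coordinate affects only the $\binom{t-i_0}{k-i_0}$ polyads containing $Y_0$; the remaining estimates are routine.
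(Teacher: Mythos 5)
Your proof is correct and takes essentially the same route as the paper's: the same independent-coordinate model for Algorithm~\ref{alg:simp} justified by Claim~\ref{claim:numcell}, followed by McDiarmid's inequality with per-coordinate differences of order $t^{-j}$ for a level-$j$ coordinate, so that $\sum c^2=O_{k,\eps_k}(t^{-2})$ and the lower bound on $t_0^*$ in~\eqref{simpreg:etas} absorbs the final numerical check. The only (harmless) divergence is in bounding $c_Y$: the paper directly counts the injections $\phi$ whose image uses the cell on $Y$, getting $c_Y\lesssim\eps_k^{-h}t^{-j}$, whereas you route through the $\binom{t-j}{k-j}$ affected polyads and the weight-perturbation estimate from Lemma~\ref{reducedd+d}, which gives a slightly sharper bound (independent of $h$); your explicit cascade argument for which cells can change is a correct elaboration of what the paper leaves implicit, and your closing reduction does require $\eps_k$ below a threshold depending on $k$ (not an absolute constant), which is a legitimate WLOG since $k$ is fixed.
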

\begin{claimproof} We want to apply McDiarmid's inequality, which holds for a random variable which can be written as $f(\mathbf{x})$ for a vector $\mathbf{x}$ whose entries are independent random variables. Hence we first argue that $d_H\big(R(G)[X]\big)$ is of this form.
  We put an arbitrary order on the $j$-cells of $\Part^*$. Let $Y$ be a $j$-set of clusters. Then observe that
  when Algorithm~\ref{alg:simp} chooses the $j$-cell of $\cJ$ for $Y$, we have already chosen the cells of $\cJ^{(j-1)}$, so, by Claim~\ref{claim:numcell}, $\cJ_Y$
  is chosen uniformly at random from the exactly $d_j^{-1}$ $j$-cells supported on $\cJ^{(j-1)}[Y]$. Equivalently, we choose a number $p$
  from $1$ to $d_j^{-1}$ uniformly at random and take the $p$th $j$-cell
  supported on $\cJ^{(j-1)}[Y]$ in $\Part^*$. It follows that we can write
  \[d_H\big(R(G)[X]\big)=f(\mathbf{x})\] where $\mathbf{x}$ is a vector of
  integers whose first $\binom{|X|}{2}$ coordinates are chosen independently
  uniformly at random in $[d_2^{-1}]$, whose next $\binom{|X|}{3}$ coordinates are chosen independently and uniformly at random in $[d_3^{-1}]$,  and
  so on.
  
  We now want to bound $\big|f(\mathbf{x})-f(\mathbf{x}')\big|$ for two vectors $\mathbf{x},\mathbf{x}'$ differing in only one coordinate. In other words, we want to bound the change $c_Y$ of $d_H\big(R(G)[X]\big)$ when we change the choice of the $j$-cell on the set $Y\in\binom{X}{j}$ in Algorithm~\ref{alg:simp}. Observe that this change can only affect weighted copies of $H$ which use the $j$-cell $Y$. Moreover, $t \geq t_0^*
  \geq 2/\eps_k^3$ by definition of $t_0^*$, and therefore $\eps_k
  t \geq 2/\eps_k^2 \geq 2h^2$. Consequently we have
  \begin{equation}\label{eq:conc:1}
    c_Y\le\frac{|X|^{h-j}}{\binom{|X|}{h}\cdot h!} \leq
    \frac{t^{h-j}}{\binom{\eps_k t}{h}\cdot h!} \leq
    \frac{ t^{h-j}}{(\eps_k t - h)^h} 
    \leq
     2\eps_k^{-h} t^{-j}
  \end{equation}
  We therefore have
  \begin{equation}\label{eq:conc:2}
   \sum_{j=2}^{k-1}\sum_{Y\in\binom{X}{j}}c_Y^2\leByRef{eq:conc:1} \sum_{j=2}^{k-1}\binom{t}{j}\big(2\eps_k^{-h} t^{-j}\big)^2\le
   \sum_{j=2}^{k-1}4\eps_k^{-2h} t^{-j}\le 4k\eps_k^{-2h}t^{-2}
  \end{equation}

  Hence using Theorem~\ref{thm:azuma}, the probability that
  $f(\mathbf{x})$ differs from $\Exp f(\mathbf{x})$ by more than $\eps_k/2$
  is at most
\begin{align*}
  2\exp\Big(\frac{-\eps_k^2/4}{8k\eps_k^{-2h}t^{-2}}\Big) &= 
  2\exp\Big(\frac{-\eps_k^{2+2h}t^2}{32k}\Big) \leq
  2\exp\Big(\frac{-4t}{\eps_k^k} \Big) \\
  & < \exp\Big( \frac{-1}{\eps_k^k} \Big)\exp \Big(-2t\Big) <
  2^{-\binom{1/\eps_k}{k}}2^{ -2t},
\end{align*}
  where the first inequality holds since $h\le 1/\eps-k$ and $t \geq t_0^* \geq 128k^2/\eps_k^{2+2/\eps_k+k}$ by definition of $t_0^*$.
\end{claimproof}

We can now prove that property~\ref{simpreg:a} holds with high probability by taking a union bound over the $s$ different $k$-graphs $G_i$, the at most
  $2^{t}$ choices of $X$ and the $2^{\binom{1/\eps_k}{k}}$ choices of $H$.
  Applying Claims~\ref{lem:expect} and~\ref{lem:concentrate} we deduce that with
  probability at least $1 - s2^{-t}$ we
  have that $d_H\big(G_i[\bigcup X]\big)=d_H\big(R(G_i)[X]\big) \pm \eps_k$ for
  all $1 \leq i \leq s$, all $k$-graphs $H$ with at most $1/\eps_k$ vertices and all sets $X$ of at least $\eps_k t$ clusters.

\smallskip

Next we show that~\ref{simpreg:b} holds with high probability also.
This is achieved through a similar but simpler argument as
for~$\ref{simpreg:a}$, so we will be brief. Note first that since each part
of $\Qart$ has equal size, any set $X$ of clusters for which $\bigcup X$ is the union of some parts of $\Qart$ has size at least $t/q$. So fix some $1 \leq j \leq k-1$, a set
$X$ of at least $t/q$ clusters of $\cJ$, a $j$-set $Y$ of clusters
of $\cJ$, and some $k$-graph $G$ on $V$. We first prove an analogue of
Claim~\ref{lem:expect}, that the expected value of $\reldeg(Y;R_\cJ(G), X)$
conditioned upon the choices of all cells on subsets of $Y$, is within
$\eps_k/2$ of $\reldeg(\cJ_Y; G, \bigcup X)$.
To prove this we choose uniformly at random an edge $\{v_1, \dots, v_j\}$ of
$\cJ_Y$ (which is fixed since we are conditioning on the choice of cell for
$Y$), and also choose uniformly at random vertices $v_{j+1}, \dots, v_k$ from $\bigcup
X$ so that the vertices $v_i$ with $1\le i\le k$ are all distinct. Let $e = \{v_1, \dots, v_k\}$.
By definition $\reldeg(\cJ_Y; G, \bigcup X) = \Prob(e \in G)$. Let
$\CROSS$ be the event that $e$ is $\Part$-partite. Using the fact that
the set $\{v_1,\ldots,v_j\}$ is automatically $\Part$-partite, 
we have, similarly as
in the proof
of Claim~\ref{lem:expect}, that
\begin{equation*}
 \Prob(\CROSS) \geq 1 - \frac{\binom{|X|m-j}{k-j-1}km}{\binom{|X|m-j}{k-j}}\ge 1-\frac{k^2m}{|X|m-k+1}\ge 1-\frac{\eps_k}{10}\,,
\end{equation*}
and therefore 
\begin{equation}\label{eq:simpreg:degexpect}\reldeg(\cJ_Y; G, \bigcup X) = \Prob(e \in G \mid \CROSS) \pm
\frac{\eps_k}{4}\,.
\end{equation}
For each $i \in [j]$ let $C_i$ be the cluster containing $v_i$. Now fix any $(k-j)$ distinct clusters 
$C_{j+1}, \dots, C_{k}$, and define $\mathcal{C}$ to be the $k$-tuple $(C_1, \dots, C_k)$. 
Recall that $\cJ[\mathcal{C}]$ was defined to be the subcomplex of $\cJ$ consisting of all 
cells of $\cJ$ which are supported on the clusters of $\mathcal{C}$. By definition of 
relative density and Fact~\ref{fact:densecount}, the number of edges in $E_\mathcal{C}$
corresponding to any given choice of $\cJ[\mathcal{C}]$ is 
\[\reld_{\cJ[\mathcal{C}]}(\mathcal{C}) \big|K_k(\hat{\cJ}_\mathcal{C})\big| = \big(1 \pm \tfrac{\eps_k}{10}\big)
\reld_{\cJ[\mathcal{C}]}(\mathcal{C}) \prod_{i=2}^{k-1} d_i^{\binom{k}{i}} m^k\,,\]
Let $E_{\mathcal{C}}$ consist of the edges $f \in G$ which have precisely one vertex in each cluster of $\mathcal{C}$ and which satisfy $f \cap Y \in \cJ_Y$. Then for each fixed $\mathcal{C}$, summing over all possibilities for $\cJ[\mathcal{C}]$ we have
\begin{equation*}\begin{split}
\big|E_\mathcal{C}\big|&=\sum_{\cJ[\mathcal{C}]}\big(1 \pm \tfrac{\eps_k}{10}\big)\reld_{\cJ[\mathcal{C}]}(\mathcal{C})
\prod_{i=2}^{k-1} d_i^{\binom{k}{i}} m^k\\
&=\big(1 \pm \tfrac{\eps_k}{10}\big)\Exp_{\cJ[\mathcal{C}]}\reld_{\cJ[\mathcal{C}]}(\mathcal{C})
\prod_{i=2}^{j} d_i^{\binom{j}{i}} m^k\\
&=\big(1 \pm \tfrac{\eps_k}{10}\big)\Exp_{\cJ}\reld_{\cJ}(\mathcal{C})
\prod_{i=2}^{j} d_i^{\binom{j}{i}} m^k\,. 
\end{split}\end{equation*}
The second line comes from observing that by Claim~\ref{claim:numcell},
$E_\mathcal{C}$ is partitioned into $\prod_{i=2}^{k-1}
d_i^{-\left(\binom{k}{i} - \binom{j}{i}\right)}$
 sets according to the choice
of $\cJ[\mathcal{C}]$, and the distribution over choices of $\cJ[\mathcal{C}]$ given by $\cJ$ is uniform, which also gives the third line. The total number of possibilities for $e$ which give $\mathcal{C}$ is $|\cJ_Y|m^{k-j}$, and by Fact~\ref{fact:densecount}
we have
\[|\cJ_Y|m^{k-j} = \big(1 \pm \tfrac{\eps_k}{10}\big)
\prod_{i=2}^{j} d_i^{\binom{j}{i}} m^k\,.\]
We conclude that, writing $\CROSS_\mathcal{C}$ for the event that $\phi(v_i) \in C_i$ for each $i \in [k]$, we have
 \[\Prob(e \in G \mid \CROSS_\mathcal{C}) =\frac{\big|E_\mathcal{C}\big|}{|\cJ_Y|m^{k-j}}= \big(1 \pm \tfrac{\eps_k}{4}\big) \Exp_\cJ
[\reld_\cJ(\mathcal{C})]\,,\] 
We now condition on the event $\CROSS$. This implies that there is precisely one choice of 
distinct clusters $C_{j+1}, \dots, C_k$ of $X \setminus Y$ for which the event 
$\CROSS_{\mathcal{C}}$ occurs; we now take $\mathcal{C}$ to be given by this choice. 
Since $v_{j+1}, \dots, v_k$ were chosen uniformly at random, and each cluster has equal size, 
it follows that the $(k-j)$-set $\{C_{j+1}, \dots, C_k\}$
is chosen uniformly at random from all $(k-j)$-sets in $X \setminus Y$. This implies that
$$\Prob(e \in G \mid \CROSS) = \Exp_\mathcal{C} \Exp_\cJ
[\reld_\cJ(\mathcal{C})] \pm\eps_k/4.$$ 
Furthermore, by definition we have $\Exp_\mathcal{C} (\reld_{\cJ}(\mathcal{C})) = \reldeg(Y; R_{\cJ}(G), X),$ so 
\[\Exp_\cJ [\reldeg(Y; R_\cJ(G), X)] = \Prob(e \in G \mid \CROSS) \pm
\tfrac{\eps_k}{4}\eqByRef{eq:simpreg:degexpect}\reldeg(\cJ_Y;G,\bigcup X)\pm\tfrac{\eps_k}{2}\,,\] proving
the analogue of Claim~\ref{lem:expect}.

\smallskip

Next, we prove an analogue of Claim~\ref{lem:concentrate}, namely that for any
$k$-graph $G$ on $V$, any $1 \leq j \leq k-1$, any $j$-set $Y$ of clusters
of $\cJ$ and any set $X$ of at least $t/q$ clusters of $\cJ$, with high probability the random variable $\reldeg(Y; R_\cJ(G), X)$ conditioned on the choice of all cells on subsets of $Y$ is
within $\eps_k/2$ of its expectation conditioned on the choice of all cells
on subsets of $Y$.
This argument is very similar to the proof of Claim~\ref{lem:concentrate}. As there, we can write $\reldeg(Y; R_\cJ(G), X)$ conditioned on the choice of all cells on subsets of $Y$ as a function $f'(\mathbf{x})$ where the entries of $\mathbf{x}$ are integers corresponding to the choice of $i$-cell on each $i$-set in $X\cup Y$ not contained in $Y$, for each $2\le i\le k-1$. Again, for each $Z\subset X\cup Y$ of size between $2$ and $k-1$ not contained in $Y$, we let $c_Z$ bound the difference $\big|f'(\mathbf{x})-f'(\mathbf{x}')\big|$ for pairs of vectors differing only on the entry corresponding to $Z$. The value of $f'(\mathbf{x})$ is the average value of $\reld_\cJ(\mathcal{C})$ for sets $\mathcal{C}$ of clusters in $X$ containing $Y$, and changing the cell on $Z$ affects only those containing $Y\cup Z$, so we have
\[c_Z\le \frac{\binom{|X\setminus Z|}{k-|Y\cup Z|}}{\binom{|X\setminus Y|}{k-|Y|}}\le 2k!|X|^{-|Z\setminus Y|}\,,\]
where the last inequality is obtained by using $|X|\ge t/q$ and the choice of $t\ge t_0^*\ge 10q^kk^k\eps_k^{-1}$ in~\eqref{simpreg:etas}. Note that $|Z\setminus Y|\ge 1$ by assumption, so summing over all choices of $Z$ (where we write $|Z\setminus Y|=j$) we have
\[\sum_Z c_Z^2\le\sum_{j=1}^{k-1}2^{|Y|}\binom{|X|}{j}4k!^2|X|^{-2j}\le 4k!^2k2^{|Y|}|X|^{-1}\le \frac{4k!^2k2^kq}{t}\,,\]
where the final inequality uses $|X|\ge t/q$ and $|Y|\le k$.

By Theorem~\ref{thm:azuma} we find that the probability that
$\reldeg(Y;R(G), X)$ fails to be within $\eps_k/2$ of $\Exp_\cJ [\reldeg(Y; R(G), X)]$ (and therefore also the probability that $\reldeg(\cJ_Y; G, \bigcup
X)$ differs from $\reldeg(Y;R(G), X)$ by more than $\eps_k$)
is at most
\[ 2\exp\Big(\frac{-\eps_k^2 t}{32k!^2k2^kq}\Big)\leByRef{simpreg:etas}2^{-2\sqrt{t}}
\leByRef{simpreg:etas} 2^{-\sqrt{t}}/(2^qs)\,.\]

For property~\ref{simpreg:b} there are only at most $2^q$ possibilities
for $X$ (since we required that $\bigcup X$ is a union of parts of $\Qart$), and
at most $\binom{t}{1}+\cdots+\binom{t}{k-1} \leq t^{k-1}$
possibilities for $Y$. So we can take a union bound over all $G=G_1,\ldots,G_s$, all
sets $Y$ of between $1$ and $k-1$ clusters of $\cJ$, and all sets $X$ of
clusters such that $\bigcup X$ is a union of parts of $\Qart$, to deduce that
property~\ref{simpreg:b} holds with probability at least $1 - t^{k-1}
2^{-\sqrt{t}}$.

\smallskip

To complete the proof, we need to show that property~\ref{simpreg:c} holds with high
probability; the argument for this splits into two parts. The `probabilistic' part is to show that $\cJ$ takes about
the same fraction of \emph{every} slice (regular or otherwise) through
$\tilde{\Part}^*$. Let us briefly sketch how this helps us. We want to show that the density of rooted copies of $H$ is about equal to the density of rooted copies of $H$ supported on $\cJ$. The former density is easily seen to be close to the average over slices $\tilde{\cJ}$ through $\tilde{\Part}^*$ of the density of rooted copies of $H$ supported on $\tilde{\cJ}$. Now Lemma~\ref{lem:strReg}\ref{strReg:spread} implies that if $\tilde{\cJ}$ is any slice through $\tilde{\Part^*}$, and $X$ is any $\tilde{\Part}$-consistent collection of $\tilde{t}$ clusters of $\Part$, then the density of rooted copies of $H$ supported on $\tilde{\cJ}$ is close to the density of rooted copies of $H$ supported on $\tilde{\cJ}[X]$. Finally, since $\cJ$ contains about the same number of subcomplexes of the form $\tilde{\cJ}[X]$ for each $\tilde{\cJ}$, the density of rooted copies of $H$ supported on $\cJ$ is close to the average over $\tilde{\cJ}$ of the density of rooted copies of $H$ supported on $\tilde{\cJ}$, which is what we want to show.

The proof of the following `probabilistic part' follows the same pattern as we saw for parts~\ref{simpreg:a} and~\ref{simpreg:b}, showing that a certain random variable has the desired expectation, and then using McDiarmid's inequality to establish concentration. We remark that the latter depends on the fact that the number of clusters of $\cJ$ is much greater than the number of slices through $\tilde{\Part}^*$ by choice of $p(\cdot)$.

\begin{claim}\label{clm:equalchoose}
  With probability at least $1-2^{-t}$, for each slice $\tilde{\cJ}$ through
  $\tilde{\Part}^*$, the number of sets $X$ of $\tilde{t}$ clusters of $\Part^*$, one
  in each cluster of $\tilde{\Part}$, such that $\cJ[X]\subseteq\tilde{\cJ}$, is
  \begin{equation}\label{equalchoose}(1\pm\tfrac{\eps_k}{16})p(t_1^*)^{\tilde{t}}\prod_{j=2}^{k-1}d_j^{\binom{\tilde{t}}{j}}
  \,.
  \end{equation}
\end{claim}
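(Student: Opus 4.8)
The plan is to handle the two directions of~\eqref{equalchoose} in turn: first to pin down the \emph{expectation} of the relevant count exactly, and then to establish concentration with McDiarmid's inequality (Theorem~\ref{thm:azuma}) together with a union bound over all slices through $\tilde\Part^*$. Fix a slice $\tilde\cJ$ through $\tilde\Part^*$; for each set $X$ of $\tilde t$ clusters of $\Part^*$, one in each cluster of $\tilde\Part$, let $\mathbf 1_X$ be the indicator of $\cJ[X]\subseteq\tilde\cJ$, so the quantity to control is $N(\tilde\cJ):=\sum_X\mathbf 1_X$. Since the choices Algorithm~\ref{alg:simp} makes on $i$-sets of clusters contained in $X$ are independent of all its other choices, $\cJ[X]$ is distributed exactly as the output of Algorithm~\ref{alg:simp} run on $\Part^*$ restricted to the clusters of $X$, which reduces the expectation to a single fixed $X$.

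For the expectation I would prove that $\Prob(\cJ[X]\subseteq\tilde\cJ)=\prod_{j=2}^{k-1}d_j^{\binom{\tilde t}{j}}$ exactly, whence summing over the $p(t_1^*)^{\tilde t}$ sets $X$ gives $\Exp N(\tilde\cJ)=p(t_1^*)^{\tilde t}\prod_{j=2}^{k-1}d_j^{\binom{\tilde t}{j}}$, the main term. This goes level by level: conditioning on the cells of $\cJ[X]$ of size $<j$ already being the restrictions of the corresponding cells of $\tilde\cJ$, I want to show that for each $j$-subset $Y$ of $X$, exactly one $j$-cell of $\Part^*$ supported on the polyad $\hat\cJ_Y:=\bigcup_{Y'\in\binom{Y}{j-1}}\cJ_{Y'}$ is contained in $\tilde\cJ_{\tilde Y}$ (where $\tilde Y$ is the set of clusters of $\tilde\Part$ containing $Y$), namely $\tilde\cJ_{\tilde Y}\cap K_j(\hat\cJ_Y)$; Algorithm~\ref{alg:simp} picks it with probability $d_j$, the $\binom{\tilde t}{j}$ choices over $j$-subsets of $X$ are independent given the lower levels, and the induction closes. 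The ``exactly one'' statement is the crux. By Claim~\ref{claim:numcell} there are exactly $1/d_j$ $j$-cells of $\Part^*$ supported on $\hat\cJ_Y$ and exactly $1/d_j$ $j$-cells of $\tilde\Part^*$ supported on $\hat{\tilde\cJ}_{\tilde Y}$ (one of which is $\tilde\cJ_{\tilde Y}$), each family consisting of pairwise disjoint, non-empty cells of relative density $d_j>\eps^*$, so they partition $K_j(\hat\cJ_Y)$ and $K_j(\hat{\tilde\cJ}_{\tilde Y})\supseteq K_j(\hat\cJ_Y)$ respectively. Since $\Part^*$ is generated from $\tilde\Part^*$ by $\Part$, each $j$-cell of $\Part^*$ supported on $\hat\cJ_Y$ is $\tilde\Part$-partite, hence an induced subgraph of a unique $j$-cell of $\tilde\Part^*$, necessarily one supported on $\hat{\tilde\cJ}_{\tilde Y}$ because the induction hypothesis gives $\hat\cJ_Y\subseteq\hat{\tilde\cJ}_{\tilde Y}$; this defines a map between two $(1/d_j)$-element sets. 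The Regular Restriction Lemma (Lemma~\ref{restriction})---applicable because $\eps^*\le 1/(2t_1^2)$ and $t_1=p(t_1^*)t_1^*$ force $\eps^*\ll 1/p(t_1^*)$---applied to $\tilde\cJ$ with its $j$-cell on $\tilde Y$ replaced by an arbitrary $j$-cell of $\tilde\Part^*$ on $\hat{\tilde\cJ}_{\tilde Y}$ and the clusters of $\tilde Y$ restricted to those of $X$, shows that every such cell has non-empty intersection with $K_j(\hat\cJ_Y)$, so the map is onto and therefore a bijection; this yields the ``exactly one'' statement and also that the unique cell equals $\tilde\cJ_{\tilde Y}\cap K_j(\hat\cJ_Y)$, maintaining the inductive hypothesis.

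For concentration I would, as in the proof of Claim~\ref{lem:concentrate}, write $N(\tilde\cJ)=f(\mathbf x)$ where $\mathbf x$ records, for each $2\le i\le k-1$ and each $i$-set of clusters of $\Part$, an independent uniform index telling Algorithm~\ref{alg:simp} which supported $i$-cell to take. Changing the index on an $i$-set $Y$ alters the chosen $i$-cell on $Y$ and, through the cascade, cells on supersets of $Y$, but can flip $\mathbf 1_X$ only when $Y\subseteq X$, so the Lipschitz constant is $0$ unless $Y$ is $\tilde\Part$-partite and otherwise at most $p(t_1^*)^{\tilde t-i}$, whence $\sum_Y c_Y^2\le\sum_{i=2}^{k-1}\binom{\tilde t}{i}p(t_1^*)^{2\tilde t-i}\le 2^{\tilde t}p(t_1^*)^{2\tilde t-2}$. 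Theorem~\ref{thm:azuma} with deviation $a=\tfrac{\eps_k}{16}p(t_1^*)^{\tilde t}\prod_{j=2}^{k-1}d_j^{\binom{\tilde t}{j}}$ then gives a failure probability at most $2\exp\big(-\eps_k^2 p(t_1^*)^2\prod_{j=2}^{k-1}d_j^{2\binom{\tilde t}{j}}/2^{\tilde t+9}\big)$. Using $\tilde t\le t_1^*$ (which follows from $p(t_1^*)\tilde t=t\le t_1=p(t_1^*)t_1^*$) and $d_j\ge 1/t_1^*$, the exponent is at most $-2^{-t_1^*-9}\eps_k^2 p(t_1^*)^2\prod_{j=2}^{k-1}(1/t_1^*)^{2\binom{t_1^*}{j}}$, and \eqref{simpreg:p} (with $x=t_1^*$) bounds the probability by $2^{-t_1^*p(t_1^*)}\prod_{j=2}^{k-1}(1/t_1^*)^{\binom{t_1^*}{j}}$. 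Finally I would union bound over the at most $\prod_{j=2}^{k-1}d_j^{-\binom{\tilde t}{j}}\le\prod_{j=2}^{k-1}(t_1^*)^{\binom{t_1^*}{j}}$ slices through $\tilde\Part^*$: the product telescopes to $2^{-t_1^*p(t_1^*)}=2^{-t_1}\le 2^{-t}$, so with probability at least $1-2^{-t}$ every slice $\tilde\cJ$ satisfies $N(\tilde\cJ)=\Exp N(\tilde\cJ)\pm a=(1\pm\tfrac{\eps_k}{16})p(t_1^*)^{\tilde t}\prod_{j=2}^{k-1}d_j^{\binom{\tilde t}{j}}$, which is~\eqref{equalchoose}.

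The hard part is the exact expectation, more precisely the surjectivity of the cell-to-cell map in the inductive step: it is what forces one to invoke the Regular Restriction Lemma (to see that each $\tilde\Part^*$-cell survives restriction to the finer polyad) rather than just the ``generated from'' relation. Everything afterwards---the Lipschitz bookkeeping and the union bound---is routine, and is exactly what the choice of the function $p$ in~\eqref{simpreg:p} was engineered to absorb.
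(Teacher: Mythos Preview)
Your proof is correct and follows the same overall architecture as the paper: compute the expectation exactly, apply McDiarmid with the Lipschitz bound $c_Y\le p(t_1^*)^{\tilde t-|Y|}$ for $\tilde\Part$-partite $Y$, and union bound over slices using~\eqref{simpreg:p}. The concentration and union-bound steps match the paper essentially line for line.

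The one place you diverge is the ``exactly one cell'' step in the expectation. You argue surjectivity of the cell-to-cell map via the Regular Restriction Lemma, and in your final paragraph flag this as the hard part. The paper handles this more cheaply: since $\Part^*$ is \emph{generated from} $\tilde\Part^*$ by $\Part$, each $\tilde\Part$-partite $j$-cell of $\Part^*$ is the \emph{induced subgraph} (not merely a subgraph) of a $j$-cell of $\tilde\Part^*$, so two distinct $\Part^*$-cells supported on $\hat\cJ_Y$ cannot map to the same $\tilde\Part^*$-cell --- the map is injective. Claim~\ref{claim:numcell} gives $1/d_j$ cells on each side, so injectivity already forces a bijection, and no appeal to Lemma~\ref{restriction} is needed. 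Your route is valid but heavier than necessary; the paper's is a two-line pigeonhole.
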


\begin{claimproof}
  Let $\tilde{\cJ}$ be a slice through $\tilde{\Part}^*$. If $X$ is a fixed set of $\tilde{t}$
  clusters of $\Part^*$, one in each cluster of $\tilde{\Part}^*$, then, by Lemma~\ref{lem:strReg}\ref{strReg:genP},
  $\cJ[X]\subseteq \tilde{\cJ}$ exactly if for each $2\le j\le k-1$ and
  set $Y$ of $\binom{X}{j}$, we happened to choose the one $j$-cell on $Y$ which
  is a subset of the corresponding $j$-cell in $\tilde{\cJ}$. Conditioning on having
  done this for $j'$-cells for $j'<j$, the probability of doing so for any given
  $Y$ is $d_j$ by Claim~\ref{claim:numcell}, and these choices for different members of $\binom{X}{j}$ are
  independent. It follows that the probability that $\cJ[X]\subseteq\tilde{\cJ}$ is
  \[\prod_{j=2}^{k-1}d_j^{\binom{\tilde{t}}{j}}\,.\]
  The total number of choices of $X$ is $p(t_1^*)^{\tilde{t}}$, since
  each cluster of $\tilde{\Part}^*$ is split into $p(t_1^*)$ clusters of
  $\Part^*$ by Lemma~\ref{lem:strReg}\ref{strReg:numP}. By linearity of expectation, the expected number of sets $X$ such
  that $\cJ[X]\subseteq\tilde{\cJ}$ is
  \[p(t_1^*)^{\tilde{t}}\prod_{j=2}^{k-1}d_j^{\binom{\tilde{t}}{j}}\]
  as desired.
  \
  Similarly as for parts~\ref{simpreg:a} and~\ref{simpreg:b}, we can write the number of $\tilde{\Part}$-consistent sets $X$ of $\tilde{t}$ clusters of $\Part^*$ such that $\cJ[X]\subseteq \tilde{\cJ}$ in the form $f''(\mathbf{x})$, where $\mathbf{x}$ represents the choice of $j$-cell on $Y$ for each $Y\in\binom{\Part}{j}$ and each $2\le j\le k-1$. As before, we let $c_Y$ be the maximum change in $f''(\mathbf{x})$ which can be obtained by changing the choice of cell on $Y$. Observe that if $Y$ contains two or more clusters from the same part of $\tilde{\Part}$ then we have $c_Y=0$, while otherwise changing the cell on $Y$ can only affect those $\Part^*$-consistent sets $X$ such that
  $Y\subseteq X$, of which there are exactly $p(t_1^*)^{\tilde{t}-|Y|}$. So we have
  \[\sum_{Y}c_Y^2\le \sum_{j=2}^{k-1}\binom{\tilde{t}}{j}p(t_1^*)^j\big(p(t_1^*)^{\tilde{t}-j}\big)^2\le
  2^{\tilde{t}}p(t_1^*)^{2\tilde{t}-2}\]
  and hence by Theorem~\ref{thm:azuma}, the probability that the number of sets $X$ with
  $\cJ[X]\subseteq\tilde{\cJ}$ deviates by more than
  \[\tfrac{\eps_k}{16}
  p(t_1^*)^{\tilde{t}}\prod_{j=2}^{k-1}d_j^{\binom{\tilde{t}}{j}}\]
  from its expectation is at most
  \[2\exp\Big(\frac{-\eps_k^2
  p(t_1^*)^{2\tilde{t}}\prod_{j=2}^{k-1}d_j^{2\binom{\tilde{t}}{j}}}{2\cdot
  16^2\cdot 
  2^{\tilde{t}}p(t_1^*)^{2\tilde{t}-2}}\Big)
  \leq
  2\exp\left(-2^{-t_1^*-9}\eps_k^2p(t_1^*)^2\prod_{j=2}^{k-1} 
  \left(\frac{1}{t_1^*}\right)^{2\binom{t_1^*}{j}}
  \right)\,,\]
where the inequality follows from Lemma~\ref{lem:strReg}\ref{strReg:numP} and~\ref{strReg:equit}, which imply that $d_j\ge 1/t_1^*$ for each $j$ and $\tilde{t} \leq t_1^*$.
Since we chose $p(\cdot)$ to satisfy~\eqref{simpreg:p} (which we apply with $t_1^*$ in place of $x$), and we have $t\le t_1=p(t_1^*)t_1^*$ by Lemma~\ref{lem:strReg}\ref{strReg:numP}, 
  this
  probability is smaller than
  \[2^{-p(t_1^*)t_1^*} \prod_{j=2}^{k-1} \left(\frac{1}{t_1^*}\right)^{\binom{t_1^*}{j}} \leq 2^{-t}\prod_{j=2}^{k-1}d_j^{\binom{\tilde{t}}{j}}\,.\]
  We can therefore take a union bound over the (by Claim~\ref{claim:numcell}) $\prod_{j=2}^{k-1}d_j^{-\binom{\tilde{t}}{j}}$ choices of $\tilde{\cJ}$ to obtain
  the desired conclusion.
\end{claimproof}
 
We want to show that if the likely event of Claim~\ref{clm:equalchoose} holds, then using Lemma~\ref{lem:strReg}\ref{strReg:spread} we can deduce property~\ref{simpreg:c}. So fix $G=G_i$ for some $i \in [s]$. Now for any fixed $1\le\ell\le 1/\eps_k$, any fixed $k$-graph $H$ equipped with a set of distinct root vertices
  $x_1,\ldots,x_\ell$ such that $h=v(H)\le 1/\eps_k$ and any fixed set of distinct vertices
  $v_1,\ldots,v_\ell$ in $V$, we would like to show that the density  $d_H(G_i;v_1,\ldots,v_\ell)$ of rooted
  $H$-copies in $G$ is within $\eps_k$ of the density $d_H(G_i;v_1,\ldots,v_\ell,\cJ)$ of rooted $H$-copies in
  $G$ supported on $\cJ$. Recall that the vertex set of $\Hskel$ consists of all vertices of $H$ except for $x_1, \dots, x_\ell$. Choose uniformly at random a map $\psi: V(\Hskel) \to V(G)$, and define $\psi': V(H) \to V(G)$ by taking $\psi'(x_j) = v_j$ for any $j \in [\ell]$ and $\psi'(x) = \psi(x)$ for any $x \in V(\Hskel)$ (note carefully that, unlike for the previous cases, we do not insist that $\psi$ is injective, and that even if $\psi$ is injective the same may not be true of $\psi'$). Let $\INJ$ denote the event that $\psi'$ is injective, and let $\HOM$ denote the event that $\psi'$ is an embedding of $H$ into $G$ (so $\HOM$ is a subset of $\INJ$). Next, let $\CROSS_{\Part}$ denote the event that $\psi$ is $\Part$-crossing, meaning as before that each vertex of $\Hskel$ is mapped to a distinct part of $\Part$, and similarly let $\CROSS_{\tilde\Part}$ denote the event that $\psi$ is $\tilde{\Part}$-crossing. Finally, for any $(k-1)$-complex $\cL$ on $V$ let $\SLICE_{\cL}$ denote the event that $\psi$ is an embedding of $\Hskel$ into $\cL$. 
Then by definition we have
\begin{equation} \label{simpreg:cdens} 
d_H(G_i;v_1,\ldots,v_\ell) = \Prob(\HOM \mid \INJ),
\end{equation}
and
\begin{equation} \label{simpreg:cdensJ}
d_H(G_i;v_1,\ldots,v_\ell,\cJ) = \Prob(\HOM \mid \SLICE_{\cJ} \cap \CROSS_\Part),
\end{equation}
and our aim is to show that these two conditional probabilities differ by at most~$\eps_k$. We shall frequently refer to events of the form $\SLICE_{\cL} \cap \CROSS_{\tilde\Part}$ for some $(k-1)$-complex $\cL$ on $V$, so for brevity we denote this event by $\SLICE_{\cL}^*$. 

Our first goal is to show that conditioning on $\CROSS_{\tilde\Part}$ instead of on $\INJ$ in~\eqref{simpreg:cdens} and on $\CROSS_\Part$ in~\eqref{simpreg:cdensJ} has an insignificant effect on the probabilities expressed in these equations, since these events are all highly probable (even after conditioning on $\SLICE_\cJ$). Indeed, similarly as before we have 
  \begin{equation}\label{simpreg:ccrossc}
   \Prob(\CROSS_{\tilde{\Part}}^c)
\le \frac{\binom{n}{h-\ell-1}h(n/\tilde{t})}{\binom{n}{h-\ell}}
= \frac{h(h-\ell)n/\tilde{t}}{n-h+\ell+1}\le \frac{2h^2}{t_0}
\leByRef{simpreg:etas} \frac{\eps_k}{100}\,.
  \end{equation}    
Now observe that $\psi$ can be obtained by selecting a uniformly-random image $\psi(x)$ for each $x \in V(\Hskel)$ in turn. The event $\INJ$ will occur unless for some $x$ we select $\psi(x)$ to be one of the at most $h-\ell$ previously-chosen images or one of the vertices $v_1, \dots, v_\ell$. Taking a union bound, it follows that for any $X \in \cX$ we have 
\begin{equation} \label{simpreg:cinjcbound}
\Prob(\INJ^c) 
\leq (h-\ell)\frac{h}{n} 
\leq \frac{h^2}{n} 
\leq \frac{\eps_k}{100}.
\end{equation}
By~\eqref{simpreg:ccrossc} we may apply Proposition~\ref{prop:condprob} with $\HOM$ and $\CROSS_{\tilde\Part}$ in place of $E$ and $C$ respectively, and by~\eqref{simpreg:cinjcbound} we may apply the same proposition with $\HOM$ and $\INJ$ in place of $E$ and $C$ respectively (in each case we take $S$ to be the event which always occurs and $\tfrac{\eps_k}{100}$ in place of $\eps$). This gives our approximation for~\eqref{simpreg:cdens}, namely
\begin{equation} \label{simpreg:capprox}
\Prob(\HOM \mid \INJ) = 
\Prob(\HOM) \pm \tfrac{\eps_k}{50} = 
\Prob(\HOM \mid \CROSS_{\tilde{\Part}}) \pm \tfrac{2\eps_k}{50}.
\end{equation}
We turn now to~\eqref{simpreg:cdensJ}, beginning with the next claim. We define $\cX$ to be the set of all $\tilde\Part$-consistent sets $X \in \binom{\Part}{\tilde{t}}$. (Recall that this means that $X$ is a set of $\tilde{t}$ clusters in $\Part$, one contained in each part of~$\tilde{\Part}$).

\begin{claim}\label{simpreg:cclaim}
Let $\cL$ be a slice through $\Part^*$. Then 
\begin{enumerate}[label=\abc]
\item $\Prob(\SLICE_{\cL}\mid\CROSS_{\Part})
= \big(1\pm\tfrac{\eps_k}{100}\big)\prod_{i=2}^{k-1}d_i^{e_i(\Hskel)}$,
\item $\Prob(\SLICE_{\cL}\mid\CROSS_{\tilde\Part})
= \big(1\pm\tfrac{\eps_k}{100}\big)\prod_{i=2}^{k-1}d_i^{e_i(\Hskel)}$, and
\item for any  $X \in \cX$ we have
$\Prob(\SLICE^*_{\cL[X]} \mid \SLICE^*_{\cL}) = \big(1\pm\tfrac{\eps_k}{20}\big)p(t_1^*)^{\ell-h}$. 
\end{enumerate}
Furthermore, for any slice $\tilde{\cJ}$ through $\tilde{\Part}^*$ we have $\Prob(\SLICE_{\tilde{\cJ}} \mid \CROSS_{\tilde{\Part}})
= \big(1\pm\tfrac{\eps_k}{100}\big)\prod_{i=2}^{k-1}d_i^{e_i(\Hskel)}$.
\end{claim}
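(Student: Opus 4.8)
The plan is to obtain all four estimates by counting the maps $\psi:V(\Hskel)\to V(G)$ that realise the relevant events, using the Dense Counting Lemma in the general form which counts partition-respecting copies of a down-closed complex in an equitable complex (cf.\ \cite[Theorem~6.5]{KRS}, of which Fact~\ref{fact:densecount} is the special case for complete complexes). Write $h:=v(H)$, so $\Hskel$ is a down-closed $(k-1)$-complex on $h-\ell$ labelled vertices. Note first that any restriction of $\cL$ (or of $\tilde{\cJ}$) to a subset of its clusters is again $(t_0^*,t_1,\eps^*)$-equitable with the same density vector, so the Dense Counting Lemma applies there; since $\eps^*$ was chosen very small, each application incurs a multiplicative error of at most $\eps_k/200$. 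Concretely, for any $(h-\ell)$-tuple $\mathbf{C}$ of distinct clusters of $\cL$, the number of maps $\psi$ with $\psi(j)$ in the $j$-th cluster of $\mathbf{C}$ for each $j$, and with $\psi$ an embedding of $\Hskel$ into $\cL$, is $(1\pm\tfrac{\eps_k}{200})m^{h-\ell}\prod_{i=2}^{k-1}d_i^{e_i(\Hskel)}$ (and likewise for $\tilde{\cJ}$, with clusters of size $n/\tilde{t}$).

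For part~(a), condition on $\CROSS_\Part$ and then, for a fixed injective assignment $\tau$ of the vertices of $\Hskel$ to clusters of $\Part$, on the event that $\psi(j)$ lies in $\tau(j)$ for each $j$; conditioned on this latter event $\psi$ is uniform on $\prod_j\tau(j)$, so the probability of $\SLICE_\cL$ is the count above divided by $m^{h-\ell}$, namely $(1\pm\tfrac{\eps_k}{200})\prod_{i=2}^{k-1}d_i^{e_i(\Hskel)}$. Since conditioned on $\CROSS_\Part$ each such $\tau$ is equally likely, averaging gives~(a). Part~(b) is identical, except that $\CROSS_{\tilde\Part}\subseteq\CROSS_\Part$ restricts $\tau$ to assignments whose images lie in distinct clusters of $\tilde{\Part}$; the per-$\tau$ estimate is unchanged. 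The final assertion is the same computation performed inside $\tilde{\cJ}$, which is equitable with the same density vector and clusters of size $n/\tilde{t}$: conditioning on $\CROSS_{\tilde\Part}$ and on the image clusters of $\tilde\Part$ and applying the Dense Counting Lemma yields $(1\pm\tfrac{\eps_k}{100})\prod_{i=2}^{k-1}d_i^{e_i(\Hskel)}$.

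For part~(c), observe that $\SLICE^*_{\cL[X]}\subseteq\SLICE^*_{\cL}$, since an embedding of $\Hskel$ into the subcomplex $\cL[X]$ is in particular an embedding into $\cL$, so
\[
\Prob\big(\SLICE^*_{\cL[X]}\mid\SLICE^*_{\cL}\big)=\frac{\Prob\big(\SLICE^*_{\cL[X]}\big)}{\Prob\big(\SLICE^*_{\cL}\big)}\,,
\]
and we estimate numerator and denominator by exact counts. A $\tilde\Part$-crossing embedding of $\Hskel$ into $\cL$ sends the vertices of $\Hskel$ to distinct clusters of $\Part$ lying in distinct clusters of $\tilde\Part$; the choice of these target clusters decomposes as a choice of $h-\ell$ distinct clusters of $\tilde\Part$, one per vertex (there are $\tilde{t}(\tilde{t}-1)\cdots(\tilde{t}-h+\ell+1)$ such choices), followed by, for each vertex, a choice of one of the $p(t_1^*)$ clusters of $\Part$ inside its $\tilde\Part$-cluster. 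Applying the Dense Counting Lemma to each resulting $(h-\ell)$-tuple of clusters and dividing by the total number $n^{h-\ell}$ of maps $\psi$, we get that $\Prob(\SLICE^*_\cL)$ equals $(1\pm\tfrac{\eps_k}{200})$ times $\tilde{t}(\tilde{t}-1)\cdots(\tilde{t}-h+\ell+1)\cdot p(t_1^*)^{h-\ell}\cdot m^{h-\ell}n^{-(h-\ell)}\prod_{i=2}^{k-1}d_i^{e_i(\Hskel)}$. For $\SLICE^*_{\cL[X]}$ the computation is the same, except that, since $X$ contains exactly one cluster of $\Part$ inside each cluster of $\tilde\Part$, a $\tilde\Part$-crossing embedding of $\Hskel$ into $\cL[X]$ is precisely an embedding into $\cL$ whose image clusters all lie in $X$ (and are automatically distinct); this removes the factor $p(t_1^*)^{h-\ell}$. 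Taking the ratio, everything cancels except $p(t_1^*)^{\ell-h}$, and the two multiplicative errors combine into the claimed $(1\pm\tfrac{\eps_k}{20})$.

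There is no substantial obstacle: the only real work is bookkeeping. One must invoke the Dense Counting Lemma in the form valid for an arbitrary down-closed $(k-1)$-complex $\Hskel$ (which need not be generated by the down-closure of a $(k-1)$-graph) rather than just for complete complexes as in Fact~\ref{fact:densecount}; note that the cluster-restrictions of $\cL$ and $\tilde{\cJ}$ used above remain equitable with the same density vector; and check that $\eps^*$ — which by its choice is far smaller than any of the thresholds involved — keeps all accumulated errors below $\eps_k/100$ and $\eps_k/20$ respectively. The one place a slip is most likely is in part~(c), namely matching up the falling-factorial factors $\tilde{t}(\tilde{t}-1)\cdots$ and the powers of $p(t_1^*)$ so that they cancel correctly between numerator and denominator.
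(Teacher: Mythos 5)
Your proposal is correct and follows essentially the same route as the paper: condition on the cluster-assignment of $\psi$ (the paper's events $\DIST_{\mathcal{C}}$), apply a counting lemma to each assignment to get the per-tuple density $\prod_{i=2}^{k-1}d_i^{e_i(\Hskel)}$, and average over assignments; for (c) the paper conditions on $\DIST_X$ and uses $\Prob(\DIST_X\mid\CROSS_{\tilde\Part})=p(t_1^*)^{-(h-\ell)}$ rather than taking a ratio of two global counts, but the falling-factorial and $p(t_1^*)$ bookkeeping you describe is exactly the same cancellation. The only cosmetic difference is that the paper cites Lemma~\ref{lem:count} for the per-tuple estimate where you invoke the Dense Counting Lemma of~\cite{KRS} directly.
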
  

\begin{claimproof}
Identify the vertices of $\Hskel$ with the integers of $[h-\ell]$. Then for any $(h-\ell)$-tuple $\mathcal{C} = (C_1, \dots, C_{h-\ell})$ of distinct clusters of $\cL$ (i.e. parts of $\Part$), we say that a copy of $\Hskel$ in $\cL$ is \emph{$\mathcal{C}$-distributed} if for each $j \in [h-\ell]$ vertex $j$ of $\Hskel$ lies in cluster $C_j$ of $\cL$. Likewise, we say that $\psi$ is $\mathcal{C}$-distributed if $\psi(j) \in C_j$ for each $j \in [h-\ell]$, an event which we denote by $\DIST_\mathcal{C}$. Note that if we
condition on the event $\DIST_\mathcal{C}$, then for each $j \in [h - \ell]$ the image $\psi(j)$ is a uniformly-random vertex in $C_j$. It follows that $\Prob(\SLICE_\cL \mid \DIST_\mathcal{C})$ is equal to the number of $\mathcal{C}$-distributed labelled copies of $\Hskel$ in $\cL$ divided by $m^{h-\ell}$ (the number of possibilities for $\psi$ given that $\DIST_\mathcal{C}$ occurs).
So 
by Lemma~\ref{lem:count} we obtain the estimate 
\begin{equation} \label{simpreg:cslicegivendist} 
\Prob(\SLICE_{\cL} \mid \DIST_{\mathcal{C}})
=\big(1\pm\tfrac{\eps_k}{100}\big)\prod_{i=2}^{k-1}d_i^{e_i(\Hskel)}.
\end{equation} 
Since the events $\DIST_{\mathcal{C}}$ partition the event $\CROSS_\Part$, this implies (a). Similarly we obtain (b) since the event $\CROSS_{\tilde\Part}$ is partitioned by the events $\DIST_{\mathcal{C}}$ for those $(h-\ell)$-tuples $\mathcal{C}$ for which each $C_j$ is a subset of a distinct part of $\tilde{\Part}$. 
Now, fix $X \in \cX$, and let $\DIST_X$ be the event that $\psi$ maps each vertex of $\Hskel$ to a distinct member of $X$ (so $\DIST_X \subseteq \CROSS_{\tilde\Part}$).  Then the event $\DIST_X$ is partitioned by the events $\DIST_{\mathcal{C}}$ for $(h-\ell)$-tuples $\mathcal{C}$ of distinct sets in $X$, each of which is equally likely to occur. Summing~\eqref{simpreg:cslicegivendist} over all such $k$-tuples $\mathcal{C}$,we obtain 
\[\Prob(\SLICE^*_{\cL[X]} \mid \DIST_X) = \Prob(\SLICE^*_{\cL} \mid \DIST_X)
=\big(1\pm\tfrac{\eps_k}{100}\big)\prod_{i=2}^{k-1}d_i^{e_i(\Hskel)}\,.\]
Together with the fact that $\Prob(\DIST_X \mid \CROSS_{\tilde\Part}) = p(t_1^*)^{-(h-\ell)}$ (since $\psi$ was chosen uniformly at random), this gives 
\[\Prob(\SLICE^*_{\cL[X]}\mid\CROSS_{\tilde\Part})
=\big(1\pm\tfrac{\eps_k}{100}\big)p(t_1^*)^{-(h-\ell)}\prod_{i=2}^{k-1}d_i^{e_i(\Hskel)}\,.\]
Since $\SLICE^*_{\cL[X]} \subseteq \SLICE^*_{\cL}$, this equation and part (a) together prove (c). Finally, 
the proof of the final statement is almost identical to the proof of (a); the only differences are that we instead consider $k$-tuples $\mathcal{C}$ of parts of $\tilde\Part$, and that the term $m^{h-\ell}$ is replaced by $(n/\tilde{t})^{h-\ell}$ (but this term is, as above, cancelled by the output from Lemma~\ref{lem:count}).
\end{claimproof}

Note that $\Prob(\CROSS_\Part \cap \SLICE_\cJ) = \Prob(\SLICE_\cJ \mid \CROSS_\Part) \Prob(\CROSS_\Part) \leq \Prob(\SLICE_\cJ \mid \CROSS_\Part)$, and Claim~\ref{simpreg:cclaim} part~(a) gives an approximation for this probability. Similarly we have $\Prob(\CROSS_{\tilde\Part} \cap \SLICE_\cJ) = 
\Prob(\SLICE_\cJ \mid \CROSS_{\tilde\Part}) \Prob(\CROSS_{\tilde\Part})$, which we can approximate by Claim~\ref{simpreg:cclaim} part~(b) and~\eqref{simpreg:ccrossc}. So, since $\CROSS_{\tilde{\Part}} \subseteq \CROSS_\Part$, we obtain
\begin{align*}
\Prob(\CROSS_{\tilde{\Part}} \mid \CROSS_\Part \cap \SLICE_\cJ) & = \frac{\Prob (\CROSS_{\tilde{\Part}} \cap \SLICE_\cJ)}{\Prob(\CROSS_\Part \cap \SLICE_\cJ)} \\
&\geq \frac{\big(1 - \tfrac{\eps_k}{100}\big)\prod_{i=2}^{k-1}d_i^{e_i(\Hskel)} (1-\tfrac{\eps_k}{100})}
{\big(1 + \tfrac{\eps_k}{100}\big)\prod_{i=2}^{k-1}d_i^{e_i(\Hskel)}} \\
&\geq 1 - \tfrac{\eps_k}{20}.
\end{align*}
We may therefore apply Proposition~\ref{prop:condprob} with $\HOM, \CROSS_\Part \cap \SLICE_\cJ$ and $\CROSS_{\tilde{\Part}}$ in place of $E$, $S$ and $C$ respectively to obtain
\begin{equation*} 
\Prob(\HOM \mid \CROSS_\Part \cap \SLICE_\cJ) = \Prob(\HOM \mid \CROSS_{\tilde{\Part}} \cap \SLICE_{\cJ}) \pm \tfrac{\eps_k}{10}.
\end{equation*}
Combining this equation with~(\ref{simpreg:cdens}),~(\ref{simpreg:cdensJ}),~(\ref{simpreg:capprox}) and the definition of $\SLICE^*_{\cJ}$ as $\SLICE_{\cJ} \cap \CROSS_{\tilde{\Part}}$, we conclude that it is sufficient to prove that
  \begin{align}\label{simpreg:desired} 
\Prob(\HOM \mid \CROSS_{\tilde{\Part}}) 
& = \Prob(\HOM \mid \SLICE^*_{\cJ}) 
\pm \tfrac{\eps_k}{2}\,. 
  \end{align}

We begin with the left hand side of~\eqref{simpreg:desired}.
Recall that there are precisely $\prod_{j=2}^{k-1}d_j^{-\binom{\tilde{t}}{j}}$
slices $\tilde{\cJ}$ through $\tilde{\Part}^*$, and observe that
if the event $\CROSS_{\tilde{\Part}}$ occurs then by Claim~\ref{claim:numcell} there are precisely 
$\prod_{j=2}^{k-1}d_j^{e_j(\Hskel)-\binom{\tilde{t}}{j}}$
slices $\tilde{\cJ}$ through $\tilde{\Part}^*$ for which the event $\SLICE^*_{\tilde{\cJ}}$ occurs. 
By definition we have $\SLICE^*_{\tilde{\cJ}} \subseteq \CROSS_{\tilde{\Part}}$ for any such slice, so
summing over all slices $\tilde{\cJ}$ through $\tilde{\Part}^*$ we obtain
  \begin{multline}\label{simpreg:homtoavg}
   \Prob(\HOM\mid\CROSS_{\tilde{\Part}})
   =\prod_{j=2}^{k-1}d_j^{\binom{\tilde{t}}{j} - e_j(\Hskel)}
   \sum_{\tilde{\cJ}}\Prob(\HOM\mid\SLICE^*_{\tilde{\cJ}})\Prob(\SLICE^*_{\tilde{\cJ}}\mid\CROSS_{\tilde{\Part}})\, \\
= (1 \pm \tfrac{\eps_k}{100}) 
    \prod_{j=2}^{k-1}d_j^{\binom{\tilde{t}}{j}}  \sum_{\tilde{\cJ}}\Prob(\HOM\mid\SLICE^*_{\tilde{\cJ}})\,,
  \end{multline}  
where the final equality holds by the final part of Claim~\ref{simpreg:cclaim}.
  \smallskip  
 
 We next show that for any slice $\tilde{\cJ}$ through $\tilde{\Part^*}$ and any $X \in \cX$ the probabilities $\Prob(\HOM\mid\SLICE^*_{\tilde{\cJ}})$ and $\Prob(\HOM\mid\SLICE^*_{\tilde{\cJ}[X]})$ are roughly equal. To do this, fix any $X \in \cX$, and for any set $\tilde{Y}$ of $h-\ell$ parts of $\tilde{\Part}$, let $Y$ be the (unique) $\tilde{Y}$-consistent subset of $X$. Then for any choice of $\tilde{Y}$, Lemma~\ref{lem:strReg}\ref{strReg:spread} states that we have 
\[\Prob(\HOM \mid \SLICE^*_{\tilde{\cJ}[\tilde{Y}]}) = \Prob(\HOM\mid\SLICE^*_{\tilde{\cJ}[Y]}) \pm\eps^*_k\,.\]
Since the events $\SLICE^*_{\tilde{\cJ}[\tilde{Y}]}$ for $\tilde{Y} \in \binom{\tilde\Part}{h-\ell}$ partition the event $\SLICE^*_{\tilde{\cJ}}$, the events $\SLICE^*_{\tilde{\cJ}[Y]}$ partition the event $\SLICE^*_{\tilde{\cJ}[X]}$, and $\eps^*_k \leq \tfrac{\eps_k}{100}$ by \eqref{simpreg:epsks}, we may sum over all $\tilde{Y} \in \binom{\tilde{\Part}}{h-\ell}$ to obtain 
  \begin{equation}\label{simpreg:JtoJx}
   \Prob(\HOM\mid\SLICE^*_{\tilde{\cJ}}) = \Prob(\HOM\mid\SLICE^*_{\tilde{\cJ}[X]}) \pm \tfrac{\eps_k}{100}.
  \end{equation}
 
  \smallskip 
Turning to the right hand side of~\eqref{simpreg:desired}, we next show that $\Prob(\HOM\mid\SLICE^*_{\cJ})$ is close to the average of $\Prob(\HOM\mid\SLICE^*_{\cJ[X]})$ over all of the $p(t_1^*)^{\tilde{t}}$ sets $X \in \cX$.
Observe that the events $\SLICE^*_{\cJ[X]}$ for $X \in \cX$ cover
  $\SLICE^*_{\cJ}$, with each $\psi\in\SLICE^*_{\cJ}$
  covered exactly $p(t_1^*)^{\tilde{t}-h+\ell}$ times. We thus obtain
  \begin{multline}\label{simpreg:avgX}
      \Prob(\HOM\mid\SLICE^*_{\cJ}) 
    = p(t_1^*)^{-\tilde{t}+h-\ell} \sum_{X \in \cX} 
    \Prob(\HOM\mid\SLICE^*_{\cJ[X]})
    \Prob(\SLICE^*_{\cJ[X]} \mid \SLICE^*_{\cJ})\,\\
   =\Big(1 \pm \tfrac{\eps_k}{20}\Big) \Big( p(t_1^*)^{-\tilde{t}}
\sum_{X \in \cX}
\Prob(\HOM\mid\SLICE^*_{\cJ[X]})
   \Big) 
   \,. 
  \end{multline}
where the second equality holds by Claim~\ref{simpreg:cclaim} part~(c). 
     
  Finally, we want to connect the averages in~\eqref{simpreg:homtoavg}
  and~\eqref{simpreg:avgX}. Observe first that for any $X \in \cX$ there is precisely one slice $\tilde{\cJ}$ through $\tilde{\Part}^*$ such that $\cJ[X] \subseteq \tilde\cJ$, and for this $\tilde{\cJ}$ we have $\cJ[X] = \tilde\cJ[X]$, which implies
\begin{equation} \label{simpreg:cremovex}
\Prob(\HOM\mid\SLICE^*_{\cJ[X]})
=\Prob(\HOM\mid\SLICE^*_{\tilde{\cJ}[X]})
\eqByRef{simpreg:JtoJx} \Prob(\HOM\mid\SLICE^*_{\tilde{\cJ}})\pm \tfrac{\eps_k}{100}
\end{equation}
Since the last term does not depend on $X$,
 we only need to know, for each $\tilde{\cJ}$, how many sets $X \in \cX$ satisfy $\cJ[X] \subseteq \tilde{\cJ}$.  If $\cJ$
  satisfies the good event of Claim~\ref{clm:equalchoose}, then
the answer
  is given by~\eqref{equalchoose}, and so by summing~\eqref{simpreg:cremovex} over all $X \in \cX$ we obtain
\begin{multline*} 
\sum_{X \in \cX} \Prob(\HOM\mid\SLICE^*_{\cJ[X]}) 
= \sum_{\tilde{\cJ}} \sum_{X \in \cX :\,\cJ[X] \subseteq \tilde{\cJ}}
 \Big(\Prob(\HOM\mid\SLICE^*_{\tilde{\cJ}})\pm \tfrac{\eps_k}{100}\Big) \\
\eqByRef{equalchoose} \sum_{\tilde{\cJ}} (1\pm\tfrac{\eps_k}{16})p(t_1^*)^{\tilde{t}}\prod_{j=2}^{k-1}d_j^{\binom{\tilde{t}}{j}}  \big(\Prob(\HOM\mid\SLICE^*_{\tilde{\cJ}})\pm \tfrac{\eps_k}{100}\big)\,,
\end{multline*} 
where the sum is over all slices $\tilde{\cJ}$ through $\tilde{\Part^*}$. 
Together with~\eqref{simpreg:avgX}, this implies that
\begin{align*}
\Prob(\HOM \mid \SLICE^*_{\cJ}) \pm\tfrac{\eps_k}{20}
&= \sum_{\tilde{\cJ}} (1\pm\tfrac{\eps_k}{16}) 
\prod_{j=2}^{k-1}d_j^{\binom{\tilde{t}}{j}}  
\big(\Prob(\HOM\mid\SLICE^*_{\tilde{\cJ}}) \pm \tfrac{\eps_k}{100}\big) \\
&=
\Big(\prod_{j=2}^{k-1}d_j^{\binom{\tilde{t}}{j}}
\sum_{\tilde{\cJ}}
\Prob(\HOM\mid\SLICE^*_{\tilde{\cJ}}) \Big)\pm\tfrac{\eps_k}{8}\,,
\end{align*}
which together with~\eqref{simpreg:homtoavg}
implies~\eqref{simpreg:desired}. This completes the proof that if $\cJ$ has the good event of Claim~\ref{clm:equalchoose} then it satisfies property~\ref{simpreg:c} of Lemma~\ref{lem:simpreg}.

  \smallskip 
  
Finally, we conclude that the probability that $\cJ$ satisfies all the
conclusions of 
Lemma~\ref{lem:simpreg} is at least $1/2 -
s2^{-t}-t^{k-1}
2^{-\sqrt{t}}-2^{-t}$.
Since $t \geq t_0^*$, by choice of
$t_0^*$ in~\eqref{simpreg:etas} this probability is strictly greater than zero, and so some $\cJ$ exists as required.
\end{proof}

\section{Embedding tight cycles} \label{sec:embedding} 

Our aim in this section is to prove Lemma~\ref{lem:emb}. We start with some definitions.

Recall the definition of a tight walk $W$ from Section~\ref{sec:emb}. As we did for tight paths, we define the length $\ell(W)$ of $W$ to be the number of edges in $W$ (i.e.~the number of consecutive $k$-sets of vertices). We refer to the first $s$ vertices of $W$, ordered as they appear in $W$, as the \emph{initial $s$-tuple} of $W$, and similarly to the final $s$ vertices as the \emph{terminal $s$-tuple} of $W$. Initial and terminal $(k-1)$-tuples have particular importance, as given tight walks $W$ and $W'$ for which the terminal $(k-1)$-tuple of $W$ is identical to the initial $(k-1)$-tuple of $W'$, we may \emph{concatenate} $W$ and $W'$ to form a new tight walk, which we denote $W + W'$; when doing so, we always include the common $(k-1)$-tuple only once, so the edges of $W + W'$ are precisely the edges of $W$ followed by the edges of $W'$. In particular, we have $\ell(W + W') = \ell(W) + \ell(W')$. Note that this definition includes the case where $W$ and $W'$ are tight paths, in which case $W + W'$ is also a tight path provided that $W$ and $W'$ have no common vertices outside this common $(k-1)$-tuple.

Before giving the full proof of Lemma~\ref{lem:emb}, we briefly sketch our approach. We show that
for any $k$-tuple $X = (X_1, \dots, X_k)$ of clusters of $\cJ$ which forms an
edge of $R := R_{d_k}(G)$, we can find a long tight path `winding around' the $k$ clusters in $G[\bigcup_{i \in [k]}
X_i]$ by repeated use of the Extension Lemma. Indeed, we proceed in steps, at
each time $j \geq 1$ keeping track of a tight path $P^{(j)}$ and a set
$\paths^{(j)}$ of `possible extensions' of $P^{(j)}$: the latter are short tight
paths whose initial $(k-1)$-tuples are equal to the terminal $(k-1)$-tuple of
$P^{(j)}$, and whose terminal $(k-1)$-tuples are all distinct. The
Extension Lemma tells us that at each time step $j$ there must
exist some $P \in \paths^{(j)}$ so that, taking $P^{(j+1)}$ to be $P^{(j)} + P$, there
is a family $\paths^{(j+1)}$ which are possible extensions of $P^{(j+1)}$ (this
argument is formalised in Claim~\ref{clm:emb}). This procedure, which we refer to as `filling the edge $X$', can be continued until only few vertices remain in each cluster of $X$.

Of course, for large $\ell$ we need to cover more vertices than are contained in
any $k$ clusters; this is the point at which the connectedness of the fractional
matching comes into play. By assumption we can find a tight walk $W$ in
$R$ which visits every edge $e \in R$ whose weight $w_e$ is non-zero. Starting
at some such edge $e$, we proceed to `fill' this edge as described above,
stopping when our tight path $P^{(j)}$ covers around a $w_e$-proportion of the
vertices of each cluster. We then extend $P^{(j)}$ by `traversing the walk $W$' to the next edge $e'$ of non-zero weight. Since this extension of $P^{(j)}$ is short, few
vertices are used in total in walk-traversing steps. So the final proportion of vertices
covered by our tight path in any cluster $X_i$ is approximately the sum of the
weights of edges of $R$ containing $X_i$. Overall, this gives a path covering
sufficiently many vertices for the bound given in the lemma. To obtain a shorter
tight path, we simply stop `filling' each edge at an earlier stage.

Finally, it remains to `join the ends' of our tight path to form a tight cycle.
For this, at the very start of the argument we set aside some large subsets
$Z_1, \dots, Z_k$ of some clusters $X_1, \dots, X_k$ which form an edge of $R$.
We then choose our first path $P^{(0)}$ so that, as well as there being many
extensions $\paths^{(0)}$ suitable for use to form $P^{(1)}$, there are also
many $(k-1)$-tuples $f$ in $Z_1, \dots, Z_{k-1}$ which can be extended to the
initial $(k-1)$-tuple of $P^{(0)}$.

We now present the full details of the proof.

\begin{proof}[Proof of Lemma~\ref{lem:emb}]
We set
\begin{equation}\label{emb:constants}
  \alpha=\tfrac{\psi}{5}\quad\text{ and }\quad\beta=\tfrac{1}{200}\,.
\end{equation}
We will want to apply Lemma~\ref{restriction} to $k$-partite $k$-complexes
(i.e.~with $s=k$) with $\alpha$ as given in~\eqref{emb:constants} and with 
$\eps_k$ playing the same role here as there. Also, we will
want to apply Lemma~\ref{lem:ext} to both $k$-partite and
$(2k-1)$-partite $k$-complexes (i.e.~with $s=k$ and with $s=2k-1$) whose
top levels have relative density at least $d_k$, with each choice of $1\le b'<b\le 3k$, 
with $\beta$ as given in~\eqref{emb:constants}, and with $\sqrt{\eps_k}$ in place of $\eps_k$. 
We require $\eps_k > 0$ to be small enough for each of these applications.

Given $d_2,\ldots,d_{k-1}$, the various applications of Lemma~\ref{lem:ext}
mentioned above require various sufficiently small positive values for $c$. We take $c>0$ to be the minimum of these values (we will have a bounded number of choices of parameters, hence this minimum is well-defined).

We now require $\eps<c^2$ to be small enough for each of the applications
of Lemmas~\ref{restriction} and~\ref{lem:ext} mentioned above to
$k$-complexes whose underlying $(k-1)$-complex is
$(\cdot,\cdot,\sqrt{\eps})$-equitable with density vector
$\mathbf{d}=(d_{k-1},\ldots,d_2)$. In addition, we require 
$\eps\le\tfrac12\prod_{i=2}^{k-1}d_i^{\binom{k-1}{i}}$ to be
small enough that we can apply Fact~\ref{fact:densecount} with $\tfrac{1}{2}$ in place of $\beta$ to
$(\cdot,\cdot,\sqrt{\eps})$-equitable $k$-partite $k$-complexes
with density vector $\mathbf{d}$.
We require $r$ to be large enough for the above mentioned applications of
Lemma~\ref{lem:ext}. We also choose $m_0\ge 16(k-1)/\eps$ to be large enough so that any $m\ge
\alpha m_0$ is acceptable for all of these applications.

Given $t$, we set
\begin{equation}\label{emb:n0}
  n_0=t\cdot\max\Big(m_0,\frac{200k}{\eps},\frac{8k}{\alpha\sqrt{\eps}},\frac{2(k+1)t^{2k}}{\alpha}\Big)\,.
\end{equation}

Now let $G$ be an $n$-vertex $k$-graph, where $n \geq n_0$, and let $\cJ$ be a $(\cdot,\cdot,\eps,\eps_k,r)$-regular slice for $G$ with $t$ clusters and density vector $\mathbf{d}$. Let $R:=R_{d_k}(G)$, and let $m := n/t$, so each cluster of $\cJ$ has size $m$. We
write $\cG$ for the $k$-complex obtained from $\cJ$ by adding all edges of $G$ supported on
$\cJ^{(k-1)}$ as the `$k$-th level' of $\cG$. So for any edge $X \in R$,
$\cG[\bigcup_{X' \in X} X']$ is a $(\reld(X), d_{k-1}, \dots, d_2, \eps, \eps_k,
r)$-regular $k$-partite
$k$-complex with $\reld(X) \geq d_k$. Furthermore, for convenience of notation, for any $s$-tuple $X = (X_1,
\dots, X_s)$ of clusters of $\cJ$ and any subsets $Y_j \subseteq X_j$ for $j \in
[s]$  we write $\cG(Y_1, \dots, Y_s)$ for the $s$-partite $s$-graph
$\cG_X[\bigcup_{j \in [s]} Y_j]$, that is, whose edges are the edges of
$\cG^{(s)}$ with one vertex in each $Y_j$. In addition, we say that an
$s$-tuple $(v_1, \dots, v_s)$ of vertices of $G$ is an \emph{ordered edge of} $\cG(Y_1, \dots, Y_s)$
if $\{v_1, \dots, v_s\}$ is an edge of $\cG$ and $v_j \in Y_j$ for each $j \in
[s]$.

Since $\cJ$ is a regular slice for $G$, for any set $X$ of $k$ clusters $X_1, \dots, X_k$ 
of $\cJ$ the $k$-partite $(k-1)$-complex $\cJ[\bigcup_{j \in [k]} X_j]$ is 
$(\bf{d}, \eps, \eps, 1)$-regular. By adding all sets of $k$ vertices supported on the polyad $\hat{\cJ}_X$ as 
a `$k$-th level', we may obtain a $(1, d_{k-1}, \dots, d_2, \eps, \eps_k, r)$-regular 
$k$-partite $k$-complex, whose restriction to any subsets $Y_j \subseteq X_j$ of size 
$|Y_j| = \alpha m$ for each $j \in [k]$ is then 
$(1, d_{k-1}, \dots, d_2, \sqrt{\eps}, \sqrt{\eps_k}, r)$ by Lemma~\ref{restriction}. 
We conclude by Fact~\ref{fact:densecount} that for any subsets $Y_1,\ldots,Y_{k-1}$ of 
distinct clusters of $\cJ$, each of size $\alpha m$, we have
\begin{equation}\label{eq:emb:numedge}
e\big(\cG(Y_1,\ldots,Y_{k-1})\big) \geq \eps m^{k-1}\,.
\end{equation}

The heart of our embedding lemma is the following claim. We will use it in steps when we fill an edge of $R$ with $i=0$, and in walk-traversing steps with $i=1$.

 \begin{claim}\label{clm:emb} Let $\{X_1, \dots, X_k\}$ be an edge of $R$, and
 choose any $Y_j \subseteq X_j$ for each $j \in [k]$ so that $|Y_1| = \dots =
 |Y_k| = \alpha m$. Let $\paths$ be a collection of at least $\tfrac12\,e(\cG(Y_1,
 \dots, Y_{k-1}))$ tight paths in $G$ (not necessarily contained in $\bigcup_{j
 \in [k]} Y_j$) each of length at most $2k+1$ and whose terminal $(k-1)$-tuples
 are distinct members of $\cG(Y_1,\ldots,Y_{k-1})$. Then for each $i \in \{0,
 1\}$ there is a path $P\in\paths$ and a collection $\paths'$ of
 $\tfrac{9}{10}\,e(\cG(Y_{i+1},\ldots,Y_{i + k-1}))$ tight paths in $G$, each of length
 $k+i$, all of whose initial $(k-1)$-tuples are the terminal $(k-1)$-tuple of
 $P$, whose terminal $(k-1)$-tuples are distinct members of
 $\cG(Y_{i+1},\ldots,Y_{i+k-1})$, and where the $j$th vertex of each path in
 $\paths'$ lies in $Y_j$ and, if $j\ge k$, is not contained in $P$.
 \end{claim}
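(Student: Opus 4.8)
The plan is to apply the Extension Lemma (Lemma~\ref{lem:ext}) to a tight path of length $k+i$, fixing \emph{both} of its extreme $(k-1)$-tuples at once so that the count of extensions can be compared directly against $e\big(\cG(Y_{i+1},\dots,Y_{i+k-1})\big)$.

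Fix $i\in\{0,1\}$ and let $\Hy$ be the $k$-complex generated by the down-closure of a tight path on $b:=2k-1+i$ vertices $w_1,\dots,w_b$. Colour $w_p$ with colour $c(p):=((p-1)\bmod k)+1$; any $k$ consecutive positions receive pairwise distinct colours, so $\Hy$ is $k$-partite for this colouring and embeds partition-respectingly into the induced subcomplex $\cG':=\cG\big[\bigcup_{j\in[k]}Y_j\big]$ by sending colour $j$ to $Y_j$ (here and below cluster indices are read cyclically, so $Y_{k+1}=Y_1$ and so on). By Lemma~\ref{restriction}, with $\alpha$ as in~\eqref{emb:constants}, $\cG'$ is $(\reld(X),d_{k-1},\dots,d_2,\sqrt\eps,\sqrt{\eps_k},r)$-regular with $\reld(X)\ge d_k$, so it meets the hypotheses of Lemma~\ref{lem:ext} with $\sqrt{\eps_k}$ in place of $\eps_k$ and with the clusters being the $Y_j$ of common size $\alpha m$. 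Let $\Hy'$ be the induced subcomplex of $\Hy$ on $\{w_1,\dots,w_{k-1}\}\cup\{w_{k+1+i},\dots,w_b\}$, that is, the union of the first and last $(k-1)$-tuples of the path. Every window of $k$ consecutive positions among $1,\dots,b$ contains position $k$ (and, when $i=1$, also meets $\{k,k+1\}$), so $\Hy'$ is precisely the disjoint union of the down-closures of two $(k-1)$-cliques, hence an induced subcomplex of $\Hy$ on $b':=2(k-1)$ vertices with $b-b'=1+i$. A labelled partition-respecting copy of $\Hy'$ in $\cG'$ is exactly a pair $(f,g)$ of vertex-disjoint edges with $f\in\cG(Y_1,\dots,Y_{k-1})$ and $g\in\cG(Y_{i+1},\dots,Y_{i+k-1})$, and extending it to a copy of $\Hy$ means choosing the $1+i$ remaining vertices (at positions in $\{k,\dots,k+i\}$, all with index $\ge k$) so as to make a tight path of length $k+i$ from $f$ to $g$ with its $j$-th vertex in $Y_{c(j)}$.

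Next I would run the averaging. Using Fact~\ref{fact:densecount} on $\cG'$ and~\eqref{eq:emb:numedge}, both $e(\cG(Y_1,\dots,Y_{k-1}))$ and $e(\cG(Y_{i+1},\dots,Y_{i+k-1}))$ are positive and comparable to $(\alpha m)^{k-1}\prod_{j=2}^{k-1}d_j^{\binom{k-1}{j}}$, and the number of copies of $\Hy'$ in $\cG'$ is at most $e(\cG(Y_1,\dots,Y_{k-1}))\cdot e(\cG(Y_{i+1},\dots,Y_{i+k-1}))$. By Lemma~\ref{lem:ext} all but at most $\beta$ times this product of the pairs $(f,g)$ extend to at least $c(\alpha m)^{1+i}$ copies of $\Hy$. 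Averaging the number of non-extending $g$ over $f\in\cG(Y_1,\dots,Y_{k-1})$, and using $\beta=\tfrac1{200}$, for more than $\tfrac9{10}e(\cG(Y_1,\dots,Y_{k-1}))$ of the edges $f$ at most $\tfrac1{20}e(\cG(Y_{i+1},\dots,Y_{i+k-1}))$ of the edges $g$ fail. Since $\paths$ contains at least $\tfrac12 e(\cG(Y_1,\dots,Y_{k-1}))$ paths with pairwise distinct terminal $(k-1)$-tuples in $\cG(Y_1,\dots,Y_{k-1})$, these two families of edges $f$ must intersect, so some $P\in\paths$ has a terminal $(k-1)$-tuple $f$ of the good type; take this $P$.

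Finally, the cleanup: for this $f$, at least $\tfrac{19}{20}e(\cG(Y_{i+1},\dots,Y_{i+k-1}))$ edges $g$ extend to $\ge c(\alpha m)^{1+i}$ copies of $\Hy$. Since $P$ has at most $3k$ vertices and each lies in at most $(\alpha m)^{k-2}$ edges of $\cG(Y_{i+1},\dots,Y_{i+k-1})$, discarding the $g$ meeting $V(P)$ removes at most $\tfrac1{100}e(\cG(Y_{i+1},\dots,Y_{i+k-1}))$ of them (here $\alpha m$ is large by~\eqref{emb:n0}), leaving more than $\tfrac9{10}e(\cG(Y_{i+1},\dots,Y_{i+k-1}))$ admissible $g$. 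For each admissible $g$, among the $\ge c(\alpha m)^{1+i}$ extensions at most $2(1+i)\cdot 3k\cdot(\alpha m)^{i}$ use a vertex of $P$ at a position with index $\ge k$, so at least one extension avoids $V(P)$ at every position $\ge k$; it is a tight path of length $k+i$ with initial $(k-1)$-tuple $f$ (the terminal tuple of $P$), terminal $(k-1)$-tuple $g\in\cG(Y_{i+1},\dots,Y_{i+k-1})$, and $j$-th vertex in $Y_{c(j)}$ that avoids $P$ whenever $j\ge k$. Selecting one such path per admissible $g$ yields $\paths'$ with pairwise distinct terminal tuples (the $g$'s) and $|\paths'|>\tfrac9{10}e(\cG(Y_{i+1},\dots,Y_{i+k-1}))$.

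The main obstacle I anticipate is arranging the Extension Lemma application so that the lower bound on $|\paths'|$ is measured against $e(\cG(Y_{i+1},\dots,Y_{i+k-1}))$ and not merely against $(\alpha m)^{k-1}$: this is exactly why one fixes \emph{both} end $(k-1)$-tuples (keeping $b-b'=1+i$ small), and it forces the averaging step, whose success depends on $\beta$ being small enough ($\beta=\tfrac1{200}$) that the density $\tfrac12$ of good terminal tuples contributed by $\paths$ plus the density $\tfrac9{10}$ of good $f$ from the averaging exceed $1$. Verifying that $\Hy'$ is induced, bookkeeping the colours-mod-$k$ against the clusters $Y_j$, and the two small-count deletions enforcing $P$-avoidance are then routine given~\eqref{emb:n0}.
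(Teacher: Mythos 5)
Your proposal is correct and follows essentially the same route as the paper's proof: the same complex $\Hy$ (down-closure of a tight path of length $k+i$) and the same $\Hy'$ (both extreme $(k-1)$-tuples), the same single application of Lemma~\ref{lem:ext} with $b-b'=1+i$, the same averaging over terminal tuples to find a suitable $P\in\paths$, and the same small-count deletions to enforce disjointness from $V(P)$. The only quibble is a constant: with total bad-pair proportion $\tfrac1{100}$ and threshold $\tfrac1{20}$, Markov gives that at least $\tfrac45$ (not $\tfrac9{10}$) of the edges $f$ are good, which is exactly what the paper uses and still suffices since $\tfrac45+\tfrac12>1$.
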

 
 One might expect to prove this claim by trying to extend the terminal $(k-1)$-tuple of a path in $\paths$ in many different ways using Lemma~\ref{lem:ext}. But it turns out to be hard to show that these many different ways really go to many different terminal $(k-1)$-tuples, and so what we actually do is show the stronger statement that most pairs of disjoint $(k-1)$-tuples are joined by many paths. That is, we apply Lemma~\ref{lem:ext} with $\mathcal{H}$ the $k$-complex formed by a tight path and $\mathcal{H}'$ the subcomplex induced by its initial \emph{and} terminal $(k-1)$-tuples.
 
 \begin{claimproof}
  We take $\mathcal{H}$ to be the $k$-complex generated by the down-closure of a
  tight path of length $k+i$ (so $\mathcal{H}$ has $2k-1+i$ vertices), and
  consider its $k$-partition in which the $i$th vertex of the path lies in the
  vertex class $V_j$ with $j = i$ modulo $k$. We take $\mathcal{H'}$ to be the
  subcomplex of $\mathcal{H}$ (with empty $k$th level) induced by the first and last $k-1$
  vertices of $\mathcal{H}$. By our assumptions on our various constants and
  by Lemma~\ref{restriction}, $\cG[\bigcup_{j \in [k]} Y_j]$ satisfies the
  conditions to apply Lemma~\ref{lem:ext}. Consider the pairs $(e, f)$, where
  $e$ is an ordered $(k-1)$-edge of $\cG(Y_1,\ldots,Y_{k-1})$ and $f$ is an ordered $(k-1)$-edge of $\cG(Y_{i+1},\ldots,Y_{i + k-1})$. For any such ordered edge $e$ there
  are at most $k m^{k-2}$ such ordered edges $f$ which intersect $e$, so by~\eqref{emb:n0} and~\eqref{eq:emb:numedge}, at most
  a $1/200$-proportion of the pairs $(e, f)$ are not disjoint. On the other hand,
  if $e$ and $f$ are disjoint, then (the down-closure of) the pair $(e, f)$ forms a labelled copy of
  $\mathcal{H'}$ in $\cG[\bigcup_{j\in[k]} Y_j]$, so by Lemma~\ref{lem:ext} with $b=2k-1+i$ and $b'=2k-2$, for all but at most a
  $1/200$-proportion of the disjoint pairs $(e, f)$ there are at least $c(\alpha
  m)^{i+1}\ge\sqrt{\eps} (\alpha  m)^{i+1}$ extensions to copies of
  $\mathcal{H}$ in $\cG[\bigcup_{j\in[k]} Y_j]$. Each such copy of $\mathcal{H}$ corresponds to a 
  tight path in $G$ of length $k+i$ with all vertices in the
  desired clusters. We conclude that at least a $99/100$-proportion of all pairs
  $(e, f)$ of ordered edges are disjoint and are linked by at least this many tight paths in $G$ 
  of the desired type; we call these pairs \emph{extensible}.

Let us call an ordered edge $e\in
  \cG(Y_{1},\ldots,Y_{k-1})$ \emph{good} if at most one-twentieth of the ordered edges
  $f \in \cG(Y_{i+1},\ldots,Y_{i + k-1})$ do not make an extensible pair with
  $e$. Then at most one-fifth of the ordered edges in $\cG(Y_1,\ldots,Y_{k-1})$ are
  not good. In particular, there must exist a path $P\in\paths$ whose terminal
  $(k-1)$-tuple is a good ordered edge $e$. Fix such a $P$ and $e$. 
Given any ordered edge
  $f$ in $\cG(Y_{i+1},\ldots,Y_{i + k-1})$ which is disjoint from $P$, suppose
  the pair $(e,f)$ is an extensible pair. By definition there are at least
  $\sqrt{\eps} (\alpha m)^{i+1}$ tight paths in $G$ from $e$ to $f$ where the $j$th vertex of each path
  lies in $Y_j$. We claim that at least one of these paths has the further
  property that if $j\ge k$, then the $j$th vertex is not contained in $P$ (and
  we can therefore put this path in $\paths'$). Indeed, as $f$ is disjoint from $P$, if $i=0$ it suffices to show that one of these paths has the property that its $k$th vertex is in $Y_k\setminus V(P)$. This is true because there are only $V(P)\le 2k+1<\sqrt{\eps}(\alpha m)$ (where the last inequality is by~\eqref{emb:n0}) paths which do not have this property. If on the other hand $i=1$, then we need a path whose $k$th and $(k+1)$st vertices are not in $V(P)$, which is possible since $2(2k+1)(\alpha m)<\sqrt{\eps}(\alpha m)^2$ by~\eqref{emb:n0}.

Finally, consider the ordered edges $f \in \cG(Y_{i+1},\ldots,Y_{i + k-1})$. Since by~(\ref{emb:n0}) and~(\ref{eq:emb:numedge}) we have $20 |P| (\alpha m)^{k-2} \leq \eps m^{k-1} \leq e(\cG(Y_{i+1},\ldots,Y_{i + k-1}))$, at most one-twentieth of these edges $f$ intersect $P$, and by choice of $e$ at most one-twentieth of these edges $f$ are such that $(e, f)$ is not extensible. This leaves at least nine-tenths of the edges $f$ remaining; choosing a tight path for each such $f$ as described above gives the desired set~$\paths'$.
 \end{claimproof} 

 Now, let $e_1,\ldots,e_s$ be the edges of non-zero weight in our fractional
 matching in $R$, and let $w_1, \ldots, w_s$ be the corresponding weights. For
 each $i \in [s]$ let $n_i$ be any integer with
 \begin{equation}\label{eq:emb:ni}
  0 \leq n_i \leq (1-3\alpha) w_i m\,.
 \end{equation}
 We next construct tight walks $W_i$ in $R$ for `traversing' from $e_i$ to $e_{i+1}$ for each $1\le i\le s-1$.
 Since our fractional matching is tightly connected, we may choose a minimum
 length tight walk $W_1$ from $e_1$ to $e_2$. 
Then
 for each $2 \le i \le s-1$, we may take a minimum length tight walk $W_i$ from
 $e_i$ to $e_{i+1}$ whose initial $(k-1)$-tuple is the terminal $(k-1)$-tuple of
 $W_{i-1}$.
 
 Finally, we construct a tight walk $W_s$ from $e_s$ to $e_1$ whose initial $(k-1)$-tuple is the terminal $(k-1)$-tuple of $W_{s-1}$ and whose terminal $(k-1)$-tuple is the initial $(k-1)$-tuple of $W_1$. We do this construction differently in order to ensure that our final cycles have length divisible by $k$. Indeed, let $W'$ be the concatenation $W_1 + \dots + W_{s-1}$, so $W'=(A_1,\ldots,A_{\ell'})$ is a
 tight walk in $R$ from $e_1$ to $e_s$. Now we let $W_s$ be the sequence of clusters
 given by writing down the terminal $(k-1)$-tuple of $W'$, that is, $A_{\ell'-k+2},A_{\ell'-k+3},\dots,A_{\ell'}$, followed by the
 penultimate $(k-1)$-tuple $A_{\ell'-k+1},A_{\ell'-k+2},\dots,A_{\ell'-1}$, and so on until we eventually write the initial
 $(k-1)$-tuple. Now any $k$ consecutive vertices of $W_s$ come from two $(k-1)$-tuples in $W'$ which share $k-2$ vertices. Since $W'$ is a tight walk in $R$, the vertices of the two $(k-1)$-tuples make an edge of $R$. In other words, $W_s$ is a tight walk in $R$. Note that by construction the terminal $(k-1)$-tuple of $W_s$ is the initial $(k-1)$-tuple of $W_1$, and also that we have
 \[\ell(W_s) = (k-1) \ell(W') = (k-1) \sum_{i = 1}^{s-1} \ell(W_i)\,.\]
 Note that
 any given $(k-1)$-tuple can appear at most once (consecutively) in a minimum length tight walk between two edges of $R$, or else we
 could contract the walk. It follows that
 each of the walks $W_i$ has length $\ell(W_i) \leq t^{k-1}$. Since
 $R$ has at most $\binom{t}{k}$ edges we conclude that $\sum_{i=1}^s
 \ell(W_i) = k \sum_{i = 1}^{s-1} \ell(W_i)$ is a multiple of $k$ and is at most $t^{2k}$.

Let $(X_1, \dots, X_k)$ be the initial $k$-tuple of $W_1$ (so $X_1, \dots, X_k$ are the clusters of $e_1$ in the order in which they appear in $W_1$). Given any subsets $Y_j \subseteq X_j$ of size $|Y_j| = \alpha m$ for $j \in [k]$, we say that an edge $e \in \cG(X_1, \dots, X_{k-1})$ is \emph{well-connected to $(Y_1, \dots, Y_{k-1})$ via $Y_k$} if for at least nine-tenths of the $(k-1)$-tuples $f$ in $\cG(Y_1,\ldots, Y_{k-1})$ there exist distinct vertices $u, v \in Y_k$ such that the concatenations $e + (u) + f$ and $e + (v) + f$ are tight paths in $G$ of length $k$. 
Now fix a subset $Z_j \subseteq X_j$ of size $\alpha m$ for each $j \in [k]$, and write
$Z = \bigcup_{j \in [k]} Z_j$. We reserve the vertices of $Z$ for joining
together the ends of the tight path we will construct to obtain a cycle; the following claim establishes the properties we will need to do this.

\begin{claim} \label{clm:emb2}
For any subsets $Y_j \subseteq X_j$ of size $\alpha m$ for $j \in [k]$ such that each $Y_j$ is disjoint from $Z_j$ the following statements hold. \begin{enumerate}[label=\abc]
\item\label{emb2:a} At least nine-tenths of the $(k-1)$-tuples $e$ in $\cG(Z_1,\ldots,Z_{k-1})$ are well-connected to $(Z_1, \dots, Z_{k-1})$ via $Z_k$.
\item\label{emb2:b} At least nine-tenths of the $(k-1)$-tuples $e$ in $\cG(Z_1,\ldots,Z_{k-1})$ are well-connected to $(Y_1, \dots, Y_{k-1})$ via $Y_k$.
\item\label{emb2:c} At least nine-tenths of the $(k-1)$-tuples $e$ in $\cG(Y_1,\ldots,Y_{k-1})$ are well-connected to $(Z_1, \dots, Z_{k-1})$ via $Y_k$. 
\end{enumerate}
\end{claim}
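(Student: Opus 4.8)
I would deduce all three parts from a single statement, proved by essentially rerunning the proof of Claim~\ref{clm:emb} with $i=0$. Fix the edge $\{X_1,\dots,X_k\}$ of $R$; call size-$\alpha m$ subsets $S_1,\dots,S_{k-1}$ of $X_1,\dots,X_{k-1}$ a \emph{source} and size-$\alpha m$ subsets $T_1,\dots,T_{k-1},T_k$ of $X_1,\dots,X_{k-1},X_k$ a \emph{target}. The statement to prove is: for every source and every target, all but a $\tfrac{1}{10}$-fraction of the $(k-1)$-tuples $e\in\cG(S_1,\dots,S_{k-1})$ have the property that for all but a $\tfrac{1}{10}$-fraction of the $(k-1)$-tuples $f\in\cG(T_1,\dots,T_{k-1})$ there are at least two vertices $u\in T_k$ with $e+(u)+f$ a tight path of length $k$ in $G$. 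Part~\ref{emb2:a} is the instance $S_j=T_j=Z_j$, $T_k=Z_k$; part~\ref{emb2:b} is $S_j=Z_j$, $T_j=Y_j$, $T_k=Y_k$; part~\ref{emb2:c} is $S_j=Y_j$, $T_j=Z_j$, $T_k=Y_k$. In parts~\ref{emb2:b} and~\ref{emb2:c} the source and target sub-regions are disjoint, so every pair $(e,f)$ is vertex-disjoint automatically; in part~\ref{emb2:a} one first discards the pairs with $e\cap f\neq\emptyset$, of which by~\eqref{emb:n0} and~\eqref{eq:emb:numedge} there are only a negligible fraction, exactly as in Claim~\ref{clm:emb}.

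To prove the statement I would restrict $\cG$ to $\bigcup_{j\in[k-1]}(S_j\cup T_j)\cup T_k'$, where $T_k'\subseteq X_k$ contains $T_k$ and has the same size as $S_j\cup T_j$ (this size is at most $2\alpha m$); by Lemma~\ref{restriction} the resulting $k$-partite $k$-complex is $(\mathbf d,\sqrt{\eps_k},\sqrt\eps,r)$-regular with top relative density at least $d_k$, so Lemma~\ref{lem:ext} applies to it. I would invoke Lemma~\ref{lem:ext} with $\Hy$ the down-closure of a tight path of length $k$ on $2k-1$ vertices, $k$-partitioned so that its $i$th vertex lies in the class with index $i$ modulo $k$, and $\Hy'$ the subcomplex induced by its first and last $k-1$ vertices (with empty top level). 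A partition-respecting copy of $\Hy'$ is then a vertex-disjoint ordered pair $(e,f)$ of $(k-1)$-edges of the restricted complex, and the lemma gives that all but a $\beta$-fraction of these pairs extend to at least $\sqrt\eps\,\alpha m\ge 2$ copies of $\Hy$, each of which is a tight path $e+(u)+f$ of length $k$ with $u\in T_k'$. Passing back to the finer partition --- demanding that the coordinates of $e$ lie in the $S_j$, those of $f$ in the $T_j$, and $u$ in $T_k$ --- reduces every relevant count by at most a factor depending only on $k$ and $\mathbf d$, which is absorbed by the choices of $\eps_k$, $\eps$ and $\beta$ fixed in the preamble of this proof; hence all but a small constant fraction of pairs $(e,f)$ with $e$ in the source and $f$ in the target admit at least two completing vertices $u\in T_k$. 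A Markov-type averaging over $e$ --- the passage from ``extensible pairs'' to ``good edges'' in the proof of Claim~\ref{clm:emb} --- then upgrades this to: all but a $\tfrac{1}{10}$-fraction of $e$ work, which is the statement.

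The one genuine obstacle is the bookkeeping forced by the wrap-around of the tight path. Because the $j$th and $(k+j)$th vertices of a tight path of length $k$ must lie in the same cluster, the source region and the $f$-region necessarily share the clusters $X_1,\dots,X_{k-1}$, and in part~\ref{emb2:a} the completing vertex $u$ even lies in a cluster that already carries part of both $e$ and $f$. One is therefore forced to apply the Extension Lemma to the coarser $k$-partite complex whose classes are the unions $S_j\cup T_j$, and the point requiring care is to check that, after intersecting with the finer $S_j$/$T_j$/$T_k$ partition, the pairs and extensions we actually use still form a constant proportion of all partition-respecting ones, so that the error term $\beta$ of Lemma~\ref{lem:ext}, the discarded non-disjoint pairs, and the few irregular polyads of $\cJ$ all remain negligible. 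Everything else is a routine repetition of the proof of Claim~\ref{clm:emb}.
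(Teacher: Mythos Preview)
Your approach to part~\ref{emb2:a} is exactly what the paper does (it simply points to the $i=0$ case of Claim~\ref{clm:emb}). For parts~\ref{emb2:b} and~\ref{emb2:c}, however, your $k$-partite workaround has a genuine gap, and the paper takes a different and cleaner route.

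The problem with passing to the finer partition is twofold. First, the quantitative loss is not absorbed by the constants as fixed: source--target pairs form only about a $4^{-(k-1)}$ fraction of all partition-respecting copies of $\Hy'$ in the enlarged complex, so the $\beta=1/200$ exceptional fraction from Lemma~\ref{lem:ext} blows up to roughly $4^{k-1}/200$ after restriction, which already for $k=3$ is far too large to run the Markov step and get nine-tenths. Second, and more seriously, the enlargement $T_k\subsetneq T_k'$ cannot be undone in the way you suggest. Lemma~\ref{lem:ext} guarantees, for a good pair $(e,f)$, at least $c\cdot|T_k'|$ completing vertices $u\in T_k'$; but $T_k$ is only half of $T_k'$, and for a fixed pair there is no reason any of these $u$ lie in $T_k$. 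The claimed ``factor depending only on $k$ and $\mathbf d$'' is an averaging statement, not a per-pair statement, so it does not give the two distinct $u\in T_k$ that well-connectedness requires.

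The paper sidesteps the wrap-around entirely by changing the partiteness: for~\ref{emb2:b} and~\ref{emb2:c} it regards the tight path on $2k-1$ vertices as a $(2k-1)$-partite $k$-complex, one vertex per class, and applies Lemma~\ref{lem:ext} with vertex classes $Z_1,\dots,Z_{k-1},Y_k,Y_1,\dots,Y_{k-1}$ (respectively $Y_1,\dots,Y_{k-1},Y_k,Z_1,\dots,Z_{k-1}$), each of size exactly $\alpha m$. Then a partition-respecting copy of $\Hy'$ is precisely a pair $(e,f)$ with $e$ in the source and $f$ in the target, and the extension vertex is forced into the correct set $Y_k$; no post-hoc restriction is needed and $\beta=1/200$ suffices directly. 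This is why the preamble of the proof of Lemma~\ref{lem:emb} explicitly prepares Lemma~\ref{lem:ext} for the case $s=2k-1$ as well as $s=k$.
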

  
\begin{claimproof}
Observe that~\ref{emb2:a} was in fact proved in the proof of Claim~\ref{clm:emb} (in the case $i=0$), with $Y_1, \dots, Y_{k}$ there corresponding to $Z_1, \dots, Z_{k}$ here. We here modify this argument to prove~\ref{emb2:b}; a near-identical argument proves~\ref{emb2:c}.
 We apply Lemma~\ref{lem:ext} as in Claim~\ref{clm:emb}, with 
$\mathcal{H}$ being the $k$-complex generated by the down-closure of a tight path of length $k$ (that is, with $2k-1$ vertices), and $\mathcal{H}'$ the subcomplex
 induced by its initial and terminal $(k-1)$-tuples. However, we now regard $\mathcal{H}$ as a $(2k-1)$-partite $k$-complex, with one vertex in each vertex class. The role of $\cG$ in Lemma~\ref{lem:ext} is played by the $(2k-1)$-partite subcomplex of $\cG$ with vertex classes $Z_1, \dots, Z_{k-1}, Y_k, Y_1, \dots, Y_{k-1}$; the first vertex of $\mathcal{H}$ is to be embedded in $Z_1$, the second in $Z_2$, and so forth. So by Lemmas~\ref{restriction} and~\ref{lem:ext} the proportion of pairs $(e, f)$ for which there is no path as in~\ref{emb2:b} is at most $1/200$, and the remainder of the argument then follows exactly as in Claim~\ref{clm:emb}.
 \end{claimproof}
  
 We are now ready to construct our cycle. Arbitrarily choose a subset $X^{(0)}_j \subseteq X_j$ of size $\alpha m$ which is disjoint from $Z_j$ for each $j \in [k]$. By Claim~\ref{clm:emb2}\ref{emb2:a} and~\ref{emb2:b} we may fix a $(k-1)$-tuple 
 $e \in \cG(Z_1,\ldots,Z_{k-1})$ such that $e$ is both well-connected to $(Z_1, \dots, Z_{k-1})$ via $Z_k$ and well-connected to $(X^{(0)}_1, \dots, X^{(0)}_{k-1})$ via $X^{(0)}_k$. Set $P^{(0)}$ to be the tight path with no 
 $k$-edges consisting simply of the vertices of $e$ in their given order. By choice of $e$ there is a set 
 $\paths^{(0)}$ of tight paths of the form $e + (v) + f$ for $v \in X^{(0)}_k$ and $f \in \cG(X_1^{(0)}, \dots, X_{k-1}^{(0)})$ for which the terminal $(k-1)$-tuples of paths in $\paths^{(0)}$ are all distinct and constitute at least half of the ordered edges of $\cG(X_1^{(0)}, \dots, X_{k-1}^{(0)})$. We now describe the algorithm we use to construct the desired cycle.
 Set the initial state to be `filling the edge $e_1$'. We proceed for each time $j \geq 1$ as follows, maintaining the following property $(\dagger)$.

\medskip
\noindent $(\dagger)$ The terminal $(k-1)$-tuples of the paths $\paths^{(j)}$ constitute at least half of the ordered edges $\cG(X^{(j)}_1,\ldots,X^{(j)}_{k-1})$.

\medskip
Suppose first that our current state is `filling the edge $e_i$' for some~$i$. If we have previously completed $n_i$ steps in this state, then we do nothing, and immediately change state to `position $1$ in traversing the walk $W_i$'. 
Otherwise, since $(\dagger)$ holds for $j-1$, 
we may apply Claim~\ref{clm:emb} with
 $i=0$ to obtain a path $P\in\paths^{(j-1)}$ and a collection $\paths^{(j)}$ of $\tfrac{9}{10}\,e(\cG(X^{(j-1)}_{1},\ldots,X^{(j-1)}_{k-1}))$ tight paths of length $k$, all of whose initial $(k-1)$-tuples are the terminal $(k-1)$-tuple of $P$,
 whose terminal $(k-1)$-tuples are distinct members of
 $\cG(X^{(j-1)}_{1},\ldots,X^{(j-1)}_{k-1})$ and are disjoint from $V(P)$, and whose remaining vertex lies in $X^{(j-1)}_k \setminus V(P)$. 
We define $P^{(j)}$ to be the concatenation $P^{(j-1)} + P$. For each $1\le p\le k$ we  generate $X^{(j)}_p$ from $X^{(j-1)}_p$ by removing the (at most two) vertices  of $P^{(j)}$ in $X^{(j-1)}_p$ and replacing them by vertices from the same cluster which do not lie in $Z$ or in $P^{(j)}$. We will prove in Claim~\ref{clm:emb3} that this is possible and that $(\dagger)$ is maintained.

Now suppose that our current state is `position $q$ in traversing the walk $W_i$' for some $i$. Since $(\dagger)$ holds for $j-1$, 
we may apply Claim~\ref{clm:emb} with $i=1$,
which returns a path $P\in\paths^{(j-1)}$ and a collection $\paths^{(j)}$ of
 $\tfrac{9}{10}\,e(\cG(X^{(j-1)}_{2},\ldots,X^{(j-1)}_{k}))$ tight paths of length $k+1$, all of whose initial $(k-1)$-tuples are the terminal $(k-1)$-tuple of $P$,
 whose terminal $(k-1)$-tuples are distinct members of
 $\cG(X^{(j-1)}_{2},\ldots,X^{(j-1)}_{k})$ which are disjoint from $V(P)$, and whose two remaining vertices lie in $X^{(j-1)}_k \setminus V(P)$ and $X^{(j-1)}_1 \setminus V(P)$ respectively.  Exactly as before we define $P^{(j)}$ to be the concatenation $P^{(j-1)} + P$. We also form $X^{(j)}_p$ from $X^{(j-1)}_{p+1}$ for each $1\le p\le k-1$ by removing the vertices of $P^{(j-1)}$ in $X_{p+1}^{(j-1)}$ and replacing them by vertices from the same cluster which do not lie in $Z$ or $P^{(j)}$. 
If we have now reached the end of $W_i$, meaning that the $(k-1)$-tuple of clusters containing $X^{(j)}_1, \dots, X^{(j)}_{k-1}$ (in that order) is the terminal $(k-1)$-tuple of $W_i$, then we choose the set $X^{(j)}_k$ as follows. If $i < s$ then we let $X^{(j)}_k$ be a subset of the remaining cluster of $e_{i+1}$ (that is, the cluster not included in the terminal $(k-1)$-tuple) which has size $\alpha m$ and is disjoint from $P^{(j)}\cup Z$. If $i=s$ we let $X^{(j)}_k$ be a subset of $X_k$ (which is the remaining cluster of $e_1$) of size $\alpha m$ disjoint from $P^{(j)}\cup Z$. We then change our state to `filling the edge $e_{i+1}$' if $i < s$, or `completing the cycle' if $i = s$. Alternatively, if we have not yet reached the end of $W_i$, we instead choose  $X^{(j)}_k$ to be a subset of size $\alpha m$ of the cluster at position $q+k$ in the sequence $W_i$, again chosen so that $X^{(j)}_k$ does not intersect $P^{(j)}$ or $Z$. This ensures that for each $p \in [k]$, $X^{(j)}_p$ is a subset of the cluster at position $p + q$ in the sequence $W_i$; in particular, these clusters form an edge of $R$ since $W_i$ is a tight walk. In this case we now change our state to `position $q+1$ in traversing
 $W_i$'. Again, we prove in Claim~\ref{clm:emb3} that these choices are all possible and that $(\dagger)$ is maintained.

Finally, if our state is `completing the cycle' then $X^{(j-1)}_1, \dots, X^{(j-1)}_{k}$ must be subsets of $X_1, \dots, X_{k}$ respectively. So by $(\dagger)$ and Claim~\ref{clm:emb2}\ref{emb2:c} we may choose a path $P \in \paths^{(j-1)}$ such that the terminal $(k-1)$-tuple $f \in \cG(X^{(j-1)}_1, \dots, X^{(j-1)}_{k-1})$ of $P$ is well-connected to $(Z_1, \dots, Z_{k-1})$ via~$X^{(j-1)}_k$. Let $P^{(j)}$ be the concatenation $P^{(j-1)} + P$, and recall that we chose $e$, the initial $(k-1)$-tuple of $P^{(j-1)}$, to be well-connected to $(Z_1, \dots, Z_{k-1})$ via $Z_k$. Together with the well-connectedness of $f$, and the fact that $V(P) \cap X_k = 1$, this implies that we may choose a $(k-1)$-tuple $e'$ in $\cG(Z_1, \ldots, Z_{k-1})$ and vertices $v \in X_k^{(j-1)} \setminus V(P)$ and $v' \in Z_k$  such that $e'$ is disjoint from~$e$ and both $Q := f + (v) + e'$ and $Q' := e' + (v') + e$ are tight paths in $G$. Return $P^{(j)} + Q + Q'$ as the output tight cycle in $G$.

 \begin{claim}\label{clm:emb3}
The algorithm described above is well-defined (that is, it is always possible to
 construct the sets $X^{(j)}_p$), maintains~$(\dagger)$, and returns a tight cycle of length 
$$\left(3 + \sum_{i \in [s]} n_i\right) \cdot k + \left(\sum_{i \in [s]} \ell(W_i)\right) \cdot (k+1).$$ In particular, this length is divisible by $k$.
\end{claim}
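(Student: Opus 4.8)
The plan is to establish all three assertions of Claim~\ref{clm:emb3} by a single induction on the step counter $j$: at step $j$ I will simultaneously show that every choice the algorithm is asked to make is available and that the invariant $(\dagger)$ passes from $j-1$ to $j$, and then read off the length formula from the bookkeeping. Throughout I will use that the matching edges $e_1,\dots,e_s$ of $R$ are pairwise vertex-disjoint, that $\sum_i\ell(W_i)\le t^{2k}$ and $\sum_i\ell(W_i)=k\sum_{i<s}\ell(W_i)$ (both shown just above the claim), together with the constraint~\eqref{eq:emb:ni} and $w_i\le1$.

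The main obstacle --- the step I would spend most care on --- is showing the algorithm never gets stuck, i.e.\ that whenever it must refresh some $X^{(j)}_p$ or pick a fresh size-$\alpha m$ set $X^{(j)}_k$ inside a new cluster, there is room. For this I would bound, uniformly in $j$ and in each cluster $C$ of $\cJ$, the quantity $|V(P^{(j)})\cap C|$. A filling step for $e_i$ extends the path by a tight path of length $k$ winding once around the $k$ clusters of $e_i$, so (after identifying the shared $(k-1)$-tuple) it deposits exactly one new vertex in each cluster of $e_i$ and none elsewhere; as the $e_i$ are disjoint, a given cluster collects at most $n_i\le(1-3\alpha)m$ path-vertices from all filling steps together. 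A traversal step extends the path by a tight path of length $\le k+1$, hence by at most $k+1$ new vertices in total, so all traversal steps together add at most $(k+1)\sum_i\ell(W_i)\le(k+1)t^{2k}$ path-vertices, and hence at most that many in any one cluster; $P^{(0)}$ and the closing paths $Q,Q'$ add only a bounded number more. A short computation using $m\ge2(k+1)t^{2k}/\alpha$ and $m\ge8k/\alpha$ from~\eqref{emb:n0} then gives $|V(P^{(j)})\cap C|<(1-2\alpha)m$ for all $j$ and $C$, so since $|Z\cap C|\le\alpha m$ and every $X^{(j')}_p$ is a size-$\alpha m$ subset of a cluster disjoint from $Z$, more than $\alpha m$ vertices of any cluster avoid $Z\cup V(P^{(j)})$ --- enough room to replace the $\le2$ vertices a set may share with $P^{(j)}$ and to choose the fresh $X^{(j)}_k$. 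I would also note that the calls to Claim~\ref{clm:emb} are legitimate: by $(\dagger)$ for $j-1$, the family $\paths^{(j-1)}$ consists of at least $\tfrac12 e(\cG(X^{(j-1)}_1,\dots,X^{(j-1)}_{k-1}))$ tight paths of length $\le2k+1$ with distinct terminal $(k-1)$-tuples in that complex --- precisely the hypothesis needed, with $i=0$ for a filling step and $i=1$ for a traversal step --- and the $k$ clusters in play always form an edge of $R$ (the clusters of $e_i$ during filling, $k$ consecutive clusters of the tight walk $W_i$ during traversal). At the `completing the cycle' state the current $X^{(j-1)}_1,\dots,X^{(j-1)}_k$ are size-$\alpha m$ subsets of $X_1,\dots,X_k$ disjoint from $Z$, so Claim~\ref{clm:emb2}\ref{emb2:c} applies, and combined with the well-connectedness of $e$ fixed at the outset (Claim~\ref{clm:emb2}\ref{emb2:a}) and the fact that the chosen path meets $X_k$ in only a bounded number of vertices, the vertices $e',v,v'$ can be chosen as required.

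Carrying $(\dagger)$ through a step is a short stability argument, essentially the one already used inside the proof of Claim~\ref{clm:emb}. The base case $j=0$ is the choice of $\paths^{(0)}$. For the inductive step, Claim~\ref{clm:emb} outputs $\paths^{(j)}$ with at least $\tfrac9{10}e(\cG(X^{(j-1)}_1,\dots,X^{(j-1)}_{k-1}))$ (respectively $\tfrac9{10}e(\cG(X^{(j-1)}_2,\dots,X^{(j-1)}_k))$, in the traversal case) distinct edges of that same complex as terminal $(k-1)$-tuples, and passing to the refreshed clusters $X^{(j)}_1,\dots,X^{(j)}_{k-1}$ exchanges at most two vertices in each of $k-1$ clusters. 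Each exchanged vertex lies in at most $(\alpha m)^{k-2}\le m^{k-2}$ edges of the complex, so at most $2(k-1)m^{k-2}$ terminal $(k-1)$-tuples are lost and $e(\cG(X^{(j)}_1,\dots,X^{(j)}_{k-1}))$ changes by at most $2(k-1)m^{k-2}$. Writing $A$ for $e(\cG(X^{(j-1)}_1,\dots,X^{(j-1)}_{k-1}))$, by~\eqref{emb:n0} (which gives $m\ge200k/\eps$) and~\eqref{eq:emb:numedge} this error is at most $\tfrac1{20}\eps m^{k-1}\le\tfrac1{20}A$, so the surviving terminal tuples number at least $\tfrac9{10}A-\tfrac1{20}A=\tfrac{17}{20}A$, while $e(\cG(X^{(j)}_1,\dots,X^{(j)}_{k-1}))\le\tfrac{21}{20}A$; since $\tfrac{17}{20}A\ge\tfrac12\cdot\tfrac{21}{20}A$, this gives $(\dagger)$ for $j$.

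Finally, the length. $P^{(0)}$ has $0$ edges. At each step $j\ge1$ the path is extended by concatenating one path from $\paths^{(j-1)}$; by construction the paths in $\paths^{(j-1)}$ have length $k$ if $j-1=0$ or step $j-1$ was a filling step, and length $k+1$ if step $j-1$ was a traversal step. In the closing stage the path is extended once more by a path from the last family $\paths^{(\cdot)}$ (produced by a traversal step, hence of length $k+1$) and then by $Q$ and $Q'$ with $\ell(Q)=\ell(Q')=k$. Tallying: the `$j-1=0$' extension contributes $k$; each of the $\sum_i n_i$ filling steps is responsible for a later extension of length $k$; each of the $\sum_i\ell(W_i)$ traversal steps is responsible for a later extension of length $k+1$ (the last of them being the one used in the closing stage); and $Q,Q'$ contribute $2k$. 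Hence the output cycle has length $k+k\sum_i n_i+(k+1)\sum_i\ell(W_i)+2k=(3+\sum_i n_i)k+(\sum_i\ell(W_i))(k+1)$. That this is genuinely a tight cycle is clear: along $P^{(j)}+Q+Q'$ consecutive $(k-1)$-tuples overlap in $k-2$ vertices and span edges of $G$, the initial and terminal $(k-1)$-tuples are both $e$, and all vertices are distinct by the disjointness built into Claims~\ref{clm:emb} and~\ref{clm:emb2} together with the bound on $|V(P^{(j)})\cap C|$. Divisibility by $k$ is then immediate, since $\sum_i\ell(W_i)=k\sum_{i<s}\ell(W_i)$ makes $(\sum_i\ell(W_i))(k+1)$ a multiple of $k$ while $(3+\sum_i n_i)k$ obviously is.
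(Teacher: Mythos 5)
Your overall architecture is the same as the paper's: bound $|V(P^{(j)})\cap C|$ cluster by cluster to show the algorithm never runs out of room, carry $(\dagger)$ through a step via the two-vertex-exchange stability estimate against $e\big(\cG(X^{(j)}_1,\ldots,X^{(j)}_{k-1})\big)\ge\eps m^{k-1}$, and tally the lengths of the concatenated extensions. The $(\dagger)$ maintenance, the length bookkeeping (including attributing the closing-stage extension of length $k+1$ to the last traversal step), the divisibility argument, and the verification that the output is a genuine tight cycle all check out and match the paper's proof.

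There is, however, one genuine error. You assert at the outset, and then use essentially in the well-definedness argument, that the edges $e_1,\dots,e_s$ are pairwise vertex-disjoint. They are not: $e_1,\dots,e_s$ are the edges of non-zero weight in a tightly connected \emph{fractional} matching, and such edges may (and in the intended applications typically do) overlap; the only constraint is that the weights of the edges containing any one cluster sum to at most $1$. So a cluster $C$ can lie in several of the $e_i$, it receives one new vertex during each filling step for \emph{each} such $e_i$, and your bound ``a given cluster collects at most $n_i\le(1-3\alpha)m$ path-vertices from all filling steps together'' does not follow from what you wrote. The correct argument, which is the one the paper gives, is that the number of vertices of $C$ consumed by filling steps is $\sum_{i:\,C\in e_i}n_i\le\sum_{i:\,C\in e_i}(1-3\alpha)w_im\le(1-3\alpha)m$, where the first inequality is \eqref{eq:emb:ni} and the second is precisely the defining property of a fractional matching. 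With this substitution the rest of your proof (the $(k+1)\sum_i\ell(W_i)\le(k+1)t^{2k}<\alpha m/2$ bound for traversal steps, the $O(k)$ contribution of $P^{(0)}$, $Q$ and $Q'$, and the conclusion that at least $\alpha m$ vertices of every cluster avoid $Z\cup V(P^{(j)})$) goes through as in the paper.
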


\begin{claimproof}
To see that the output is indeed a tight cycle, recall that we always chose $X_1^{(j)}, \dots, X_k^{(j)}$ to be disjoint from $Z$ and $P^{(j)}$, and the vertices added to $P^{(j)}$ to form $P^{(j+1)}$ are always taken from $\bigcup_{p \in [k]} X_p^{(j-1)}$. So by construction, the final $P^{(j)}$ is a tight path which only meets $Z$ in its initial $(k-1)$-tuple $e$ and does not contain the vertex $v$ used in the `completing the cycle' step. Since when completing the cycle $e'$ is chosen to be disjoint from $e$, no vertices are repeated, and so the output is indeed a tight cycle.

 To see that $(\dagger)$ is maintained, observe that applying~\eqref{eq:emb:numedge} we have $e\big(\cG(X_1^{(j)},\ldots,X_{k-1}^{(j)})\big)\ge\eps m^{k-1}$ for each $j$. Fix some $j$. By construction, for either $A_p := X^{(j-1)}_p$ or $A_p := X^{(j-1)}_{p+1}$ (according to our current state) we obtain sets $A_1,\ldots,A_{k-1}$ each of size $\alpha m$ such that the terminal $(k-1)$-tuples of $\paths^{(j)}$ constitute at least nine-tenths of the ordered edges of $\cG(A_1,\ldots,A_{k-1})$, and for each $1\le i\le k-1$ the set $X_i^{(j)}$ is formed from $A_i$ by removing at most two vertices and replacing them with the same number of vertices. Since each vertex is in at most $m^{k-2}$ ordered $(k-1)$-edges of either $\cG(A_1,\ldots,A_{k-1})$ or $\cG(X_1^{(j)},\ldots,X_{k-1}^{(j)})$, we conclude that the fraction of ordered $(k-1)$-edges of $\cG(X_1^{(j)},\ldots,X_{k-1}^{(j)})$ which are terminal $(k-1)$-tuples of paths in $\paths^{(j)}$ is at least
 \begin{multline*}
  \frac{\frac{9}{10}\,e\big(\cG(A_1,\ldots,A_{k-1})\big)-2(k-1) m^{k-2}}{e\big(\cG(X_1^{(j)},\ldots,X_{k-1}^{(j)})\big)}\\
  \ge \frac{\frac{9}{10}\big(e\big(\cG(X_1^{(j)},\ldots,X_{k-1}^{(j)})\big)-2(k-1) m^{k-2}\big)-2(k-1) m^{k-2}}{e\big(\cG(X_1^{(j)},\ldots,X_{k-1}^{(j)})\big)}\\
  \ge \frac{9}{10}-\frac{4(k-1) m^{k-2}}{\eps m^{k-1}}\ge \frac12\,,
 \end{multline*}
 where the final inequality is because we have $m\ge m_0\ge 16(k-1)/\eps$. Thus $(\dagger)$ holds for $j$ as desired.

 To see that we can always
 construct the sets $X^{(j)}_p$, observe that it is enough to check that at
 termination every cluster still has at least $2\alpha m$ vertices not in
 $P^{(j)}$, as then there are at least $\alpha m$ such vertices outside $Z$. 
 Observe that at each step we choose a set of paths $\paths^{(j)}$, precisely one member of which is then used to extend $P^{(j)}$ to $P^{(j+1)}$ in the subsequent step. 
 In each walk-traversing step each path in $\paths^{(j)}$ contains precisely $k+1$ new vertices (i.e. vertices outside $P^{(j)}$), and the total number of walk-traversing steps is precisely $\sum_{i \in [s]} \ell(W_i)$. Recalling that this number is at most $t^{2k}$, and using $t \leq t_1$, by~\eqref{emb:n0} we have $(k+1)t^{2k}
< \alpha m/2$, so in particular fewer than $\alpha m/2$ vertices are added to $P^{(j)}$ as a result of walk-traversing steps. On the
 other hand, we remain in the state `filling the edge $e_i$' for precisely
 $n_i$ steps, and in each of these steps each path in $\paths^{(j)}$ contains
 precisely $k$ new vertices, one from each cluster of $e_i$. So for any cluster~$C$, the number of vertices of $C$ which are added to $P^{(j)}$ as a result of edge-filling steps is
 $\sum_{i :~C \in e_i} n_i \leq \sum_{i :~C \in e_i} (1-3\alpha) w_i m \leq
 (1-3\alpha) m$, where the first inequality holds by~\eqref{eq:emb:ni} and the
 second by definition of a fractional matching. Together with the $k-1$ vertices of $P^{(0)}$, and the $k$ vertices of the chosen path in $\paths^{(0)}$, we conclude that in total at
 most $(1-2\alpha) m$ vertices of any cluster are contained in the path $P^{(j)}$ at termination, as desired.
 
Finally, the total length of the cycle is equal to the number of vertices it contains, which we can calculate similarly. Recall that our initial $P^{(0)}$ contained $k-1$ vertices, to which $k$ vertices were added from some member of $\paths^{(0)}$ in the first step to form $P^{1}$. Each of the $\sum_{i \in [s]} n_i$ edge-filling steps resulted in $k$ new vertices being added to $P^{(j)}$ in the subsequent step, and each of the $\sum_{i \in [s]} \ell(W_i)$ walk-traversing steps resulted in $k+1$ new vertices being added to $P^{(j)}$ in the subsequent step. Finally, when completing the cycle we used $k+1$ vertices which were not contained in our final path $P^{(j)}$ (namely $v, v'$ and the vertices of $e'$). In summation, the cycle formed has length
\[(k-1)+k + \left(\sum_{i \in [s]} n_i\right) \cdot k + \left(\sum_{i \in [s]} \ell(W_i)\right) \cdot (k+1) + (k+1)\,,\]
giving the claimed expression. Since $\sum_{i \in [s]} \ell(W_i)$ is divisible by $k$, the same is true of this length.
\end{claimproof}

Recall that $\sum_{i \in [s]} \ell(W_i) \leq t^{2k}$. So if we take $n_i = 0$
for every $i \in [s]$ then we obtain a tight cycle of length at most $2k
t^{2k}$. On the other hand, if we take the $n_i$ to be as large as permitted,
so $n_i = (1-3\alpha)w_i m$ for each $i \in [s]$, then, since $\sum_{i=1}^{s} w_i = \mu$, we obtain a tight cycle of length at least $(1-3\alpha) \mu k m \geq
(1-\psi) k \mu n/t$. Clearly by choosing the $n_i$ appropriately we may
obtain tight cycles of any length between these two extremes which is divisible
by $k$. So it remains only to prove the lemma for cycle lengths $\ell < 2k
t^{2k}$. Note that $2k t^{2k} < m$, so the number of vertices in such a
cycle is fewer than the number of vertices in any cluster. Hence, we need only use
one edge of the reduced graph to find the desired tight cycle in this case.

More precisely, fix any edge $e_1 = \{X_1, \dots, X_k\}$ of $R$. If $\ell \geq 3k$, then we may
proceed as before with $n_1 := \ell/k - 3$. That is, we choose a $(k-1)$-tuple
$e$, disjoint sets $Z_j, X^{(0)}_j \subseteq X_j$ of size $\alpha m$, and a set of extensions $\paths^{(0)}$ of $P^{(0)} = e$ as previously, then enter state `filling the edge $e_1$', where we remain
for $n_1$ steps. Following this, we move directly to the state `completing the
cycle' (since there is no walk to traverse), which proceeds as before. By
similar arguments as before, this process gives a tight cycle of length
$kn_1 + 3k = \ell$, as required.

Finally, if $\ell = 2k$, we simply apply Lemma~\ref{lem:ext}
 with $\mathcal{H}$ the complex generated by the down-closure of a tight cycle of length $2k$, and
 $\mathcal{H}'$ the subcomplex induced by any $k-1$ consecutive vertices of this cycle, to 
 $\cG(X_1, \dots, X_k)$, where $X_1, \dots, X_k$ are the clusters of any edge of $R$. 
 This gives a cycle of length $2k$ in $G$.
\end{proof}

To conclude this section, we note that by a similar approach it is possible to find paths of specified
lengths whose initial and terminal $(k-1)$-tuples lie in specified sets of $(k-1)$-tuples. We will not need this
result here, but we state it for future convenience. 

\begin{lemma}\label{lem:fracembrest}
  Let $k,r,n_0,t, B$ be positive integers, and
  $\psi,d_2,\ldots,d_k,\eps,\eps_k, \nu$ be positive constants such that
  $1/d_i\in\NATS$ for each $2\le i\le k-1$, and such
  that $1/n_0 \ll 1/t$,
  \[\frac{1}{n_0}, \frac{1}{B}\ll
  \frac{1}{r},\eps\ll\eps_k,d_2,\ldots,d_{k-1}\quad\text{and}\quad\eps_k\ll
  \psi,d_k, \nu, \frac{1}{k}\,.\]
  Then the following holds for all integers $n\ge n_0$.

  Let~$G$ be a $k$-graph
  on~$n$ vertices, and $\cJ$ be a $(\cdot,\cdot,\eps,\eps_k,r)$-regular slice
  for $G$ with $t$ clusters and density vector $(d_{k-1},\ldots,d_2)$. Let
  $M$ be a tightly connected fractional matching of weight $\mu$ in $R :=
  R_{d_k}(G)$. Also let $X$ and $Y$ be $(k-1)$-tuples of clusters which lie in
  the same tight component of $R$ as $M$, and let $S_X$ and $S_Y$ be subsets of
  $\cJ_X$ and $\cJ_Y$ of sizes at least $\nu|\cJ_X|$ and $\nu|\cJ_Y|$ respectively. Finally, let $W$ be a tight walk in $R$ from $X$ to $Y$ of length at most $t^{2k}$, and let $p=\ell(W)$.

  Then for any
  $\ell$ divisible by $k$ with $3k \leq \ell \leq (1-\psi)k\mu n/t$
  there is a tight path $P$ in $G$ of length $\ell+p(k+1)$ whose initial
  $(k-1)$-tuple forms an edge of $S_X$ and whose terminal $(k-1)$-tuple forms an edge of
  $S_Y$, where the orders of these tuples are given by the orders of $X$ and $Y$ respectively. Furthermore $P$ uses at most $\mu(C)n/t + B$ vertices from any
  cluster $C$, where $\mu(C)$ denotes the total weight of edges of $M$ containing $C$.
\end{lemma}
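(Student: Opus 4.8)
The plan is to rerun the proof of Lemma~\ref{lem:emb} almost verbatim, with three changes: the object built is a tight path rather than a cycle, its two ends are pinned inside the prescribed edge sets $S_X$ and $S_Y$ instead of inside reserved sets $Z_1,\dots,Z_k$, and it is routed along the given walk $W$ (which in the relevant applications visits every edge of the fractional matching $M$; the filling steps below take place at those edges). All auxiliary constants $\alpha=\psi/5$, $\beta$, $c$, $\eps$, $r$, $m_0$, $n_0$ are chosen as in that proof, except that we also ask $\beta<\nu$ and that $B$ be large in terms of $k$ and $t$; we keep the complex $\cG$, the notation $\cG(Y_1,\dots,Y_s)$, the bound~\eqref{eq:emb:numedge}, Claim~\ref{clm:emb} (invoked with $i=0$ for filling steps and $i=1$ for walk-traversing steps), and the invariant $(\dagger)$ that the terminal $(k-1)$-tuples of the current path family cover at least half of $\cG(X^{(j)}_1,\dots,X^{(j)}_{k-1})$.

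First I would set up the two ends. Since $|S_X|\ge\nu|\cJ_X|>\beta|\cJ_X|$, a single application of Lemma~\ref{lem:ext} (as in Claim~\ref{clm:emb}, with $\mathcal H$ a short tight path rooted at its initial $(k-1)$-tuple) shows that some $(k-1)$-tuple $e_{\mathrm{init}}\in S_X$ has at least $\tfrac12\,e(\cG(\cdot))$ distinct length-$k$ extensions into the clusters that follow $X$ on $W$; set $P^{(0)}:=e_{\mathrm{init}}$ with the corresponding family $\paths^{(0)}$, so $(\dagger)$ holds initially and the vertex-order of $e_{\mathrm{init}}$ agrees with $X$. Symmetrically, using $|S_Y|\ge\nu|\cJ_Y|$ together with a Claim~\ref{clm:emb2}-type argument (statement~\ref{emb2:c}, with the clusters of $Y$ in place of $Z_1,\dots,Z_{k-1}$ and $S_Y$ supplying the target tuples), I would show that when the construction has traversed all of $W$ it can be steered so that its terminal $(k-1)$-tuple is an edge of $S_Y$ with vertex-order matching $Y$; the point is that the family $\paths^{(j)}$ at that stage already covers a constant fraction of the relevant $(k-1)$-tuples, so a constant fraction of $S_Y$ is reachable.

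Then I would run the algorithm of Lemma~\ref{lem:emb}: traverse $W$ from $X$ to $Y$, each walk-traversing step contributing a length-$(k+1)$ piece (hence $p(k+1)$ in total), and at each edge $e_i$ of $M$ along $W$ insert $n_i$ filling steps with $0\le n_i\le(1-3\alpha)w_im$, choosing $\sum_i n_i$ so that the filling together with the three constant-size pieces at the start and at the $S_Y$-end contributes exactly $\ell$ — possible because $\ell$ is a multiple of $k$, $\ell\ge 3k$, and $\sum_i(1-3\alpha)w_im\cdot k=(1-3\alpha)k\mu n/t\ge(1-\psi)k\mu n/t\ge\ell$. Maintenance of $(\dagger)$ at every step, and the fact that every cluster always retains $\ge 2\alpha m$ unused vertices, are verified exactly as in Claim~\ref{clm:emb3} using the fractional-matching inequality $\sum_{i:\,C\in e_i}w_i\le 1$. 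For the final vertex count, a cluster $C$ lying in some edge of $M$ receives at most $\sum_{i:\,C\in e_i}n_i\le(1-3\alpha)\mu(C)m$ vertices from filling plus at most $O(kt^{2k})$ from the $p\le t^{2k}$ walk-traversing steps and the two endpoint pieces, which is at most $\mu(C)n/t+B$; a cluster not in $M$ receives only these $O(kt^{2k})\le B$ vertices. Thus $P$ has length $\ell+p(k+1)$ and meets the stated per-cluster bound.

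The main obstacle is the $S_Y$-end. In Lemma~\ref{lem:emb} the ends were closed using \emph{reserved} sets $Z_j$ kept untouched throughout, whereas $S_Y$ is prescribed in advance and the clusters of $Y$ may already have been heavily used for filling or walk-traversal by the time we arrive there; one must check that a positive fraction of $S_Y$ still lies in the ``good'' part on which the extensions of $\paths^{(j)}$ land, which is where the slack between the ``nine-tenths'' produced by Claim~\ref{clm:emb} and the ``at least half'' required by $(\dagger)$, together with $|S_Y|\ge\nu|\cJ_Y|$ and $\beta<\nu$, are used. A secondary point needing care is that one must re-run the bookkeeping of Claim~\ref{clm:emb3} with filling steps interleaved inside the traversal of $W$ rather than grouped as in Lemma~\ref{lem:emb}, and confirm that this affects neither the length computation nor the ``$\ge 2\alpha m$ unused vertices'' argument.
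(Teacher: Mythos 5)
Your overall architecture (rerun the algorithm of Lemma~\ref{lem:emb}, pin the initial $(k-1)$-tuple in $S_X$ via the Extension Lemma, traverse $W$ with filling steps interleaved, count vertices per cluster via the fractional-matching inequality) matches the paper's proof, and your treatment of the $S_X$-end and of the length/vertex bookkeeping is essentially what the paper does. However, your handling of the $S_Y$-end has a genuine gap, and it is precisely the point you flag as ``the main obstacle''. You propose to \emph{steer} the final step of the algorithm so that its terminal $(k-1)$-tuple lands in $S_Y$, arguing that the family $\paths^{(j)}$ at that stage covers nine-tenths of the relevant tuples while $|S_Y|\ge\nu|\cJ_Y|$. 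This does not work: the terminal tuples of $\paths^{(j)}$ live in $\cG(X^{(j)}_1,\ldots,X^{(j)}_{k-1})$, where each $X^{(j)}_i$ is a working subset of size only $\alpha m$ inside the corresponding cluster of $Y$ (and, worse, by the time the algorithm reaches $Y$ most of each cluster may already be consumed by $P^{(j)}$). The set $S_Y$ is an arbitrary $\nu$-fraction of $\cJ_Y$ over the \emph{full} clusters, so it may be entirely disjoint from $\cG(X^{(j)}_1,\ldots,X^{(j)}_{k-1})$ --- for instance, $S_Y$ could consist only of tuples through vertices outside every $X^{(j)}_i$. The ``nine-tenths versus one-half'' slack and the condition $\beta<\nu$ cannot repair this, because the failure is one of support, not of proportion.

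The paper resolves this by committing to the terminal edge \emph{before} the algorithm runs, exactly as the cycle-closing edge $e$ was handled in Lemma~\ref{lem:emb}. One reserves sets $Z_j$ of size $\alpha m$ inside the clusters of $Y$ (and the cluster $Y_k$ preceding $Y$ on $W$), applies Lemma~\ref{lem:ext} with the smaller parameter $\beta=\min(\nu/40,1/100)$ so that all but a $\beta<\nu$ fraction of $\cJ_Y$ is well-connected to $(Z_1,\ldots,Z_{k-1})$ via $Z_k$, and hence fixes a specific $e\in S_Y$ with this property. The algorithm then runs while avoiding $Z_1,\ldots,Z_k$ and $e$, and the final ``completing'' step connects the last path $P^{(j)}$ through the untouched reserved sets into $e$, yielding a tight path ending at $e\in S_Y$ rather than a cycle. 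Your instinct to use $\beta<\nu$ is the right one, but it must be applied at the outset to select $e\in S_Y$ (where $S_Y$ is compared against the full set $\cJ_Y$), not at the end to the depleted working subsets. With that change --- and the corresponding adjustment that the constant-size pieces at the two ends contribute $3k$ to the length, so the filling steps contribute $\ell-3k$ --- the rest of your argument goes through as written.
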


\begin{proof}
We need to be able to apply Lemma~\ref{lem:ext} with
$\beta=\min(\nu/40,1/100)$ rather than $1/100$, and we need $\alpha\ll\nu$, but
the remaining constants only have to satisfy the given order of magnitude hierarchy.
We use the same notation as in the proof of Lemma~\ref{lem:emb}. So, writing $X
= (X_1, \dots, X_{k-1})$ and $Y = (Y_1, \dots, Y_{k-1})$ we can rewrite our
assumption on $S_X$ and $S_Y$ in a more familiar form: that $S_X$ constitutes at
least a $\nu$-proportion of $\cG(X_1, \dots, X_{k-1})$ and $S_Y$ constitutes at
least a $\nu$-proportion of $\cG(Y_1, \dots, Y_{k-1})$. Let $X_k$ be the cluster
following $X$ in $W$, and let $Y_k$ be the cluster preceding $Y$ in $W$, so
$\{X_1, \dots, X_k\}$ and $\{Y_1, \dots, Y_k\}$ are edges of $R$. Then we choose
subsets $Z_j \subseteq Y_j$ and $X^{(0)}_j \subseteq X_j$ of size $\alpha n$ for
each $j \in [k]$. A similar argument as in the proof of Lemma~\ref{lem:emb}
implies that we may choose $e \in S_Y$ so that for at least half the members $f$
of $\cG(Z_1, \dots, Z_k)$ there is a tight path of length $k$ in $G$ from $f$ to
$e$ whose remaining vertex lies in $Z_k$. Similarly, we may choose a
$(k-1)$-tuple $P^{(0)}$ in $S_X$ and a set $\paths^{(0)}$ of tight paths of the
form $P^{(0)} + v + e'$ with $v \in Y_k$ and $e' \in \cG(Y_1, \dots, Y_{k-1})$
so that the members of $\paths^{(0)}$ have distinct terminal $(k-1)$-tuples
which together occupy at least nine-tenths of $\cG(Y_1, \dots, Y_{k-1})$. We
then proceed by exactly the same algorithm as in the proof of
Lemma~\ref{lem:emb} to repeatedly extend $P^{(j)}$ whilst avoiding $Z_1, \dots,
Z_{k}$ and $e$. The only difference is that now in the `completing the cycle'
state, when we identify tight paths $Q$ and $Q'$ which together connect our
final $P^{(j)}$ to $e$, this does not yield a tight cycle but rather a tight
path whose initial $(k-1)$-tuple lies in $S_X$ and whose final $(k-1)$-tuple
lies in $S_Y$, as claimed.

It remains only to check the lengths of tight paths which can be obtained in
this way. As in Lemma~\ref{lem:emb} the shortest tight path is achieved by never
entering the state of `filling an edge', in which case we obtain a tight path of
length $3k + p(k+1)$. On the other hand, exactly as for Lemma~\ref{lem:emb}, by
extending $W$ to include all edges of $R$ of non-zero weight before we implement
the algorithm, we can obtain tight paths of length up to $(1-\psi)k\mu
n/t$, but without using more than $\mu(C)n/t + B$ vertices from any
cluster, where $B=B(t,k)$ does not depend on $n$.
\end{proof}

\section{Concluding remarks}
\label{sec:conclude}

\paragraph{\bf Towards an extremal theorem for tight paths and cycles}

We mentioned in the introduction that our result is an approximate analogue
of the Erd\H{o}s-Gallai Theorem. The exact analogue is the following, which we
conjecture.

\begin{conjecture}\label{conj:EG} For any~$\ell$, all $n$-vertex
  $k$-graphs with more than $\tfrac{\ell-k}{k}\binom{n}{k-1}$ edges
  contain a
  tight path on $\ell$ vertices, and all $n$-vertex
  $k$-graphs with more than $\tfrac{\ell-1}{k}\binom{n-1}{k-1}$ edges have
  a tight cycle of length at least $\ell$. 
\end{conjecture}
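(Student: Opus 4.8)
The plan is to prove each half of Conjecture~\ref{conj:EG} in two stages, mirroring the skeleton of our proof of Theorem~\ref{thm:HypErdGal} but with hypergraph regularity (which unavoidably costs an error of order $\binom{n}{k}$) removed. By Proposition~\ref{prop:comp} there is a tight component $G^*$ of $G$ with $e_k(\cG^*) \ge \frac{e_{k-1}(\cG^*)}{\binom{n}{k-1}}e(G)$, so for the path statement it would suffice to prove the following \emph{local} assertion: for every $\ell$, if $\cG$ is a $k$-complex with $\cG^{(k)}$ tightly connected and $e_k(\cG) > \frac{\ell-k}{k}e_{k-1}(\cG)$, then $\cG^{(k)}$ contains a tight path on $\ell$ vertices. (Feeding $e(G) > \frac{\ell-k}{k}\binom{n}{k-1}$ and $e_{k-1}(\cG^*) \le \binom{n}{k-1}$ into Proposition~\ref{prop:comp} yields the hypothesis of the local assertion for $\cG^*$.) Specialised to $\ell = kr$ this is a tightly-connected strengthening of Lemma~\ref{lem:ratiomatching} in which ``matching of $r$ edges'' is upgraded to ``tight path on $kr$ vertices'', and the complex $\K$ in the sharpness example following Lemma~\ref{lem:ratiomatching} shows it would be best possible; for $\ell$ not divisible by $k$ the local assertion is a genuine refinement, since the clean integer count that powers Lemma~\ref{lem:ratiomatching} is no longer available. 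For the cycle half one wants the analogous local assertion producing a tight cycle of length at least $\ell$, together with the sharper bound $e_{k-1}(\cG^*) \le \binom{n-1}{k-1}$; this forces one to treat separately the degenerate tight component consisting of all $k$-sets through a single fixed vertex, whose $(k-1)$-shadow is all of $\binom{[n]}{k-1}$ but which contains no long tight cycle.

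To attack the local assertion I would follow the proof of Lemma~\ref{lem:ratiomatching} closely. Its two reduction steps — repeatedly deleting a $j$-edge contained in fewer than $(k-j)r$ edges of $\cG^{(j+1)}$ (Proposition~\ref{deletelowdeg}) and repeatedly applying an $ij$-compression — bring one to a fully-compressed complex $\Hy$ in which \eqref{eq:compressgives} holds, and one then exhibits the explicit matching $\{e_{k,m}\}_{m\in[r]}$ on the $kr$ smallest vertices. Since the $e_{k,m}$ are consecutive windows of integers, \eqref{eq:compressgives} should allow one to slide from $e_{k,m}$ to $e_{k,m+1}$ one coordinate at a time through edges of $\Hy^{(k)}$ — that is, to build a tight walk, and with a little bookkeeping to keep coordinates distinct, a tight path visiting all $kr$ of the smallest vertices. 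The obstacle, which I expect to be the real difficulty, is transferring this back to $G$: Proposition~\ref{compress} only guarantees that compression does not \emph{increase} the matching number, and one would need the analogous — and, I believe, substantially subtler — statement that an $ij$-compression does not \emph{decrease} the length of the longest tight path, or at least preserves tight connectivity of the component carrying a near-longest tight path. I do not see an easy way to establish such a compression-compatibility statement for tight paths in general, and it may be necessary to forgo compression near the extremal configurations and run a stability argument there instead.

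Granting the local assertion, the cycle half additionally requires passing from a tight path on $\ell$ vertices to a tight cycle of length at least $\ell$ \emph{without loss}: in the proof of Lemma~\ref{lem:emb} we could afford to waste a bounded number of vertices when closing a path into a cycle because regularity had already handed us slack, whereas here one would have to reserve, at the very start, a bounded set of vertices inside one suitable edge of the tight component and use it to absorb the closing step, in the spirit of the absorbing method, so that the total vertex count still reaches $\ell$. The remaining ingredients are comparatively routine: the reduction to a tight component (Proposition~\ref{prop:comp}) is already in hand, the arithmetic for $\ell$ not divisible by $k$ is automatic once the local assertion is proved in that generality, and isolating the all-sets-through-a-fixed-vertex component for the $\binom{n-1}{k-1}$ bound is a finite case analysis. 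Overall I expect the crux to be the compression-versus-tight-paths compatibility of the previous paragraph, and a fully successful approach may ultimately rest on a stability description of the near-extremal $k$-graphs.
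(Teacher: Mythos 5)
You are attempting to prove Conjecture~\ref{conj:EG}, which the paper explicitly poses as an \emph{open problem} (and, for long cycles, even doubts is best possible); there is no proof in the paper to compare against, and your text is a research plan rather than a proof. The decisive gap is the one you name yourself: the whole strategy funnels through a ``local assertion'' that a tightly connected complex with $e_k(\cG) > \tfrac{\ell-k}{k}e_{k-1}(\cG)$ contains a tight path on $\ell$ vertices, to be proved by the deletion--compression scheme of Lemma~\ref{lem:ratiomatching}. But Proposition~\ref{compress} part~\ref{compress:b} is a statement about matchings only. An $ij$-compression can destroy tight walks (replacing an edge $e$ of a walk by $\{i\}\cup e\setminus\{j\}$ breaks the consecutive-$k$-set condition at every window through $e$), and no monotonicity of longest-tight-path length, or preservation of tight connectivity, under compression is known or established here. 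Worse, even if the fully compressed complex $\Hy$ did contain a long tight path, $\Hy$ is obtained from $\cG^*$ by operations that move edges around arbitrarily, so that path would not pull back to a tight path in $G$; for matchings this is exactly what part~\ref{compress:b} rescues, and nothing rescues it for paths. So the core of the argument is absent, not merely delicate.

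Two further steps are also unsupported. First, for the cycle bound you need the denominator $\binom{n-1}{k-1}$, but the averaging in Proposition~\ref{prop:comp} only yields $\binom{n}{k-1}$ (the shadows of distinct tight components are merely disjoint subsets of $\binom{V}{k-1}$), and ``isolating the all-sets-through-a-fixed-vertex component'' is not a finite case analysis: many tight components can simultaneously have large shadow, and the sharper constant would require a genuinely different counting argument. Second, closing a tight path on exactly $\ell$ vertices into a tight cycle of length at least $\ell$ without losing a single vertex is precisely the kind of step that in this paper consumes regularity and slack (Lemma~\ref{lem:emb}); asserting that an absorbing reservoir inside one edge of the component will do it is a hope, not an argument. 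In short, you correctly reuse Proposition~\ref{prop:comp} and correctly diagnose where the difficulty lies, but the proposal does not prove the statement, and the conjecture remains open.
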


As we saw in Section~\ref{sec:LB}, for any fixed $\ell$ this conjecture is
sharp for $n$ satisfying certain divisibility conditions. We also observed
there that if $p\ll n^{-(\ell-k)/(\ell-1)}$
then a random hypergraph of density $p$ has fewer $\ell$-vertex tight
cycles than edges, so we can easily delete all short cycles without
significantly altering the density. Hence we cannot ask for the existence
of cycles of lengths up to $\ell$.

Recall that Gy\H{o}ri, Katona and Lemons~\cite{GyKatLem} proved that more
than $(\ell-k)\binom{n}{k-1}$ edges suffices to guarantee the existence of
an $\ell$-vertex tight path, which is weaker than the
conjecture both in that it only deals with tight paths and in that a factor $k$
more edges than conjectured are required. However their result holds for
all $\ell\le n$. Our result is off by a factor only
$\big(1+o(1)\big)$ from the conjectured number of edges, but we require $\ell$
to be linear in $n$. Possibly one could even prove the conjecture exactly in
our range of $\ell$ using the Stability Method, but we do not believe an
attempt to do so is worthwhile, for the following reason.

For large cycles we do not believe Conjecture~\ref{conj:EG} is best
possible. It is easy to check that if $\alpha\gg n^{-1/2}$ then no designs
(or even set systems with nearly as many edges as a design) with sets of
size $\alpha n$ and without pairwise intersections of size two or greater
exist: The lines of $\mathbb{F}^2_p$ for prime $p\approx n^{1/2}$ form a
design with all pairs covered exactly once, so this is optimal.

The best lower bound we know of in this
range of~$\alpha$ is the simple construction presented in Section~\ref{sec:LB} in which we take all edges
meeting a fixed $(\alpha n/k-1)$-vertex set. There is a substantial gap
between this lower bound and Conjecture~\ref{conj:EG}. 
It would be very interesting---but also, we suspect, very difficult---to
close this gap. In fact, we think that it is already difficult to solve the
following problem for some fixed $\alpha\in(0,1)$.

\begin{problem}
  Determine the limit of the maximum edge density of $n$-vertex $3$-graphs
  which do not contain $\alpha n$-vertex tight paths.
\end{problem}

We have no suggestions of good candidates for extremal structures, which
would also be interesting to obtain.  For $\alpha$ close to $1$, it even
seems possible that the construction presented in Section~\ref{sec:LB} in which we take an $(\alpha n-1)$-vertex clique and all further
edges which meet it in less than $k-1$ vertices could be extremal.

\smallskip

\paragraph{\bf Spanning structures and stability} 
Let us support our claim that Lemma~\ref{lem:simpreg} property~\ref{simpreg:c}
can be useful for proving results involving spanning subgraphs. For this
purpose we first briefly sketch a proof of R\"odl,
Ruci\'nski and Szemer\'edi~\cite{RRSHam} giving the following Dirac-type condition
for tight Hamilton cycles in $k$-graphs, and then explain how their
approach can be simplified with the help of Lemma~\ref{lem:simpreg}.

\begin{theorem}[\cite{RRSHam}, Theorem 1.1]\label{thm:rrsham}
  For each $k\ge 3$ and $\gamma>0$ there exists $n_0$ such that for each $n\ge
  n_0$ the following holds. If $G$ is a $k$-graph on $n$ vertices and each
  $(k-1)$-set of vertices of $G$ is contained in at least $(1/2+\gamma)n$ edges of $G$,
  then $G$ has a tight Hamilton cycle.
\end{theorem}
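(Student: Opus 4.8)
The plan is to deduce Theorem~\ref{thm:rrsham} from the Regular Slice Lemma, the Cycle Embedding Lemma (in the form of Lemma~\ref{lem:fracembrest}), and the absorbing method of~\cite{RRSHam}, with property~\ref{simpreg:c} of Lemma~\ref{lem:simpreg} supplying what is needed to cope with the spanning nature of the problem. Fix constants $1/n \ll \psi \ll d_k \ll \eta \ll \gamma, 1/k$ and apply Lemma~\ref{lem:simpreg} to $G$ (with $q=s=1$) to obtain a $(t_0,t_1,\eps,\eps_k,r)$-regular slice $\cJ$ for $G$ with $t$ clusters satisfying properties~\ref{simpreg:a}--\ref{simpreg:c}. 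By property~\ref{simpreg:b} together with Lemma~\ref{reducedd+d}, the reduced $k$-graph $R := R_{d_k}(G)$ has $\delta_{k-1}(R) \geq (\tfrac12 + \tfrac{\gamma}{2})t$. It is standard that a $k$-graph on $t$ vertices with minimum codegree above $t/2$ forms a single tight component (any two $(k-1)$-sets have a common neighbour, since each has more than $t/2$ neighbours) and contains a perfect fractional matching (if $k \nmid t$, delete at most $k-1$ clusters first); hence $R$ contains a tightly connected fractional matching of weight at least $t/k - 1$, and so by Lemma~\ref{lem:fracembrest} (or Lemma~\ref{lem:emb}) the $k$-graph $G$ contains tight paths covering all but a $2\psi$-fraction of its vertices.

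Next I would build an absorbing tight path. Following~\cite{RRSHam}, call a $(2k-1)$-tuple $(x_1,\dots,x_{2k-1})$ of distinct vertices of $G$, none equal to $v$, an \emph{absorber for $v$} if both $x_1\cdots x_{2k-1}$ and $x_1\cdots x_{k-1}\,v\,x_k\cdots x_{2k-1}$ are tight paths in $G$; note the two paths share their initial and terminal $(k-1)$-tuples, so an absorber for $v$ lying inside a tight path lets one insert $v$ while preserving the path's ends. The crucial point is that one can build a tight path $A$ with $v(A) \leq \eta n$, meeting each cluster of $\cJ$ in at most $\sqrt{\eta}\,n/t$ vertices and with prescribed end $(k-1)$-tuples $\vec a, \vec b$, which absorbs any set $W$ disjoint from $V(A)$ with $|W| \leq \eta^2 n$; and this is where property~\ref{simpreg:c} earns its keep. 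Applied with $\ell=1$, root $v$, and $H$ the small $k$-graph on $2k$ vertices whose edges are the windows of the two paths above, it shows that the density of such configurations rooted at $v$ is within $\eps_k$ of the density of those whose non-root $(k-1)$-skeleton lies on $\cJ$ — uniformly over \emph{every} $v$ — so that the count can be performed inside the regular complex $\cG$ obtained by putting $G$ on top of $\cJ$, via the Extension Lemma (Lemma~\ref{lem:ext}), rather than by the more delicate direct link-degree estimates. Given this, the usual randomised construction (select absorbers independently with the appropriate probability, discard the few overlapping ones, and concatenate the survivors into one tight path using short tight paths lifted from tight walks in the tightly connected graph $R$) produces $A$ with the stated properties.

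To finish I would assemble the Hamilton cycle. By the Regular Restriction Lemma (Lemma~\ref{restriction}), $\cJ$ restricted to $V(G)\setminus V(A)$ is still a regular slice for $G' := G[V(G)\setminus V(A)]$, and $R_{d_k}(G')$ still has codegree above $t/2$, hence is a single tight component with a near-perfect fractional matching $M$. Applying Lemma~\ref{lem:fracembrest} to $G'$ with this $M$, and with $S_X, S_Y$ taken to be large families of $(k-1)$-tuples on the slice chosen so that both ends of the resulting path can be joined to $\vec a$, respectively $\vec b$, by short tight paths (using the connecting machinery underlying the Cycle Embedding Lemma together with property~\ref{simpreg:c} to route around the relevant vertices, and using a bounded number of vertices of $V(A)$ as the connectors), one gets a tight path covering all of $V(G')$ except a set $W$ with $|W| \leq \psi n < \eta^2 n$; splicing in $A$ yields a tight path on $V(G)\setminus W$ with ends $\vec a, \vec b$, into which $A$ absorbs $W$, producing a tight Hamilton cycle. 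The main obstacle is the bookkeeping of the absorbing step — ensuring that the absorber count, the disjointification, the reservoir of vertices used for all the connectors, and the endpoint matching with the long path all fit together with room to spare — but with property~\ref{simpreg:c} in hand each of these reduces either to a black box from earlier in the paper (Cycle Embedding, Extension, Regular Restriction) or to a routine greedy or concentration argument, which is exactly the simplification over the original proof that we wish to illustrate.
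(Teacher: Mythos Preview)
Your proposal is essentially the same sketch the paper gives: apply the Regular Slice Lemma, use property~\ref{simpreg:c} to transfer the per-vertex absorber counts to structures supported on the slice, replace the Connecting Lemma of~\cite{RRSHam} by tight connectivity of the reduced graph, cover almost everything via the embedding machinery, and absorb the remainder. The only differences are cosmetic (the paper uses $(2k-2)$-vertex absorbers rather than $(2k-1)$-vertex ones, and notes that one may need to delete a few clusters before asserting tight connectivity of $R_{d_k}(G)$, as in the proof of Theorem~\ref{thm:PartiteCycle}, rather than claiming a clean minimum codegree directly).
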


The strategy of R\"odl, Ruci\'nski and Szemer\'edi is as follows. First,
they show that the codegree condition implies that for each vertex~$u$
of~$G$, there are $\Theta\big(n^{2k-2}\big)$ `absorbing structures for
$u$', that is, vertex tuples $(v_1,\ldots,v_{2k-2})$ such that both of the
tuples $(v_1,\ldots,v_{2k-2})$ and
$(v_1,\ldots,v_{k-1},u,v_k,\ldots,v_{2k-2})$ form tight paths in
$G$. Second, they use a probabilistic argument to show that there is a
collection of pairwise-disjoint $(2k-2)$-vertex tight paths which cover a
small fraction of $V(G)$ and which have the property that for each $u\in
V(G)$, $\Theta(n)$ of these tight paths are absorbing structures for
$u$. Third, they establish a Connecting Lemma which allows them to connect
the collection of $(2k-2)$-vertex tight paths into one `absorbing path',
which is a tight path $P$ such that for any not too large set $S\subset
V(G)\setminus V(P)$, there is a tight path on the vertices $V(P)\cup S$
with the same endpoints as $P$. Fourth, they show that $P$ can be extended
to an almost-spanning tight cycle. Then setting $S=V(G)\setminus V(C)$ and
using the absorbing property of $P$ completes their proof.

In their approach, the Regularity Lemma is used only to complete the fourth
step, and most of the work is in proving the Connecting Lemma. We can use
Lemma~\ref{lem:simpreg} to simplify this approach. Property~\ref{simpreg:c}
implies that since each $u\in V(G)$ has many absorbing structures, so each
$u$ also has many absorbing structures supported on the regular slice $\cJ$
returned by Lemma~\ref{lem:simpreg}, and the same probabilistic argument as
in their second step then gives a collection of $(2k-2)$-vertex tight paths
supported on $\cJ$ with otherwise the same properties. But now their third
step can be replaced by showing that the reduced graph $R(G)$ is (after
deleting a few vertices) tightly connected. This, however, is easy and can
be obtained as in our proof of Theorem~\ref{thm:PartiteCycle}. Their fourth
step can be carried out in more or less the same way on $\cJ$, and the
proof is then completed exactly as before.

Besides simplifying the proof, we also believe it would be much easier to
prove a `stability version' of Theorem~\ref{thm:rrsham} using our approach,
because the tight connectivity of $R(G)$ we use is a comparatively easy
concept to work with. It appears more challenging to prove a stability version
of the Connecting Lemma in their approach. 

Furthermore, it would be interesting to prove similar results for more
general `tight-path-like' hypergraphs. To do this one will need a suitable
`embedding lemma' which can complete the fourth step (in either approach)
and this does not currently exist. Given such an `embedding lemma', it
would be easy to modify our approach to complete the proof of such a
result.

\smallskip

\paragraph{\bf Entropy preserving regular slices}

We note that one can ask for the slice $\cJ$ provided by Lemma~\ref{lem:simpreg} to satisfy further properties. In particular, one property which is useful in enumeration is that entropy is preserved.

The \emph{binary entropy} of a number $x\in(0,1)$ is defined to be
$\entropy(x)=-x\log_2 x-(1-x)\log_2 (1-x)$, and we define
$\entropy(0)=\entropy(1)=0$. Given a weighted reduced $k$-graph $R$, we
define the binary entropy $\entropy(R)$ of $R$ to be the average over all
$e\in\binom{V(R)}{k}$ of $\entropy\big(\reld(e)\big)$.  Then we can ask for
the following additional property of Lemma~\ref{lem:simpreg}. The number of
$n$-vertex $k$-graphs $G$ whose reduced graph $R(G)$ given by
Lemma~\ref{lem:simpreg} has $\entropy\big(R(G)\big)\le x$ is at most
$2^{x\binom{n}{k}+\eps_k n^k}$.

We define the binary entropy $\entropy(G,\Part^*)$ of $G$ with respect to a
family of partitions $\Part^*$ to be the average of $\entropy\big(d(G\mid
\hat{P}(Q))\big)$ over the polyads $\hat{P}(Q)$ of
$\Part^*$ (where $d(G\mid
\hat{P}(Q))$ is the relative density of $G$ with respect to $\hat{P(Q)}$ as defined in Section~\ref{subsec:regdef}).  Then the method used in the proof of
Lemma~\ref{lem:simpreg}\ref{simpreg:a} can be used to show that
$\entropy\big(R(G)\big)$ is with high probability close to
$\entropy(G,\Part^*)$, where $\Part^*$ is as given in the proof of
Lemma~\ref{lem:simpreg}. Hence we may ask that Lemma~\ref{lem:simpreg}
returns a regular slice $\cJ$ such that $\entropy\big(R(G)\big)\approx
\entropy(G,\Part^*)$. Then a standard and easy enumeration argument yields
the desired conclusion.

\smallskip

\paragraph{\bf Regular slices and the Hypergraph Blow-up Lemma}
Recently, Keevash~\cite{HypBlow} provided a new major tool for extremal
hypergraph theory, the Hypergraph Blow-up Lemma. However, while our Regular
Slice Lemma, Lemma~\ref{lem:simpreg}, is based on the Strong Hypergraph
Regularity Lemma, Lemma~\ref{reglemma}, the Hypergraph Blow-up Lemma
requires the stronger regularity properties given by the Regular
Approximation Lemma, Lemma~\ref{lem:RAL}. Hence
Lemma~\ref{lem:simpreg} is not suitable for an application together with
the Hypergraph Blow-up Lemma. 

We remark though that Lemma~\ref{lem:simpreg} can
be modified appropriately to allow for such an application. The proof of
such a modified version is almost identical to the proof of
Lemma~\ref{lem:simpreg}, but uses a version of the Strengthened Regularity
Lemma, Lemma~\ref{lem:strReg}, which
gives regularity properties of comparable strength to those in
Lemma~\ref{lem:RAL}. It is easy to modify the proof of
Lemma~\ref{lem:strReg} to obtain this.

\appendix

\section{Derivation of Lemmas~\ref{lem:count} and~\ref{lem:ext}}\label{app:count}

We first describe the differences between Lemma~\ref{lem:count} and the
Lemma provided by Cooley, Fountoulakis, K\"uhn, and
Osthus~\cite[Lemma~4]{CFKO}.  Firstly, their version also allows to count
copies of $H$ in which multiple vertices of $H$ may be embedded within the
same cluster. We do not need this strengthening, so we omit it for
notational convenience. 

Secondly, their lemma includes an additional constant $d_k$ with $\eps_k
\ll d_k$ and $1/d_k \in \NATS$, and requires for any edge $e \in H$ that
$G$ is $(d_k,\eps_k, r)$-regular with respect to $\hat{\cJ_X}$, where $X =
\{V_j : j \in e\}$, whereas our lemma only requires that $G$ is $(d,\eps_k,
r)$-regular with respect to $\hat{\cJ_X}$ for some $d$ which may depend on
$X$. We now describe how this apparently stronger result may be derived.

Introduce a constant $d_k$ with $d_k \ll \beta$, and also a constant
$\gamma$ with $\eps \ll \gamma \ll \eps_k, d_2, \dots, d_{k-1}$. Let~$X$ be
any $k$-set of clusters of $\cJ$. The version of the Dense Extension
Lemma~\cite[Lemma~7]{CFKO}, applied with $\cJ$ in place of $\cG$ and
$\mathcal{H} \setminus \mathcal{H}^{(k-1)}$ in place of $\mathcal{H}$,
implies that at most $\gamma |K_i(\hat{\cJ}_X)|$ edges of $G_X$ are
contained in more than $\prod_{i = 2}^{k-1} d_i^{e_i(\mathcal{H}) -
  \binom{k}{i}}$ copies of $\mathcal{H}$. Also, the Slicing
Lemma~\cite[Lemma~8]{CFKO} implies that we can partition $G_X$ into
subgraphs $G^0_X, G^1_X, \dots, G^{p_X}_X$ for some integer $p_X$, so that
$|G^0_X| \leq d_k |K_i(\hat{\cJ}_X)|$ and $G^i_X$ is $(d_k,2\eps_k,
r)$-regular with respect to $\hat{\cJ_X}$ for each $i \geq 1$.  However we
choose $q_X \in [p_X]$ for each $X$, we may then apply~\cite[Lemma~4]{CFKO}
to establish that there are $(1 \pm \beta/3) m^s \prod_{i=2}^{k}
d_i^{e_i(\Hy)}$ copies of $\mathcal{H}$ in $\bigcup_X G^{q_X}_X$.
Different choices of $q_X$ give distinct copies of $\mathcal{H}$, so
summing over all possible choices we obtain $(\prod_{e \in \mathcal{H}}
\reld(e) \pm 2\beta/3) m^s \prod_{i=2}^{k-1} d_i^{e_i(\Hy)}$ copies of
$\mathcal{H}$ in $G$, having counted all copies except for those which
contain an edge from some $G^0_X$. Fact~\ref{fact:densecount} tells us that
for any $X$ there are at most $d_k |K_i(\hat{\cJ}_X)| \leq
d_k\prod_{i=2}^{k-1} d_i^{\binom{k}{i}} m^k$ such edges, each of which lies
in at most $\prod_{i=2}^{k-1} d_i^{e_i(\mathcal{H}) - \binom{k}{i}}
m^{s-k}$ copies of $\mathcal{H}$, except for at most $\gamma
|K_i(\hat{\cJ}_X)| \leq d_k \prod_{i=2}^{k-1} d_i^{e_i(\Hy)} m^k$ edges,
which each lie in up to $m^{s-k}$ copies of $\mathcal{H}$. Overall this
adds at most $(\beta/3) m^s \prod_{i=2}^{k-1} d_i^{e_i(\Hy)}$ further
copies of $\mathcal{H}$ in $G$, giving Lemma~\ref{lem:count}.

\smallskip

Similarly, \cite[Lemma~4.6]{lcycles} differs from our Lemma~\ref{lem:ext}
in that it requires that for each $A\in \binom{[s]}{k}$ such that there is an edge of
$\Hy$ with index~$A$, the graph $\cG^{(k)}[V_A]$ is regular with density
exactly $d_k$ with respect to $\cG^{(k-1)}[A]$, whereas we require density
at least $d_k$. Moreover, \cite[Lemma~4.6]{lcycles} also gives a
formula for the typical number of extensions, whereas we give only a lower
bound. This latter difference makes it elementary to reduce our lemma to theirs by applying the Slicing
Lemma~\cite[Lemma~8]{CFKO}.

\section{Proof of Lemma~\ref{lem:strReg}.}
\label{app:strReg}

Let us first justify why we need Lemma~\ref{lem:strReg}.  As we saw in the
proof of Lemma~\ref{lem:simpreg}, Lemma~\ref{lem:strReg} is used to ensure
that the link of \emph{every} vertex of $G$ is represented in the regular
slice. It may not be clear why Lemma~\ref{reglemma} is not good enough for
this purpose. But observe that if $G$ is a $k$-graph and $\Part^*$ a family
of partitions obtained by applying Lemma~\ref{reglemma}, then we can
construct a $k$-graph $G'$ from $G$ by associating to each slice $\cJ$
through $\Part^*$ a vertex $v_{\cJ}$ of $V(G)$, and then removing all edges
of $G$ of the form $\{v_{\cJ}\}\cup e$ where $e\in\cJ^{(k-1)}$. Then $G'$
is also regular with respect to $\Part^*$, but whatever slice $\cJ$ through
$\Part^*$ is chosen, the link of $v_{\cJ}$ in $G'$ is not represented on
$\cJ$, so $\cJ$ does not satisfy Lemma~\ref{lem:simpreg}\ref{simpreg:c}.

\medskip

Now we turn to the proof of Lemma~\ref{lem:strReg}.  The basic idea is simple. We will apply the
Regular Approximation Lemma, Lemma~\ref{lem:RAL}, to our $k$-graph $G$ to
generate a family of partitions $\tilde{\Part}^*$ with $\tilde{t}\le t_1^*$
clusters. We will then take a random equipartition of each cluster into
$p(t_1^*)$ parts to obtain a partition $\Part$, and let $\Part^*$ be a
refinement of $\tilde{\Part}^*$ generated by $\Part$. We will show that
deterministically $G$ has the desired regularity with respect to
$\Part^*$, and that with high probability, for any set $\tilde{X}$ of clusters of $\tilde{\Part}^*$ and any $\tilde{X}$-consistent set $X$ of clusters of $\Part^*$, the  
rooted $H$-densities on $X$ of any slice through $\Part^*$ are close to the densities on $\tilde{X}$ of the corresponding slice through $\tilde{\Part}^*$. 

Let us explain why it is necessary in this proof to apply
Lemma~\ref{lem:RAL} rather than Lemma~\ref{reglemma} to obtain
$\tilde{\Part}^*$. The reason is that typically $p(\cdot)$ grows fast and
we must therefore split each cluster into parts which are too small to
control using any regularity obtainable from Lemma~\ref{reglemma}. 

To analyse the random equipartition we need the following
standard concentration inequality for the hypergeometric distribution.

\begin{theorem}[see~{\cite[Theorem 2.10]{JaLuRu:Book}}]\label{thm:hypgeom}
Given a set $V$ and a subset $W$ of $V$, let $V'$ be chosen uniformly at random from all $\ell$-subsets of $V$. We have
$\Prob\Big(|V'\cap W|=\tfrac{\ell|W|}{|V|}\pm t\Big)\ge 1-2\exp\Big(-\tfrac{t^2}{2|V|}\Big)$.
\qed
\end{theorem}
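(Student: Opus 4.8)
The plan is to derive Theorem~\ref{thm:hypgeom} from McDiarmid's inequality (Theorem~\ref{thm:azuma}) by realising the uniformly random $\ell$-subset as a function of $n:=|V|$ \emph{independent} random variables. Write $V=\{v_1,\dots,v_n\}$, and let $U_1,\dots,U_n$ be independent, each uniformly distributed on $[0,1]$. Almost surely the $U_i$ are pairwise distinct, and conditioned on this event the set $V'$ consisting of those $v_i$ for which $U_i$ is among the $\ell$ smallest of $U_1,\dots,U_n$ is, by symmetry, a uniformly random $\ell$-subset of $V$ (defining $V'$ arbitrarily on the null event changes nothing). So it suffices to prove the concentration statement for $f(U_1,\dots,U_n):=|V'\cap W|$.

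First I would record the mean: by symmetry each $v_i$ lies in $V'$ with probability $\ell/n$, so $\Exp f=|W|\cdot\ell/n=\ell|W|/|V|$, which is exactly the centring value in the statement. Next I would check the bounded-differences hypothesis of Theorem~\ref{thm:azuma} with $c_i=1$ for every $i$. Fix $i$ and let $\mathbf{u},\mathbf{u}'$ agree off coordinate $i$. Writing $w_{(1)}<\dots<w_{(n-1)}$ for the order statistics of the $n-1$ coordinates other than the $i$-th, the set $V'$ is either $\{v_i\}\cup\{\text{the }w_{(1)},\dots,w_{(\ell-1)}\text{-elements}\}$ (when $u_i<w_{(\ell)}$) or $\{\text{the }w_{(1)},\dots,w_{(\ell)}\text{-elements}\}$ (when $u_i>w_{(\ell)}$); these two candidate sets differ only in whether they contain $v_i$ or the $w_{(\ell)}$-element. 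Hence $|V'\cap W|$ changes by at most $1$ in absolute value, i.e.\ $|f(\mathbf{u})-f(\mathbf{u}')|\le 1$.

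Finally, applying Theorem~\ref{thm:azuma} with $a=t$ and $\sum_{i=1}^n c_i^2=n=|V|$ gives
\[
\Prob\big(|f(U_1,\dots,U_n)-\Exp f|\ge t\big)\le 2\exp\!\Big(\tfrac{-t^2}{2|V|}\Big),
\]
and passing to the complement yields the claimed bound (using $\Prob(|f-\Exp f|\le t)\ge 1-\Prob(|f-\Exp f|\ge t)$). The only points needing care are the almost-sure distinctness of the $U_i$ (so that $V'$ is genuinely uniformly distributed) and the precise bookkeeping in the bounded-difference step showing that exactly one element can leave $V'$ and at most one can enter; neither is a serious obstacle, so I expect the write-up to be short. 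Two alternative routes, if one prefers not to introduce the $U_i$: (a) reveal a uniformly random ordering of $V$ one element at a time, use the resulting Doob martingale, and invoke Azuma's inequality directly; or (b) quote Hoeffding's classical comparison that sampling without replacement is at least as concentrated as sampling with replacement and then apply the standard Chernoff bound (which in fact gives the stronger exponent $2t^2/\ell$). The encoding above is the most economical given that Theorem~\ref{thm:azuma} is already at hand.
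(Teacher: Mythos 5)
The paper does not prove Theorem~\ref{thm:hypgeom} at all: it is quoted as a known concentration bound for the hypergeometric distribution, with a citation to Janson, {\L}uczak and Ruci\'nski and a \qed. Your argument is therefore a genuinely different contribution rather than a reproduction: you make the statement self-contained by deriving it from McDiarmid's inequality (Theorem~\ref{thm:azuma}), which the paper already has on hand, via the standard device of realising the uniform $\ell$-subset as the indices of the $\ell$ smallest of $n=|V|$ independent uniforms. The encoding is sound: the expectation is $\ell|W|/|V|$ by symmetry, and your order-statistics analysis correctly shows that altering one coordinate swaps at most one element of $V'$, so $c_i=1$ and McDiarmid gives exactly the exponent $t^2/(2|V|)$ claimed. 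The trade-off between the two routes is that the cited result in the book is sharper (its exponent involves the mean rather than $|V|$), whereas your derivation buys a short, self-contained proof of the weaker form the paper actually uses, at no cost in new machinery.

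One small point deserves tightening rather than being waved away: Theorem~\ref{thm:azuma} as stated requires the bounded-difference condition for \emph{every} pair $\mathbf{x},\mathbf{x}'$ differing in one coordinate, so defining $V'$ ``arbitrarily on the null event'' of ties is not quite enough---an arbitrary choice could violate the Lipschitz condition at tie points. Fix this by breaking ties deterministically, e.g.\ taking the $\ell$ indices minimal in the lexicographic order on pairs $(U_i,i)$; then $f$ is defined everywhere, the swap argument (hence $c_i=1$) holds at every point of $[0,1]^n$, and since ties occur with probability zero the law of $V'$ is still uniform on $\binom{V}{\ell}$. With that adjustment the proof is complete.
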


\begin{proof}[Proof of Lemma~\ref{lem:strReg}]
Assume we are given integers $q$, $t_0$ and $s$,
a constant $\eps_k$, functions $r:\NATS\to\NATS$ and $\eps:\NATS\to(0,1]$ and a monotone increasing function
$p:\NATS\to\NATS$.
Without loss of generality, we may assume that $r$ is also monotone increasing
while $\eps$ is monotone decreasing.
We set
\[ \nu=\eps_k^3/(16k!)\quad\text{and}\quad t_0^*=\max\big(t_0,4k^2/\eps_k\big)\,.\]
We let $\eps^*:\NATS\to(0,1]$ be a monotone decreasing function such that for each $x$ we have
\begin{equation}\label{eq:strReg:epss}\eps^*(x)\le \min\Big(\eps\big(p(x)x\big)^2,
\frac{\eps_k^4}{16r\big(p(x)x\big)^2}\Big)\,.
\end{equation}
We further require $\eps^*(\cdot)$ to be small enough that we can apply
Lemma~\ref{restriction} with $\alpha=1/p(x)$, and Lemma~\ref{lem:count}
with $\beta=\tfrac{\eps_k}{10}$ and any $s\le1/\eps_k$, to
$(\mathbf{d},\eps^*(x),\eps^*(x),1)$-regular $k$-complexes, provided that
each $d_i$ is at least $1/x$.

Let $t_1^*$ and $n_0^*$ be returned by Lemma~\ref{lem:RAL} with inputs $q$,
$t_0^*$, $s$, $\nu$ and $\eps^*(\cdot)$. Let $t_1 :=p(t_1^*)t_1^*$, $\eps^*:=\eps^*(t_1^*)$, $\eps:=\eps(t_1)$ and $r:=r(t_1)$, so in particular we have $\eps^* \leq \eps^2$ by definition of $\eps^*$. Let $m_0\ge
n_0^*$ be large enough for the above applications of Lemmas~\ref{lem:count}
and~\ref{restriction} with $x=t_1^*$ and clusters of size $m\ge m_0$. We then
choose $n_0\ge m_0t_1$ sufficiently large for the union bound at the end of
the proof.

Let $V$ be a set of $n \geq n_0$ vertices, where $n$ is divisible by $t_1!$,
and let $\Qart$ partition $V$ into at most $q$ parts of equal size. Let
$G_1, \dots, G_s$ be edge-disjoint $k$-graphs on the vertex set $V$. We
start by applying Lemma~\ref{lem:RAL} (with the inputs stated above), which yields
$k$-graphs $G'_1,\ldots,G'_s$ with $|G_i\Delta G'_i| \leq \nu n^k$ for
each $1\le i\le s$, and a $(k-1)$-family of partitions $\tilde{\Part}^*$ on $V$ which is
$(t_0^*,t_1^*,\eps^*)$-equitable, whose ground
partition $\tilde{\Part}$ refines $\Qart$, and which is such that each $G'_i$ is perfectly
$(\eps^*,1)$-regular with respect to $\tilde{\Part}^*$. Let the density vector of
$\tilde{\Part}^*$ be $\mathbf{d}=(d_2,\ldots,d_{k-1})$. By definition of `equitable', the entries of $\mathbf{d}$ are all at least $1/t_1^*$.

We let the partition $\Part$ of $V$ be obtained by partitioning each cluster $X$ of $\tilde{\Part}^*$ uniformly at random into~$p(t_1^*)$ parts of equal size. We let $\tilde{t}$ be the number of
clusters of $\tilde{\Part}$, and $t$ be the number of clusters of
$\Part$. Thus we have $t=p(t_1^*)\tilde{t}$, which is
property~\ref{strReg:numP}. Since $\tilde{\Part}$ refines $\Qart$, so also
$\Part$ refines $\Qart$, giving property~\ref{strReg:refQ}. 

We now obtain a
family of partitions $\Part^*$ with ground partition $\Part$ as follows. For
each $2\le i\le k-1$ and each $i$-cell $C$ in $\tilde{\Part}^*$ on the
$i$ clusters $\mathcal{C}$ of $\tilde{\Part}$, we put into $\Part^*$ each
of the $i$-uniform induced subgraphs of $C$ obtained by choosing $i$ clusters of
$\Part$, one in each cluster of $\mathcal{C}$.
We then need to add further $2$-cells between pairs of clusters in $\Part$
which were both contained in one cluster of $\tilde{\Part}$. We do this by
choosing for each such pair $X_1,X_2$ of clusters in~$\Part$ an arbitrary
equipartition of the complete bipartite graph with vertex classes $X_1$ and $X_2$ into $1/d_2$ cells which are each
$(d_2,\sqrt{\eps^*},1)$-regular with respect to the $1$-graph formed by the vertices of $X_1$ and $X_2$ (for example, it is not hard to show that a random partition has this property with high probability). We then need to choose all those $3$-cells whose supporting
clusters do not lie in distinct parts of $\tilde{\Part}$, and so on. We do this in the same
way, while also ensuring that we are consistent with the $2$-cells we just
chose, and so on, so that we obtain a family of partitions. By
construction $\Part^*$ is generated from $\tilde{\Part}^*$ by $\Part$,
giving property~\ref{strReg:genP}. Now consider any $\tilde{\Part}$-partite set $Q \in \binom{V}{k}$, let $\tilde{\cJ}(Q)$ be the $k$-partite $(k-1)$-complex whose edge set is the union of the cells $\tilde{C}_Q$ of $\tilde{\Part}^*$ which contain proper subsets of $Q$, and similarly let $\cJ(Q)$ be the $k$-partite $(k-1)$-complex whose edge set is the union of the cells $C_Q$ of $\Part^*$ which contain proper subsets of $Q$. The fact that $\tilde{\Part}^*$ is $(t_0^*, t_1^*, \eps^*)$-equitable implies that $\tilde{\cJ}(Q)$ is $(\bf{d}, \eps^*, \eps^*, 1)$-regular, whereupon Lemma~\ref{restriction} implies that $\cJ(Q)$ is $(\bf{d}, \sqrt{\eps^*}, \sqrt{\eps^*}, 1)$-regular. It follows that for any $2 \leq i \leq k-1$ and any $i$-cell $C$ of $\Part^*$, if $C$ was obtained as an induced subgraph of an
$i$-cell of $\tilde{\Part}^*$, then $C$ is $(d_i,\sqrt{\eps^*},1)$-regular with respect to the $(i-1)$-cells of $\Part^*$ on which it is supported. On the other hand, if $C$ was not obtained in this manner, then by construction $C$ is $(d_i,\sqrt{\eps^*},1)$-regular with respect to these $(i-1)$-cells. Since $t_0^*\ge t_0$, $t_1=p(t_1^*)t_1^*\ge t_1^*$ and
$\sqrt{\eps^*}\le\eps$, we conclude that $\Part^*$ is
$(t_0,t_1,\eps)$-equitable with density vector $\mathbf{d}$. This
establishes property~\ref{strReg:equit}.

We next want to establish property~\ref{strReg:regP}. Fix a graph $G=G_i$, and let $G'=G'_i$. The next claim shows that  if $G$ is not $(\eps_k,r)$-regular with respect to some $\tilde{\Part}$-partite polyad of $\Part^*$, then $G\Delta G'$ is dense on that polyad.

\begin{claim}\label{cl:regP}
For any $\tilde{\Part}$-partite $k$-set $Q \in \binom{V}{k}$, if $G$ is not $(\eps_k,r)$-regular with respect to the polyad $\hat{P} = \hat{P}(Q; \Part^*)$ of $\Part^*$, then $d\big(G'\Delta G\mid \hat{P}\big)>(\eps_k/2)^2$.
\end{claim}
\begin{claimproof}
  Let $\mathbf{W}=(W_1,\ldots,W_{r})$ be any collection of~$r$ subgraphs
  of~$\hat{P}$ such that $|K_k(\mathbf{W})|\ge\eps_k |K_k(\hat{P})|$.  We
  split these $r$ subgraphs into \emph{large} subgraphs $W_j$ with
  $|K_k(W_j)|\ge \sqrt{\eps^*}|K_k(\hat{P})|$, and the remaining
  \emph{small} subgraphs.

Since $Q$ is $\tilde{\Part}$-partite, Lemma~\ref{restriction} implies 
that $G'$ is $(\sqrt{\eps^*},1)$-regular with respect to $\hat{P}$, so on
  any large $W_j$ we have $d(G'|W_j)=d(G'|\hat{P})\pm\sqrt{\eps^*}$. On the
  other hand the total number of $k$-cliques supported on small subgraphs
  $W_{j}$ is by definition of `small' less than
  $r\sqrt{\eps^*}|K_k(\hat{P})|$. It follows that
  \[d(G'|\mathbf{W})=d(G'|\hat{P})\pm \tfrac{2r\sqrt{\eps^*}}{\eps_k}\,.\]

  If $d(G'\Delta G|\hat{P})\le(\eps_k/2)^2$, then by definition we have
  $d(G'|\hat{P})=d(G|\hat{P})\pm(\eps_k/2)^2$. Furthermore, since we
  assumed $|K_k(\mathbf{W})|\ge\eps_k |K_k(\hat{P})|$, we have
  \[
  d(G'|\mathbf{W})
  =d(G|\mathbf{W})\pm d(G'\Delta G|\hat{P})\frac{|K_k(\hat{P})|}{|K_k(\mathbf{W})|}
  =d(G|\mathbf{W})\pm\eps_k/4\,. 
  \]
  Putting these together we have
  \[d(G|\mathbf{W})=d(G|\hat{P})\pm\big(\tfrac{\eps_k^2}{4}+\tfrac{2r\sqrt{\eps^*}}{\eps_k}+\tfrac{\eps_k}{4}\big)\eqByRef{eq:strReg:epss}d(G|\hat{P})\pm\eps_k\,,\]
  hence $G$ is $(\eps_k,r)$-regular with respect to $\hat{P}$.
\end{claimproof}

Let $\mathcal{B}$ be the set of all polyads $\hat{P} = \hat{P}(Q; \Part^*)$ of $\Part^*$ for which $Q \in \binom{V}{k}$ is $\tilde{\Part}$-partite and $G$ is not $(\eps_k, r)$-regular with respect to $\hat{P}$. 
Now using the fact that $e(G\Delta G')\le \nu n^k$ and Claim~\ref{cl:regP} we have
\[\nu n^k\ge\sum_{\hat{P} \in \mathcal{B}} d(G\Delta G'\mid\hat{P})\big|K_k(\hat{P})\big|> (\eps_k/2)^2\sum_{\hat{P} \in \mathcal{B}}\big|K_k(\hat{P})\big|\,,\]
and hence the number of $\tilde{\Part}$-partite sets $Q \in \binom{V}{k}$ such that $G$ is not $(\eps_k,r)$-regular with respect to $\hat{P}(Q; \Part^*)$ is at most $\nu n^k(\eps_k/2)^{-2}\le\tfrac12\eps_k\binom{n}{k}$, where the inequality is by choice of $\nu$ and $n_0$. Moreover, the number of $k$-sets $Q \in \binom{V}{k}$ which are not $\tilde\Part$-partite is at most $\binom{n}{k-1}\tfrac{(k-1)n}{t^*_0}$, which by choice of $t_0^*$ is at most $\tfrac{1}{2}\eps_k\binom{n}{k}$. Putting these together we see that there are at most $\eps_k\binom{n}{k}$ $\Part$-partite sets $Q \in \binom{V}{k}$ such that $G$ is not $(\eps_k,r)$-regular with respect to $\hat{P}(Q; \Part^*)$. In other words, $G$ is $(\eps_k,r)$-regular with respect to $\Part^*$, proving property~\ref{strReg:regP}.

It remains to show that property~\ref{strReg:spread} holds with positive probability. To that end, fix a $k$-graph $G=G_i$, $1\le\ell\le
1/\eps_k$, a $k$-graph $H$ equipped with roots $x_1,\ldots,x_\ell$ such that
$v(H) \le 1/\eps_k$, vertices $v_1,\ldots,v_\ell$ in $V$, a slice $\tilde{\cJ}$ through
$\tilde{\Part}^*$ and a $v(\Hskel)$-tuple of clusters $\tilde{X}$ of $\tilde{\Part}^*$, and let $h := v(\Hskel)$. Now for any permutation $\phi$ of $1, \dots, h$, the rooted copies
of $H$ in $G$ supported on $\tilde{\cJ}[\tilde{X}]$, with the $i$th vertex of $\Hskel$ in the $\phi(i)$th cluster of $\tilde{X}$, correspond to a $h$-uniform
$\tilde{\Part}$-partite hypergraph $F_\phi$.
Let $X$ be an $\tilde{X}$-consistent $h$-tuple of clusters of
$\Part^*$. Then we want to show that for any fixed slice $\cJ$ through $\Part^*$ such that $\cJ[X]\subset\tilde{\cJ}[\tilde{X}]$, with high probability we have
\[d_H(G;v_1,\ldots,v_\ell,\tilde{\cJ}[\tilde{X}])=d_H(G;v_1,\ldots,v_\ell,\cJ[X])\pm\eps_k\,.\]
By Lemma~\ref{lem:count} the number of labelled $\Hskel$-copies in
$\tilde{\cJ}[\tilde{X}]$ for which no two vertices of $\Hskel$ lie in the same cluster of $\tilde{\cJ}$ is
$$ n'_{\Hskel}(\tilde{\cJ}[\tilde{X}])= h! \big(1\pm\tfrac{\eps_k}{10}\big)\big(\tfrac{n}{\tilde{t}}\big)^{v(\Hskel)}\prod_{i=2}^{k-1}d_i^{e_i(\Hskel)}\,,$$
where the $h!$ term is the number of possible allocations of vertices of $\Hskel$ to clusters of $\tilde{J}$.
Similarly
$$ n'_{\Hskel}(\cJ[X])= h! \big(1\pm\tfrac{\eps_k}{10}\big)\big(\tfrac{n}{t}\big)^{v(\Hskel)}\prod_{i=2}^{k-1}d_i^{e_i(\Hskel)}\,.$$
It therefore suffices to show that with high probability, for each of the $h!$ permutations $\phi$ we have
\begin{equation}\label{eq:strReg:desired}d(F_\phi)=d(F_\phi[X])\pm \frac{\eps_k}{2(h!)}\prod_{i=2}^{k-1}d_i^{e_i(\Hskel)}\,.
\end{equation}

Fix any permutation $\phi$, and for each $0\le j\le h$, let $F_j$ denote the subgraph of $F_\phi$ given by taking the first $j$ clusters from $X$ and the last $h-j$ clusters from $\tilde{X}$. Thus we have $F_0=F_\phi$ and $F_{h}=F_\phi[X]$. Observe that for each $j$, the distribution of the $j$th cluster of $X$ is the uniform distribution over all $n/t$-subsets of the $j$th cluster of $\tilde{X}$. We can thus use the following claim to show that for each $1\le j\le h$, with high probability the densities $d(F_{j-1})$ and $d(F_j)$ are close.

\begin{claim}\label{clm:equipart}
  For each $s,p\in\NATS$ and each $\delta>0$, let $B$ be an
  $s$-partite $s$-uniform hypergraph with parts $V_1,\ldots,V_s$. Choose uniformly at random a subset $V' \subseteq V_1$ of size $|V_1|/p$, and let $B'=B[V',V_2,\ldots,V_s]$. Then with
  probability at least \[1- \tfrac{4}{\delta}\exp\big(-\tfrac{\delta^4|V_1|}{32p^2}\big)\,,\] we have $d(B')=d(B)\pm\delta$.
\end{claim}
\begin{claimproof}
  We partition the vertices $V_1$ into sets $W_0, W_1,\ldots,W_{2/\delta}$, with the
  property that all vertices in $W_j$ have degree in $B$ in the interval
  \[\Big(\frac{(j-1)\delta |V_2|\cdots|V_s|}{2},\frac{j\delta
    |V_2|\cdots|V_s|}{2}\Big]\,.\] Let $V'$ be a random subset of $V_1$ of
  size $\ell=|V_1|/p$, and $B'=B[V',V_2,\ldots,V_s]$. By
  Theorem~\ref{thm:hypgeom}, for each $1\le j\le 2/\delta$ we have
  \[\Prob\big(|V'\cap W_j|=\tfrac{|W_j|}{p}\pm t\big)\ge 1-2\exp\big(-\tfrac{t^2}{2|V_1|}\big)\,.\]
  Taking $t=\delta^2|V_1|/(4p)$ and using a union bound, we conclude that we have
  \[|V'\cap W_j|=\tfrac{|W_j|}{p}\pm\tfrac{\delta^2
  |V_1|}{4p}\]
  for each $1\le j\le 2/\delta$ with probability at least
  \[1- \tfrac{4}{\delta}\exp\big(-\tfrac{\delta^4|V_1|}{32p^2}\big)\,.\]
  Conditioning on this likely event, we have
  \begin{align*}
   e(B')&=\sum_{j=1}^{2/\delta}|V'\cap W_j|\cdot\big(j-\tfrac12\pm\tfrac12\big)\tfrac{\delta|V_2|\cdots|V_s|}{2}\\
   &=\sum_{j=1}^{2/\delta}\Big(\tfrac{|W_j|}{p}\pm\tfrac{\delta^2
   |V_1|}{4p}\Big)\cdot\big(j-\tfrac12\pm\tfrac12\big)\tfrac{\delta|V_2|\cdots|V_s|}{2}\\
   &=\sum_{j=1}^{2/\delta}\tfrac{|W_j|}{p}\cdot\big(j-\tfrac12\pm\tfrac12\big)\tfrac{\delta|V_2|\cdots|V_s|}{2}\pm\tfrac{\delta}{2p}|V_1|\cdots|V_s|\\
   &=\tfrac{1}{p}e(B)\pm\Big( \sum_{j=1}^{2/\delta}\tfrac{|W_j|}{p}\cdot\tfrac{\delta|V_2|\cdots|V_s|}{2}\Big)\pm\tfrac{\delta}{2p}|V_1|\cdots|V_s|\\
   &=\tfrac{1}{p}e(B)\pm\tfrac{\delta}{p}|V_1|\cdots|V_s|\,,
  \end{align*}
  which gives the desired conclusion.    
\end{claimproof}

Applying Claim~\ref{clm:equipart} to $B=F_{j-1}$ and $B'=F_j$ for each $1\le j\le h$, with
\[p=p(t_1^*)\quad\text{ and }\quad\delta=\frac{1}{2h(h!)}\eps_k\prod_{i=2}^{k-1}d_i^{e_i(\Hskel)}\,,\]
we conclude
that with probability at least
\[1- \tfrac{4h}{\delta}\exp\big(-\tfrac{\delta^4n}{32p^2t_1^*}\big)\,,\] 
we have $d(F_0)=d(F_{h})\pm \delta h$. By choice of $\delta$ this implies that the equation~\eqref{eq:strReg:desired} holds for the permutation $\phi$.

Now we take a union bound over the $s$ choices of $G=G_i$, the $1/\eps_k$ choices of $\ell$, the at most $2^{\binom{1/\eps_k}{k}}$ choices of rooted $H$, the at most $n^{1/\eps_k}$ choices of $v_1,\ldots,v_\ell$, the $\prod_{j=2}^{k-1}d_j^{-\binom{\tilde{t}}{j}}$ choices of $\tilde{\cJ}$, the at most $\binom{\tilde{t}}{1/\eps_k}$ choices of $\tilde{X}$, the $\prod_{j=2}^{k-1}d_j^{-\binom{t}{j}}$ choices of $\cJ$, the at most $\binom{t}{1/\eps_k}$ choices of $X$, and the $h! \leq (1/\eps_k)!$ permutations $\phi$ of $1, \dots, h$. Since we have the lower bound $d_j\ge 1/t_1^*$ for each $j$, the total number of events over which we take a union bound is polynomial in $n$. The failure probability of each good event is exponentially small in $n$, so if $n_0$ was chosen sufficiently large, with positive probability all good events hold. In other words, with positive probability~\ref{strReg:spread} holds, which completes the proof.
\end{proof}

 
\bibliographystyle{amsplain_yk}
\bibliography{HyperErdosGallai}
 
\end{document}